\let\mathcal=\CMcal
\def\Hom{\operatorname{Hom}}
\def\Bil{\operatorname{Bil}}
\def\diag{\mathrm{diag}}
\def\Mat{\mathrm{Mat}}
\DeclareMathOperator{\tr}{tr}
\DeclareMathOperator{\GL}{GL}
\DeclareMathOperator{\SL}{SL}
\DeclareMathOperator{\SO}{SO}
\DeclareMathOperator{\Orth}{O}
\DeclareMathOperator{\GSpin}{GSpin}
\DeclareMathOperator{\Spin}{Spin}
\DeclareMathOperator{\Sp}{Sp}
\DeclareMathOperator{\Real}{Re}
\def\Ind{\operatorname{Ind}}
\def\ind{\operatorname{ind}}
\newcommand{\A}{\ensuremath{\mathbb{A}}}
\newcommand{\C}{\ensuremath{\mathbb{C}}}
\newcommand{\R}{\ensuremath{\mathbb{R}}}
\newcommand{\Z}{\ensuremath{\mathbb{Z}}}
\newcommand{\cD}{\ensuremath{\mathcal{D}}}
\newcommand{\cF}{{\mathcal{F}}}
\newcommand{\Sym}{\operatorname{Sym}}
\newcommand{\cE}{{\mathcal{E}}}
\newcommand{\Fre}{{Fr\'{e}chet \,}}
\newcommand{\hot}{\widehat{\otimes}}
\newcommand{\Span}{{\operatorname{Span}}}
\newcommand{\alp}{{\alpha}}
\newcommand{\Supp}{\mathrm{Supp}}
\newcommand{\cO}{{\mathcal{O}}}
\newtheorem{theorem}{Theorem}[section]
\newtheorem{lemma}[theorem]{Lemma}
\newtheorem{proposition}[theorem]{Proposition}
\newtheorem{corollary}[theorem]{Corollary}
\newtheorem{remark}[theorem]{Remark}
\newtheorem{example}[theorem]{Example}
\newtheorem{definition}[theorem]{Definition}
\newtheorem{notn}[theorem]{Notation}
\numberwithin{equation}{section}
\newtheorem{theo}{Theorem}[]
    \def\@pnumwidth{2em}
\begin{document}
\title[Multiplicity one theorems for generalized doubling]{Multiplicity one theorems for the generalized doubling method}

\address{Avraham Aizenbud,
Faculty of Mathematics and Computer Science,
Weizmann Institute of Science,
POB 26, Rehovot 76100, Israel}
\email{aizenr@gmail.com}
\urladdr{\url{http://www.wisdom.weizmann.ac.il/~aizenr}}

\author[D. Gourevitch]{Dmitry Gourevitch}
\address{Dmitry Gourevitch,
Faculty of Mathematics and Computer Science,
Weizmann Institute of Science,
POB 26, Rehovot 76100, Israel }
\email{dmitry.gourevitch@weizmann.ac.il}
\urladdr{\url{http://www.wisdom.weizmann.ac.il/~dimagur}}

\author[E. Kaplan]{Eyal Kaplan}

\address{Eyal Kaplan,
Department of Mathematics, Bar Ilan University, Ramat Gan 5290002, Israel}
\email{kaplaney@gmail.com}
\urladdr{\url{http://www.math.biu.ac.il/~kaplanea}}

\thanks{This research was supported by the ERC, StG grant number 637912 (Gourevitch),
and by the ISRAEL SCIENCE FOUNDATION, grant numbers 249/17 (Gourevitch) and
421/17 (Kaplan).}

\subjclass[2010]{Primary 11F70; Secondary 11F55, 11F66, 22E50, 22E55}

\keywords{doubling method, covering groups, multiplicity one, invariant distributions, Schwartz functions}

\begin{abstract}
In this work we prove the local multiplicity at most one theorem underlying the definition and theory of local
$\gamma$-, $\epsilon$- and $L$-factors, defined by virtue of the generalized doubling method, over any local field of characteristic $0$.
We also present two applications: one to the existence of local factors for genuine representations of covering groups, the other to the global unfolding argument of the doubling integral.
\end{abstract}
\maketitle

The doubling method of \cite{PSR,CFGK2,CFK} constructs an integral representation for the tensor product of a pair of irreducible cuspidal automorphic representations of $G(\A)$ and $\GL_k(\A)$, for a range of reductive groups $G$ defined over a number field $F_0$ with a ring of adeles $\A$. One of the advantages of this method, is that it does not rely on the existence of a model (or a nonzero Fourier coefficient) for the representation of $G(\A)$; it is applicable to any cuspidal representation.
The family of local integrals can be used to define local $\gamma$-, $\epsilon$- and $L$-factors. These factors are defined for arbitrary irreducible admissible representations, and as such, generalize the corresponding (tensor) factors defined by Shahidi \cite{Sh3} for irreducible generic representations, using his method of local coefficients. We prove the local multiplicity at most one theorem underlying the definition of the local factors.

Let $F$ be a local field of characteristic $0$. Let $G$ be one of the split groups $\Sp_c$, $\SO_c$, $\GSpin_c$ or $\GL_c$, where in the symplectic case $c$ is even. For an integer $k$, let $H$ be the split group of the same type as $G$, which is either $\Sp_{2kc}$, $\SO_{2kc}$, $\GSpin_{2kc}$ or $\GL_{2kc}$. There is a unipotent subgroup $U$ of $H$, and a character $\psi_U$ of $U$ which is generic with respect to the unipotent orbit $((2k-1)^c1^c)$ associated with $H$, such that $G\times G$ can be mapped into the normalizer of $U$ and stabilizer of $\psi_U$.
We denote the image of $G\times G$ under this map by $(G,G)$ and let $D$ be the subgroup $U\rtimes (G,G)$ of $H$.

We identify $F$-groups with their $F$-points. The underlying principle of the doubling construction is the multiplicity at most one property of the restriction to $D$, of representations of $H$ parabolically induced from certain degenerate representations of $\GL_{kc}$.

A representation $\rho$ of $\GL_{kc}$ is called a $(k,c)$ representation if its wave-front set contains $(k^c)$ as the unique maximal orbit, and its degenerate Whittaker model with respect to this orbit is unique. The simplest examples are the representation $\det$ of $\GL_c$ or its twist by a quasi-character of $F^*$, which is a $(1,c)$ representation, or irreducible generic representations of $\GL_k$, which are $(k,1)$. The generalized Speh representation $\rho_c(\tau)$ of $\GL_{kc}$ attached to $c$ copies of an irreducible unitary representation $\tau$ of $\GL_k$ is $(k,c)$ (\cite[Theorem~5]{CFK}).

Let $P$ be a maximal parabolic subgroup of $H$ which is a Siegel parabolic subgroup if $G\ne\GL_{kc}$.
Denote the Levi part of $P$ by $M_P$. If
$M_P=\GL_{kc}$, let $\rho$ be a $(k,c)$ representation of $M_P$. For a complex parameter $s$, consider the space $V(s,\rho)$ of the representation of $H$ parabolically induced from
$|\det|^{s-1/2}\rho$ and $P$ to $H$. In the other cases of subgroups $M_P$ the representation $V(s,\rho)$ slightly varies: for $M_P=\GL_{kc}\times\GL_1$ we induce from $\rho\otimes\eta$ with a quasi-character $\eta$ of $F^*$, and for $M_P=\GL_{kc}\times\GL_{kc}$, $\rho$ is the (exterior) tensor product of two $(k,c)$ representations of $\GL_{kc}$.

\begin{theo}\label{theorem A}(see Theorem~\ref{theorem:uniqueness})
Let $\pi_1$ and $\pi_2$ be irreducible admissible representations of $G$, and $\rho$ be an admissible finite length $(k,c)$ representation of $\GL_{kc}$. Outside a discrete subset $\mathcal{B}\subset\C$ of $s$,
\begin{align*}
\dim\Hom_{D}(V(s,\rho),\psi_U^{-1}\otimes\pi_1\otimes\pi_2)\leq\dim\Hom_{G}(\pi_1^{\vee},\pi_2^{\iota}).
\end{align*}
Over non-archimedean fields, for supercuspidal representations $\pi_1$ and $\pi_2$ the result holds for all $s$, under certain additional conditions.
\end{theo}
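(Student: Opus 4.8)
The plan is to run a geometric (Mackey-type) argument on the double-coset space $P\backslash H/D$: a single open orbit will reproduce the right-hand side, and every remaining orbit will be shown to contribute nothing once $s$ avoids a discrete exceptional set $\mathcal{B}$.

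\emph{Step 1: Geometric filtration.} I would first stratify $V(s,\rho)|_D$ according to the finitely many $D$-orbits on $P\backslash H$. Over a non-archimedean $F$ this is the Bernstein--Zelevinsky geometric lemma: the orbits, ordered by closure, give a filtration of $V(s,\rho)|_D$ whose successive subquotients are each of the form $\ind_{D_w}^{D}\bigl(\delta_w\,|\det|^{s-1/2}\rho\bigr)$, where $w$ runs over orbit representatives, $D_w=D\cap w^{-1}Pw$, and $\delta_w$ collects the relevant modulus characters. Over an archimedean $F$ the same holds in the Casselman--Wallach category once $\ind$ is replaced by induction of Schwartz sections, using Aizenbud--Gourevitch Schwartz-function theory on Nash manifolds to build the filtration and its exact sequences. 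Since $\Hom_D(-,\psi_U^{-1}\otimes\pi_1\otimes\pi_2)$ is left exact, subadditivity along the filtration together with Frobenius reciprocity gives $\dim\Hom_D(V(s,\rho),\psi_U^{-1}\otimes\pi_1\otimes\pi_2)\le\sum_{w}\dim\Hom_{D_w}(\delta_w\,|\det|^{s-1/2}\rho,\psi_U^{-1}\otimes\pi_1\otimes\pi_2)$.

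\emph{Step 2: The open orbit.} One checks there is a unique open $D$-orbit; its stabilizer $D_w$ is a semidirect product of a unipotent subgroup $N_w\subseteq U$ with a diagonally embedded copy of $G$, and the restriction of $\psi_U$ to $N_w$ together with the $U$-part of the action cuts out exactly the degenerate Whittaker functional of $\rho$ along the nilpotent orbit $(k^c)$. By the defining property of a $(k,c)$ representation this functional space is one-dimensional, so the open-orbit term collapses to the $G$-equivariance condition carried by $\pi_1$ and $\pi_2$; after tracking the contragredient and the outer involution $\iota$ built into the doubling embedding $(G,G)$, this is precisely $\Hom_G(\pi_1^\vee,\pi_2^\iota)$. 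Hence the open orbit contributes at most $\dim\Hom_G(\pi_1^\vee,\pi_2^\iota)$, uniformly in $s$.

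\emph{Step 3: Vanishing of the non-open orbits.} Here two complementary mechanisms are used. (i) For a non-open $w$ one typically finds a unipotent subgroup $V_w\subseteq U$ inside $D_w$ on which $\psi_U$ is nontrivial while $\rho$ is forced to transform trivially --- equivalently, the induced twisted Jacquet integral on $\rho$ is supported on a nilpotent orbit that strictly dominates, or is incomparable with, $(k^c)$, or on $(k^c)$ with the wrong character; in each case the maximality-and-uniqueness clause in the definition of a $(k,c)$ representation kills the $\Hom$, for \emph{all} $s$. (ii) For the finitely many orbits not covered by (i), after restriction to the diagonal $G$ the subquotient is built from representations induced from proper parabolic subgroups of $G$ twisted by $\det$-powers depending affinely on $s$; a nonzero functional would then force an equality between the central-character/exponent data of $\pi_1,\pi_2,\rho$ and an affine function of $s$, a nontrivial polynomial condition, which fails outside a discrete set. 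Taking $\mathcal{B}$ to be the union of these exceptional loci (together, in the archimedean case, with the discrete set of poles of the meromorphic-in-$s$ families of orbit functionals) yields the stated inequality. When $F$ is non-archimedean and $\pi_1,\pi_2$ are supercuspidal, mechanism (ii) is unnecessary: the offending subquotients are, restricted to $G$, compactly induced from proper parabolics, on which a supercuspidal representation has no nonzero functional, so every non-open orbit dies for all $s$; the ``additional conditions'' are the genericity assumptions on $\rho$ (and on the Siegel-parabolic datum when $G\ne\GL_{kc}$) that make the orbit/stabilizer analysis of Step~2 valid and guarantee that mechanism (i) disposes of those orbits on which supercuspidality gives no leverage.

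\emph{Main obstacle.} The heart of the matter is Steps~2 and~3: one must describe $P\backslash H/D$ and every stabilizer $D_w$ together with the restriction of $\psi_U$ to it, and then verify, uniformly across the families $\Sp,\SO,\GSpin,\GL$, that each non-open orbit is disposed of by (i) or (ii) while pinning down which $s$ actually enter $\mathcal{B}$. This is where the combinatorics of the unipotent orbit $((2k-1)^c1^c)$ and the wave-front/uniqueness hypothesis on $(k,c)$ representations are indispensable; the archimedean case carries the extra burden of establishing the Schwartz-section filtration and the finite-dimensionality and meromorphic dependence on $s$ of the orbit contributions, via a Bernstein-type rationality argument and Aizenbud--Gourevitch Schwartz-distribution techniques.
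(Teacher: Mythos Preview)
Your overall architecture---orbit analysis on $P\backslash H/D$, open orbit gives the bound, non-open orbits die by the $(k,c)$ hypothesis or by $s$-genericity---matches the paper's, and your mechanisms (i) and (ii) are the right ones. But Step~1 contains a genuine gap that is precisely the main technical obstacle of the paper.

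You assert that $P\backslash H/D$ has finitely many orbits, so that the geometric lemma yields a finite filtration. This is false: when $k>1$ and $c>1$ the double-coset space is \emph{uncountable}. There is therefore no finite Mackey filtration to sum over, and mechanism~(ii) as you state it---which must be applied orbit by orbit and excludes a discrete set of $s$ for each---would a priori exclude an uncountable set. The paper resolves this by interposing the coarser Bruhat decomposition $P\backslash H/Q$ (where $Q=M_Q\ltimes U$), which \emph{is} finite. For every non-open Bruhat cell $PwQ$ one proves vanishing \emph{uniformly} over all the (possibly uncountably many) $D$-orbits it contains, using only mechanism~(i); the crucial lemmas (Propositions~\ref{proposition:1st reduction of w}--\ref{proposition:d_1 < n-l} and their $\GL$ analogues) show that the $\psi_U$-incompatibility and the wave-front bound depend only on the Weyl part $w$ of the representative, not on its unipotent tail. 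Only the open Bruhat cell $Pw_0Q$ breaks into finitely many $D$-orbits ($n{+}1$ for $H\ne\GL_{2kc}$, $(c{+}1)(c{+}2)/2$ for $\GL_{2kc}$), and mechanism~(ii) is applied solely to these, keeping $\mathcal{B}$ discrete.

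Over archimedean $F$ the infinitude of orbits is still more serious. The Bruhat theory you invoke (Kolk--Varadarajan, Casselman--Wallach filtrations, Schwartz induction) requires finitely many orbits of the acting group; here one needs a genuine extension in which a larger group $A=Q\times P\supset D\times P$ has finitely many orbits while the equivariance character $\psi_U$ is only $D$-equivariant. The paper proves exactly this extension in its appendix (Theorem~\ref{thm:AG}/\ref{theorem:convenient vanishing}), and verifying its hypotheses---that $\{\psi_U^a|_{C_z}:a\in A\}$ is a finite union of locally closed orbits---is itself nontrivial (Lemma~\ref{lem:Uhat}). Your appeal to generic ``Schwartz-distribution techniques'' and a ``Bernstein-type rationality argument'' does not furnish this; rationality in $s$ already presupposes a finite filtration.
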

Here $\iota$ is a certain involution of $G$; when $G=\Sp_c$, $\pi^{\iota}=\pi^{\vee}$, and for $\GL_c$, $\iota$ is trivial.
As usual, if the field is non-archimedean and its residue field contains $q$ elements, the set $\mathcal{B}$ consists of finitely many values of $q^{-s}$. For the stronger statement for supercuspidal representations we must exclude minimal rank cases where $G$ does not contain nontrivial unipotent subgroups, and for $\GL_c$ there is an additional condition on $\rho$. In the setup of the doubling method of \cite{CFGK2,CFK}, $(\pi_1,\pi_2)=(\pi^{\vee},\pi^{\iota})$ and the dimension is precisely $1$ outside a discrete subset of $s$. Note that there is no canonical isomorphism between the spaces appearing in the theorem.
For the definition of all objects and notation, and for the more precise statement, see
\S~\ref{preliminaries}, in particular \S~\ref{Doubling setup} where we recall the generalized doubling setup, and Theorem~\ref{theorem:uniqueness} (e.g., $\iota$ is defined in \S~\ref{Doubling setup}).

The case $k=1$ of the theorem for supercuspidal representations was proved by Harris \textit{et al.} \cite[\S~4]{HKS},
but the general setting of the theorem (even for $k=1$) has not been studied. In this sense we close a historical gap.

Theorem~\ref{theorem A} is the local counterpart of the global unfolding argument in \cite{CFGK2}.
We briefly recall the global result, focusing on the parts relevant to us here. For more detail on the global setting see \S~\ref{Global unfolding}.

Let $\tau$ be an irreducible cuspidal automorphic representation of $\GL_k(\A)$, and $\mathcal{E}_{\tau}$ be the generalized Speh representation of $\GL_{kc}(\A)$ corresponding to $c$ copies of $\tau$, defined by Jacquet \cite{Jac4}. The representation $\mathcal{E}_{\tau}$ is a global $(k,c)$ representation, in the sense that it does not support any Fourier coefficient along an orbit greater than or non-comparable with $(k^c)$, it supports a Fourier coefficient along $(k^c)$, and all of its local components are $(k,c)$ (\cite{G2,JL2013,CFGK2,CFK}). Let $E(h;s,f)$ denote the Eisenstein series attached to a suitable section $f$ in the space of the representation of $H(\A)$ parabolically induced from $|\det|^{s-1/2}\mathcal{E}_{\tau}$ and $P(\A)$. One can consider the Fourier coefficient
$E^{U,\psi_U}(h;s,f)$ of $E(h;s,f)$ along $(U,\psi_U)$ (see \eqref{eq:U psi U coefficient of the series}) as an automorphic function on $G(\A)\times G(\A)$. The global integral was defined
in \cite{CFGK2} by integrating $E^{U,\psi_U}(h;s,f)$ against two cusp forms in the space of a unitary irreducible cuspidal automorphic representation $\pi$ of $G(\A)$. In a right half plane $\Real(s)\gg0$, one can rewrite the integral as a sum (of integrals) parameterized by representatives of $P(F_0)\backslash H(F_0)/D(F_0)$. All summands but one vanish, and the remaining summand was shown to produce an Eulerian integral.

There are $3$ methods for showing the vanishing of a summand. The first is by finding a subgroup $U'<U$ such that
$\psi_U$ is nontrivial on $U'(\A)$, and showing that the summand admits an inner integral of $\psi_U$ over $U'(F_0)\backslash U'(\A)$, which is then zero. Second, if the summand admits an inner integral which constitutes a Fourier coefficient of $\mathcal{E}_{\tau}$, that is
greater than or non-comparable with $(k^{c})$. This summand vanishes because $\mathcal{E}_{\tau}$ is $(k,c)$.
Third, if one can obtain an inner integral of one of the cusp forms along a unipotent radical $U_R$ of a parabolic subgroup $R=M_R\ltimes U_R$ of $G$, then the summand vanishes because $\pi$ is cuspidal.

Our local result is, in some sense, parallel to the global unfolding. We consider distributions on
the orbits of $P\backslash H/D$. The argument involving $\psi_U$ can be applied locally. The second case, where we use the $(k,c)$ representation alone, is not difficult to carry out in the non-archimedean setting, using the local ``exchange of roots" arguments of Ginzburg \textit{et al.} \cite{GRS5} and the theory of derivatives of Bernstein and Zelevinsky \cite{BZ1,BZ2}. Results involving the Jacquet functor are in general more difficult and subtle over archimedean fields. Fortunately, we are able to benefit from the recent (partial) extension of the theory of derivatives to archimedean fields by Aizenbud \textit{et al.} \cite{AGS2015a,AGS2015b}. In fact, our argument in this case is greatly simplified and streamlined using the precise reformulation of
Gomez \textit{et al.} \cite{GGS} of the connection between the wave-front set and the theory of derivatives, over both archimedean and non-archimedean fields.

The class of double cosets, where the global vanishing follows using the fact that the representations of $G$ are cuspidal, require a different approach. The difficulty arises because in the local setting the class of representations of $G$ must not be restricted to supercuspidal ones. This is where we lose the subset $\mathcal{B}$.

In more detail, if the global summand was treated using, say, $U_R<R<G$, the corresponding orbit should be handled by analyzing the action of the center $C_{M_R}$ of $M_R$. Consider the non-archimedean setting. Following the method of Jacquet \textit{et al.} \cite{JPSS}, if the local representations (which are usually Jacquet modules of $\pi_i$ and $\rho$) restrict to finite length representations of $M_R$, the action of $C_{M_R}$ is filtered by a finite sequence of quasi-characters, combined with a quasi-character determined by $|\det|^s$. This produces a compatibility condition that rules out a discrete subset of $s$, i.e., there are no distributions on the orbit unless $s$ belongs to a discrete set. This argument was carried out by several authors, including \cite{GPR,Soudry,GRS5,me5}. The main obstacle was showing that indeed the representations involved restrict to finite length representations of a Levi subgroup, or more precisely in those works, of the reductive part of the appropriate mirabolic subgroup, and the key tool in the proofs was the theory of derivatives \cite{BZ1,BZ2}. In contrast, here the Jacquet modules of $\rho$ that occur in the analysis do not afford a representation of the mirabolic subgroup, but we are still able to show that they restrict to finite length representations of $M_R$.
Moreover, while in the previous aforementioned works the number of double cosets was finite, here there are in general uncountably many (unless $k=1$). Therefore we must be careful to apply this argument to only finitely many representatives.

In fact, treating uncountably many orbits is another difficulty. In the non-archimedean case, in principle if there are no distributions (satisfying certain equivariance properties) on the orbits, there are no global distributions (see e.g., \cite[\S~6]{BZ1}).
Over archimedean fields this is considerably more complicated. Kolk and Varadarajan \cite{KV} extended parts of the archimedean Bruhat theory to this case using transverse symbols. In Appendix~\ref{Appendix} by the first named author and Avraham Aizenbud, we will present a generalization of the main result of \cite{KV}, which is sufficiently strong for our application and is of independent interest, using tools from functional analysis.

The proof here clarifies several arguments of \cite{CFGK2}, and is applicable to a wide class of groups, in particular all groups treated in \cite{CFK} (the unfolding argument in \cite{CFGK2} was presented only for symplectic groups). The original doubling method of \cite{PSR} was stated for a slightly different class of groups, e.g., the full orthogonal groups $\Orth_c$, and also unitary groups; general spin groups, as well as the double cover of the symplectic group, were mentioned in \cite[\S~4.3]{PSR} but not treated in any way. The extension of the present proof to other classes of groups would be straightforward.

Note that the aforementioned proof in \cite[\S~4]{HKS} does not involve $\psi_U$ nor the structure of $\rho$ (at any rate $\rho$ for $k=1$
is plainly a character), and involves only finitely many orbits. Once the proof here is reduced to the open orbit, it ``mirrors" the arguments of \textit{loc. cit.} when the representations are supercuspidal, but again in the general case it is more difficult and relies on the deep properties of $\rho$.

Our work has two immediate applications. The first concerns covering groups (topological central extensions by finite cyclic groups). The classical doubling method ($k=1$) was extended by Gan \cite{Gan} to the double cover of the symplectic group. In the recent work \cite{me12}, the doubling method was extended to $m$-fold coverings $\Sp_c^{(m)}(\A)$ of $\Sp_c(\A)$ (defined by \cite{Mats}) for all $m$, and any $k$, providing an integral representation for the tensor product of a pair of genuine irreducible cuspidal automorphic representations $\pi$ of $\Sp_c^{(m)}(\A)$ and $\tau$ of $\widetilde{\GL}_k(\A)$, where $\widetilde{\GL}_k(\A)$ is a covering group of $\GL_k(\A)$ defined by restriction from $\Sp_{2k}^{(m)}(\A)$. Alongside, the local doubling construction for $\widetilde{\GL}_c$ was developed as well (for all $m$ and $k$). The construction of \cite{me12} is still subject to a local and global conjecture regarding generalized Speh representations, but the local theory for unramified data
over non-archimedean fields does not depend on these conjectures.

Theorem~\ref{theorem A} can be reformulated for covering groups, granted certain conditions hold (see \S~\ref{Covering groups} for details). In the particular cases of $\Sp_c^{(m)}$ and $\widetilde{\GL}_c$, Theorem~\ref{theorem A} is applicable and as a consequence, we can define local factors using uniqueness, at least when data are unramified. Specifically, define the $\gamma$-factor as the proportionality between two integrals, then use it to define $\epsilon$- and $L$-factors; see \eqref{eq:gamma factor} and the explanation below in the linear setting. To the best of our knowledge, at present no other method for an analytic definition of these factors is known (see below; of course, for a formal abstract definition of local factors for unramified data one can use the Satake parametrization). Moreover, granted the conjectures of \cite{me12}, Theorem~\ref{theorem A} is expected to imply the existence of local $\gamma$-, $\epsilon$- and $L$-factors in general, i.e., also in the ramified case, for $\Sp_c^{(m)}\times \widetilde{\GL}_k$ (and additional covering groups).

As explained above, the doubling method does not rely on the existence of a model for the representation of $G$. This is advantageous for linear groups, but even more so when considering covering groups. As a rule, Whittaker models are not unique for representations of covering groups (the double cover of $\Sp_c$ is an exception), first and foremost, for genuine irreducible unramified principle series representations. This means that Shahidi's theory of local coefficients is no longer applicable, even in the unramified setting. The fact that one can still define local factors using analytic methods and uniqueness, is perhaps a surprise.

We note that the number of Whittaker models is still finite. In recent works, Gao \textit{et al.} \cite{GaoShahidiSzpruch2018,GaoShahidiSzpruch2019} and Szpruch \cite{Szpruch2018} studied generalizations of Shahidi's local coefficients, namely a local coefficients matrix and a scattering matrix, and extracted interesting representation theoretic invariants.

The second application is global. Since the local components of $\mathcal{E}_{\tau}$ are $(k,c)$ representations, our local analysis expresses the Fourier coefficient of the Eisenstein series as a sum over a finite number of cosets in (the infinite space) $P(F_0)\backslash H(F_0)/D(F_0)$. Then it is visible, that the integral of this coefficient against two cusp forms reduces to a single summand, explicating the unfolding process. In this sense we fill out the gap of the unfolding for the cases of groups considered in \cite{CFK}; although our arguments also readily globalize.

The original doubling method of Piatetski-Shapiro and Rallis \cite{PSR} produced an integral representation for the
standard automorphic $L$-function of an irreducible cuspidal automorphic representation of a classical group, or its rank-$1$ twists, which is the case $k=1$. The local theory for $k=1$ was fully developed by Lapid and Rallis \cite{LR}.
The doubling construction was extended to arbitrary $k$ in \cite{CFGK2}, and the corresponding theory of local factors was developed in \cite{CFK}.

We briefly explain how Theorem~\ref{theorem A} is used for the definition of the local factors. Fix a nontrivial additive character $\psi$ of $F$. Let $\pi$ be an irreducible admissible representation of $G$, and $\tau$ be an irreducible admissible and generic representation of $\GL_k$. If $\tau$ is unitary, the representation
$\rho_c(\tau)$ was introduced above, in general $\rho_c(\tau)$ is defined using Langlands' classification and the tempered case.
The local doubling integral $Z(s,\omega,f)$ is defined for a matrix coefficient $\omega$ of $\pi^{\vee}$ and a holomorphic section $f$ of $V(s,\rho_c(\tau))$. In its domain of absolute convergence (a right half plane), $Z(s,\omega,f)$ defines a morphism in
\begin{align}\label{space:homspace for proportionality intro}
\Hom_{D}(V(s,\rho_c(\tau)),\psi_U^{-1}\otimes\pi^{\vee}\otimes\pi^{\iota}).
\end{align}
Applying a standard intertwining operator
\begin{align*}
M(s,w):V(s,\rho_c(\tau))\rightarrow V(1-s,{}^w\rho_c(\tau)),
\end{align*}
where ${}^w\rho_c(\tau)=\rho_c(\tau^{\vee})$
for $G=\Sp_c,\SO_c$, we obtain a second integral $Z(s,\omega,M(s,w)f)$, absolutely convergent in a left half plane, which still defines a morphism in \eqref{space:homspace for proportionality intro}. In fact $M(s,w)$ is further normalized using a second functional equation,
$M^*(s,w)=C(s,c,\tau,\psi)M(s,w)$, where $C(s,c,\tau,\psi)$ is a meromorphic function of $s$ (see \cite[\S~4]{CFK}).
By Theorem~\ref{theorem A} we can define the $\gamma$-factor
$\gamma(s,\pi\times\tau,\psi)$ by
\begin{align}\label{eq:gamma factor}
\gamma(s,\pi\times\tau,\psi)Z(s,\omega,f)=Z^*(s,\omega,f),\qquad Z^*(s,\omega,f)=Z(1-s,\omega,M^*(s,w)f).
\end{align}
The main local result of \cite{CFK} was the characterization of this factor, according to the prescribed list of properties formulated
by Shahidi in the context of generic representations \cite[Theorem~3.5]{Sh3} (see also \cite[Theorem~4]{LR}). In turn, the $\gamma$-factor was used in \cite{CFK} to define the local $\epsilon$- and $L$-factors, following Shahidi's method \cite[\S~7]{Sh3} (see also \cite{Sh2,Sh4,Shahidi1983,Shahidi1985}). The main motivation of \cite{CFK} was to find a new proof of global functoriality from $G(\A)$ to the appropriate general linear group, via the Converse Theorem of Cogdell and Piatetski-Shapiro \cite{CPS3,CPS1999}, thereby extending the global result of \cite{CKPS2,CKPS,AsgSha} from globally generic representations to arbitrary cuspidal ones. Of course the endoscopic functorial transfer for quasi-split orthogonal or symplectic groups was obtained by Arthur \cite{Arthur2013} using the twisted stable trace formula, and extended to quasi-split unitary groups by Mok \cite{Mok2015}.

We mention that if $\pi$ is supercuspidal (and certain additional assumptions), our uniqueness results imply, using Bernstein's continuation principle (\cite{Banks}), that the integral is holomorphic (see Corollary~\ref{coro:meromorphic continuation for doubling 2}). Using the fact that the integral can always be made constant, it follows that
the only poles appearing in $\gamma(s,\pi\times\tau,\psi)$ are poles of $M^*(s,w)$. This observation hints that
a ``g.c.d. definition" of the $L$-function using the generalized doubling method must involve ``good sections"
(see e.g., \cite[589--590]{me4}). Indeed this was the approach of Yamana \cite{Yamana}, who studied this definition of the $L$-function for $k=1$.

Similar multiplicity at most one theorems exist in the literature. In the context of Rankin--Selberg integrals for representations of $G\times\GL_k$ admitting unique Whittaker models, where $G$ is a classical group, see \cite{GPR,Soudry,Soudry3,GRS5,me5}. See also \cite{AGRS,JSZ,GGP,MW,Sun2012,SunZhu2012,Waldspurger2012,LiuSun2013} who proved strong general uniqueness results (which in particular imply multiplicity one for the same Rankin--Selberg constructions with irreducible generic representations). Our proof technique resembles Soudry's \cite[\S~8]{Soudry} and \cite{Soudry3}.

In a more general context, for a representation $\rho$ of an arbitrary group $H$, a subgroup $D<H$ and a representation $\xi$ of $D$,
one can consider the space $\Hom_D(\rho,\xi)$. Typical questions involve the multiplicity of this space, or the structure of $\xi$ for which $\Hom_D(\rho,\xi)\ne0$. In certain cases, the nonvanishing is related to special values of $L$-functions. Globally, one is often interested in a period integral of an automorphic form on $H(\A)$, over $D(F_0)\backslash D(\A)$ (with $\xi=1$). There is a vast amount of studies of such problems, let us mention \cite{Z3,Jac2,JR,FJ,Offen,OS2,OS,Offen2,Jac3,Matringe4,Matringe3,Matringe2,Matringe,FLO,Matringe5,Yamana2}.

For other works on the doubling method see, e.g., \cite{KudlaRallis1989,HKS,BochererSchmidt2000,HarrisLiSkinner2005,HarrisLiSkinner2006,WGS}.
The doubling method is not the only integral representation to lift the barrier of globally generic representations: other constructions of similar generality were developed, thus far without complete local theory, in \cite{GPSR,BS,GJRS,JZ,Soudry7,Soudry8}. While the local ramified theory is probably within reach (see e.g., \cite[Theorem~4.2]{Soudry7}), it will require an abundance of work. E.g., these integrals are far less uniform than the doubling method, and for orthogonal groups one uses the Bessel model of $\pi$ which involves an auxiliary representation.

The rest of this work is organized as follows. In \S~\ref{preliminaries} we provide some general preliminaries, define $(k,c)$ representations and recall the doubling construction of \cite{CFGK2,CFK}. The proof of our main result is given in \S~\ref{Uniqueness results}.
Section~\ref{Applications} contains our main applications.

Parts of the non-archimedean version of Theorem~\ref{theorem A} for supercuspidal representations appear in \cite{CaiTwisted}; him and the authors were working independently.

\addtocontents{toc}{\protect\setcounter{tocdepth}{1}}

\subsection*{Acknowledgements}
We are grateful to the referees for their interest in this work and
helpful remarks, which helped improve the presentation.

\addtocontents{toc}{\protect\setcounter{tocdepth}{2}}

\makeatletter
  \def\@pnumwidth{2em}
  \def\@tocrmarg {3.5em}
\makeatother

\tableofcontents
\section{Preliminaries}\label{preliminaries}
\subsection{The groups}\label{the groups}

Let $l\geq1$ be an integer. Let $B_{\GL_l}=T_{\GL_l}\ltimes N_{\GL_l}$ denote the Borel subgroup of upper triangular invertible matrices, where $N_{\GL_l}$ is its unipotent radical. The standard parabolic subgroups of
$\GL_l$ can be identified with the set of compositions $\beta=(\beta_1,\ldots,\beta_a)$ of $l$ ($\beta_i\geq0$, $a\geq1$), where $P_{\beta}=M_{\beta}\ltimes V_{\beta}$ denotes the parabolic subgroup with $M_{\beta}=\GL_{\beta_1}\times\ldots\times \GL_{\beta_a}$ and $V_{\beta}<N_{\GL_l}$.
Let $J_l$ be the permutation matrix with $1$ on the anti-diagonal and $0$ otherwise. For $g\in\GL_l$, ${}^tg$ denotes the transpose of $g$, and $g^*=J_l{}^tg^{-1}J_l$.

For $x\in \R$, $\lfloor x \rfloor$ (resp., $\lceil x \rceil$) denotes the largest integer smaller (resp., greater) than or equal to $x$.

For an even $l$, define
\begin{align*}
\Sp_{l}=\{g\in\GL_l:{}^tg\left(\begin{smallmatrix}&J_{l/2}\\-J_{l/2}\end{smallmatrix}\right)g=\left(\begin{smallmatrix}&J_{l/2}\\-J_{l/2}\end{smallmatrix}\right)\}.
\end{align*}
Let $B_{\Sp_{l}}=\Sp_{l}\cap B_{\GL_l}$. For any $l$, let $\SO_l=\{g\in\SL_l:{}^tgJ_lg=J_l\}$ and fix
$B_{\SO_{l}}=\SO_{l}\cap B_{\GL_l}$.
Let $\Spin_l$ be the algebraic double cover of $\SO_l$, with the Borel subgroup which is the preimage of $B_{\SO_l}$. This defines the set of simple roots $\alpha_0,\ldots,\alpha_{\lfloor l/2\rfloor-1}$ where
$\alpha_i=\epsilon_i-\epsilon_{i+1}$ for $0\leq i<\lfloor l/2\rfloor-1$, and $\GSpin_l$ can be defined as the Levi subgroup of
$\Spin_{l+2}$ obtained by removing $\alpha_0$. For $l=0,1$, $\GSpin_l=\GL_1$, and $\GSpin_2=\GL_1\times\GL_1$.

Henceforth we fix one of the families of groups $\GL_l$, $\Sp_{l}$ (when $l$ is even), $\SO_l$ or $\GSpin_l$, and for a given $l$ denote the member by $\mathcal{G}_l$, e.g., $\mathcal{G}_l=\Sp_l$. Write the Borel subgroup in the
form $B_{\mathcal{G}_l}=T_{\mathcal{G}_l}\ltimes N_{\mathcal{G}_l}$, where $N_{\mathcal{G}_l}$ is the unipotent radical.
For a parabolic subgroup $R<\mathcal{G}_l$, $\delta_R$ denotes its modulus character, and we write $R=M_R\ltimes U_R$ where $M_R$ is the Levi part and $U_R$ is the unipotent radical. If $U<\mathcal{G}_l$ is a unipotent subgroup,
$U^-$ denotes the opposite subgroup. The Weyl group of $\mathcal{G}_l$ is denoted $W(\mathcal{G}_l)$, and similar notation is used for any reductive group. The center of an algebraic group $X$ is denoted $C_X$, and its connected component by $C_X^{\circ}$.

The unipotent subgroups of $\GSpin_l$ are isomorphic (as algebraic groups) to the unipotent subgroups of $\SO_l$, and $W(\GSpin_l)$ is isomorphic to $W(\SO_l)$. Also $C_{\GSpin_{2l+1}}$ is connected and for $l>2$,
$C_{\GSpin_{l}}^{\circ}\cong\GL_1$.

Let $F$ be a local field with characteristic $0$. Throughout, we identify $F$-groups with their $F$-points, e.g.,  $\mathcal{G}_l=\mathcal{G}_l(F)$. The additive group of $l\times l'$ matrices (over $F$) is denoted $\Mat_{l\times l'}$ and $\Mat_l=\Mat_{l\times l}$.
The trace map is denoted $\tr$. If $F$ is non-archimedean, we let $q$ denote the cardinality of its residue field. When we say that a property holds outside a discrete subset of $s$, over a non-archimedean field we mean for all but finitely many values of $q^{-s}$.
For any group $X$, $x,y\in X$ and $Y<X$, ${}^xy=xyx^{-1}$ and ${}^xY=\{{}^xy:y\in Y\}$.

\subsection{Representations}\label{representations}
We describe the general notation involving representations that appear in this work. In this section $\mathcal{G}_l$ can be replaced with any reductive algebraic group. By a representation of a closed subgroup of $\mathcal{G}_l$ we always mean a smooth representation on a complex vector space.
Over archimedean fields, an admissible representation is understood to be admissible Fr\'{e}chet of moderate growth.
If $\pi$ is a representation of a closed subgroup $Y<\mathcal{G}_l$, $\pi^{\vee}$ is the representation contragredient to $\pi$, and for $x\in\mathcal{G}_l$, ${}^x\pi$ denotes the representation of ${}^xY$ on the same space of $\pi$, with the action given by ${}^x\pi(y)=\pi({}^{x^{-1}}y)$. Parabolic induction is normalized. Morphisms are continuous and induction is smooth, and $\otimes$ is the complete tensor product, over archimedean fields.

In this work supercuspidal representations are not automatically irreducible (or unitary). When the field is non-archimedean, a representation of a group which does not have unipotent subgroups is also (trivially) supercuspidal. By definition, supercuspidal representations only exist over non-archimedean fields.

For a closed unipotent subgroup $U<\mathcal{G}_l$, denote the set of (unitary) characters of $U$ by $\widehat{U}$. Let $\pi$ be a representation of $U$ on a space $\mathcal{V}$. For $\psi\in\widehat{U}$,
let $\mathcal{V}(U,\psi)\subset \mathcal{V}$ be the subspace spanned by the vectors $\pi(u)\xi-\psi(u)\xi$ for all
$u\in U$ and $\xi\in\mathcal{V}$ over non-archimedean fields, and over archimedean fields $\mathcal{V}(U,\psi)$ is the closure of this subspace. The Jacquet module $J_{U,\psi}(\pi)$ is the quotient $\mathcal{V}(U,\psi)\backslash\mathcal{V}$. Assume
$R<\mathcal{G}_l$ is a closed subgroup containing $U$. Denote the normalizer of $U$ in $R$ by $N_R(U)$. If
$\pi$ is a representation of $R$, $J_{U,\psi}(\pi)$ is a representation of the subgroup of $N_R(U)$ which stabilizes $\psi$. We do not twist the action, i.e., we do not multiply by a modulus character. For any $r\in R$, we have an isomorphism ${}^rJ_{U,\psi}(\pi)\cong J_{{}^rU,{}^r\psi}(\pi)$ of representations of ${}^rU$ (use $\xi\mapsto\pi(r)\xi$). In particular if $r\in N_R(U)$, ${}^rJ_{U,\psi}(\pi)\cong J_{U,{}^r\psi}(\pi)$.

Over non-archimedean fields, if $U$ is abelian and $N_R(U)$ acts on $\widehat{U}$ with finitely many orbits, by
\cite[5.9--5.12]{BZ1} if $J_{U,\psi'}(\pi)=0$ when $\psi'$ varies over a complete set of representatives for the nontrivial orbits, $U$ acts trivially on the space of $\pi$, i.e., $\pi=J_{U,1}(\pi)$.

Let $J_{U,\psi}(\pi)^*$ be the algebraic dual of $J_{U,\psi}(\pi)$ over a non-archimedean field, and
the continuous dual over archimedean fields. By definition $\Hom_{U}(\pi,\psi)=J_{U,\psi}(\pi)^*$.

Over archimedean fields we will also need the notion of generalized Jacquet modules. Let $\pi$ be a representation of $\mathcal{G}_l$, and $R=M_R\ltimes U_R<\mathcal{G}_l$ be a parabolic subgroup. Denote the Lie algebra of $U_R$ by $\mathfrak{u}_R$. For any positive integer $i$, we call $\pi/\overline{\mathfrak{u}^i\pi}$ the $i$-th generalized Jacquet module of $\pi$.

\begin{lemma}\label{lem:adm}
If $\pi$ is an admissible finite length representation of $\mathcal{G}_l$,
the $i$-th generalized Jacquet module is an admissible finite length representation of $M_R$.
\end{lemma}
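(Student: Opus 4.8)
The plan is to reduce everything to the non-archimedean-style statement that ordinary (degree-one) Jacquet modules of admissible finite length representations have finite length, which over archimedean fields is the Casselman--Wallach theory, and then to bootstrap from $i=1$ to general $i$ by dévissage along the powers of $\mathfrak{u}_R$. First I would recall that admissibility and finite length are both preserved under the operations we use: subrepresentation, quotient, and (unnormalized or normalized) parabolic restriction. The key input is that for an admissible finite length Harish-Chandra module $\pi$ of $\mathcal{G}_l$, the quotient $\pi/\overline{\mathfrak{u}_R\pi}$ — which up to a modulus twist is the Casselman--Wallach completion of the ordinary Jacquet module $\pi/\mathfrak{u}_R\pi$ of the underlying $(\mathfrak{g},K)$-module — is again admissible of finite length as a module for $M_R$ (equivalently, for the Levi $(\mathfrak{m}_R,K_{M_R})$-pair). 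This is the Casselman subrepresentation theorem together with the exactness of the Jacquet functor on Harish-Chandra modules and the equivalence of categories of Casselman--Wallach.

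Next, for general $i$, I would set up the standard filtration of $\pi/\overline{\mathfrak{u}_R^i\pi}$ coming from the descending chain
\begin{align*}
\overline{\mathfrak{u}_R^i\pi}\subset \overline{\mathfrak{u}_R^{i-1}\pi}\subset\cdots\subset \overline{\mathfrak{u}_R\pi}\subset\pi,
\end{align*}
so that the successive quotients are $\overline{\mathfrak{u}_R^{j}\pi}/\overline{\mathfrak{u}_R^{j+1}\pi}$ for $0\le j\le i-1$. Each of these is a quotient of $\mathfrak{u}_R^j\pi\otimes(\text{something})$, and more usefully each is a module on which $\mathfrak{u}_R$ acts trivially, hence a genuine $M_R$-module; concretely $\overline{\mathfrak{u}_R^{j}\pi}/\overline{\mathfrak{u}_R^{j+1}\pi}$ is a quotient of $S^j(\mathfrak{u}_R/[\mathfrak{u}_R,\mathfrak{u}_R])\otimes(\pi/\overline{\mathfrak{u}_R\pi})$-type data — more precisely, it is the $j$-th graded piece of the Jacquet module, which is a subquotient of $S^j(\mathfrak{u}_R^*)\otimes (\pi/\overline{\mathfrak{u}_R\pi})$ as an $M_R$-module (the coadjoint action on the symmetric algebra being algebraic and finite-dimensional in each degree). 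Since $\pi/\overline{\mathfrak{u}_R\pi}$ is admissible of finite length over $M_R$ by the $i=1$ case, and $S^j(\mathfrak{u}_R^*)$ is a finite-dimensional algebraic representation of $M_R$, each graded piece is admissible of finite length over $M_R$; finite length and admissibility are closed under extensions, so $\pi/\overline{\mathfrak{u}_R^i\pi}$ inherits both properties. One should also note that the closure operation is harmless here because on the relevant Harish-Chandra modules the subspaces $\mathfrak{u}_R^j\pi$ already have the correct codimension, by Casselman--Wallach the completions match, and passing to the smooth globalization commutes with these finite algebraic operations.

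The main obstacle I expect is not the combinatorial filtration but the precise comparison between the algebraically defined quotients $\pi/\mathfrak{u}_R^i\pi$ at the level of Harish-Chandra modules and the topological quotients $\pi/\overline{\mathfrak{u}_R^i\pi}$ at the level of smooth Fréchet representations — i.e., making sure the relevant subspaces are already closed (or that their closures have the expected codimension) so that one does not accidentally collapse the module. The clean way around this is to invoke the Casselman--Wallach equivalence and the fact, due to Casselman and to Wallach, that for admissible finite length $\pi$ the (co)homology of $\mathfrak{u}_R$ is finite-dimensional over $M_R$ in each degree, from which the finite length of all generalized Jacquet modules follows formally; alternatively one can cite the now-standard treatment of generalized Jacquet modules in this exact finite-length, moderate-growth setting. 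I would state the lemma's proof as this dévissage, pointing to Casselman's subrepresentation/comparison theorems and the Casselman--Wallach globalization theorem for the two facts it rests on.
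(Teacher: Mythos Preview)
Your proposal is correct, but it takes a different route from the paper. The paper's proof is a one-line citation: it asserts that Wallach's argument for the classical case $i=1$ (\cite[Lemma~4.3.1]{Wal88}) goes through unchanged for general $i$, with no bootstrapping. Your approach instead establishes the $i=1$ case first and then d\'evisses along the filtration by $\overline{\mathfrak{u}_R^j\pi}$, controlling each graded piece as a finite-dimensional algebraic twist of the ordinary Jacquet module. This is perfectly valid, and in fact it is exactly the filtration argument the paper itself deploys in the proof of the \emph{next} lemma (Lemma~\ref{lem:adm 2 discrete}) to control the central exponents. One small correction: the $j$-th graded piece $\overline{\mathfrak{u}_R^j\pi}/\overline{\mathfrak{u}_R^{j+1}\pi}$ is naturally a quotient of $\mathfrak{u}_R^{\otimes j}\otimes(\pi/\overline{\mathfrak{u}_R\pi})$ (this is how the paper phrases it, writing ``$\mathfrak{u}^j\otimes V/\mathfrak{u}V$''), not of $S^j(\mathfrak{u}_R^*)\otimes(\pi/\overline{\mathfrak{u}_R\pi})$ --- the dual is misplaced, and the passage to the symmetric power would require $\mathfrak{u}_R$ to be abelian. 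This slip is harmless for your purposes, since either way the tensoring factor is a finite-dimensional algebraic $M_R$-module and the conclusion (admissibility and finite length of each graded piece, hence of the whole) follows. Your care about matching the algebraic quotients on the Harish-Chandra module with the topological closures on the globalization via Casselman--Wallach is appropriate and is indeed the point one has to check.
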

This lemma is proven in the same way as the classical case ($i=1$), see
Wallach \cite[Lemma~4.3.1]{Wal88}.

\begin{lemma}\label{lem:adm 2 discrete}
Assume $\pi$ is an admissible finite length representation of $\mathcal{G}_l$.
The set of central exponents of $\pi/\mathfrak{u}^i\pi$, i.e., the central characters of the irreducible constituents of $\pi/\mathfrak{u}^i\pi$ as a representation of $M_R$, where $i$ varies over the positive integers, belong in a discrete set.
\end{lemma}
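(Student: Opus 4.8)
The plan is to deduce Lemma~\ref{lem:adm 2 discrete} from Lemma~\ref{lem:adm} by controlling how the central characters of $\pi/\mathfrak{u}^i\pi$ vary with $i$. First I would fix a maximal split torus $S$ of $C_{M_R}^{\circ}$ (it suffices to analyze the restriction of central characters to $S$, since on the finite part there are only finitely many possibilities once we know finite length). The key observation is that $\mathfrak{u}_R$ decomposes, under the adjoint action of $C_{M_R}$, into a finite sum of weight spaces $\mathfrak{u}_R=\bigoplus_{j}\mathfrak{u}_R^{(j)}$, where the weights $\mu_1,\dots,\mu_r\in X^*(S)$ are the restrictions to $S$ of the roots of $S$ in $\mathfrak{u}_R$. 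Consequently $\mathfrak{u}^i\pi=\sum \mathfrak{u}_R^{(j_1)}\cdots\mathfrak{u}_R^{(j_i)}\pi$, and the filtration $\pi\supset\mathfrak{u}\pi\supset\mathfrak{u}^2\pi\supset\cdots$ has successive quotients $\mathfrak{u}^i\pi/\mathfrak{u}^{i+1}\pi$ on which $S$ acts through characters of the form $(\text{central character on }\pi/\mathfrak{u}\pi)\cdot\prod\mu_{j_t}$. Hence every central exponent of $\pi/\mathfrak{u}^i\pi$ is of the form $\chi_0\cdot\mu_{j_1}\cdots\mu_{j_{i-1}}$, where $\chi_0$ runs over the (finitely many, by Lemma~\ref{lem:adm} with $i=1$) central exponents of $\pi/\mathfrak{u}\pi=J_{U_R}(\pi)$.

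Next I would argue that the set $\{\chi_0\cdot\mu_{j_1}\cdots\mu_{j_m}:m\geq0,\ 1\leq j_t\leq r,\ \chi_0\in\Xi_0\}$, where $\Xi_0$ is the finite set of base exponents, is discrete in the relevant space of characters. Writing a character of $S\cong(F^*)^d$ by its vector of exponents, this set is contained in $\Xi_0\cdot\{\text{monomials in }\mu_1,\dots,\mu_r\}$, which is the image under a fixed finite translation of a finitely generated submonoid of the free abelian group $X^*(S)$; its image in the appropriate real vector space (or, non-archimedeanly, the set of values of $|\cdot|$ evaluated on it) is a translate of a finitely generated monoid, hence discrete — the non-archimedean version being the statement that finitely many values of $q^{-s}$-type quantities generate a discrete set. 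Over archimedean fields one uses instead that the real parts of these exponents lie in a finitely generated submonoid of a lattice in $\R^d$, which is discrete. Combining this with the fact that there are only finitely many possible ``finite parts'' of central characters of any single $\pi/\mathfrak{u}^i\pi$ (each being an admissible finite length $M_R$-representation by Lemma~\ref{lem:adm}), and that crossing these with a discrete set of unramified twists stays discrete, gives the claim.

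The main obstacle I anticipate is the archimedean bookkeeping: over $\R$ or $\C$ a ``central character'' is a continuous character of $C_{M_R}$, which need not be unitary, and the natural invariant to track is the real part of its differential (the central exponent in the sense of Casselman--Wallach theory). I would need to verify carefully that the generalized Jacquet module $\pi/\mathfrak{u}^i\pi$ indeed has its central exponents confined to the translated monoid described above — this amounts to an analysis of the associated graded of the $\mathfrak{u}_R$-adic filtration as a $C_{M_R}$-module, together with the finiteness input from Lemma~\ref{lem:adm}, and care that "exponent" is taken consistently (generalized eigenvalues of the $S$-action, not just the semisimple part). The non-archimedean case is comparatively routine: there the filtration by $\mathfrak{u}^i\pi$ is by $M_R$-subrepresentations with the stated weight shifts, and discreteness of $\{q^{-\langle\lambda,\text{monomial}\rangle}\}$ is immediate.
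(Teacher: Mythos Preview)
Your approach is the same as the paper's: filter by the $\mathfrak{u}_R$-adic filtration, note that each graded piece $\mathfrak{u}^j\pi/\mathfrak{u}^{j+1}\pi$ is a quotient of $\mathfrak{u}^{\otimes j}\otimes(\pi/\mathfrak{u}\pi)$, and conclude that all central exponents lie in finitely many translates of the lattice generated by the weights of $C_{M_R}$ on $\mathfrak{u}_R$.

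There is one real gap, and it is exactly the obstacle you flag at the end without resolving. You write $\pi/\mathfrak{u}\pi=J_{U_R}(\pi)$ and invoke Lemma~\ref{lem:adm} to get finitely many base exponents $\chi_0$. But over archimedean fields $J_{U_R}(\pi)=\pi/\overline{\mathfrak{u}\pi}$, and Lemma~\ref{lem:adm} only asserts that $\pi/\overline{\mathfrak{u}^i\pi}$ is admissible of finite length; it says nothing about $\pi/\mathfrak{u}\pi$, which need not even be Hausdorff. Without a finiteness statement for the exponents of $\pi/\mathfrak{u}\pi$ itself, the filtration argument on $\pi$ does not start.

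The paper fixes this by passing to the Harish-Chandra module $V\subset\pi$ of $K$-finite vectors. There the situation is purely algebraic: Casselman's results \cite[Proposition~5.1 and Lemma~5.3]{Cass} give finitely many central exponents for $V/\mathfrak{u}V$, the filtration argument runs on $V/\mathfrak{u}^iV$ with no closures in sight, and the exponents land in a finite union of lattices. Finally, density of $V$ in $\pi$ \cite[Proposition~2.2]{Cass} transfers the conclusion back: if a polynomial $p(X)$ in a central Lie algebra element kills $V/\mathfrak{u}^iV$, it kills $\pi/\mathfrak{u}^i\pi$ as well. This last step is what replaces your unproven identification, and it is the only substantive addition your argument needs.
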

\begin{proof}
Let $V$ denote the Harish-Chandra module of $\pi$, i.e., the space of $K$-finite vectors, where $K\subset \mathcal{G}_l$ is a maximal compact subgroup. By \cite[Proposition 2.2]{Cass}, $V$ is dense in $\pi$, and by \cite[Proposition  5.1 and Lemma 5.3]{Cass},  $V/\mathfrak{u} V$ has finitely many central exponents. In other words, for any $X$ in the Lie algebra of the center of  $\mathcal{G}_l$ there exists a polynomial $p$ such that $p(X)$ acts by zero on $V/\mathfrak{u} V$.

Now, $V/\mathfrak{u}^iV$ is filtered by the modules $\mathfrak{u}^jV/\mathfrak{u}^{j+1}V$, $0\leq j<i$, and each of these is a quotient of $\mathfrak{u}^j\otimes V/\mathfrak{u}V$.
When $i$ varies, the set of central exponents of
$V/\mathfrak{u}^iV$ is contained in the set of central exponents of $\mathfrak{u}^j\otimes V/\mathfrak{u}V$, $j\geq0$.
Regarding $\mathfrak{u}^j$, its central exponents can be computed using the adjoint action, and when $j$ varies they belong in a lattice. Since
$V/\mathfrak{u}V$ admits only finitely many central exponents, the central exponents of $V/\mathfrak{u}^iV$ for all $i$ lie in a finite union of lattices.

Finally, note that for any $i$, the set of central exponents of $\pi/\mathfrak{u}^i\pi$ lies in the set of central exponents of $V/\mathfrak{u}^iV$. Indeed, if $p(X)$ acts by zero on $V/\mathfrak{u}^iV$ then it acts by zero on $\pi/\mathfrak{u}^i\pi$, since $V$ is dense in $\pi$.
\end{proof}
\begin{remark}
In particular, the set of central exponents of the $i$-th generalized Jacquet modules of $\pi$, where $i$ varies over the positive integers, belongs in a discrete set.
\end{remark}
Let $\psi$ be a nontrivial additive character of $F$. For $v\in V_{(c^l)}$, write $v=(v_{i,j})_{1\leq i,j\leq l}$ with $v_{i,j}\in\Mat_c$. Denote
\begin{align*}
\psi_{l}(v)=\psi(\sum_{i=1}^{l-1}\tr(v_{i,i+1})).
\end{align*}

For a representation $\pi$ of $\GSpin_l$ which admits a central character, let $\chi_{\pi}$ be the restriction of the central character of $\pi$ to $C_{\GSpin_l}^{\circ}$.

\subsection{Distribution vanishing theorem}\label{sec:KV}

Let a real algebraic group $C$ act on a real algebraic manifold $X$. Let $E$ be a  smooth representation of $C$ in a Fr\'{e}chet space. Assume
the actions of $C$ on $X$ and on $E$ extend to a Lie group $A$, which contains $C$ as a closed normal subgroup.

Let $Z\subset X$ be a closed subset which is a union of finitely-many locally closed $A$-orbits.
For any $\nu \in\Z_{>0}$ and $z\in Z$ let $\Lambda_z^{\nu}$ be the symmetric $\nu$-th power of the conormal space at $z$ to the orbit $Cz$ in $X$. Let $C_z$ denote the stabilizer of $z$ in $C$, and $\delta$ be the ratio of modular functions of $C$ and $C_z$.

Denote the space of $E$-valued distributions on $X$, i.e., functionals on the space of compactly supported smooth $E$-valued functions on $X$, by $\cD'(X,E)$, and let $\cD'_{Z}(X,E)\subset \cD'(X,E)$ denote the subspace of distributions supported on $Z$. For a smooth character $\chi$ of $Z$,
let $\cD'_{Z}(X,E)^{C,\chi}\subset \cD'_{Z}(X,E)$ be the subspace of $(C,\chi)$-equivariant distributions.

The following theorem follows from Theorem~\ref{theorem:convenient vanishing} in the appendix:
\begin{theorem}\label{thm:AG}
Assume that for any $z\in Z$, the set $\{\chi^a|_{C_z}:a\in A\}$ is a union of finitely many locally closed orbits, under
the action of the stabilizer $A_z$ of $z$ in $A$. Suppose also that for any $z\in Z$ and any $\nu\geq 0$,
\begin{align}\label{=KV}
((E\otimes \Lambda_{\nu}\otimes \delta)^*)^{C_z,\chi}=0.
\end{align}
Then $\cD'_{Z}(X,E)^{C,\chi}=0$.
\end{theorem}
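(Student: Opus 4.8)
The plan is to deduce this from the appendix's Theorem~\ref{theorem:convenient vanishing} by a standard Bruhat-type devissage, so that the only real issue becomes matching hypotheses; I sketch the three moves. First I would stratify $Z$. Since it is a finite union of locally closed $A$-orbits, I can order them into a chain $\emptyset=Z_0\subsetneq Z_1\subsetneq\cdots\subsetneq Z_n=Z$ with each $Z_i$ closed in $X$ and $O_i:=Z_i\setminus Z_{i-1}$ a single $A$-orbit, closed in the open set $X_i:=X\setminus Z_{i-1}$. Restriction of distributions yields the left-exact sequence
\[
0\to\cD'_{Z_{i-1}}(X,E)\to\cD'_{Z_i}(X,E)\to\cD'_{O_i}(X_i,E),
\]
and since passing to the $(C,\chi)$-equivariant part is left exact, an induction on $n$ reduces the statement to the case of a single $A$-orbit $Z=O$, closed in $X$. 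All hypotheses are inherited, being quantified over all $z\in Z$, and the conormal, stabilizer and modular data at a point of $O_i$ are insensitive to shrinking $X$ to $X_i$.

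Next, on a single closed $A$-orbit $O=Az_0$, I would analyse distributions by their transverse behaviour along the $C$-orbits inside $O$. The $C$-orbits need not be finite in number --- this is the whole reason the appendix result is needed --- but heuristically a distribution supported on $O$ has a transverse Taylor expansion along $Cz$ whose order-$\nu$ symbol is a distribution section, on the relevant $C$-orbit, of $E\otimes\Lambda_\nu\otimes\delta$, the twist $\delta$ arising from Frobenius reciprocity on $C/C_z$. The hypothesis that $\{\chi^a|_{C_z}:a\in A\}$ is a union of finitely many locally closed $A_z$-orbits ensures that, as one moves over $O$ under the $A$-action, only finitely many ``types'' of $C_z$-equivariant symbol can occur, and \eqref{=KV} says each of them vanishes. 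Theorem~\ref{theorem:convenient vanishing} packages exactly this: applied with the manifold $X$, the closed normal subgroup $C\triangleleft A$, the coefficient space $E$ and the character $\chi$, its hypotheses are the two displayed conditions of the theorem and its stalk-wise input is \eqref{=KV}, so I would simply invoke it.

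The hard part will be none of the above bookkeeping but the functional analysis hidden in the appendix. Over the non-archimedean counterpart of this setup the transverse filtration is locally finite (Bernstein--Zelevinsky localization, \cite[\S1]{BZ1}) and everything is elementary; here $X$ is a real manifold, the transverse filtration along the $C$-orbits is infinite, and the substance is to show that no $(C,\chi)$-equivariant distribution escapes detection by all of its finite-order transverse symbols --- the completeness and closedness issues that force one past naive Bruhat theory. This is precisely the strengthening of Kolk--Varadarajan \cite{KV} carried out in the appendix via transverse symbols, and I expect it to be the only genuinely delicate point: once it is granted in the form of Theorem~\ref{theorem:convenient vanishing}, the stratification above and the symbol analysis assemble immediately to give $\cD'_{Z}(X,E)^{C,\chi}=0$.
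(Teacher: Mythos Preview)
Your proposal is correct and matches the paper's approach: the paper simply states that Theorem~\ref{thm:AG} follows from Theorem~\ref{theorem:convenient vanishing}, and the remark after that theorem's proof spells out the substitution (take $\cE$ to be the constant bundle with fiber $E$, and $L$ the trivial line bundle with $C$-action through $\chi$). Your preliminary d\'evissage to a single $A$-orbit is harmless but redundant, since Case~3 of the proof of Theorem~\ref{theorem:convenient vanishing} already performs exactly this induction on the number of $G$-orbits; likewise your transverse-symbol heuristic is slightly off (in the appendix the symbols are taken transverse to the $G$-orbit, not to the $C$-orbits), but since you ultimately just invoke the appendix theorem this does not affect the argument.
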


\begin{remark}
If $\chi$ is trivial or $A=C$, the theorem already follows from \cite[Theorem 3.15, Cases (i,ii)]{KV}. Note that in both cases $\chi^a=\chi$ for any $a\in A$.
\end{remark}

\begin{remark}
If $A,X$ and the action of $A$ on $X$ are semi-algebraic, the $A$-orbits in $X$ are automatically locally closed. If in addition $C$ is semi-algebraic and $C_z$ is unipotent, the condition \eqref{=KV} is equivalent to $(E^*)^{C_z,\chi}=0$, independent of $\nu$ (see \cite{Sun}).
\end{remark}
In order to check the conditions of the theorem we will need the following lemma.

\begin{lemma}\label{lem:Uhat}
Let $H$ be a real reductive group and $Q<H$ be a parabolic subgroup, with a unipotent radical $U=U_Q$. The set $\widehat{U}$ (the unitary characters of $U$) is a finite union of locally closed $Q$-orbits.
\end{lemma}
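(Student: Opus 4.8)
The plan is to pass from $\widehat U$ to the dual of the abelianisation of $\mathfrak u=\mathrm{Lie}(U)$, to identify the latter with a single homogeneous component of $\mathfrak h=\mathrm{Lie}(H)$ for the grading defining $Q$, and then to invoke the theory of graded reductive Lie algebras.

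\emph{Linearisation.} Since $U$ is unipotent, $[U,U]$ is a closed normal subgroup and $U^{\mathrm{ab}}:=U/[U,U]$ is a vector group with Lie algebra $\mathfrak u/[\mathfrak u,\mathfrak u]$; via $\exp$ it is $Q$-equivariantly isomorphic to $\mathfrak u/[\mathfrak u,\mathfrak u]$ with the (linear, algebraic) adjoint action. As every unitary character of $U$ factors through $U^{\mathrm{ab}}$, Pontryagin duality gives a $Q$-equivariant identification $\widehat U\cong(\mathfrak u/[\mathfrak u,\mathfrak u])^{*}$, the real dual, on which $Q$ acts by the representation contragredient to $\mathrm{Ad}$ on $\mathfrak u/[\mathfrak u,\mathfrak u]$. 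Conjugation by $U$ is trivial on $U^{\mathrm{ab}}$, so this action factors through $Q/U\cong M:=M_Q$, and the $Q$-orbits coincide with the $M$-orbits. It therefore suffices to prove that $M$ acts on $(\mathfrak u/[\mathfrak u,\mathfrak u])^{*}$ with finitely many orbits, these being automatically locally closed as the action is algebraic, hence semi-algebraic (cf.\ the remark after Theorem~\ref{thm:AG}).

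\emph{Identification of the $M$-module.} Let $\lambda$ be a cocharacter attached to $Q$, with associated grading $\mathfrak h=\bigoplus_{d\in\Z}\mathfrak h_{d}$, so $\mathfrak m=\mathfrak h_{0}$ and $\mathfrak u=\bigoplus_{d\ge1}\mathfrak h_{d}$. Passing to the algebraic closure, $\mathfrak h_{-1}\oplus\mathfrak h_{0}\oplus\mathfrak h_{1}$ contains a Cartan subalgebra together with all simple root vectors, hence generates $\mathfrak h$ as a Lie algebra; a short induction using the Jacobi identity then gives $\mathfrak h_{d}=[\mathfrak h_{1},\mathfrak h_{d-1}]$ for every $d\ge1$, so $[\mathfrak u,\mathfrak u]=\bigoplus_{d\ge2}\mathfrak h_{d}$ and $\mathfrak u/[\mathfrak u,\mathfrak u]\cong\mathfrak h_{1}$ as $M$-modules. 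Dualising with the Killing form, which pairs $\mathfrak h_{1}$ with $\mathfrak h_{-1}$ nondegenerately, we obtain $\widehat U\cong\mathfrak h_{-1}$ as $M$-varieties; in particular $\widehat U$ is realised $M$-equivariantly as a linear subspace of $\mathfrak h$ consisting of nilpotent elements.

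\emph{Finiteness.} It remains to show that $M$ has finitely many orbits on $\mathfrak h_{-1}$. Over the algebraic closure this is a case of Vinberg's theory of $\Z$-graded reductive Lie algebras: every element of $\mathfrak h_{-1}$ is nilpotent in $\mathfrak h$, there are only finitely many nilpotent $H$-orbits in $\mathfrak h$, and each such orbit meets $\mathfrak h_{-1}$ in finitely many $M$-orbits (equivalently, via the theory of $\mathfrak{sl}_2$-triples and associated cocharacters for nilpotent elements). To descend to the real group I would use the standard fact that a linear algebraic group over $\R$ acting on a real algebraic variety with finitely many geometric orbits has finitely many orbits on the real points -- a consequence of the finiteness of Galois cohomology over $\R$ -- together with the finiteness of $Q/Q^{\circ}$. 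I expect the only genuinely non-formal ingredient to be this last finiteness statement on $\mathfrak h_{-1}$; the linearisation and the isomorphism $\mathfrak u/[\mathfrak u,\mathfrak u]\cong\mathfrak h_{1}$ are routine structure theory.
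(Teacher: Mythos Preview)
Your proof is correct and follows the same overall strategy as the paper: both identify $\widehat U$ with the first graded piece $\mathfrak h_1$ (equivalently $\mathfrak h_{-1}$ via the Killing form) of the $\Z$-grading attached to $Q$, and then argue that the Levi has finitely many orbits there, these being locally closed by algebraicity.

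The only real difference is in how finiteness of orbits on $\mathfrak h_{\pm 1}$ is obtained. You outline Vinberg's $\theta$-group theory over $\C$ (every element of $\mathfrak h_{-1}$ is nilpotent; finiteness of nilpotent orbits via $\mathfrak{sl}_2$-triples), followed by a Galois-cohomology descent to real points. The paper instead cites a single result that already works over $\R$: Richardson's finiteness theorem for the first internal Chevalley module \cite[Theorem~E$'$]{Ric}, which states directly that a parabolic subgroup of a reductive group has finitely many orbits on $\mathfrak u_1$. Your route is perfectly valid and makes the structural content explicit, but the paper's citation is shorter and avoids the descent step; if you want to streamline, Richardson's theorem is the reference to invoke.
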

\begin{proof}
Let $\mathfrak{u}$ denote the Lie algebra of $U$. There exists a
hyperbolic semi-simple element $S\in H$ such that $\mathfrak{u}$ is the sum of positive eigenspaces of the adjoint action $\mathrm{ad}(S)$. The eigenspace $\mathfrak{u}_1$ corresponding to the smallest positive eigenvalue of $\mathrm{ad}(S)$ is called the first internal Chevalley module of $Q$. Clearly, $\mathfrak{u}_1$ projects onto (and in fact identifies with) the space of characters of $\mathfrak{u}$, which in turn identifies with $\widehat{U}$ by multiplying by $i$ and exponentiation. By \cite[Theorem E']{Ric}, $Q$ has finitely many orbits on $\mathfrak{u}_1$, and each orbit is locally closed since the action is algebraic.
\end{proof}

\subsection{Representations of type $(k,c)$}\label{kc representations}
Let $k$ and $c$ be positive integers.
For a partition $\sigma$ of $kc$, let $V(\sigma)<N_{\GL_{kc}}$ denote the corresponding unipotent subgroup, and
$\widehat{V}(\sigma)_{\mathrm{gen}}$ denote the set of generic characters. If $\sigma'$ is another partition of $kc$, write $\sigma'\succsim\sigma$ if $\sigma'$ is greater than or non-comparable with $\sigma$, with respect to the natural partial ordering on
partitions. See \cite{G2}, \cite[\S~5]{CM} and \cite{Cr} for details on these notions. For convenience, we provide the definition of $V(\sigma)$. Identify $\sigma$ with an $l$-tuple of integers $(a_1,\ldots,a_l)$ such that $a_1 \geq\ldots\geq a_l>0$. Let $p_{\sigma}$ be the $kc$-tuple of integers obtained by arranging the multi-set $\{a_i-2j+1:1\leq i \leq l,\,1\leq j\leq a_i\}$ in decreasing order. For any $x\in F^*$, put $x^{p_{\sigma}}=\diag(x^{p_{\sigma}(1)},\ldots,x^{p_{\sigma}(kc)})\in T_{\GL_{kc}}$. The one-parameter subgroup
$\{x^{p_{\sigma}}:x\in F^*\}$ acts on the Lie algebra of $N_{\GL_{kc}}$ by conjugation, and $V(\sigma)$ is the subgroup generated by the weight subspaces of weight at least $2$.

For the orbit $(k^c)$, $V((k^c))=V_{(c^k)}$, the group $M_{(c^k)}$ acts transitively on the set
$\widehat{V}((k^c))_{\mathrm{gen}}$, and $\psi_k\in \widehat{V}((k^c))_{\mathrm{gen}}$. The stabilizer of
$\psi_k$ in $M_{(c^k)}$ is then the diagonal embedding $\GL_c^{\triangle}$ of $\GL_c$ in $M_{(c^k)}$.

Let $\rho$ be a representation of $\GL_{kc}$. We say that $\rho$ is a $(k,c)$ representation if
$\Hom_{V(\sigma)}(\rho,\psi')=0$ for all $\sigma\succsim(k^c)$ and $\psi'\in\widehat{V}(\sigma)_{\mathrm{gen}}$, and
$\dim\Hom_{V_{(c^k)}}(\rho,\psi_{k})=1$.

We briefly recall the definition of the wave-front set (see e.g., \cite[\S~4.1]{GourevitchSahi2019} for some more details).
When $\rho$ is admissible of finite length, its character defines a distribution on a neighborhood of $0$
in the Lie algebra of $\GL_{kc}$. This distribution (in the non-archimedean case) or the leading term of its asymptotic expansion near $0$ (archimedean case) is a combination of Fourier transforms of Haar measures of nilpotent coadjoint orbits (\cite{Howe1974}, \cite[p.~180]{Harish-Chandra1977}, \cite[Theorems~1.1 and 4.1]{BV80}). For a nilpotent orbit $\mathcal{O}$, let
$c_{\mathcal{O}}$ denote its coefficient in this expansion (for a suitable normalization of the measures).
The wave-front set $\mathrm{WF}(\rho)$ of $\rho$ is defined to be the set of orbits $\mathcal{O}$ such that
$c_{\mathcal{O}}\ne0$ and for each orbit $\mathcal{O}'$ containing $\mathcal{O}$ in its closure, $c_{\mathcal{O}'}=0$.

In this case, an equivalent definition of a $(k,c)$ representation can be given in terms of $\mathrm{WF}(\rho)$:
Now $\rho$ is $(k,c)$ if $(k^c)$ is the unique maximal orbit in $\mathrm{WF}(\rho)$ and
the dimension of the space of degenerate Whittaker functionals on $\rho$ with respect to $V_{(c^k)}$ and $\psi_{k}$ is $1$ (see \cite[Theorem~E]{GGS}).

For $c=1$, a representation is $(k,1)$ if and only if it affords a unique Whittaker model. On the other end,
a representation is $(1,c)$ if and only if $\dim\Hom_{V_{(c)}}(\rho,1)=1$, equivalently $\rho$ is a character
($V_{(c)}$ is the trivial group).

For a $(k,c)$ representation $\rho$, $\dim J_{V_{(c^k)},\psi_k}(\rho)^*=1$, hence $\dim J_{V_{(c^k)},\psi_k}(\rho)=1$ so that $\SL_c^{\triangle}$ acts trivially on $J_{V_{(c^k)},\psi_k}(\rho)$ and
$\GL_c^{\triangle}$ acts on $J_{V_{(c^k)},\psi_k}(\rho)$ by a character.

We recall the map $\rho_c$ defined (implicitly) in \cite[\S~2.2]{CFK} from irreducible generic representations of $\GL_k$ to admissible
finite length $(k,c)$ representations of $\GL_{kc}$. For an irreducible tempered representation $\tau$ of $\GL_k$, $\rho_c(\tau)$ is the generalized Speh representation, i.e., the unique irreducible quotient of
$\Ind_{P_{(k^c)}}^{\GL_{kc}}((\tau\otimes\ldots\otimes\tau)\delta_{P_{(k^c)}}^{1/(2k)})$ (see \cite{Jac4,MW4}). Then if $\tau=\Ind_{P_{\beta}}^{\GL_k}(\otimes_{i=1}^d|\det|^{a_i}\tau_i)$ where $\beta$ is a composition of $d$ parts of $k$, $a_1>\ldots>a_d$ and each $\tau_i$ is tempered, $\rho_c(\tau)=\Ind_{P_{\beta c}}^{\GL_{kc}}(\otimes_{i=1}^d|\det|^{a_i}\rho_c(\tau_i))$. By \cite[Theorem~5]{CFK} the representation $\rho_c(\tau)$ is $(k,c)$. The definition of $\rho_c(\tau)$ was also extended to unramified principal series
$\Ind_{B_{\GL_k}}^{\GL_k}(\otimes_{i=1}^k|\det|^{a_i}\tau_i)$, where $\tau_i$ are unramified unitary quasi-characters of $F^*$ and $a_1\geq\ldots\geq a_k$, again by letting $\rho_c(\tau)=\Ind_{P_{(c^k)}}^{\GL_{kc}}(\otimes_{i=1}^k|\det|^{a_i}\rho_c(\tau_i))$ (note that
$\rho_c(\tau_i)=\tau\circ\det_{\GL_c}$). While $\rho_c(\tau)$ might be reducible in the general case, it is still admissible, of finite length and admits a central character.
Also note that (over any local field) $\GL_c^{\triangle}$ acts on
$J_{V_{(c^k)},\psi_k}(\rho_c(\tau))$ by $g\mapsto\tau((\det g)I_k)$ (\cite[Lemma~14]{CFK}).

We mention that over non-archimedean fields, certain structural properties of irreducible $(k,c)$ representations
follow from \cite[\S~II.2]{MW3}. For principal series representations, irreducible or not, over any local field, a representation is $(k,c)$ if and only if it takes the form $\Ind_{P_{(c^k)}}^{\GL_{kc}}(\otimes_{i=1}^k\chi_i\det_{\GL_c})$ for quasi-characters $\chi_i$ of $F^*$. This follows from
\cite{AGS2015a,AGS2015b,GGS} (their focus was archimedean; the non-archimedean case essentially follows from \cite{BZ1,MW3}).
\subsection{Doubling setup}\label{Doubling setup}
We define the basic setup for the doubling method: the groups $G$ and $H$, the image of $G\times G$ in $H$, and the definition of the local integral. The precise details depend on $G$.

Let $c,k\geq1$ be integers, $G=\mathcal{G}_c$ and $H=\mathcal{G}_{2kc}$ (if $G=\Sp_c$, $c$ must be even). Let $n=\lfloor c/2\rfloor$ if $G\ne\GL_c$, otherwise
$n=c$. Also set $\epsilon_0=-1$ for $G=\Sp_{c}$ and $\epsilon_0=1$ otherwise, and if $G=\SO_c,\GSpin_c$ and $c$ is odd, define
$(\epsilon_1,\epsilon_2)=(1,1/2)$ if $k$ is even and $(\epsilon_1,\epsilon_2)=(1/2,1)$ if $k$ is odd.

Recall $B_H=T_H\ltimes N_H$ is our fixed Borel subgroup in $H$ (see \S~\ref{the groups}).
Set $H_0=\mathcal{G}_{2c}$. Let $Q=M_Q\ltimes U_Q$ be the standard parabolic subgroup of $H$ such that its Levi part $M_Q$ is isomorphic to $\GL_c\times\ldots\times\GL_c\times H_0$ if $H\ne\GL_{2kc}$,
otherwise $Q=P_{(c^{k-1},2c,c^{k-1})}$. Denote $U=U_Q$. We construct the following character $\psi_U$ of $U$.

For $k>1$, denote the middle $4c\times 4c$ block of an element in $U$ by
\begin{align}\label{matrix:middle 4c block of u}
\left(\begin{smallmatrix}I_c&u&v\\&I_{2c}&u'\\&&I_c\end{smallmatrix}\right).
\end{align}
Let $u^{1,1}$ be the top left $n\times n$ block of $u$, and if $H\ne\GL_{2kc}$, denote the bottom right $n\times n$ block of $u$ by $u^{2,2}$.
For $H=\GL_{2kc}$, $u^{2,2}$ is defined to be the top $c\times c$ block of $u'$. If $H=\SO_{2kc},\GSpin_{2kc}$ and $c$ is odd, denote the middle two coordinates of row $n+1$ of $u$ by $(u^3,u^4)\in\Mat_{1\times2}$.

If $H\ne\GL_{2kc}$ and $k>1$, the character $\psi_U$ restricts to $\psi_{k-1}$ on the group $V_{(c^{k-1})}$, identified with a subgroup of $U$ via the embedding $v\mapsto\diag(v,I_{2c},v^*)\in U$. For $H=\GL_{2kc}$ and $k>1$, $\psi_U$ restricts to $\psi_{k-1}^{-1}$ on each of the two copies of $V_{(c^{k-1})}$, embedded in $U$ via $(v_1,v_2)\mapsto\diag(v_1,I_{2c},v_2)$ ($v_1,v_2\in V_{(c^{k-1})}$).
The character $\psi_U$ is given on \eqref{matrix:middle 4c block of u} by
\begin{align*}
\begin{cases}
\psi(\tr(-u^{1,1}+u^{2,2}))&H=\GL_{2kc},\\
\psi(\tr(u^{1,1}+u^{2,2}))&H=\Sp_{2kc},\SO_{2kc},\GSpin_{2kc},\text{ even $c$},\\
\psi(\tr(u^{1,1}+u^{2,2})+\epsilon_1u^3-\epsilon_2u^4)&H=\SO_{2kc},\GSpin_{2kc},\text{ odd $c$}.
\end{cases}
\end{align*}
For $k=1$, $U$ and thereby $\psi_U$ are trivial.

Now consider the case $H\ne\GSpin_{2kc}$. In this case $G\times G$ is embedded in the stabilizer of $\psi_U$ in $M_Q$.
Explicitly, assume $k\geq1$ and $g_1,g_2\in G$. If $H=\Sp_{2kc},\SO_{2kc}$ with an even $c$, write
$g_1=\left(\begin{smallmatrix}g_{1,1}&g_{1,2}\\g_{1,3}&g_{1,4}\end{smallmatrix}\right)$, $g_{1,i}\in \Mat_n$, then
\begin{align*}
(g_1,g_2)=
\diag(g_1,\ldots,g_1,\left(\begin{smallmatrix} g_{1,1}&&g_{1,2}\\ &g_2&\\ g_{1,3}&&g_{1,4}\end{smallmatrix}\right),g_1^*,\ldots,g_1^*),
\end{align*}
where $g_1^*$ appears $k-1$ times. For $H=\GL_{2kc}$,
\begin{align*}
(g_1,g_2)=
\diag(g_1,\ldots,g_1,g_1,g_2,g_1,\ldots,g_1).
\end{align*}
Here $g_1$ appears $k$ times on the left of $g_2$ and $k-1$ on the right.

For odd $c$ and $H=\SO_{2kc}$, take column vectors $e_{\pm i}$, $1\leq i\leq c$, whose Gram matrix is $J_{2c}$ (i.e., ${}^te_{i}e_{-j}=\delta_{i,j}$). Let
\begin{align*}
&b=(e_1,\ldots,e_{c-1},\epsilon_1e_{c}-\epsilon_2e_{-c},\epsilon_1e_{c}+\epsilon_2e_{-c},e_{-c+1},\ldots,e_{-1}),\\
&b_1=(e_1,\ldots,e_{n},\epsilon_1e_{c}-\epsilon_2e_{-c},e_{-n},\ldots,e_{-1}),\\
&b_2=(e_{n+1},\ldots,e_{c-1},\epsilon_1e_{c}+\epsilon_2e_{-c},e_{-c+1},\ldots,e_{-n-1}),\\
&m=\diag(I_{c-1},\left(\begin{smallmatrix}\epsilon_1&\epsilon_1\\-\epsilon_2&\epsilon_2\end{smallmatrix}\right),I_{c-1}).
\end{align*}
The Gram matrices of $(b,b_1,b_2)$ are $(J_{2c},\diag(I_n,-1,I_n)J_{c},J_{c})$. The left (resp., right) copy of $\SO_c$ acts
on the subspace spanned by $b_1$ (resp., $b_2$); the left copy is defined by
\begin{align*}
\{g_1\in\SL_c:{}^tg_1\diag(I_n,-1,I_n)J_{c}g_1=\diag(I_n,-1,I_n)J_{c}\},
\end{align*}
and the right copy is defined using the convention of \S~\ref{the groups} (the Gram matrix of $b_2$ is $J_c$).
Extend $g_i$ by letting it fix the vectors of $b_{3-i}$, then write this extension as a matrix $g_i'\in\SO_{2c}$ with
respect to $b$, $i=1,2$. The matrices ${}^mg_1'$ and ${}^mg_2'$ commute and the embedding is given by
\begin{align*}
(g_1,g_2)=\diag(g_1,\ldots,g_1,{}^mg_1'\,{}^mg_2',g_1^*,\ldots,g_1^*).
\end{align*}

The notation $(1,g)$ or $(g,1)$ is used for the embedding of one of the copies of $G$ in $H$, where $1$ denotes the identity element of $G$.

\begin{example}\label{example:odd c}
Here are a few examples for the embedding in the odd orthogonal case, adapted from \cite[Example~15]{CFK}.
Consider the standard Siegel parabolic subgroup $R$ of $G$. For $a,b\in\GL_n\cong M_R$,
\begin{align*}
(a,b)=\diag(\diag(a,1,a^*)^{\triangle'},\diag(a,b,I_2,b^*,a^*)),
\end{align*}
where $\triangle'$ denotes the diagonal embedding of $\GL_c$ in $\GL_{(k-1)c}$, and we omitted, here and below, the bottom right $(k-1)c\times(k-1)c$ block
of $(a,b)$ (it is uniquely defined by the given blocks and $H$).
The images of $(U_R,1)$ and $(1,U_R)$ take the form
\begin{align*}
&\diag(\left(\begin{smallmatrix}I_n&x&y\\&1&x'\\&&I_n\end{smallmatrix}\right)^{\triangle'},
\left(\begin{smallmatrix}I_n&&\epsilon_2x&-\epsilon_1x&&y\\&I_{n}\\&&1&&&\epsilon_1x'\\&&&1&&-\epsilon_2x'\\&&&&I_n\\&&&&&I_n\end{smallmatrix}\right)),\\
&\diag(I_{(k-1)c},
\left(\begin{smallmatrix}I_n\\&I_n&\epsilon_2x&\epsilon_1x&y\\&&1&&\epsilon_1x'\\&&&1&\epsilon_2x'\\&&&&I_n\\&&&&&I_n\end{smallmatrix}\right)),
\end{align*}
where $x'$ is uniquely determined given $x$ and $H$. We also note that
\begin{align*}
&(\left(\begin{smallmatrix}I_{n-1}\\&&&1\\&&-1\\&1\\&&&&I_{n-1}\end{smallmatrix}\right),1)=
\diag(\left(\begin{smallmatrix}I_{n-1}\\&&&1\\&&-1\\&1\\&&&&I_{n-1}\end{smallmatrix}\right)^{\triangle'},
\left(\begin{smallmatrix}I_{n-1}\\&&&&&&1\\&&I_n\\&&&&2\epsilon_1^2\\&&&2\epsilon_2^2\\&&&&&I_n\\&1\\&&&&&&&I_{n-1}\end{smallmatrix}\right)),\\
&(1,\left(\begin{smallmatrix}I_{n-1}\\&&&1\\&&-1\\&1\\&&&&I_{n-1}\end{smallmatrix}\right))=
\diag(I_{(k-1)c},
\left(\begin{smallmatrix}I_{2n-1}\\&&&&1\\&&&-2\epsilon_1^2\\&&-2\epsilon_2^2\\&1&&\\&&&&&I_{2n-1}\end{smallmatrix}\right)).
\end{align*}
\end{example}
The case of $H=\GSpin_{2kc}$ is slightly more complicated: we have an embedding of
\begin{align}\label{embedding G G in H GSpin}
\{(z,z):z\in C_G^{\circ}\}\backslash G\times G
\end{align}
in $M_Q$, in the stabilizer of $\psi_U$. Here with a minor abuse of notation $(z,z)$ is regarded as an element of $G\times G$. For details see \cite[\S~3.5]{CFK}.

We define the space of the induced representation of $H$, which is used for the construction of the integral.
First assume $H=\Sp_{2kc},\SO_{2kc}$. Let $P=M_P\ltimes U_P$ be a standard maximal parabolic subgroup of $H$ such that $M_P\cong\GL_{kc}$
and $M_P<M_{(kc,kc)}$. Let $\rho$ be a representation of $\GL_{kc}$. For a complex parameter $s$, let $V(s,\rho)$ be the space of $\Ind_{P}^{H}(|\det|^{s-1/2}\rho)$. For $H=\GSpin_{2kc}$ we take the standard parabolic subgroup $P$ obtained by removing the simple root $\alpha_{kc}$, then $M_P\cong\GL_{kc}\times\GL_1$ and note that $\GL_1$ is identified with $C_H^{\circ}$. Let $\rho$ be as above, and $\eta$ be a quasi-character of $F^*$. Then
$V(s,\rho\otimes\eta)$ is the space of the induced representation $\Ind_{P}^{H}(|\det|^{s-1/2}\rho\otimes\eta)$.
For $H=\GL_{2kc}$ we take $P=P_{(kc,kc)}$, $\rho=\rho_1\otimes\rho_2$ for two representations $\rho_1$ and $\rho_2$ of $\GL_{kc}$,
and $V(s,\rho)$ denotes the space of $\Ind_{P}^{H}(|\det|^{s-1/2}\rho_1\otimes|\det|^{-s+1/2}\rho_2)$.

Assume $H\ne\GSpin_{2kc}$.
Take $\delta_0\in H$ satisfying ${}^{\delta_0}U_P=U_P^-$ if $kc$ is even or $H=\GL_{2kc}$; otherwise take
$\delta_0'\in\Orth_{2kc}$ with ${}^{\delta_0'}U_P=U_P^-$ and let $\delta_0$ be the product of $\delta_0'$ and a representative of the transposition in $\Orth_{2kc}$ which normalizes $N_H$ (to obtain $\det\delta_0=1$). Let $\delta_1\in H_0\cap U_P$ ($H_0<M_Q$) be such that its natural identification with a matrix in $\Mat_c$ is of rank $c$, unless $H=\SO_{2kc}$ and $c$ is odd, in which case the rank is $c-1$ (this is the maximal rank). Put $\delta=\delta_0\delta_1$.
Then let $\iota$ be an involution of $G$ such that ${}^{\delta}\{(g,{}^{\iota}g):g\in G\}<M_P$.

One concrete choice of $\delta_0,\delta_1$ and $\iota$ was given in \cite{CFK}:
\begin{align*}
&\delta_0=\begin{dcases}\left(\begin{smallmatrix} &I_{kc}\\ \epsilon_0 I_{kc}\end{smallmatrix}\right)&\text{$H\ne\SO_{2kc}$ or $c=2n$},\\
\left(\begin{smallmatrix} &I_{kc}\\ I_{kc}\end{smallmatrix}\right)
\diag(I_{(k-1)c},\left(\begin{smallmatrix}I_n\\&&(-1)^{k}\\&I_n\end{smallmatrix}\right),
\left(\begin{smallmatrix}&I_n\\(-1)^{k}&&\\&&I_n\end{smallmatrix}\right),I_{(k-1)c})\jmath_{kc}&\text{$H=\SO_{2kc}$, $c=2n+1$},
\end{dcases}
\end{align*}
where $\jmath_{kc}=\diag(I_{kc-1},\left(\begin{smallmatrix} &1\\ 1\end{smallmatrix}\right)^{kc},I_{kc-1})$,
$\delta_1=\diag(I_{(k-1)c},\left(\begin{smallmatrix}I_c &A \\ &I_c\end{smallmatrix}\right),I_{(k-1)c})$ with
\begin{align*}
A=\begin{dcases}
I_c&H=\Sp_{2kc},\GL_{2kc},\\
\left(\begin{smallmatrix}-I_{n}\\&I_{n}\end{smallmatrix}\right) &\text{$H=\SO_{2kc}$, $c=2n$},\\
\left(\begin{smallmatrix}&-I_{n}\\&&I_{n}\\0\end{smallmatrix}\right) &\text{$H=\SO_{2kc}$, $c=2n+1$},\\
\end{dcases}
\end{align*}
and
\begin{align*}
\iota=\begin{dcases}
\left(\begin{smallmatrix}&I_{n}\\-\epsilon_0I_{n}\end{smallmatrix}\right)&\text{$H=\Sp_{2kc},\SO_{2kc}$, $c=2n$},\\
I_c&H=\GL_{2kc},\\
\left(\begin{smallmatrix}&&I_{n}\\&I_2\\I_{n}\end{smallmatrix}\right)&\text{$H=\SO_{2kc}$, $c=2n+1$, $k$ is odd},\\
\left(\begin{smallmatrix}&&I_{n}\\&\begin{smallmatrix}&-2\epsilon_1^2\\-2\epsilon_2^2\end{smallmatrix}\\I_{n}\end{smallmatrix}\right)&
\text{$H=\SO_{2kc}$, $c=2n+1$, $k$ is even}.
\end{dcases}
\end{align*}
Also set $U_0=U\cap {}^{\jmath_{kc}}U_P$, then $\psi_U$ is a character of $U_0$ by restriction.

For $H=\GSpin_{2kc}$, $\delta_0\in H$ is defined using the isomorphism $W(H)\cong W(\SO_{2kc})$, $\delta_1$ is the element taken for $\SO_{2kc}$, and $\iota$ satisfies the same condition as above (the concrete examples of $\iota$ extend to involutions of $\GSpin_c$ as well). The important observation for us here concerning $\GSpin_{2kc}$, is that the images of unipotent subgroups, and subgroups $\GL_l$ occurring as direct factors of standard Levi subgroups, can be read off the corresponding orthogonal cases.

For any representation $\pi$ of $G$, $\pi^{\iota}$ is the representation on the space of $\pi$,
with the action defined by $\pi^{\iota}(g)=\pi({}^{\iota}g)$. The definitions imply
$(\pi^{\iota})^{\vee}=(\pi^{\vee})^{\iota}$.

We define the (local) doubling integral.
Let $\pi$ be an irreducible admissible representation of $G$.
If $H\ne\GL_{2kc}$, let $\rho$ be an admissible finite length $(k,c)$ representation of $\GL_{kc}$ which admits a central character.
Otherwise $\rho=\rho_1\otimes\chi^{-1}\rho_2$ where $\rho_1$ and $\rho_2$ are
admissible finite length $(k,c)$ representations of $\GL_{kc}$, each admitting a central character, and such that the central character of $\rho_1$ is the inverse of the central character of $\rho_2$, and $\chi$ is a quasi-character of $F^*$.

Let $\omega$ be a matrix coefficient of $\pi^{\vee}$.
Let $f$ be a holomorphic section of $V(s,\rho)$ if $H\ne\GSpin_{2kc}$, and for $\GSpin_{2kc}$, $f$ is a holomorphic section of $V(s,\rho\otimes\chi_{\pi})$. The doubling integral for $\pi\times\rho$ is defined by
\begin{align}\label{local integral}
Z(s,\omega,f)=\int\limits_{G}\int\limits_{U_0}
\omega(g)f(s,\delta u_0(1,{}^{\iota}g))\,\psi_U(u_0)\,du_0\,dg.
\end{align}
Here if $H=\GSpin_{2kc}$, the domain of integration is $C_G^{\circ}\backslash G$ instead of $G$.

\begin{theorem}\label{theorem:basic props of doubling integrals}\cite[Propositions~17, 20, 21]{CFK}
Integral \eqref{local integral} enjoys the following properties.
\begin{enumerate}[leftmargin=*]
  \item\label{props 1}Formally, it belongs to the space
  \begin{align}\label{eq:doubling Hom}
  \Hom_{(G,G)}(J_{U,\psi_U^{-1}}(V(s,\rho\otimes\chi_{\pi})),\chi^{-k}\pi^{\vee}\otimes\pi^{\iota}).
  \end{align}
  Here $\chi_{\pi}$ and $\chi$ are omitted for the cases where they are undefined.
  \item\label{props 2}It is absolutely convergent for $\Real(s)\gg0$, independent of the data $(\omega,f)$.
  \item\label{props 3}Over non-archimedean fields there is data $(\omega,f)$, where $f$ is a polynomial section in $q^{\mp s}$, such that $Z(s,\omega,f)$ is absolutely convergent in $\C$ and equals a nonzero constant (independent of $s$). Over archimedean fields for each $s$ there is $\omega$ and a smooth section $f$ such that the integral is nonzero at $s$.
\end{enumerate}
\end{theorem}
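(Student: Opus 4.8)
The plan is to prove the three assertions separately, following the template established in \cite{PSR,LR} for $k=1$ and in \cite{CFK} in general. Assertion~\ref{props 1} is purely formal. Fix a matrix coefficient $\omega$ of $\pi^{\vee}$. The first point is that, for fixed $\omega$, the functional $f\mapsto Z(s,\omega,f)$ factors through the twisted Jacquet module $J_{U,\psi_U^{-1}}(V(s,\rho\otimes\chi_{\pi}))$. Since $G\times G$ normalizes $U$ and stabilizes $\psi_U$, right translation of $f$ by $u\in U$ can be pushed past the factor $(1,{}^{\iota}g)$ at the cost of replacing $u$ by a $U$-conjugate with the same $\psi_U$-value; a change of variables in the $U_0$-integration, using that sections are left-invariant under $U_P$ and the explicit form of $\delta=\delta_0\delta_1$ (with $\delta_0$ conjugating $U_P$ to $U_P^{-}$), then shows $Z(s,\omega,R(u)f)=\psi_U^{-1}(u)Z(s,\omega,f)$, i.e.\ $Z(s,\omega,\cdot)$ lies in $\Hom_{U}(V(s,\rho\otimes\chi_{\pi}),\psi_U^{-1})=J_{U,\psi_U^{-1}}(V(s,\rho\otimes\chi_{\pi}))^{*}$. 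For the $(G,G)$-equivariance one uses the defining property ${}^{\delta}\{(g,{}^{\iota}g):g\in G\}<M_P$: right translation of $f$ by an element $(g,{}^{\iota}g)$ becomes, after conjugating by $\delta$, a left translation by an element of $M_P$, contributing a factor through $|\det|^{s-1/2}\delta_P^{1/2}\rho$. Combining this with the translation behaviour of $\omega$ under $G\times G$ and a substitution absorbing the remaining copy of $G$ in the outer integral yields precisely the equivariance recorded in \eqref{eq:doubling Hom}. The twist $\chi^{-k}$ enters because in the $\GL$ case $\rho=\rho_1\otimes\chi^{-1}\rho_2$ and $\chi^{-1}$ gets evaluated on a $k$-fold diagonal copy of $G$ inside $M_P$; for $H=\GSpin_{2kc}$ the character $\chi_{\pi}$ appears because the embedded group is $\{(z,z):z\in C_G^{\circ}\}\backslash G\times G$ and $\rho\otimes\chi_{\pi}$ is arranged so that the central copy of $C_G^{\circ}$ acts trivially.

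For assertion~\ref{props 2} I would majorize the integrand. Writing $G$ (resp.\ $C_G^{\circ}\backslash G$) via the Cartan decomposition and $U_0$ as a $p$-adic or Euclidean space, and putting $\delta u_0(1,{}^{\iota}g)$ in Iwasawa form relative to $P$, the absolute value $|f(s,\delta u_0(1,{}^{\iota}g))|$ is bounded by a product of $|\det|^{\Real(s)-1/2}$ of the $M_P$-component, the modulus character, a Harish--Chandra $\Xi$-type bound for the admissible finite length representation $\rho$ (resp.\ the corresponding seminorm estimates over archimedean fields, as in \cite{Wal88}), and a factor in $u_0$ that is rapidly decaying because of $\psi_U$ and the polynomial dependence of the matrix entries. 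Pairing this against the Harish--Chandra bound for $\omega$, the resulting integral converges absolutely for $\Real(s)$ sufficiently large, uniformly for $s$ in compact sets and with at most polynomial dependence on the data; over archimedean fields this is where the Schwartz-function formalism and Wallach's estimates are used. This is the estimate of \cite[Proposition~20]{CFK}, a variant of \cite[\S3]{PSR}.

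For assertion~\ref{props 3} I would use that $P\delta$ lies in an open $(G,G)$-orbit on $P\backslash H$ — the surviving double coset of the unfolding — whose stabilizer maps isomorphically onto $\{(g,{}^{\iota}g):g\in G\}$. Over a non-archimedean field, choose $f$ supported on a small $P$-neighbourhood of $\delta$, taken from the standard lattice model so that $s\mapsto f(s,\cdot)$ is polynomial in $q^{\mp s}$ and, on the support, the factor $|\det|^{s}$ is constant, and choose $\omega$ supported near the identity of $G$; then both integrations reduce to integrations of a nonzero constant over compact sets, so $Z(s,\omega,f)$ is absolutely convergent on all of $\C$ and equals a nonzero constant independent of $s$. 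Over an archimedean field the analogous construction with a smooth section supported near $\delta$ produces, for each fixed $s$, a choice of $(\omega,f)$ with $Z(s,\omega,f)\neq0$; one cannot in general make a single section work for all $s$, hence the weaker statement there.

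I expect assertion~\ref{props 1} to be the main obstacle: the careful verification that the $U_0$-integral, together with the complementary unipotent directions conjugated by $\delta$, really produces a functional on the full twisted Jacquet module $J_{U,\psi_U^{-1}}$, and the precise bookkeeping of the normalizing twists $\chi^{-k}$ and $\chi_{\pi}$, are the delicate points; the uniform convergence estimate over archimedean fields, requiring control of the asymptotics of both $\rho$ and $\omega$, is the secondary difficulty.
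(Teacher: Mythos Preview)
Your proposal is essentially correct and follows the approach of \cite{CFK}, which is exactly what the paper relies on. Note, however, that the paper's own proof of this theorem is a two-sentence citation: it observes that Propositions~17, 20, 21 of \cite{CFK} prove these statements for $\rho=\rho_c(\tau)$ (or $\rho_c(\tau)\otimes\chi^{-1}\rho_c(\tau^{\vee})$ in the $\GL$ case), and that the arguments there go through verbatim for the more general admissible finite length $(k,c)$ representation $\rho$ considered here. You have instead sketched the content of those propositions, which is fine, but the paper's actual point is the (easy) extension from the specific $\rho_c(\tau)$ to arbitrary $(k,c)$ representations $\rho$; you do not isolate that step, and it is the only thing the present paper claims to add. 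In short: your outline reproduces the underlying \cite{CFK} arguments, while the paper simply invokes them and remarks they require no change of hypothesis on $\rho$.
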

\begin{proof}
The theorem was proved in \textit{loc. cit.}, for the representation $\rho=\rho_c(\tau)$ ($H\ne\GL_{2kc}$) or $\rho=\rho_c(\tau)\otimes\chi^{-1}\rho_c(\tau^{\vee})$ ($\rho_c(\tau)$ was defined in \S~\ref{kc representations}). However, the proofs of these statements remain valid when we take the more general representation $\rho$ as described above.
\end{proof}

Over non-archimedean fields, once we prove that \eqref{eq:doubling Hom} is at most one-dimensional outside a discrete subset of $s$, Theorem~\ref{theorem:basic props of doubling integrals} together with Bernstein's continuation principle (in \cite{Banks}) imply that for a rational section $f$, $Z(s,\omega,f)$ admits meromorphic continuation to a rational function in $q^{-s}$.
Over archimedean fields for the choice of $\rho$ in \cite{CFK}, the meromorphic continuation of the integral and continuity of this continuation in the input data were proved in \cite[\S~6.13]{CFK}.

\section{Uniqueness results}\label{Uniqueness results}
\subsection{Outline of the proof of Theorem~\ref{theorem A}}\label{outline}
Let $\pi_1$ and $\pi_2$ be admissible finite length representations of
$G$. If $H\ne\GL_{2kc}$, let $\rho$ be an admissible finite length $(k,c)$ representation of $\GL_{kc}$. For $H=\GL_{2kc}$ put
$\rho=\rho_1\otimes\rho_2$ where each $\rho_i$ is an admissible finite length $(k,c)$ representation of $\GL_{kc}$, and let $\chi_0$ be the quasi-character of $F^*$ such that the diagonal action of $\GL_c^{\triangle}$ on $J_{V_{(c^k)},\psi_k}(\rho_1)\otimes J_{V_{(c^k)},\psi_k}(\rho_2)$ is given by $g\mapsto\chi_0(\det g)$ ($g\in \GL_c$).
If $H=\GSpin_{2kc}$, assume in addition that $\chi_{\pi_1},\chi_{\pi_2}$ exist and $\chi_{\pi_1}^{-1}=\chi_{\pi_2}$, and put $\eta=\chi_{\pi_1}^{-1}$.
To preserve uniform notation the characters $\chi_0,\chi_{\pi_i}$ and $\eta$ are simply ignored in all other cases.

Let $D=U\rtimes (G,G)<H$. We will prove our main result by analyzing distributions on the orbits of the right action of $D$
on the homogeneous space $P\backslash H$. The space $P\backslash H/D$ is finite if $k=1$ (see \cite[Lemma~2.1]{PSR}) or $c=1$ (then either $n=0$ and $U=N_H$, or $G=\GL_1$ and $U$ contains all the roots of $N_H$ but one, on which $(G,G)$ acts with $2$ orbits). Otherwise it is infinite, even uncountable (e.g., for $H=\Sp_{2kc}$ and $k>2$), but contains a unique Zariski open orbit which is $P\delta D$. This follows by showing that the dimension of $P\delta D$ is equal to the dimension of $PU_P^-$. For $h,h'\in H$, write $h\sim h'$ if $PhD=Ph'D$, otherwise $h\not\sim h'$.

Regard $\psi_U\otimes \pi_1^{\vee}\otimes\pi_2^{\vee}$ as a representation of $D$. For $H=\GSpin_{2kc}$,
$(G,G)$ is a homomorphic image of $G\times G$ (see \eqref{embedding G G in H GSpin}), and the condition $\chi_{\pi_1}^{-1}=\chi_{\pi_2}$ above implies that $\pi_1^{\vee}\otimes\pi_2^{\vee}$ is a representation of $(G,G)$.

Consider the space
\begin{align}\label{eq:Hom L 0}
\Hom_{(G,G)}(J_{U,\psi_U^{-1}}(V(s,\rho\otimes\eta)),\pi_1\otimes\pi_2)\cong
\Hom_{D}(V(s,\rho\otimes\eta)\otimes\psi_U\otimes \pi_1^{\vee}\otimes\pi_2^{\vee},1),
\end{align}
which is isomorphic to
\begin{align}\label{eq:Hom L}
\Hom_{D}(\Ind_{P\times D}^{H\times D}\left((|\det|^{s-1/2}\rho\otimes\eta)\otimes(\psi_U\otimes \pi_1^{\vee}\otimes\pi_2^{\vee})\right),1).
\end{align}
Here the action of $D$ on the space of functions $\xi$ on $H\times D$ is given by $d\cdot\xi(h',d')=\xi(h'd,d'd)$; and if $H=\GL_{2kc}$, $|\det|^{s-1/2}\rho$ is short for $|\det|^{s-1/2}\rho_1\otimes|\det|^{-s+1/2}\rho_2$.

For any $h\in H$, denote $P_{h}={}^{h^{-1}}P\cap D$. We will study \eqref{eq:Hom L} by considering the following spaces of distributions
on the orbits $PhD$ (this is well defined, see below):
\begin{align}\label{space:Hom orbit}
\Hom_{D}(\ind_{P_{h}}^D\left({}^{h^{-1}}((|\det|^{s-1/2}\rho\otimes\eta)\delta_{P}^{1/2})\otimes(\psi_U\otimes \pi_1^{\vee}\otimes\pi_2^{\vee}\otimes\Lambda_{\nu})\right),1).
\end{align}
Here over non-archimedean fields $\ind$ denotes the compact non-normalized induction, while for archimedean fields $\ind$ is the Schwartz induction of \cite[\S~2]{duCloux1991} (see also \cite[\S~2.3]{GGS2}); and $\Lambda_0$ is the trivial character. If the field is non-archimedean
or $h\sim\delta$, we only have $\nu=0$. Over archimedean fields when $h\not\sim\delta$, we further have for each integer $\nu>0$, a finite dimensional algebraic representation $\Lambda_{\nu}$ which is the algebraic dual of the symmetric $\nu$-th power of the normal bundle to the double coset. Note that when $h\sim\delta$, i.e., for the open orbit $P\delta D$, the tangent space to the double coset coincides with the total tangent space, and thus the normal space is trivial.

By the Frobenius reciprocity \eqref{space:Hom orbit} is isomorphic to
\begin{align}\label{H(h)}
\mathcal{H}_{\nu}(h)=\Hom_{P_{h}}({}^{h^{-1}}(|\det|^{s-1/2}\rho\otimes\eta)\otimes(\psi_U\otimes \pi_1^{\vee}\otimes\pi_2^{\vee})\otimes\Lambda_{\nu},\theta_h).
\end{align}
Here $\theta_h(x)=\delta_{P_h}(x)\delta_D^{-1}(x)\delta_P^{-1/2}({}^hx)$ ($x\in P_h$).
We define $\mathcal{H}(h)=\mathcal{H}_0(h)$ if $F$ is non-archimedean or $h\sim\delta$, otherwise
$\mathcal{H}(h)=\oplus_{\nu}\mathcal{H}_{\nu}(h)$.

Our main result --- Theorem~\ref{theorem:uniqueness} below, is that \eqref{eq:Hom L 0} is at most one-dimensional outside a discrete subset of $s$. We will prove there is a discrete subset $\mathcal{B}\subset\C$, such that for all $s\notin\mathcal{B}$, $\mathcal{H}(h)=0$ for all $h\not\sim\delta$, and $\dim \mathcal{H}(\delta)\leq1$.

Over non-archimedean fields this already implies \eqref{eq:Hom L 0} is at most one-dimensional outside $\mathcal{B}$.
Indeed this follows from the theory of distributions on $l$-sheafs of \cite{BZ1}. In more detail,
let $\mathcal{F}$ be the $l$-sheaf of the induced representation in \eqref{eq:Hom L}. The right action of $D$ on
$P\backslash H$ is constructive, by \cite[Theorem~A]{BZ1} applied to $X(F)$ where $X$ is the algebraic $F$-variety $P\backslash H$. Each $\mathcal{H}(h)$ (see \eqref{space:Hom orbit}) is the
space of distributions on the restriction of $\mathcal{F}$ to the orbit $PhD$ (the orbits are locally closed, hence this restriction is well defined). Fix $s\notin\mathcal{B}$ and
let $\mathcal{T},\mathcal{T}'$ be nonzero distributions in \eqref{eq:Hom L}. Since $P\delta D$ is open,
by \cite[1.16]{BZ1} both $\mathcal{T}$ and $\mathcal{T}'$ restrict to distributions on $\mathcal{H}(\delta)$, which is one-dimensional, hence
there is $\alpha\in\C$ such that $\alpha\mathcal{T}|_{P\delta D}=\mathcal{T}'|_{P\delta D}$. Then
$\alpha\mathcal{T}-\mathcal{T}'$ is well defined on the quotient $l$-sheaf
$\mathcal{F}(P\delta D)\backslash\mathcal{F}$ (see \cite[1.16]{BZ1} for the definition and notation), which is an $l$-sheaf on
the complement of $P\delta D$ in $H$. Since there are no nonzero distributions on any $\mathcal{H}(h)$ for $h\not\sim\delta$, by \cite[Theorem~6.9]{BZ1} we deduce $\alpha\mathcal{T}-\mathcal{T}'$ vanishes on
$\mathcal{F}$, i.e., $\alpha\mathcal{T}=\mathcal{T}'$.

Over archimedean fields the argument also depends on the precise methods we use in order to handle
each $\mathcal{H}(h)$. We describe this below.
\subsubsection{Basic properties of $\mathcal{H}(h)$}\label{Basic properties}
In general every algebraic representation of a unipotent group is unipotent, i.e., admits a (finite) filtration such that the group acts trivially on each of its quotients. We can hence filter each $\Lambda_{\nu}$ and consider these quotients. If \eqref{H(h)} is nonzero, it is nonzero when $\Lambda_{\nu}$ is replaced by one of these quotients. Since we will prove
$\mathcal{H}_{\nu}(h)=0$ for all $\nu>0$, we can consider each of these quotients, re-denoted $\Lambda_{\nu}$ (at the cost of re-enumerating the index set of $\nu$), separately, so that we assume $\Lambda_{\nu}$ is a trivial representation of $U$ for all $\nu\geq0$.

In general if $Y<{}^hU\cap M_P$, then ${}^{h^{-1}}Y<P_h$ and
by definition any morphism in $\mathcal{H}(h)$ factors through $J_{Y,{}^{h}\psi_U^{-1}}(\rho)$. Indeed since
${}^{h^{-1}}Y<U$, for $y\in Y$ we have
\begin{align*}
{}^{h^{-1}}(|\det|^{s-1/2}\rho\otimes\eta)({}^{h^{-1}}y)=\rho(y),\qquad
(\psi_U\otimes \pi_1^{\vee}\otimes\pi_2^{\vee}\otimes\Lambda_{\nu})({}^{h^{-1}}y)=\psi_U({}^{h^{-1}}y),
\end{align*}
then if $\mathcal{T}\in\mathcal{H}_{\nu}(h)$ for some $\nu$, and $\xi_{\rho}\otimes\xi$ is a pure tensor in the space of
$\rho\otimes(\pi_1^{\vee}\otimes\pi_2^{\vee}\otimes\Lambda_{\nu})$,
\begin{align*}
\psi_U({}^{h^{-1}}y)\mathcal{T}(\rho(y)\xi_{\rho}\otimes\xi)=
\mathcal{T}(\psi_U({}^{h^{-1}}y)\rho(y)\xi_{\rho}\otimes\xi)=\mathcal{T}(\xi_{\rho}\otimes\xi).
\end{align*}
Thus
\begin{align}\label{eq:T on Jacquet}
\mathcal{T}((\rho(y)\xi_{\rho}-{}^{h}\psi_U^{-1}(y)\xi_{\rho})\otimes\xi)=0.
\end{align}
This means that $\mathcal{T}$ factors through $J_{Y,{}^{h}\psi_U^{-1}}(\rho)$, where in the archimedean case note that $\mathcal{T}$ is continuous, and because the argument is applicable to all $\nu$, we conclude that any morphism in $\mathcal{H}(h)$ factors through $J_{Y,{}^{h}\psi_U^{-1}}(\rho)$.

\subsubsection{The vanishing of $\mathcal{H}(h)$}\label{the vanishing 3 types}
One can prove the vanishing of $\mathcal{H}(h)$ using three types of arguments. First we have an incompatibility condition: assume
$h$ is such that
\begin{align}
\label{psi U nontrivial}
&\psi_U|_{U\cap {}^{h^{-1}}U_P}\ne1.
\end{align}
In this case we can take a subgroup $Y<U$ such that ${}^hY<U_P$ and $\psi_U|_Y\ne1$.
Then $Y<P_h$ and both ${}^{h^{-1}}(|\det|^{s-1/2}\rho\otimes\eta)$ and $\pi_1^{\vee}\otimes\pi_2^{\vee}\otimes\Lambda_{\nu}$ are trivial on
$Y$ (because ${}^hY<U_P$ and $Y<U$), hence the action on the left hand side in $\mathcal{H}_{\nu}(h)$ is given by
$\psi_U$ which is nontrivial by \eqref{psi U nontrivial}. However, the action on the right hand side is trivial, because it is given by a modulus character and $Y<U$. Thus
$\mathcal{H}_{\nu}(h)=0$ for all $\nu$, and $\mathcal{H}(h)=0$.

Note that while a priori \eqref{psi U nontrivial} depends on $h$, we will actually prove it only depends on the double coset
$PhQ$ (this is only important for the archimedean parts).

Second, if any morphism in $\mathcal{H}(h)$ factors through $J_{V(\sigma),\psi'}(\rho)$, where $\sigma\succsim(k^c)$ and $\psi'\in\widehat{V}(\sigma)_{\mathrm{gen}}$, then $J_{V(\sigma),\psi'}(\rho)=0$ because $\rho$ is $(k,c)$, a fortiori $\mathcal{H}(h)=0$.

Let us remark, that these two methods for proving vanishing will be applied to all but finitely many representatives. In fact, consider the Bruhat decomposition $H=\coprod_{w'}Pw'Q$ where $w'$ are representatives for Weyl elements of $H$, and let $w_0$ denote the representative of the longest reduced Weyl element. Then the orbit $Pw_0Q$ is open. The above arguments prove vanishing on $H-Pw_0Q$. The remaining orbit $Pw_0Q$ is the disjoint union of finitely many orbits $PhD$, namely $n+1$ orbits when $H\ne\GL_{2kc}$ and $(c+1)(c+2)/2$ orbits for $H=\GL_{2kc}$. In particular for $\delta$, as explained in \S~\ref{Doubling setup}, one can choose $\delta=\delta_0\delta_1$ with $\delta_0=w_0$ and $\delta_1\in N_{H_0}$, then $P\delta D \subset Pw_0Q$. The orbits in $Pw_0Q$ must be handled using the third method, which we turn to describe.

Third, assume there is a composition $\beta$ of $kc$ and a character $\psi$ of $V_{\beta}$, which may depend on $h$, such that any morphism in $\mathcal{H}(h)$ factors through $J_{V_{\beta},\psi}(\rho)$. The vanishing argument in this case will be applicable to all but a discrete subset of $s$.

We first describe the non-archimedean case. Assume there is a proper parabolic subgroup $R=M_R\ltimes U_R<G$, with $M_R$ containing $\GL_l$ as a direct factor, $l\geq1$, such that $J_{V_{\beta},\psi}(\rho)$ is a trivial representation of ${}^h(1,U_R)$. Therefore
any morphism in $\mathcal{H}(h)$ also factors through $J_{U_R}(\pi_2^{\vee})$ which is an admissible finite length representation of $M_R$
(if $\pi_2$ is supercuspidal, we immediately deduce $\mathcal{H}(h)=0$).
On each irreducible constituent of $J_{U_R}(\pi_2^{\vee})$, as a representation of $M_R$, $C_{\GL_l}<C_{M_R}$ acts by a character, and there are only finitely many such characters possible, depending only on $\pi_2$ and $U_R$ (thereby on $h$).

Also assume $J_{V_{\beta},\psi}(\rho)$ admits a finite length filtration as a representation of
${}^h(1,\GL_l)$, and on each of the (not necessarily irreducible) constituents,
${}^h(1,C_{\GL_l})$ acts by a character. Again this character belongs to a finite set, now depending only on $\rho$ and on the character $\psi$ (which depends on $h$).

If $0\ne\mathcal{T}\in\mathcal{H}(h)$, we can take constituents $\mathcal{V}$ of $J_{V_{\beta},\psi}(\rho)$ and $\mathcal{V}'$ of $J_{U_R}(\pi_2^{\vee})$, such that $\mathcal{T}$ is well defined and nonzero on $\mathcal{V}\otimes\pi_1^{\vee}\otimes\mathcal{V}'$. We then obtain a relation
\begin{align}\label{eq:relation for T with s}
\mu(a)|a|^{bs}\mathcal{T}(\xi)=\mathcal{T}(\left({}^{h^{-1}}(|\det|^{s-1/2}\rho\otimes\eta)\otimes(\psi_U\otimes \pi_1^{\vee}\otimes\pi_2^{\vee})\right)(1,a)\xi)=\theta_h((1,a))\mathcal{T}(\xi),
\end{align}
where $\mu$ is a quasi-character of $F^*$ which belongs to a finite set depending only on $(\pi_2,\eta,\rho,h)$, and $b$ is a constant which depends only on $h$, and we assume $b\ne0$.
We deduce
$\mu(a)|a|^{bs}=\theta_h((1,a))$ for all $a\in F^*$. This excludes at most a discrete subset of $s$, and if we apply this argument to only finitely many representatives $h$, the set of these values of $s$ can be taken to be our $\mathcal{B}$.

Now assume the field is archimedean. Let $\mathfrak{u}_R$ denote the Lie algebra of $U_R$. Assume
${}^h(1,\mathfrak{u}_R)$ acts locally nilpotently on $J_{V_{\beta},\psi}(\rho)^*$. Then there is a countable increasing filtration of
(closed subspaces) $\mathcal{W}_i$ of $J_{V_{\beta},\psi}(\rho)^*$ by the order of nilpotency. The orthogonal complements $\mathcal{V}_{i}=(\mathcal{W}_i)_{\bot}\subset J_{V_{\beta},\psi}(\rho)$ form a decreasing filtration of $J_{V_{\beta},\psi}(\rho)$, exhausting in the sense that $\bigcap_i \mathcal{V}_i=0$. For each $i$,
$J_{V_{\beta},\psi}(\rho)/\mathcal{V}_{i}$ is a quotient of a generalized Jacquet module of $\rho$ with respect to
${}^h(1,\mathfrak{u}_R)$. Since any morphism in $\mathcal{H}_{\nu}(h)$ lies in some $\mathcal{W}_i$, it is annihilated by $\mathfrak{u}_R^i$. Thus it factors through a generalized Jacquet module $\pi_2^{\vee}/\overline{\mathfrak{u}_R^i\pi_2^{\vee}}$. The latter is
an admissible finite length representation of $M_R$ by Lemma~\ref{lem:adm}, in particular admits a finite filtration such that
$C_{\GL_l}$ acts by a character on each constituent.

Assume in addition, that there exists a parabolic subgroup of $\GL_{kc}$, whose Levi part contains
${}^h(1,\GL_l)$ as a direct factor, such that the Lie algebra $\mathfrak{v}$ of its unipotent radical acts locally nilpotently on $J_{V_{\beta},\psi}(\rho)^*$. Repeating the argument in the last paragraph, any morphism in $\mathcal{H}_{\nu}(h)$  factors through
a generalized Jacquet module $\rho/\overline{\mathfrak{v}^j\rho}$, and the latter --- by Lemma~\ref{lem:adm} --- has a finite filtration with
${}^h(1,C_{\GL_l})$ acting by a character on its constituents.

Now if $0\ne\mathcal{T}\in\mathcal{H}_{\nu}(h)$, there are constituents $\mathcal{V}$ of $\rho/\overline{\mathfrak{v}^j\rho}$,
$\mathcal{V}'$ of $\pi_2^{\vee}/\overline{\mathfrak{u}_R^i\pi_2^{\vee}}$ and $\mathcal{V}''$ of $\Lambda_{\nu}$ (considering $\Lambda_{\nu}$ as a representation of $(1,\GL_l)$), such that
$\mathcal{T}$ is well defined and nonzero on $\mathcal{V}\otimes\pi_1^{\vee}\otimes \mathcal{V}'\otimes \mathcal{V}''$.
Again we can apply \eqref{eq:relation for T with s} and obtain a relation $\mu(a)|a|^{bs}=\theta_h((1,a))$ (with $b\ne0$) for all $a\in F^*$. Here $\mu$ is uniquely determined by $\mathcal{V},\mathcal{V}',\mathcal{V}''$ and $h$. In the archimedean case this condition excludes one $s$.

As we vary $\mathcal{V}$ and $\mathcal{V}'$ over the finite filtrations of $\rho/\overline{\mathfrak{v}^j\rho}$ and $\pi_2^{\vee}/\overline{\mathfrak{u}_R^i\pi_2^{\vee}}$, and also vary $j$ and $i$, the actions of ${}^h(1,C_{\GL_l})$ and $C_{\GL_l}$ are given by a discrete set of characters, by Lemma~\ref{lem:adm 2 discrete}. The action of $(1,C_{\GL_l})$ on $\mathcal{V}''$ is also given by a discrete set of characters, because the central characters of the set of irreducible constituents of $\{\Lambda_{\nu}\}_{\nu}$ as representations of $(1,\GL_l)$ form a lattice.
Thus the total subset of $s$ we exclude is still discrete (for each $h$). Again, repeating this for finitely many $h$, we will obtain a discrete set $\mathcal{B}$.

\subsubsection{The space \eqref{eq:Hom L 0} is at most one-dimensional outside $\mathcal{B}$: archimedean case}

Re-denote the Bruhat cells appearing in the decomposition $P\backslash H/ Q$ by
$Y_0,\ldots, Y_l$, numbered such that if $Y_i\subset \overline{Y_j}$, $i>j$. In particular $Y_0$ is the open Bruhat cell
(i.e., $Y_0=Pw_0Q$). We have
\begin{align*}
\Hom_{D}(V(s,\rho\otimes\eta),\psi_U^{-1}\otimes\pi_1\otimes\pi_2)&\cong ((V(s,\rho\otimes\eta)\otimes \psi_U\otimes\pi^{\vee}_1\otimes\pi^{\vee}_2)^*)^{\Delta D}\\&\cong \cD'(H,(|\det|^{s-1/2}\rho\otimes\eta)\otimes \psi_U\otimes\pi^{\vee}_1\otimes\pi^{\vee}_2)^{D\times P}.
\end{align*}
First we show that outside $\mathcal{B}$,
\begin{align}\label{1=Goal}
\cD'_{\overline{Y_1}}(H,(|\det|^{s-1/2}\rho\otimes\eta)\otimes \psi_U\otimes\pi^{\vee}_1\otimes\pi^{\vee}_2)^{D\times P}=0.
\end{align}
For any $i>0$, let $X_i=\bigcup_{j=0}^i Y_j$, it is an open subset of $H$ and $Y_i$ is a closed submanifold of $X_i$.
It is enough to show that for any $i>0$, outside $\mathcal{B}$ we have
 \begin{align}\label{=Step}
 \cD'_{Y_i}(X_i,(|\det|^{s-1/2}\rho\otimes\eta)\otimes \psi_U\otimes\pi^{\vee}_1\otimes\pi^{\vee}_2)^{D\times P}=0.
 \end{align}
Indeed we show by induction on $i$ that for any distribution $\mathcal{T}$ belonging to the left hand side of
\eqref{=Step}, the restriction $\mathcal{T}|_{X_i}$ vanishes. The base case $i=0$ holds by definition, and the induction step is \eqref{=Step}.  Since $X_k=H$ we get $\mathcal{T}=0$.

To prove \eqref{=Step}, we divide $Y_i$ into two cases depending on the first two vanishing arguments from \S~\ref{the vanishing 3 types} (which apply to all $s$).
Assume \eqref{psi U nontrivial} holds and recall this condition only depends on the double coset (this is proved in
Proposition~\ref{proposition:1st reduction of w} below). In this case we show, for all $s$,
\begin{align*}
\cD'_{Y_i}(X_i,(|\det|^{s-1/2}\rho\otimes\eta)\otimes \psi_U\otimes\pi^{\vee}_1\otimes\pi^{\vee}_2)^{U\times U_P}=0.
\end{align*}
Indeed by \cite[\S~2, p.~70]{KV}, the left hand side can be identified with the subspace of $(U\times U_P)$-invariant maps from
$C_c^{\infty}(X_i,\psi_U)$ supported on $Y_i$ to $\left((|\det|^{s-1/2}\rho\otimes\eta)\otimes\pi^{\vee}_1\otimes\pi^{\vee}_2\right)^*$ (recall that over archimedean fields
${}^*$ denotes the continuous dual). Since $U\times U_P$ acts trivially on $(|\det|^{s-1/2}\rho\otimes\eta)\otimes\pi^{\vee}_1\otimes\pi^{\vee}_2$, such a nonzero map $\mathcal{L}$ would define
a nonzero distribution in $\cD'_{Y_i}(X_i,\psi_U)^{U\times U_P}$ (e.g., fix some functional which is nonzero on the image of $\mathcal{L}$). But $\cD'_{Y_i}(X_i,\psi_U)^{U\times U_P}=0$
by \cite[Theorem~3.15, case (iii)]{KV} (in their notation $M_y^{(r)}=\Lambda_v$ which can be taken to be trivial as explained above, and $\mathcal{O}=Y_i$).

Now assume \eqref{psi U nontrivial} does not hold.
We prove a more general result: for all $s$,
\begin{align}\label{=Goal}
\cD'_{Y_i}(X_i,(|\det|^{s-1/2}\rho\otimes\eta)\otimes \psi_U\otimes\pi^{\vee}_1\otimes\pi^{\vee}_2)^{U\times P}=0.
\end{align}
We deduce it from Theorem \ref{thm:AG} as follows. Let $X=X_i$, $Y=Y_i$, $C=U\times P$; $E=(|\det|^{s-1/2}\rho\otimes\eta)\otimes \pi^{\vee}_1\otimes\pi^{\vee}_2$ with $U$ acting trivially and $P$ acting only on $|\det|^{s-1/2}\rho\otimes\eta$; and $\chi=\psi_U^{-1}\times 1$.  Let $A=Q\times P$ and extend the action of $C$ on $E$ to an action of $A$ by letting $Q$ act trivially. Condition \eqref{=KV} follows from our proof of $\mathcal{H}(h)=0$ in this case (which uses the fact that $\rho$ is $(k,c)$). Note that $\mathcal{H}(h)$ can indeed be identified with the space of distributions on the orbit $PhD$ by, e.g., \cite[Theorem~5.2.4.5]{Warner1972I}.
The set $\{\chi^a|_{C_z}:a\in A\}$ is finite: first, $\psi_U|_{U\cap {}^{h^{-1}}U_P}=1$ and because this condition is independent of the representative $h$ in the double coset $PhQ$, $\chi^a$ is trivial on $U\cap{}^{h^{-1}}U_P$; and second, $Q\cap {}^{h^{-1}}M_P$ is a parabolic subgroup of $M_P$ and $U\cap{}^{h^{-1}}M_P$ is its unipotent radical (see \eqref{eq:beta} below).
From this and Lemma \ref{lem:Uhat} we deduce that the set $\{\chi^a|_{C_z}:a\in A\}$ is a finite union of orbits. All orbits are locally closed since they are orbits of an algebraic action of an algebraic group (note that the characters are unitary).
Thus Theorem \ref{thm:AG} implies \eqref{=Goal}.

Altogether we have shown \eqref{1=Goal}. Therefore restriction of $D\times P$-equivariant distributions from $H$ to $Y_0$ is injective.
Now $D\times P$ acts on $Y_0$ with finitely many orbits $Z_0,\ldots,Z_r$, enumerated such that $Z_i\subset \overline{Z_j}$ implies $i>j$, in particular, $Z_0=P\delta D$ is the open orbit. As above is suffices to prove that for any $i>0$, but for $s\notin \mathcal{B}$,
 \begin{align}\label{2=Step}
 \cD'_{Z_i}(\bigcup_{j=0}^i Z_j,(|\det|^{s-1/2}\rho\otimes\eta)\otimes \psi_U\otimes\pi^{\vee}_1\otimes\pi^{\vee}_2)^{D\times P}=0.
 \end{align}
Let $A=C=D\times P$, $E=(|\det|^{s-1/2}\rho\otimes\eta)\otimes \pi^{\vee}_1\otimes\pi^{\vee}_2$ with $D$ acting only on $\pi^{\vee}_1\otimes\pi^{\vee}_2$ ($U$ acting trivially), $P$ acting only on $|\det|^{s-1/2}\rho\otimes\eta$; and $\chi=\psi_D\times 1$, where $\psi_D$ is the character of $D$ defined by $\psi_U^{-1}$ extended trivially to $(G,G)$. Our proof of $\mathcal{H}(h)=0$ in this case
(using \eqref{eq:relation for T with s}) implies \eqref{=KV} for $s\notin\mathcal{B}$, and Theorem \ref{thm:AG} implies \eqref{2=Step}. It then follows that restriction of $D\times P$-equivariant distributions from $H$ to $Z_0$ is injective. Combining this with the fact that
$\dim\mathcal{H}(\delta)\leq1$, we are done.

\subsubsection{The main result}
We turn to formulate our main result. Define
\begin{align*}
d(s,\rho,\eta,\pi_1,\pi_2)=\dim\Hom_{(G,G)}(J_{U,\psi_U^{-1}}(V(s,\rho\otimes\eta)),\pi_1\otimes\pi_2).
\end{align*}

\begin{theorem}\label{theorem:uniqueness}
Let $\pi_1$, $\pi_2$ and $\rho$ be as above.
\begin{enumerate}[leftmargin=*]
\item\label{part1}Outside a discrete subset of $s$, $d(s,\rho,\eta,\pi_1,\pi_2)\leq\dim\Hom_{G}(\chi_0\pi_1^{\vee},\pi_2^{\iota})$.
\item\label{part2}If $\pi_1$ and $\pi_2$ are irreducible, outside a discrete subset of $s$,
$d(s,\rho,\eta,\pi_1,\pi_2)=0$ unless $\pi_1=\chi_0(\pi_2^{\iota})^{\vee}$ in which case $d(s,\rho,\eta,\pi_1,\pi_2)\leq1$.
\end{enumerate}
Furthermore, assume $\pi_2$ is supercuspidal and $\rho$ is not necessarily of finite length. Then
the assertions of \eqref{part1} and \eqref{part2} hold for all $s$, granted one of the following:
\begin{enumerate}[leftmargin=*,label=(\alph*)]
\item\label{part3}$H\ne\GL_{2kc}$ and $c>2$; or $H=\Sp_{4k}$ ($G=\Sp_{2}$); or $H\ne\GL_{2kc}$, $c=2$ and $\rho=\rho_c(\tau)$ for an irreducible supercuspidal representation $\tau$ of $\GL_k$ and $k>1$.
\item\label{part4}$H=\GL_{2kc}$, $ck>1$, $\pi_1$ is also supercuspidal, and $\rho_i=\rho_c(\tau_i)$ for irreducible supercuspidal representations $\tau_1$ and $\tau_2$ of $\GL_k$.
\end{enumerate}
\end{theorem}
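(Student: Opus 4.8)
The proof of Theorem~\ref{theorem:uniqueness} follows the blueprint laid out in \S\ref{outline}: reduce the computation of $d(s,\rho,\eta,\pi_1,\pi_2)$ to the study of the spaces $\mathcal{H}(h)$ attached to the double cosets $PhD$, show that $\mathcal{H}(h)=0$ for every $h\not\sim\delta$ (outside a discrete set $\mathcal{B}$), and show $\dim\mathcal{H}(\delta)\le\dim\Hom_G(\chi_0\pi_1^\vee,\pi_2^\iota)$. The passage from the vanishing on the non-open orbits to the bound on \eqref{eq:Hom L 0} is the content of the Bernstein--Zelevinsky $l$-sheaf argument (non-archimedean) recalled in \S\ref{outline}, and of the iterated application of Theorem~\ref{thm:AG} (archimedean) spelled out in the last subsection. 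So the real work is: (i) a precise parametrization of $P\backslash H/D$ — exhibiting the open orbit $P\delta D$, the finitely many orbits inside the open Bruhat cell $Pw_0Q$, and establishing that $\psi_U|_{U\cap{}^{h^{-1}}U_P}$ depends only on $PhQ$; (ii) for each $h\not\sim\delta$, producing the relevant vanishing via one of the three mechanisms of \S\ref{the vanishing 3 types}; and (iii) the open-orbit computation.

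\textbf{Orbit analysis and the first two vanishing mechanisms.} First I would work out the Bruhat decomposition $H=\coprod_{w'}Pw'Q$ and, for each $w'$, decide whether \eqref{psi U nontrivial} holds. The key structural fact (to be proved as Proposition~\ref{proposition:1st reduction of w}) is that $Q\cap{}^{h^{-1}}M_P$ is a parabolic subgroup of $M_P$ with unipotent radical $U\cap{}^{h^{-1}}M_P$, so that the composition $\beta$ of $kc$ appearing in \eqref{H(h)} depends only on $PhQ$; combined with the description of $\psi_U$ on the block \eqref{matrix:middle 4c block of u}, this shows \eqref{psi U nontrivial} is a property of the double coset $PhQ$. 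For cosets outside $Pw_0Q$ where \eqref{psi U nontrivial} fails, one uses the second mechanism: identify, via \S\ref{Basic properties}, a unipotent $Y<{}^hU\cap M_P$ corresponding to a partition $\sigma\succsim(k^c)$ with the induced character generic, so that any morphism in $\mathcal{H}(h)$ factors through $J_{V(\sigma),\psi'}(\rho)=0$. This is the ``exchange of roots'' step; over non-archimedean fields it rests on \cite{GRS5,BZ1,BZ2}, over archimedean fields on the wave-front set reformulation of \cite{GGS} together with \cite{AGS2015a,AGS2015b}. The payoff is that only the $n+1$ (resp.\ $(c+1)(c+2)/2$) orbits inside $Pw_0Q$ remain.

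\textbf{The orbits in the open Bruhat cell and the center argument.} For the remaining $h$ with $h\not\sim\delta$ I would exhibit, from the explicit embeddings of \S\ref{Doubling setup} (and Example~\ref{example:odd c}), a proper parabolic $R=M_R\ltimes U_R<G$ with $\GL_l$ a direct factor of $M_R$, such that $J_{V_\beta,\psi}(\rho)$ is trivial on ${}^h(1,U_R)$ and admits a finite-length filtration over ${}^h(1,\GL_l)$ with ${}^h(1,C_{\GL_l})$ acting by characters from a finite set; here the crucial new input — as emphasized in the introduction — is that the relevant Jacquet modules of $\rho$, although \emph{not} carrying a mirabolic action, still restrict to finite length over this Levi, which one derives from the structure of $(k,c)$ representations (\cite{MW3} non-archimedean; \cite{AGS2015a,AGS2015b,GGS} archimedean). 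Then \eqref{eq:relation for T with s} forces $\mu(a)|a|^{bs}=\theta_h((1,a))$ with $b\ne0$, excluding a discrete set of $s$; over archimedean fields one replaces Jacquet modules by generalized ones and uses Lemmas~\ref{lem:adm} and~\ref{lem:adm 2 discrete} to keep the excluded set discrete. Taking $\mathcal{B}$ to be the (finite, hence discrete) union over the finitely many remaining $h$ gives parts \eqref{part1} and \eqref{part2} for finite-length $\rho$. For the open orbit, $P_\delta={}^{\delta^{-1}}P\cap D$ contains ${}^{\delta^{-1}}\{(g,{}^\iota g)\}\cong G$ diagonally and one computes, using that $\GL_c^\triangle$ acts on $J_{V_{(c^k)},\psi_k}(\rho)$ by the character $\chi_0\det$ (from \S\ref{kc representations}), that $\mathcal{H}(\delta)\hookrightarrow\Hom_G(\chi_0\pi_1^\vee,\pi_2^\iota)$, while the modulus bookkeeping through $\theta_\delta$ pins down the twist; when $\pi_1,\pi_2$ are irreducible this is $\le1$ and vanishes unless $\pi_1=\chi_0(\pi_2^\iota)^\vee$.

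\textbf{The supercuspidal improvement and the main obstacle.} For the ``for all $s$'' statements \eqref{part3}--\eqref{part4} with $\pi_2$ supercuspidal, the third mechanism simplifies drastically: if $J_{V_\beta,\psi}(\rho)$ is trivial on ${}^h(1,U_R)$ for a \emph{nontrivial} $U_R$, then any morphism in $\mathcal{H}(h)$ factors through $J_{U_R}(\pi_2^\vee)=0$ by supercuspidality, giving $\mathcal{H}(h)=0$ with no exclusion of $s$; hence $\mathcal{B}=\emptyset$. The hypotheses $c>2$, or $H=\Sp_{4k}$, or $c=2$ with $\rho=\rho_c(\tau)$, $\tau$ supercuspidal, $k>1$ (resp.\ their $\GL$ analogue) are exactly what guarantees that $G$ contains enough nontrivial unipotent radicals $U_R$, and that $\rho$ is degenerate enough, for every non-open $h$ to be killed this way; the minimal-rank cases excluded are those where $G$ has no nontrivial unipotent subgroup at all. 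I expect the main obstacle to be step (iii) together with the bookkeeping that makes (i) uniform across the four families $G=\Sp_c,\SO_c,\GSpin_c,\GL_c$ — in particular verifying that for \emph{every} non-open representative in $Pw_0Q$ one can actually find a suitable $R<G$ and a composition $\beta$ so that $J_{V_\beta,\psi}(\rho)$ is $(1,U_R)$-trivial and finite-length over $M_R$; the $\GSpin$ case adds the extra layer of tracking the central character $\chi_{\pi_i}$ and the condition $\chi_{\pi_1}^{-1}=\chi_{\pi_2}$ through the quotient \eqref{embedding G G in H GSpin}, and the odd-orthogonal case the twists by $\epsilon_1,\epsilon_2$, but these are expected to be manageable given the explicit matrices already recorded.
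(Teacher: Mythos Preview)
Your proposal is essentially correct and follows the same architecture as the paper's proof: reduce to the orbit spaces $\mathcal{H}(h)$, kill the non-open orbits by the three mechanisms of \S\ref{outline}, and compute $\mathcal{H}(\delta)$ directly. A few small corrections: what you attribute to Proposition~\ref{proposition:1st reduction of w} (the parabolic structure of $Q\cap{}^{h^{-1}}M_P$) is actually recorded around \eqref{eq:beta}, while the fact that \eqref{psi U nontrivial} depends only on $PhQ$ is Lemma~\ref{lemma:easier condition on psiU}; and the paper in fact proves $\dim\mathcal{H}(\delta)=\dim\Hom_G(\chi_0\pi_1^\vee,\pi_2^\iota)$, not merely an inequality. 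More substantively, the paper does not organize the argument by ``inside/outside $Pw_0Q$'' but by an explicit parametrization $h=wu$ with integers $(l,d_1,\ldots,d_{k-1})$ (Propositions~\ref{proposition:structure of w u}--\ref{proposition:2nd reduction of w}): mechanism~1 forces the shape \eqref{eq:w_i second reduction}, mechanism~2 (the nilpotency-order argument of Proposition~\ref{proposition:d_1 < n-l}, via \cite[Theorem~E]{GGS}) disposes of the uncountably many orbits with $d_1<n-l$, and only then do $n+1$ orbits remain for mechanism~3 (Proposition~\ref{proposition:d1 = n-l l < n} with Lemmas~\ref{lemma:Jacquet module is a trivial rep of U_R}--\ref{lemma:Jacquet module is a finite length}). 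Your identification of the main obstacle---verifying, uniformly across the four families and for every non-open representative, that a suitable $R<G$ exists with $J_{V_\beta,\psi}(\rho)$ trivial on ${}^h(1,U_R)$ and Levi-finite---is exactly right, and is where the bulk of the paper's computations lie.
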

\begin{remark}
If $\eta\ne\chi_{\pi_1}^{-1}$, $d(s,\rho,\eta,\pi_1,\pi_2)=0$ outside a discrete subset of $s$.
\end{remark}
The proof of the theorem occupies \S~\ref{section not GL}--\S~\ref{section GL}. Note that the case of $\GL_1\times\GL_k$ over non-archimedean fields was proved in \cite[Lemma~35]{CFGK2} for $\pi_1=\pi_2^{\vee}$ and $\chi_0=1$ ($P\backslash H/D$ is finite in this case).

Recall the representations $\pi$ and $\rho$ defined in \S~\ref{Doubling setup}:
$\pi$ is an irreducible admissible representation of $G$, and
$\rho$ is either an admissible finite length $(k,c)$ representation of $\GL_{kc}$ which admits a central character, or
the tensor product $\rho_1\otimes\chi^{-1}\rho_2$ of two such representations $\rho_i$ of $\GL_{kc}$, with a quasi-character $\chi$ of $\GL_k$,
in which case $\chi_0=\chi^{-k}$ (the central character of $\rho_1$ is the inverse of the central character of $\rho_2$). This is a minor generalization of \cite{CFK}, where $\rho$ was taken to be $\rho_c(\tau)$ (or $\rho_i=\rho_c(\tau_i)$, $i=1,2$).

Combining Theorem~\ref{theorem:uniqueness} with the doubling integral, we obtain the following.
\begin{corollary}\label{coro:meromorphic continuation for doubling 1}
Let $F$ be non-archimedean and consider \eqref{local integral} for the representations $\pi$ and $\rho$ defined in \S~\ref{Doubling setup}.
\begin{enumerate}[leftmargin=*]
\item\label{mero coro:part1}If $f$ is a rational section in $q^{-s}$, $Z(s,\omega,f)$ admits meromorphic continuation to a rational function in $q^{-s}$.
\item\label{mero coro:part2}$d(s,\rho,\chi_{\pi},\chi_0\pi^{\vee},\pi^{\iota})\geq1$ for all $s$.
\end{enumerate}
\end{corollary}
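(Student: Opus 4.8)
The plan is to deduce both parts from the generic multiplicity‑at‑most‑one statement of Theorem~\ref{theorem:uniqueness}, the analytic input recorded in Theorem~\ref{theorem:basic props of doubling integrals}, and Bernstein's continuation principle \cite{Banks}; this is the non‑archimedean counterpart of the remarks preceding \S~\ref{Uniqueness results}.

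For \eqref{mero coro:part1}: realize $V(s,\rho\otimes\chi_{\pi})$ on a fixed vector space via restriction to a good maximal compact subgroup of $H$, so that a rational section $f$ in $q^{-s}$ together with a matrix coefficient $\omega$ of $\pi^{\vee}$ forms a $q^{-s}$-family of data on which the relevant group actions are polynomial in $q^{-s}$. By parts \eqref{props 1} and \eqref{props 2} of Theorem~\ref{theorem:basic props of doubling integrals}, for $\Real(s)\gg0$ the integral $Z(s,\omega,f)$ converges, depends holomorphically on $q^{-s}$, and defines a genuine element of \eqref{eq:doubling Hom}, which for the present $\rho$ equals $\Hom_{(G,G)}(J_{U,\psi_U^{-1}}(V(s,\rho\otimes\chi_{\pi})),\chi_0\pi^{\vee}\otimes\pi^{\iota})$ because $\chi_0=\chi^{-k}$. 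By Theorem~\ref{theorem:uniqueness}\eqref{part2}, applied with $\pi_1=\chi_0\pi^{\vee}$ and $\pi_2=\pi^{\iota}$ (so that the required relation $\pi_1=\chi_0(\pi_2^{\iota})^{\vee}$ holds), this space is at most one‑dimensional outside a discrete subset of $s$. Bernstein's continuation principle \cite{Banks} then applies to the family $\{Z(s,\omega,f)\}$ and gives that it is a rational function of $q^{-s}$.

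For \eqref{mero coro:part2}: by Theorem~\ref{theorem:basic props of doubling integrals}\eqref{props 3} there is data $(\omega_0,f_0)$, with $f_0$ polynomial in $q^{-s}$, for which $Z(s,\omega_0,f_0)$ converges and equals a nonzero constant for every $s$, so the rational family from \eqref{mero coro:part1} is not identically zero. Bernstein's continuation principle, together with the generic one‑dimensionality of \eqref{eq:doubling Hom}, then yields a normalization of the integral — concretely, one divides by a factor of the form $(1-q^{s_0-s})^{m}$ absorbing the maximal order of pole at a given point $s_0$ — which is holomorphic in $q^{-s}$ at $s_0$ for every datum and not identically zero there (when $m=0$ this is witnessed by $(\omega_0,f_0)$); call the resulting functional $Z'(s,\cdot,\cdot)$. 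For $\Real(s)\gg0$, $Z'(s,\cdot,\cdot)$ is a nonzero scalar multiple of $Z(s,\cdot,\cdot)$ and hence lies in \eqref{eq:doubling Hom}. Since the group actions entering the definition of \eqref{eq:doubling Hom} depend on $s$ only through $V(s,\rho\otimes\chi_{\pi})$ and polynomially in $q^{-s}$, the equivariance is preserved under $q^{-s}$-holomorphic families by analytic continuation; therefore $Z'(s_0,\cdot,\cdot)$ is a nonzero element of \eqref{eq:doubling Hom} at $s=s_0$, i.e.\ $d(s_0,\rho,\chi_{\pi},\chi_0\pi^{\vee},\pi^{\iota})\geq1$.

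The step requiring most care is exactly this passage from the half‑plane $\Real(s)\gg0$ to an arbitrary point $s_0$: one must know that the meromorphically continued and renormalized integral still satisfies the full equivariance defining \eqref{eq:doubling Hom} at $s_0$ and has not been annihilated by the renormalization. The former follows from holomorphy together with analytic continuation; the latter — equivalently, the boundedness of pole orders over all data near $s_0$ — is the strong form of Bernstein's continuation principle, which is available precisely because the generic multiplicity is at most one by Theorem~\ref{theorem:uniqueness}. Verifying that the hypotheses of \cite{Banks} literally apply (the rational sections organized into admissible $q^{-s}$-families, and the formally equivariant functional of Theorem~\ref{theorem:basic props of doubling integrals}\eqref{props 1} being genuine on the domain of absolute convergence) is routine.
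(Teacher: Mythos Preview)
Your proof of \eqref{mero coro:part1} is essentially identical to the paper's: apply Theorem~\ref{theorem:uniqueness} with $\pi_1=\chi_0\pi^{\vee}$, $\pi_2=\pi^{\iota}$ to get generic multiplicity at most one, then invoke Theorem~\ref{theorem:basic props of doubling integrals} and \cite{Banks}.

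For \eqref{mero coro:part2} you take a different route. You extract the leading Laurent coefficient at $s_0$: multiply (you write ``divide'', but you mean multiply) by $(1-q^{s_0-s})^m$ with $m$ the maximal pole order, and observe that the result is a holomorphic family of equivariant functionals, nonzero at $s_0$. This is correct, but it relies on $m<\infty$, i.e.\ a \emph{uniform} bound on pole orders over all data; you attribute this to the strong form of Bernstein's principle, which is legitimate since \cite{Banks} produces a common denominator. Note also that your parenthetical only justifies nonvanishing at $s_0$ when $m=0$; for $m>0$ the nonvanishing follows instead from the very definition of $m$ as the maximum (achieved by some datum), and you should say so.

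The paper instead avoids any claim about bounded pole orders (and explicitly disclaims needing it). It uses only that the set of poles lies in a finite set of values of $q^{-s}$, hence all $Z(s,\omega,f)$ are holomorphic on some punctured disk around $s_0$; it then defines the functional at $s_0$ by the Cauchy integral $(\omega,f)\mapsto \tfrac{1}{2\pi i}\oint_\gamma \frac{Z(s,\omega,f)}{s-s_0}\,ds$ over a small contour $\gamma$ in that punctured disk. Nonvanishing is witnessed directly by the constant datum $(\omega_0,f_0)$ from Theorem~\ref{theorem:basic props of doubling integrals}\eqref{props 3}, for which the Cauchy integral returns that nonzero constant. This buys a cleaner argument that uses strictly less from \cite{Banks}; your leading-term approach is the standard alternative and works too, once the two small expository gaps above are filled.
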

\begin{proof}
For part~\eqref{mero coro:part1}, by Theorem~\ref{theorem:uniqueness} with $\pi_1=\chi_0\pi^{\vee}$ and $\pi_2=\pi^{\iota}$ (then $\eta=\chi_{\pi_1}^{-1}=\chi_{\pi}$), the dimension of \eqref{eq:doubling Hom} is at most $1$ outside a discrete subset of $s$. Now the meromorphic continuation follows from
Theorem~\ref{theorem:basic props of doubling integrals} and Bernstein's continuation principle (\cite{Banks}).

For part~\eqref{mero coro:part2}, fix some $s_0$. Consider the family $\mathcal{I}$ of integrals $Z(s,\omega,f)$, where $\omega$ varies over the matrix coefficients of $\pi^{\vee}$, and $f$ varies over the sections of $V(s,\rho\otimes\chi_{\pi})$ that are polynomial in $q^{\mp s}$. The set of poles of $Z(s,\omega,f)\in\mathcal{I}$ belongs to a finite set of values of $q^{-s}$ which depends only on the representations $\pi$ and $\rho$, by \cite{Banks} (we do not claim the multiplicity of a pole is bounded independently of $\omega$ and $f$). Therefore, there is $r>0$ such that all integrals of $\mathcal{I}$ are holomorphic in the punctured disk of radius $r$ around $s_0$. Let $\gamma$ be the boundary of this disk. Moreover, by Theorem~\ref{theorem:basic props of doubling integrals} \eqref{props 3}, there is $Z(s,\omega,f)\in\mathcal{I}$ which is a nonzero constant at $s_0$. Thus Cauchy's integral formula gives a nonzero morphism
$(\omega,f)\mapsto\tfrac1{2\pi i}\oint_{\gamma}\frac{Z(s,\omega,f)}{s-s_0}\,ds$ in \eqref{eq:doubling Hom}.
\end{proof}
\begin{corollary}\label{coro:meromorphic continuation for doubling 2}
Consider \eqref{local integral} for the representations $\pi$ and $\rho$ defined in \S~\ref{Doubling setup}.
Assume $\pi$ is irreducible supercuspidal and the additional assumptions \ref{part3} or \ref{part4} of Theorem~\ref{theorem:uniqueness} hold.
\begin{enumerate}[leftmargin=*]
\item\label{supercusp coro:part1}$d(s,\rho,\chi_{\pi},\chi_0\pi^{\vee},\pi^{\iota})=1$ for all $s$.
\item\label{supercusp coro:part2}If $f$ is a polynomial section in $q^{\mp s}$, $Z(s,\omega,f)$ admits analytic continuation to a
polynomial function in $q^{\mp s}$.
\end{enumerate}
\end{corollary}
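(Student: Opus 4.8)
The plan is to obtain \eqref{supercusp coro:part1} from the uniqueness theorem together with the non-vanishing of the integral, and then to bootstrap \eqref{supercusp coro:part2} from \eqref{supercusp coro:part1} using Bernstein's continuation principle and the existence of a constant integral. For \eqref{supercusp coro:part1}, I would apply Theorem~\ref{theorem:uniqueness} with $\pi_1=\chi_0\pi^{\vee}$, $\pi_2=\pi^{\iota}$ and $\eta=\chi_{\pi_1}^{-1}$. Since $\pi$ is irreducible supercuspidal, so are $\pi^{\vee}$, $\pi^{\iota}$ and the twist $\chi_0\pi^{\vee}$; as in Corollary~\ref{coro:meromorphic continuation for doubling 1}\eqref{mero coro:part1} one has $\chi_{\pi_1}^{-1}=\chi_{\pi}$, and since $\iota$ is an involution $\pi_2^{\iota}=\pi$, so that $\pi_1=\chi_0(\pi_2^{\iota})^{\vee}$ and $\chi_0\pi_1^{\vee}=\pi=\pi_2^{\iota}$. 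The hypotheses \ref{part3} (when $H\neq\GL_{2kc}$) and \ref{part4} (when $H=\GL_{2kc}$, noting that $\chi_0\pi^{\vee}$ is again supercuspidal and $\rho_i=\rho_c(\tau_i)$ by assumption) are exactly what is granted, so Theorem~\ref{theorem:uniqueness}\eqref{part2}, or part~\eqref{part1} of that theorem combined with Schur's lemma, yields $d(s,\rho,\chi_{\pi},\chi_0\pi^{\vee},\pi^{\iota})\leq1$ for every $s$. For the reverse inequality I would invoke Theorem~\ref{theorem:basic props of doubling integrals}\eqref{props 1} and \eqref{props 3}: for each $s$ there is data $(\omega,f)$ with $Z(s,\omega,f)\neq0$ (indeed a nonzero constant for a suitable polynomial section over non-archimedean fields), and since $Z(s,\cdot,\cdot)$ lies in the space \eqref{eq:doubling Hom}, whose dimension is $d(s,\rho,\chi_{\pi},\chi_0\pi^{\vee},\pi^{\iota})$, this forces the dimension to be $\geq1$; hence it equals $1$ for all $s$. (Over non-archimedean fields the lower bound is also Corollary~\ref{coro:meromorphic continuation for doubling 1}\eqref{mero coro:part2}.)

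For \eqref{supercusp coro:part2}, fix a matrix coefficient $\omega$ of $\pi^{\vee}$ and a section $f$ of $V(s,\rho\otimes\chi_{\pi})$ polynomial in $q^{\mp s}$. By Corollary~\ref{coro:meromorphic continuation for doubling 1}\eqref{mero coro:part1} the integral continues to a rational function of $q^{-s}$, and by \cite{Banks} there is a single nonzero $Q\in\C[q^{-s}]$, depending only on $\pi$ and $\rho$, with $Q(q^{-s})Z(s,\omega',f')\in\C[q^{\mp s}]$ for every choice of data; thus it suffices to show that $Z(s,\omega,f)$ has no pole at any $s_0\in\C$. Suppose it does, and let $m\geq1$ be the maximal order of a pole at $s_0$ of $Z(s,\omega',f')$ over all data, which is finite (bounded by the order of $q^{-s_0}$ as a root of $Q$) and, being maximal, is attained by some $(\omega_1,f_1)$. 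The leading Laurent coefficient
\begin{align*}
\mathcal{A}(\omega',f')=\lim_{s\to s_0}(q^{-s}-q^{-s_0})^{m}\,Z(s,\omega',f')
\end{align*}
is then nonzero (it does not vanish on $(\omega_1,f_1)$); it vanishes on any polynomial section $f'$ with $f'(s_0)=0$, since such an $f'$ factors as $(q^{-s}-q^{-s_0})g'$ with $g'$ again a polynomial section whose integral has a pole of order at most $m$, so $\mathcal{A}$ descends to a functional on the fibre $V(s_0,\rho\otimes\chi_{\pi})$; and it is $(G,G)$-equivariant and factors through $J_{U,\psi_U^{-1}}(V(s_0,\rho\otimes\chi_{\pi}))$, because the integrals $Z(s,\cdot,\cdot)$ have these (closed) properties on their domain of convergence by Theorem~\ref{theorem:basic props of doubling integrals}\eqref{props 1}, hence for all $s$ by analytic continuation, and these conditions are preserved under the limit. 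Thus $\mathcal{A}$ is a nonzero — hence spanning — element of the one-dimensional space \eqref{eq:doubling Hom} at $s=s_0$ (by \eqref{supercusp coro:part1}). But with the constant-section data $(\omega_0,f_0)$ of Theorem~\ref{theorem:basic props of doubling integrals}\eqref{props 3}, for which $Z(s,\omega_0,f_0)\equiv c\neq0$, we get $\mathcal{A}(\omega_0,f_0)=\lim_{s\to s_0}(q^{-s}-q^{-s_0})^{m}c=0$ since $m\geq1$, whereas the integral \eqref{local integral} at $s_0$ itself — which converges on $(\omega_0,f_0)$ with value $c$, and on its full domain of convergence is again a nonzero $(G,G)$-equivariant functional factoring through the same Jacquet module — furnishes another element of that one-dimensional line which does not vanish on $(\omega_0,f_0)$; two nonzero elements of a line cannot have different kernels, a contradiction. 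Hence $Z(s,\omega,f)$ has no finite poles, and since $Q(q^{-s})Z(s,\omega,f)\in\C[q^{\mp s}]$, so is $Z(s,\omega,f)$.

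The main obstacle is precisely the step, in the proof of \eqref{supercusp coro:part2}, of showing that the leading Laurent coefficient (and likewise the value of the continued integral at $s_0$) genuinely descends to a $(G,G)$-equivariant functional on the fibre $V(s_0,\rho\otimes\chi_{\pi})$ that factors through $J_{U,\psi_U^{-1}}$, so that the one-dimensionality from \eqref{supercusp coro:part1} can be played off against the non-vanishing of the constant integral; this is the same mechanism that underlies the proof of Corollary~\ref{coro:meromorphic continuation for doubling 1}\eqref{mero coro:part2}. Everything else is a routine assembly of Theorem~\ref{theorem:uniqueness}, Theorem~\ref{theorem:basic props of doubling integrals}, Corollary~\ref{coro:meromorphic continuation for doubling 1} and Bernstein's continuation principle \cite{Banks}.
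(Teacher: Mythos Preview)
Your proof of part~\eqref{supercusp coro:part1} is correct and coincides with the paper's: Theorem~\ref{theorem:uniqueness} under \ref{part3} or \ref{part4} gives $d\leq 1$ for all $s$, and Corollary~\ref{coro:meromorphic continuation for doubling 1}\eqref{mero coro:part2} gives $d\geq 1$.

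For part~\eqref{supercusp coro:part2} the paper takes a shorter route: once $d(s,\rho,\chi_{\pi},\chi_0\pi^{\vee},\pi^{\iota})=1$ for \emph{every} $s$, the corollary in \cite{Banks} (not merely Bernstein's theorem) applies directly and says the rational continuation is actually polynomial. Your argument is essentially a reproof of that corollary, and the strategy (leading Laurent coefficient $\mathcal{A}$ versus a second functional not vanishing on $(\omega_0,f_0)$) is the right one. There is, however, a genuine gap in your construction of the second functional. You write that ``the integral \eqref{local integral} at $s_0$'' furnishes an element of the $\Hom$-space at $s_0$; but under your standing hypothesis that some $Z(s,\omega',f')$ has a pole at $s_0$, the assignment $(\omega',f')\mapsto Z(s_0,\omega',f')$ is not defined on all of $V(s_0,\rho\otimes\chi_\pi)$, so it is not an element of \eqref{eq:doubling Hom} and cannot be compared with $\mathcal{A}$ inside that one-dimensional space. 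Convergence on the single datum $(\omega_0,f_0)$ does not produce a functional on the whole fibre.

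The fix is the mechanism you yourself point to: take as the second functional the Cauchy-integral morphism constructed in the proof of Corollary~\ref{coro:meromorphic continuation for doubling 1}\eqref{mero coro:part2},
\[
\mathcal{B}(\omega',f')=\frac{1}{2\pi i}\oint_{\gamma}\frac{Z(s,\omega',f')}{s-s_0}\,ds,
\]
over a small circle $\gamma$ on which all $Z(s,\omega',f')$ are holomorphic. This is defined for every polynomial section, lies in \eqref{eq:doubling Hom} at $s_0$, and satisfies $\mathcal{B}(\omega_0,f_0)=c\neq 0$ since $Z(s,\omega_0,f_0)\equiv c$. Now $\mathcal{A}$ and $\mathcal{B}$ are both nonzero elements of the one-dimensional space, hence proportional, yet $\mathcal{A}(\omega_0,f_0)=0\neq\mathcal{B}(\omega_0,f_0)$---the desired contradiction. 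With this replacement your argument goes through; alternatively, simply cite the corollary in \cite{Banks} as the paper does.
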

\begin{proof}
The first assertion follows from Theorem~\ref{theorem:uniqueness} combined with
Corollary~\ref{coro:meromorphic continuation for doubling 1} \eqref{mero coro:part2}. The second holds because when
$d(s,\rho,\chi_{\pi},\chi_0\pi^{\vee},\pi^{\iota})=1$ for all $s$, by the corollary in \cite{Banks} the continuation is to a polynomial.
\end{proof}
\subsection{The case $H\ne\GL_{2kc}$}\label{section not GL}
As explained in \S~\ref{outline}, we will consider each $\mathcal{H}(h)$ separately. We prove that all but finitely many spaces
$\mathcal{H}(h)$ vanish using the first two methods, show the vanishing of the remaining $\mathcal{H}(h)$ with $h\not\sim\delta$ outside a discrete subset $\mathcal{B}$, then prove $\dim\mathcal{H}(\delta)\leq1$.

Recall $n=\lfloor c/2\rfloor$, and since we prove the result for both odd and even $c$ simultaneously, we also use
$\lceil c/2\rceil$, which is $n$ when $c$ is even and $n+1$ otherwise.

We start with describing a choice of representatives.
Since $P\backslash H/ N_H$ can be identified with $W(M_P)\backslash W(H)$ and $Q=M_Q\ltimes U$, we can write
$P\backslash H/ D=\coprod_hPhD$ with $h=wu$, where $w$ is a representative from $W(M_P)\backslash W(H)$ and $u\in M_Q\cap N_H$.
Since $W(M_P)\backslash W(H)$ is embedded in $\mathbb{Z}_2^{kc}$, we can identify $w$ with a $kc$-tuple of $0$ and $1$, where
the $i$-th coordinate corresponds to the permutation matrix
\begin{align*}
\left(\begin{smallmatrix}
I_{kc-i}
\\&0&&1
\\&&I_{2(i-1)}
\\&\epsilon_0&&0
\\&&&& I_{kc-i}
\end{smallmatrix}\right).
\end{align*}
If $H\ne\Sp_{2kc}$, only even products of such matrices can appear in $w$. In this case denote for an integer $a\geq0$,
$\jmath_a=(1,0^{kc-1})$ if $a$ is odd otherwise $\jmath_a=(0^{kc})$. Note that $\jmath_a$ normalizes $N_H$. If $H=\Sp_{2kc}$, we set $\jmath_a=(0^{kc})$ for uniformity. We use $*$ in an expression for $w$ to signify an undetermined coordinate (either $0$ or $1$).

For the case $k=1$, we can parameterize $P\backslash H/D=P\backslash H/(G,G)$ using the elements
\begin{align*}
\jmath_l(0^{c-l},1^l)({}^{\jmath_l}u_l), \qquad 0\leq l\leq n,
\end{align*}
where
\begin{align}\label{eq:weyl rep k=1}
(0^{c-l},1^l)=\left(\begin{smallmatrix}
&&&&&J_{{l}}\\&I_{2(c-l)}\\\epsilon_0J_{l}
\end{smallmatrix}\right)
\end{align}
and
\begin{align}\label{eq:unipotent rep k=1}
u_l=
\left(\begin{smallmatrix}
I_l&&I_l\\&I_{n-l}\\&&I_l\\&&&I_{2c-2(n+l)}\\&&&&I_l&&-I_l\\&&&&&I_{n-l}\\&&&&&&I_l
\end{smallmatrix}\right).
\end{align}
Here if $l<\lceil c/2\rceil$, ${}^{\jmath_l}u_l=u_l$ (always the case for an odd $c$). The double cosets for $k=1$ were described in \cite[\S~2]{PSR}; see also \cite[\S~4]{GRS7} for $\Sp_{2c}$; $\SO_{2c}$ and $\GSpin_{2c}$ with even $c$ are similar, and for odd $c$ also refer to the description of the embedding of $\SO_c\times \SO_c$ in $\SO_{2c}$ given in \cite[Example~15]{CFK} (see Example~\ref{example:odd c}).

We start by generalizing this description, to some extent, to all $k\geq1$.
For $x\in M_Q$, denote its projection into the direct product of $k-1$ copies of $\GL_c$ by $\ell(x)$, then $x=\ell(x)\ell_0(x)$, where $\ell_0(x)\in H_0$. For $k=1$, $x=\ell_0(x)$ and $\ell(x)$ is trivial. If $y\in M_Q$,
\begin{align*}
\ell({}^xy)={}^{\ell(x)}\ell(y),\qquad \ell_0({}^xy)={}^{\ell_0(x)}\ell_0(y).
\end{align*}
For any $(g_1,g_2)\in (G,G)$,
\begin{align}\label{eq:conj x by g1 g2}
{}^{(g_1,g_2)}x=\ell({}^{(g_1,g_2)}x)\ell_0({}^{(g_1,g_2)}x),
\end{align}
but because $(1,g_2)\in H_0$,
\begin{align}\label{eq:conj x by g2}
{}^{(1,g_2)}x=\ell(x)({}^{(1,g_2)}\ell_0(x)).
\end{align}

\begin{proposition}\label{proposition:structure of w u}
Let $h=wu$, where $w$ is a representative from $W(M_P)\backslash W(H)$ and $u\in M_Q\cap N_H$.
Then $h\sim\hat{w}\hat{u}$ with the following properties. There is $0\leq l\leq n$ such that
\begin{align}\label{eq:start condition of w}
&\hat{w}=\jmath_a(0^{c-l},1^{l},w_2,\ldots,w_k),\qquad\forall i, w_i\in\{0,1\}^{c},
\end{align}
where $a$ is the sum of coordinates $1$ in $(0^{c-l},1^{l},w_2,\ldots,w_k)$. Additionally $\hat{u}\in M_Q$, there
is $\sigma=(g,1)\in(G,1)$ such that $g$ is a representative of an element in $W(G)$ and
${}^{\sigma}\hat{u}\in M_Q\cap N_H$, ${}^{\jmath_a}\ell_0(\hat{u})$ takes the form
\begin{align}\label{eq:second form u final}
\left(\begin{smallmatrix}
I_l&&A_l&&&&\\&I_{n-l}\\&&I_l\\&&&I_{2c-2(n+l)}\\&&&&I_l&&A_l'\\&&&&&I_{n-l}\\&&&&&&I_l
\end{smallmatrix}\right),
\end{align}
and there are no zero rows in $A_l$.
\end{proposition}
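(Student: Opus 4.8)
The plan is an explicit reduction of the pair $(w,u)$ representing $PhD=PwuD$, performed by repeatedly replacing $(w,u)$ by an equivalent pair using the left action of $P$ (via $W(M_P)=S_{kc}$, $T_{M_P}$ and $U_P$) and the right action of $D=U\rtimes(G,G)$ (via the Weyl group and the unipotent part of $(G,1)$, and via $U=U_Q$).

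First I would normalize the Weyl component $w$. Writing $w\in W(M_P)\backslash W(H)$ as a $kc$-tuple over $\{0,1\}$, the right action of the Weyl group of $(G,1)$ — embedded so that its $\GL_c$-representation is repeated on the $k-1$ copies of $\GL_c$ in $M_Q$ while it acts on the $H_0$-block through the embedding recalled in \S~\ref{Doubling setup} — together with the element $\jmath_a$ (which adjusts parity), is enough to bring $w$ to the form \eqref{eq:start condition of w}. Concretely, the sign data attached to the $H_0$-block is governed by exactly the rank-$c$ combinatorics of \cite[\S~2]{PSR} (see also Example~\ref{example:odd c} and \cite[\S~4]{GRS7}): this produces the invariant $0\le l\le n$ and the standard block $(0^{c-l},1^l)$, which we place in the first slot; the remaining blocks $w_2,\dots,w_k$ carry only $S_c$-type data and may be taken arbitrary in $\{0,1\}^c$ after the reduction. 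The element $\sigma=(g,1)$ appearing in the statement is the Weyl element effecting the final step of this normalization; it must be recorded because $\hat u$ need not lie in $N_H$ although ${}^{\sigma}\hat u$ does.

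With $\hat w$ fixed, I would analyze $\hat u\in M_Q$, decomposing $\hat u=\ell(\hat u)\ell_0(\hat u)$ with $\ell_0(\hat u)\in H_0$. Using left multiplication by $U_P$, right multiplication by $U=U_Q$, and right multiplication by the unipotent radical of $(G,1)$, one eliminates every entry of $u$ whose position is not forced by the requirement that $\hat w\hat u$ not be equivalent to a representative with a strictly shorter Weyl component. The surviving entries of $\ell_0(\hat u)$ are exactly those occupied by the $k=1$ model element $u_l$ of \eqref{eq:unipotent rep k=1}, so after conjugating by $\jmath_a$ it acquires the shape \eqref{eq:second form u final}. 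The blocks $A_l,A_l'\in\Mat_l$ are, however, general matrices rather than $\pm I_l$: in the $k=1$ case the row and column operations available from $U_P$ and $U_Q$ force $A_l=I_l$, but for $k>1$ part of this freedom is consumed by the coupling between $\ell(\hat u)$ and $\ell_0(\hat u)$ and by the constraint ${}^{\sigma}\hat u\in M_Q\cap N_H$, so only a normalization up to ``no zero rows'' survives.

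Finally, the no-zero-rows claim I would obtain by maximality: among all pairs $(w,u)$ representing a fixed double coset, choose one for which the length of the Weyl component $\hat w$ (equivalently the number $a$, or $l+\sum_{i\ge2}|w_i|$) is maximal. If the $i$-th row of $A_l$ were zero, then the $i$-th coordinate set to $1$ in the normalized block interacts trivially with its partner, and a reflection in $W(M_P)\times W((G,1))$ combined with a compensating change of $u$ produces an equivalent representative with strictly smaller length, a contradiction; hence for the maximal-length representative $A_l$ has no zero rows. I expect the main obstacle to be the middle step — tracking how the left $U_P$-, right $U_Q$-, and right $(G,1)$-unipotent cleanups interact and checking that the surviving part of $\ell_0(\hat u)$ is precisely the pattern \eqref{eq:second form u final}. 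This is a long but mechanical root computation whose conceptual content is that it reduces block-by-block to the $k=1$ analysis of \cite{PSR}, with the extra $\GL_c$-blocks $w_2,\dots,w_k$ and $\ell(\hat u)$ playing a passive role that merely relaxes ``$A_l=I_l$'' to ``$A_l$ has no zero rows''.
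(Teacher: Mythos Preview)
Your overall plan---normalize $w$ using the embedded Weyl group, then clean up $u$ with the available unipotent actions---is the same strategy the paper follows, and much of what you outline would carry through. But there are two concrete problems.

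First, the extremality argument for ``no zero rows'' is logically backwards. You choose a representative with \emph{maximal} length and then argue that a zero row of $A_l$ lets you produce an equivalent representative of \emph{smaller} length; that is not a contradiction. The move you describe (a zero row of $A_l$ permits flipping the corresponding coordinate of $\hat w$ from $1$ to $0$, i.e., decreasing $l$) is exactly the paper's step, but it yields the claim only if you pick $l$ \emph{minimal}, or---as the paper does---carry it out constructively: for each $i$ with row $i$ of $v$ zero and $i\in\mathcal{R}(w)$, conjugate by a suitable $\sigma_1=(g,1)$ so that $i$ is removed from $\mathcal{R}$, then use a permutation $\sigma_2\in(W(\GL_n),1)$ to collect the surviving indices into $\{1,\dots,l\}$. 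The recorded $\sigma$ in the statement is $\sigma_1\sigma_2$, arising from these two operations together, not from the initial Weyl normalization of $w$.

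Second, you invoke only the Weyl group of the \emph{left} copy $(G,1)$ to normalize $w$, but the paper's proof needs both copies: the first reduction $w\mapsto w_1=\jmath_a(0^{\lceil c/2\rceil},*^{\,kc-\lceil c/2\rceil})$ is obtained by right-multiplying by an element of $(1,W(G))$ (this fixes $\ell(u)$, which is why it is \emph{not} part of $\sigma$), and only afterwards do elements of $(W(G),1)$ refine the first block to $(0^{c-l},1^l)$. Relatedly, the $w$-normalization and the $u$-cleanup are genuinely interleaved in the paper: one first strips $\ell_0(u)$ down to the $v$-block using unipotents from both $(G,1)$ and $(1,G)$ and left multiplication by elements that ${}^{w_1}(\cdot)$ lands in $P$, and only then performs the $(g,1)$-Weyl moves that determine $l$ and force the final shape \eqref{eq:second form u final}. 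Your sequential ``first $w$, then $u$'' description glosses over this interaction, which is exactly where the need for ${}^{\sigma}\hat u\in M_Q\cap N_H$ (rather than $\hat u\in N_H$) arises.
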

\begin{proof}
Let $E=M_{E}\ltimes U_{E}$ denote the standard parabolic subgroup of $H_0$ such that $M_{E}=\GL_n\times\mathcal{G}_{2(c-n)}$, and identify $N_{\GL_n}$ with its natural image in $M_{E}$. According to the description of $(G,G)$ in \S~\ref{Doubling setup}, $N_{\GL_n}\ltimes C_{U_E}<\ell_0((G,1))$. Let $g\in N_G$ be with
$\ell_0((g,1))\in N_{\GL_n}\ltimes C_{U_E}$, such that the projection of $\ell_0(u(g,1))$ into $N_{H_0}$ is trivial on $N_{\GL_n}$. Put $u_1=u(g,1)\in M_Q\cap N_H$, $wu\sim wu_1$. We also have some control over the projection of the unipotent part of the representative into $C_{U_E}$ (see below).

If $c$ is even, then also $N_{\mathcal{G}_{2(c-n)}}<(1,G)$, and for $g\in N_{\mathcal{G}_{2(c-n)}}$ such that the projection of
$\ell_0(u_1(1,g))$ into $N_{\mathcal{G}_{2(c-n)}}$ is trivial, $wu_1\sim wu_2$ with $u_2=u_1(1,g)$. The projection of $\ell_0(u_2)$ into
$N_{\GL_n}\ltimes C_{U_E}$ coincides with that of $\ell_0(u_1)$.

If $c$ is odd, $N_{\mathcal{G}_{2(c-n)}}/(1,N_G)\cong\Mat_{n\times1}$. Choosing $g\in N_G$ and taking $u_2=u_1(1,g)$, we can assume the projection of $\ell_0(u_2)$ into $N_{\mathcal{G}_{2(c-n)}}$ takes the form
\begin{align}\label{eq:remaining inner column odd c}
\left(\begin{smallmatrix}I_{n}&y_1&y_2&y''\\
&1&&y_1'\\&&1&y_2'\\&&&I_n
\end{smallmatrix}\right)\in N_{\mathcal{G}_{2(n+1)}},
\end{align}
where $y_i'$, $y''$ uniquely depend on $y_1,y_2$ and $H$, and we can choose either $y_1=0$ or $y_2=0$ (see Example~\ref{example:odd c}).
Observe that $w$ conjugates precisely one of the columns $y_1$ or $y_2$ in \eqref{eq:remaining inner column odd c} into $P$, so that if we choose the other column to be zero (i.e., define $g\in N_G$ accordingly), then now we already have both zero. In other words we can write $u_3=z^{-1}u_2$ for $z$ defined by $y_1$ or $y_2$ such that ${}^{w}z\in P$, then $wu_1\sim wu_2=wzu_3\sim wu_3$. If $c$ is even, put $u_3=u_2$, so that the projection of $\ell_0(u_3)$ into $N_{\mathcal{G}_{2(c-n)}}$ is trivial now for both odd and even $c$.

One can take
a representative $g$ of an element in $W(G)$ such that
\begin{align}\label{eq:w_1}
w_1=w(1,g)=j_a(0^{\lceil c/2\rceil},*^{kc-\lceil c/2\rceil}).
\end{align}
Then $wu_3\sim wu_3(1,g)=w_1({}^{(1,g)^{-1}}u_3)$. Put $u_4={}^{(1,g)^{-1}}u_3$, it is of the same form as $u_3$: this conjugation merely permutes the columns in the projection of $\ell_0(u_3)$ into $C_{U_{E}}\backslash U_{E}$.
Now we can write
\begin{align*}
\ell_0(u_4)=\left(\begin{smallmatrix}I_n&v&v''\\&I_{2(c-n)}&v'\\&&I_n\end{smallmatrix}\right)\in N_{H_0},
\end{align*}
where $v\in\Mat_{n\times 2(c-n)}$ is arbitrary and $v',v''$ are uniquely defined given $v$ and $H$.

Put $v=(v_1,v_2)$, $v_1=(z_1,z_2)$ and $v_2=(z_3,z_4)$, where $z_1,z_4\in\Mat_{n\times \lceil c/2\rceil-1}$ and $z_2,z_3\in\Mat_{n\times1}$.
The element $w_1$ does not permute any column of $z_1$ or $z_4$, and conjugates the block $z_4$ into $M_P$. Hence one can write $u_5=z^{-1}u_4$, where $z\in N_{H_0}$ is defined by $z_4$, and the corresponding block $z_4$ of $\ell_0(u_5)$ is $0$, then $w_1u_4=w_1zu_5\sim w_1u_5$.

Next we see that $w_1$ also conjugates precisely one column $z_j$ out of $\{z_2,z_3\}$ into $P$. If $a$ is even, $j=3$
and we can assume $z_3=0$. Otherwise $j=2$, we can assume $z_2=0$. In both cases we multiply $u_5$ on the left by a suitable matrix $z^{-1}$, and $w_1u_5\sim w_1u_6$ with
$u_6=z^{-1}u_5$. We deduce
\begin{align*}
{}^{\jmath_a}\ell_0(u_6)=\left(\begin{smallmatrix}I_n&v&0&v''\\&I_{c-n}&&0\\&&I_{c-n}&v'\\&&&I_n\end{smallmatrix}\right)\in N_{H_0}.
\end{align*}

If $c$ is odd, $v$ contains $n+1$ columns. We show that the rightmost column of ${}^{\jmath_a}\ell_0(u_6)$ can be made $0$. Indeed
we can take $g\in N_{G}$ such that
\begin{align*}
\ell_0((g,1))=\left(\begin{smallmatrix}I_n&&\epsilon_2x&-\epsilon_1x&&y\\&I_{n}\\&&1&&&\epsilon_1x'\\&&&1&&-\epsilon_2x'\\&&&&I_n\\&&&&&I_n\end{smallmatrix}\right)
\in N_{H_0}
\end{align*}
(see Example~\ref{example:odd c}). The element $w_1$ permutes precisely one of the middle $2$ columns into $P$, either the column with
$\epsilon_2x$ or with $-\epsilon_1x$. Then if $\ell_0((g,1))$ is chosen such that the other column is $0$ in $u_6(g,1)$, and $z_j\in N_{H_0}$ is defined
by the column of $\ell_0((g,1))$ which is permuted by $w_1$ into $P$ (thus ${}^{w_1}z_j\in P$),
\begin{align*}
w_1u_6\sim w_1u_6(g,1)=w_1z_jz_j^{-1}u_6(g,1)\sim w_1z_j^{-1}u_6(g,1).
\end{align*}
Put $u_7=z_j^{-1}u_6(g,1)$, then
\begin{align*}
{}^{\jmath_a}\ell_0(u_{7})=\left(\begin{smallmatrix}I_n&v&&&v''\\&I_{n}&&&\\&&I_{2(c-2n)}\\&&&I_{n}&v'\\&&&&I_n\end{smallmatrix}\right)\in N_{H_0}.
\end{align*}
For uniformity, denote $u_7=u_6$ when $c$ is even.

By the definition of $H$, the block $v''$ in ${}^{\jmath_a}\ell_0(u_{7})$ above can be taken independently of $v$. Hence we can multiply
$u_7$ on the right by $(g,1)$ with $g\in N_G$, where $\ell_0((g,1))\in C_{U_E}$ is defined using $v''$, and obtain $u_8=u_7(g,1)$
such that ${}^{\jmath_a}\ell_0(u_8)$ is of the form
\begin{align}\label{eq:starting form u''}
\left(\begin{smallmatrix}I_n&v&&&\\&I_{n}&&&\\&&I_{2(c-2n)}\\&&&I_{n}&v'\\&&&&I_n\end{smallmatrix}\right)\in H_0.
\end{align}
Then $w_1u_7\sim w_1u_8$. If $c$ is odd, $\ell_0(u_8)$ commutes with $\jmath_a$ (since then $2(c-2n)=2$).

At this point we still have $u_8\in M_Q\cap N_H$, since the only changes from $u$ to $u_8$ involve multiplying by elements of $M_Q\cap N_H$ (on the right or left).

For any matrix $u_0$ of the form
\eqref{eq:starting form u''}, denote the block of $v$ by $v(u_0)$.
For any representative $w'=(*^{kc})$, let $\mathcal{R}(w')$ denote the set of $1\leq i\leq n$ such that $w'$ permutes the $i$-th row of the block $v$ of a general matrix \eqref{eq:starting form u''}. Note that $\mathcal{R}(w')$ only depends on the coordinates $\lceil c/2\rceil +1,\ldots,c$ of $w'$ (enumerating the coordinates of $w'$ from left to right).

For each row $i$ of $v({}^{\jmath_a}\ell_0(u_8))$, one can always write $u_9=z_i^{-1}u_8$, where the $i$-th row of $v({}^{\jmath_a}\ell_0(u_9))$ is zero, ${}^{\jmath_a}z_i$ is of the form \eqref{eq:starting form u''} and any row $j\ne i$ in $v({}^{\jmath_a}z_i)$ is zero. Moreover, $z_i\in P$, and
if $i\notin\mathcal{R}(w_1)$, $w_1$ commutes with $z_i$. Hence
$w_1u_8=z_iw_1u_9\sim w_1u_9$.
Since we can apply this separately to each row, we can assume that for each $1\leq i\leq n$,
either the $i$-th row of $v({}^{\jmath_a}\ell_0(u_9))$ is zero or $i\in\mathcal{R}(w_1)$. The difference between
$u_8$ and $u_9$, is that the nonzero rows of $v({}^{\jmath_a}\ell_0(u_9))$ occur only at rows $i$ which $w_1$ permutes.

Consider $i$ such that both the $i$-th row of $v({}^{\jmath_a}\ell_0(u_9))$ is zero and $i\in\mathcal{R}(w_1)$.
In this case take $\sigma_1=(g,1)$ where $g\in G$ is a representative of an element of $W(G)$ of minimal length, such that
$\mathcal{R}(\sigma_1)=\{i\}$. More specifically take $g$ with $\ell_0((g,1))=\jmath_1(0^{c-i},1,0^{i-1})$ (if $c$ is odd, the right hand side is multiplied by $\diag(I_{c-1},2\epsilon_1^2,2\epsilon_2^2,I_{c-1})$, see Example~\ref{example:odd c}). Since $\sigma_1=\ell(\sigma_1)\ell_0(\sigma_1)$ and $\ell(\sigma_1)\in P$,
\begin{align*}
w_1u_9\sim w_1u_9\sigma_1=
w_1\sigma_1({}^{\sigma_1^{-1}}u_9)=
\ell(\sigma_1)({}^{\ell(\sigma_1)^{-1}}w_1)\ell_0(\sigma_1)({}^{\sigma_1^{-1}}u_9)\sim
({}^{\ell(\sigma_1)^{-1}}w_1)\ell_0(\sigma_1)({}^{\sigma_1^{-1}}u_9).
\end{align*}
Put $w_2=({}^{\ell(\sigma_1)^{-1}}w_1)\ell_0(\sigma_1)$, it is again a representative from $W(M_P)\backslash W(H)$
and
\begin{align*}
\mathcal{R}(w_2)=
\mathcal{R}(w_1\ell_0(\sigma_1))=\mathcal{R}(w_1)-\{i\}.
\end{align*}
Let $u_{10}={}^{\sigma_1^{-1}}u_9$. We have $\ell_0(u_{10})=\ell_0(u_9)$ if $H=\Sp_{2kc}$ or $c$ is odd, otherwise $\ell_0(u_{10})$ differs from $\ell_0(u_9)$ only in the middle $2$ columns: these columns are exchanged because of $\jmath_1$. The element
$\ell(u_{10})$ need not be in $N_H$ anymore, only in $M_Q$, but ${}^{\sigma_1}u_{10}\in M_Q\cap N_H$ and by \eqref{eq:conj x by g1 g2},
also ${}^{\ell(\sigma_1)}\ell(u_{10})\in (H_0\backslash M_Q)\cap N_H$. Since we can apply this procedure separately to each row $i$, we can assume the $i$-th row of $v({}^{\jmath_a}\ell_0(u_{10}))$ is nonzero if and only if $i\in\mathcal{R}(w_2)$.
However, we can no longer assume $\ell(u_{10})\in N_H$.

Regard $\GL_n$ as the direct factor of the standard Levi subgroup
$\GL_n\times\mathcal{G}_{c-2n}$ of $G$. For any representative $g$ of an element of $W(\GL_n)$, set $\sigma_2=(g,1)$. Given
arbitrary sets $\mathcal{R}(w')$ and $\mathcal{R}'\subset\{1,\ldots,n\}$ of the same size, one can find $\sigma_2$ for which $\mathcal{R}({}^{\sigma_2^{-1}}w')=\mathcal{R}'$. Because $i\in \mathcal{R}(w_2)$ if and only if
the $i$-th row of $v({}^{\jmath_a}\ell_0(u_{10}))$ is nonzero, we can choose $\sigma_2$ such that $\mathcal{R}({}^{\sigma_2^{-1}}w_2)=\{1,\ldots,l\}$, where $0\leq l\leq n$ is the size of $\mathcal{R}(w_2)$, and simultaneously
\begin{align}\label{eq:starting form u'''}
\ell_0({}^{\jmath_a}({}^{\sigma_2^{-1}}u_{10}))=
\left(\begin{smallmatrix}
I_l&&v&&&&\\&I_{n-l}\\&&I_n\\&&&I_{2(c-2n)}\\&&&&I_n&&v'\\&&&&&I_{n-l}\\&&&&&&I_l
\end{smallmatrix}\right)
\end{align}
where none of the rows of $v$ are zero. Set $w_3={}^{\sigma_2^{-1}}w_2$ and $u_{11}={}^{\sigma_2^{-1}}u_{10}$. Since $\sigma_2\in (G,1)\cap P$,
\begin{align*}
w_2u_{10}\sim w_2u_{10}\sigma_2=
\sigma_2({}^{\sigma_2^{-1}}w_2)({}^{\sigma_2^{-1}}u_{10})\sim w_3u_{11}.
\end{align*}
Now $\mathcal{R}(w_3)=\{1,\ldots,l\}$ and note that $w_3$ is still of the form
\eqref{eq:w_1} (with possibly a different $a$, but of the same parity), because when we pass to $w_2$ and then to $w_3$, we do not change the coordinates $2,\ldots,\lceil c/2\rceil$ of $w_1$ ($c-i\geq \lceil c/2\rceil$ for $i\leq n$).
Moreover, ${}^{w_3}({}^{\jmath_a}(1,g))\in M_P$ for any $g\in\GL_n$ (where $\GL_n<\GL_n\times\mathcal{G}_{c-2n}<G$);
if $\jmath_a$ is trivial, $w_3$ simply commutes with $(1,\GL_n)$. Also
\begin{align}\label{eq:important prop of u9}
{}^{\sigma_1\sigma_2}u_{11}={}^{\sigma_1}u_{10}\in M_Q\cap N_H.
\end{align}

The rank of $v$ in \eqref{eq:starting form u'''} is at most $l$, whence we can further use ${}^{\jmath_a}(1,g_0)$ with $g_0\in\GL_n$ to reduce $v$ to an $l\times l$ block (e.g., in a column reduced echelon form). Denote $\hat{w}=w_3$ and $\hat{u}={}^{{}^{\jmath_a}(1,g_0)^{-1}}u_{11}$. Now
$\mathcal{R}(\hat{w})=\{1,\ldots,l\}$ and $\hat{w}$ takes the form \eqref{eq:start condition of w}, namely $\jmath_a(0^{c-l},1^{l},*^{(k-1)c})$.
Since ${}^{w_3}{}^{\jmath_a}(1,g)\in M_P$ for any $g\in\GL_n$,
$w_3u_{11}\sim\hat{w}\hat{u}$.

Regarding $\hat{u}$, ${}^{\jmath_a}\ell_0(\hat{u})$ takes the form \eqref{eq:second form u final} with
$A_l=v$. Denote $\sigma=\sigma_1\sigma_2$ with the notation above.
We claim ${}^{\sigma}\hat{u}\in N_H$ (clearly ${}^{\sigma}\hat{u}\in M_Q$). Since the conjugation by ${}^{\jmath_a}(1,g_0)^{-1}$ only affects the columns of $v$ and rows of $v'$ in
\eqref{eq:starting form u'''}, the result follows from
\eqref{eq:important prop of u9}.
\end{proof}
While it is relatively straightforward to obtain condition \eqref{psi U nontrivial} when $h=w$,
the representatives $wu$ are more difficult to describe, because of the form of $\ell(u)$.
The following lemma implies that (with our current structure of $u$) it is sufficient to obtain \eqref{psi U nontrivial} for $w$.
\begin{lemma}\label{lemma:easier condition on psiU}
Let $h=wu$, where $w$ and $u$ are given by Proposition~\ref{proposition:structure of w u}. Assume
\begin{align}\label{psi U nontrivial using only w}
\psi_U|_{U\cap {}^{w^{-1}}U_P}\ne1.
\end{align}
Then \eqref{psi U nontrivial} holds as well, namely
$\psi_U|_{U\cap {}^{h^{-1}}U_P}\ne1$.
\end{lemma}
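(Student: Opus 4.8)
The plan is to first strip off the unipotent factor $u$, reducing the statement to a question about conjugating $\psi_U$, and then to combine the explicit shape of $u$ furnished by Proposition~\ref{proposition:structure of w u} with the list of root subgroups on which $\psi_U$ is nontrivial.

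First I would use that $u\in M_Q$ normalizes $U=U_Q$, so conjugation by $u^{-1}$ is an automorphism of $H$ preserving $U$ and
\[
U\cap{}^{h^{-1}}U_P=U\cap{}^{u^{-1}}\bigl({}^{w^{-1}}U_P\bigr)={}^{u^{-1}}\bigl(U\cap{}^{w^{-1}}U_P\bigr),
\]
so that $\psi_U|_{U\cap{}^{h^{-1}}U_P}\ne1$ is equivalent to ${}^{u}\psi_U$ being nontrivial on $U\cap{}^{w^{-1}}U_P$. By Proposition~\ref{proposition:structure of w u} we may write $u=\sigma^{-1}u'\sigma$ with $\sigma=(g,1)\in(G,1)$ and $u'={}^{\sigma}u\in M_Q\cap N_H$, and $(G,1)\subseteq(G,G)$ both normalizes $U$ and stabilizes $\psi_U$. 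Since $\sigma$ stabilizes $\psi_U$, transporting everything by $\sigma$ turns the claim into the following: if $\psi_U$ is nontrivial on $V:=U\cap{}^{\sigma w^{-1}}U_P={}^{\sigma}(U\cap{}^{w^{-1}}U_P)$ — which is exactly the hypothesis \eqref{psi U nontrivial using only w} moved by $\sigma$ — then ${}^{u'}\psi_U$ is nontrivial on $V$, where $u'$ is now a product of root subgroups $U_\mu$ with $\mu$ a \emph{positive} root of $M_Q$.

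Write $V=\prod_{\alpha\in S}U_\alpha$ with $S=\Phi_Q^+\cap(\sigma w^{-1})\Phi_P^+$; by hypothesis there is $\alpha_0\in S$ with $\psi_U|_{U_{\alpha_0}}\ne1$, and since $\psi_U$ is a character the root subgroup $U_{\alpha_0}$ is not contained in $[U,U]$. It then suffices to show that $\psi_U$ is nontrivial on the one-parameter group ${}^{u'^{-1}}U_{\alpha_0}\subseteq{}^{u'^{-1}}V$. Projecting $t\mapsto{}^{u'^{-1}}x_{\alpha_0}(t)$ to $U/[U,U]$, through which $\psi_U$ factors, shows that $\psi_U$ along ${}^{u'^{-1}}U_{\alpha_0}$ is $t\mapsto\psi\bigl((\lambda_{\alpha_0}+\sum_{\gamma}\lambda_{\gamma}c_{\gamma})t\bigr)$, where $\lambda_{\beta}$ is the coefficient of $\psi_U$ along $U_{\beta}$, $\lambda_{\alpha_0}\ne0$, and $\gamma$ runs over the roots of the form $\alpha_0+(\text{a nonzero sum of positive roots of }M_Q)$ whose root space survives in $U/[U,U]$. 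The crux — the step I expect to cost the most work — is the combinatorial assertion that among the roots on which $\psi_U$ is nontrivial (the diagonal roots of the blocks carrying $\psi_{k-1}$, the diagonal roots of the corners $u^{1,1}$ and $u^{2,2}$, and, for odd $c$, the extra root from $u^3,u^4$) no pair $\alpha_0,\gamma$ has $\gamma-\alpha_0$ a nonzero nonnegative combination of positive roots of $M_Q$. Granting it, every $\lambda_{\gamma}$ occurring above vanishes, so $\psi_U|_{{}^{u'^{-1}}U_{\alpha_0}}\ne1$ and the lemma follows. This assertion is a finite check: roots in different levels of $U_Q$ are not congruent modulo the root lattice of $M_Q$, while two $\psi_U$-nontrivial roots of the same level sit on the diagonal of a single matrix block, so their difference is a sum of a positive root and a negative root of the two $\GL_c$-factors flanking that block, never a nonnegative combination; for odd orthogonal $H$ one additionally uses the explicit shape \eqref{eq:second form u final} of ${}^{\jmath_a}\ell_0(\hat u)$, supported only in the $(1,3)$ and mirror block positions of $H_0$ — whose columns are disjoint from those carrying $u^{1,1},u^{2,2}$ — to bound the case analysis.
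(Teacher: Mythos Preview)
Your approach is the paper's: pass from $u$ to $u'={}^\sigma u\in M_Q\cap N_H$, pick a $\psi_U$-nontrivial root subgroup $U_{\alpha_0}$ inside the transported group $V$, and show that conjugation by $u'^{-1}$ cannot kill the $\alpha_0$-contribution to $\psi_U$. The paper carries this out in matrix coordinates, block by block; your root-theoretic reformulation is a legitimate and slightly more uniform variant.

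One point needs repair. Your justification of the combinatorial assertion (``two $\psi_U$-nontrivial roots of the same level sit on the diagonal of a single matrix block, so their difference is a positive plus a negative root of the two flanking $\GL_c$-factors'') is correct for the blocks $B_i$ inside $V_{(c^{k-1})}$, but the block $u\in\Mat_{c\times 2c}$ carrying $u^{1,1},u^{2,2}$ (and, for odd $c$, $u^3,u^4$) is flanked by $\GL_c$ and $H_0$, not two $\GL_c$'s, and the relevant $\psi_U$-nontrivial roots do not all sit on one diagonal. Your fallback for odd $c$ appeals to the shape \eqref{eq:second form u final} of $\ell_0(\hat u)$, but after your transport by $\sigma$ it is $u'={}^\sigma\hat u$, not $\hat u$, that governs the conjugation, so this reference does not directly help. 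In fact your root-theoretic assertion \emph{is} still true for these cases --- e.g.\ for $\alpha_0\in u^{1,1}$ and $\gamma\in u^{2,2}$ the $\GL_c$-component of $\gamma-\alpha_0$ is a single negative root, and for $\alpha_0=u^3,\gamma=u^4$ one has $\gamma-\alpha_0=2\epsilon_{kc}$, which is not a nonnegative combination of positive roots in type $D_c$ --- but you should supply these verifications rather than gesture at \eqref{eq:second form u final}. The paper handles this range by direct matrix inspection of the positions ``above or to the right'' of the chosen coordinate, invoking in the exceptional $u^3\to u^4$ case the fact that the $(kc,kc+1)$ entry of any element of $N_{\SO_{2kc}}$ vanishes.
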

\begin{proof}
By \eqref{psi U nontrivial using only w} there exists a root in $U$, such that for the subgroup $Y<U$ generated by this root, ${}^wY<U_P$ and $\psi_U|_Y\ne1$. Since $u\in M_Q$, it normalizes $U$ whence ${}^{u^{-1}}Y<U$, and also ${}^h({}^{u^{-1}}Y)={}^wY<U_P$. It remains to show $\psi_U|_{{}^{u^{-1}}Y}\ne1$, since then \eqref{psi U nontrivial} holds.

We can identify the quotient of $U$ by its commutator subgroup, with the direct product of $k-2$ copies of
$\Mat_c$, and one copy of $\Mat_{c\times 2c}$. The root defining $Y$ corresponds to a coordinate $(i,j)$ in one of these copies.
Looking at the definition of $\psi_U$, we can be more specific. Either $(i,j)$ belongs to one of the $k-2$ blocks of size $c\times c$, on which $\psi_U$ is given by $\psi\circ\tr$, or $(i,j)$ belongs to one of two $n\times n$ blocks inside $\Mat_{c\times 2c}$
($u^{1,1}$ or $u^{2,2}$, see \eqref{matrix:middle 4c block of u}), and again $\psi_U$ restricts to $\psi\circ\tr$ on these blocks. Denote the block by $B$. In both cases, since $\psi_U|_Y\ne1$, the coordinate $(i,j)$ appears as a diagonal coordinate of $B$. When $c$ is odd there is a third possibility, that $(i,j)$ appears in the block $B\in\Mat_{1\times2}$ on which $\psi_U$ is given by
$\psi(\epsilon_1B_{1,1}-\epsilon_2B_{1,2})$, and $(i,j)$ is either the coordinate of $B_{1,1}$ or $B_{1,2}$. In this case also note that $w$ of the prescribed structure can not permute both $B_{1,1}$ and $B_{1,2}$ into $U_P$.

Write ${}^{\sigma}u=u_0^{-1}\in M_Q\cap N_H$, where $\sigma\in (G,1)$ is given by Proposition~\ref{proposition:structure of w u}. Then
${}^{\sigma}Y$ is again a root subgroup, and since conjugation by $\sigma$ permutes the coordinates of $U$ and stabilizes $\psi_U$,
${}^{\sigma}Y$ is still defined be a coordinate $(i,j)$ which belongs to one of the blocks $B'$ described above. In fact if $B\in\Mat_{c}$, we must have $B'=B$; if $B\in\Mat_{n}$, there are two options for $B'$, one of which is $B$; and for $B\in\Mat_{1\times 2}$, $B'=B$. Of course $\psi_U$ is nontrivial on ${}^{\sigma}Y$.

The conjugation of ${}^{\sigma}Y$ by $u_0$ must be performed with more care, because $u_0$ normalizes $U$ but may not stabilize $\psi_U$.
First consider the case where $B=B'\in\Mat_{c}$. Then ${}^{\sigma}Y$ is the root subgroup defined by the $(d,d)$-th diagonal coordinate in $B$, for some $1\leq d\leq c$. For a fixed element $y\in Y$, assume the $(d,d)$-th coordinate of ${}^{\sigma}y$ is $x\ne0$. It is the only nonzero coordinate in the projection of $y$ to $B$. Since $u_0\in M_Q$, the nontrivial coordinates of ${}^{u_0}({}^{\sigma}y)$ are still contained in $B$.
This means that the only nonzero coordinates of ${}^{u_0}({}^{\sigma}y)$ on which $\psi_U$ can possibly be nontrivial, are coordinates in the block $B$. Because $u_0\in M_Q\cap N_H$, the $(d,d)$-th coordinate of
${}^{u_0}({}^{\sigma}y)$ is still $x$, and all other nontrivial coordinates belong to the set of coordinates in $B$ of the form $\{(i',j')\ne(d,d):i'\leq d,j'\geq d\}$, i.e., are above or to the right of the $(d,d)$-th coordinate. Therefore
$\psi_U({}^{u_0}({}^{\sigma}y))=\psi(x)$, hence $\psi_U$ is nontrivial on ${}^{u_0\sigma}Y$.

Next assume $B'\in\Mat_{n}$ and proceed with similar notation. Now ${}^{u_0}({}^{\sigma}y)$ can contain nontrivial coordinates outside $B'$. Assume $B'$ is the top left $n\times n$ block in $\Mat_{c\times2c}$ (i.e., $u^{1,1}$). Then ${}^{u_0}({}^{\sigma}y)$ contains $x$ in the $(d,d)$-th coordinate, $1\leq d\leq n$, and arbitrary elements in the coordinates
$(i',j')\ne(d,d)$, where $i'\leq d$ only varies over the rows of $B'$, but $j'$ varies over all columns $j'\geq d$ of $B'$ and also the columns to the right of $B'$, up to the rightmost column of $\Mat_{c\times2c}$ (this is the $(k+1)c$-th column as a matrix in $U$). Otherwise $B'$ is the bottom right block (which is $u^{2,2}$). Then ${}^{u_0}({}^{\sigma}y)$ contains $x$ in the $(d,d)$-th coordinate and may contain nontrivial coordinates for $(i',j')\ne(d,d)$, where $i'$ varies over the rows $i'\leq d$ of $B'$ and the rows above $B'$, up to the first row of
$\Mat_{c\times2c}$ (row $(k-2)c+1$ for matrices in $U$),
and $j'\geq d$ only varies over columns of $B'$. In both cases $\psi_U$ is trivial on all of the possibly nonzero coordinates $(i',j')$,
and the $(d,d)$-th coordinate is $x$, thus $\psi_U|_{{}^{u_0\sigma}Y}\ne1$.

If $c$ is odd we also consider $B=B'\in\Mat_{1\times2}$. Observe that now $\psi_U$ is trivial on all coordinates above or to the right of
$B_{1,2}$, and also on all coordinates above or to the right of $B_{1,1}$ except $B_{1,2}$. Hence if the nonzero coordinate $x$ of ${}^{\sigma}y$ is in $B_{1,2}$, $\psi_U|_{{}^{u_0\sigma}Y}\ne1$, but also if $x$ is in $B_{1,1}$ we have $\psi_U|_{{}^{u_0\sigma}Y}\ne1$, because multiplying $u_0({}^{\sigma}y)$ on the right by $u_0^{-1}$ leaves $B_{1,2}$ zero (when $c$ is odd, the $(kc,kc+1)$-th coordinate of any element of $N_H$ is zero).

Now because $\sigma\in (G,1)$, it immediately follows that $\psi_U$ is nontrivial on
${}^{\sigma^{-1}u_0\sigma}Y={}^{u^{-1}}Y$, completing the proof of the lemma.
\end{proof}

Re-denote $h=wu$ where $w$ and $u$ satisfy the properties of Proposition~\ref{proposition:structure of w u}. In particular $w$ defines the integer $0\leq l\leq n$.
\begin{proposition}\label{proposition:1st reduction of w}
We have $\mathcal{H}(h)=0$ unless
\begin{align}\label{eq:w_i first reduction}
&w_i=(1^{\lceil c/2\rceil},*^{n-l},1^{{l}}),\qquad\forall 1<i\leq k.
\end{align}
\end{proposition}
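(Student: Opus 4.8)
The plan is to apply the first vanishing argument of \S\ref{the vanishing 3 types} (incompatibility of $\psi_U$ with the Siegel unipotent). Concretely, I will show that if \eqref{eq:w_i first reduction} fails for some $i$, then $\psi_U|_{U\cap{}^{w^{-1}}U_P}\ne1$, so that by Lemma~\ref{lemma:easier condition on psiU} also $\psi_U|_{U\cap{}^{h^{-1}}U_P}\ne1$, whence $\mathcal{H}(h)=0$. Thus it suffices to exhibit, under the negation of \eqref{eq:w_i first reduction}, a root subgroup $Y<U$ on which $\psi_U$ is nontrivial and with ${}^wY<U_P$; this is exactly the setting that Lemma~\ref{lemma:easier condition on psiU} was designed to support.

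First I list the root subgroups $Y<U$ carrying a nontrivial restriction of $\psi_U$. From the definition of $\psi_U$ in \S\ref{Doubling setup} these fall into the families: (a) for each $1\le j\le k-2$, the $c$ diagonal root subgroups of the block joining the $j$-th and $(j+1)$-st copies of $\GL_c$ in $M_Q$ (on which $\psi_U$ restricts to $\psi_{k-1}$ on $V_{(c^{k-1})}$); (b) the $n$ diagonal root subgroups of the block $u^{1,1}$, and (c) the $n$ diagonal root subgroups of $u^{2,2}$, coming from the middle block \eqref{matrix:middle 4c block of u}; and (d), only when $c$ is odd, the two root subgroups given by the coordinates $u^3$ and $u^4$. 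For a root $r$ of $U$ I then record the criterion for ${}^wr<U_P$ in terms of the $kc$-tuple $w=\jmath_a(0^{c-l},1^l,w_2,\ldots,w_k)$: writing $r=\epsilon_p-\epsilon_q$ (a root of $M_P\cap N_H$) or $r=\epsilon_p+\epsilon_q$ (a root of $U_P$), and reading off from the coordinates of $w$ at the positions of $p$ and $q$ whether these are sign-flipped, ${}^wr$ is a root of $U_P$ exactly for prescribed configurations of the two corresponding bits (for $\epsilon_p-\epsilon_q$, precisely the ``lower'' index must be flipped; for $\epsilon_p+\epsilon_q$, neither may be flipped), subject to the parity twist by $\jmath_a$.

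With this in hand I run the case analysis. Identifying the coordinates of $w_i$ ($i>1$) with positions inside the corresponding copy of $\GL_c$: if $w_i$ has a $0$ among its first $\lceil c/2\rceil$ coordinates, one of the root subgroups in family (a), or in family (b) for the copy adjacent to $H_0$, is mapped by $w$ into $U_P$; if $w_i$ has a $0$ among its last $l$ coordinates, the same holds for a root subgroup in family (a) or (c). When $c$ is odd the coordinates $u^3,u^4$ are handled separately, using --- as in the proof of Lemma~\ref{lemma:easier condition on psiU} --- that $w$ of the prescribed structure cannot carry both of them into $U_P$. In each case $\psi_U$ is nontrivial on the root subgroup produced, which proves the claim; Proposition~\ref{proposition:structure of w u} guarantees that we only need to treat $w$ of this normal form.

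The main obstacle is purely the bookkeeping: correctly translating the shape of $w$ (including the $\jmath_a$ twist and, when $H\ne\Sp_{2kc}$, the restriction to even products of sign changes) into conditions on the bits of the $w_i$, and organizing the argument so that a single $0$ in a forbidden slot of some $w_i$ already forces one support root of $\psi_U$ into $U_P$. The odd orthogonal and $\GSpin$ cases, with the extra coordinates $u^3,u^4$ and the half-integral weights $\epsilon_1,\epsilon_2$, will demand the most care.
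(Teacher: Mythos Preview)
Your plan to reduce everything to Lemma~\ref{lemma:easier condition on psiU} (i.e.\ to the condition $\psi_U|_{U\cap{}^{w^{-1}}U_P}\ne1$ on $w$ alone) has a genuine gap: it fails for the first $l$ coordinates of $w_2$ when $l>0$. Take even $c$ and $k=2$ for concreteness. The $\psi_U$-support roots are the diagonals of $u^{1,1}$ and $u^{2,2}$. The $(j,j)$ entry of $u^{1,1}$ is the root $\epsilon_{(k-2)c+j}-\epsilon_{(k-1)c+j}$; the second index lies in the block governed by $w_1=(0^{c-l},1^l)$ and is flipped exactly for $j\le l$, so these roots go to $U_P$ under $w$ iff $w_2[c-j+1]=0$ for some $j\le l$, which constrains the \emph{last} $l$ coordinates of $w_2$ (you attributed this to family~(c)). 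The $(j,j)$ entry of $u^{2,2}$ is the root $\epsilon_{(k-1)c-n+j}+\epsilon_{(k-1)c+n-j+1}$; again the second index is governed by $w_1$ and is flipped for $j>n-l$, so only the top $n-l$ diagonals of $u^{2,2}$ can ever land in $U_P$, and they constrain $w_2[l+1],\ldots,w_2[n]$. In particular, if $w_2[1]=0$ (and $l\ge1$), \emph{no} $\psi_U$-support root is carried by $w$ into $U_P$; condition~\eqref{psi U nontrivial using only w} simply fails here, so Lemma~\ref{lemma:easier condition on psiU} is unavailable.

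This is exactly why the paper does \emph{not} use the lemma for this sub-case: it constructs a subgroup $Y<U$ depending on $A_l$ (the block of $\ell_0(u)$ in \eqref{eq:second form u final}), and verifies $\psi_U|_{Y_i}\ne1$ and ${}^{h}Y_i<U_P$ directly, i.e.\ condition~\eqref{psi U nontrivial} for the full $h=wu$. The point is that after conjugation by $u$, the block $yA_l'$ of an element of $Y$ lands in the bottom right $l\times l$ corner of $u^{2,2}$, and since $A_l$ has no zero rows (guaranteed by Proposition~\ref{proposition:structure of w u}) the character is nontrivial there. So your proof needs one additional ingredient beyond the root-subgroup bookkeeping: for the first $l$ slots of $w_2$ you must work with $h$, not just $w$, and invoke the no-zero-rows property of $A_l$. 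Your outline for the remaining coordinates is essentially the paper's argument (with the roles of $u^{1,1}$ and $u^{2,2}$ swapped), and note also that the passage from $w_2$ to $w_i$, $i>2$, via family~(a) is inductive: you must have already pinned down $w_{i-1}=(1^{\lceil c/2\rceil},*^{n-l},1^{l})$ before the block $v_{k-i+1,k-i+2}$ can constrain $w_i$.
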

\begin{proof}
For $k=1$ there is nothing to prove, assume $k>1$. Write $w_2=(w_2',w_2'')$ with
$w_2'\in\{0,1\}^{\lceil c/2\rceil}$, $w_2''\in\{0,1\}^{n}$. If $w_2'$ is not of the form $(1^{{l}},*^{\lceil c/2\rceil-l})$, then $l>0$ (for $l=0$, $w_2'$ is automatically of the form $(1^{{l}},*^{\lceil c/2\rceil-l})=(*^{\lceil c/2\rceil})$). Let $Y<U$ be the subgroup
of elements with the middle $2(c+l)\times2(c+l)$ block of the form
\begin{align*}
\left(\begin{smallmatrix}
I_l&&&&&y&&yA_l'\\
&I_l&&&&&&&-A_ly'\\
&&I_{n-l}&&\\
&&&I_l&&&&&y'\\
&&&&I_{2c-2(n+l)}\\
&&&&&I_l\\
&&&&&&I_{n-l}\\
&&&&&&&I_l\\
&&&&&&&&I_l\end{smallmatrix}\right).
\end{align*}
Recall $\psi_U$ restricts to $\psi\circ\tr$ on the block $u^{2,2}$ (see \eqref{matrix:middle 4c block of u}). The block $yA_l'$ above occupies the bottom right $l\times l$ block of $u^{2,2}$.
Since there are no zero rows in $A_l$, there are no zero columns in $A_l'$. Hence for each $1\leq i\leq l$, the form $y\mapsto (yA_l')_{i,i}$ on $\Mat_l$ is not identically $0$. Then if one of the first $l$ coordinates of $w_2'$ is $0$, we can take a subgroup $Y_i<Y$ on which
$\psi_U|_{Y_i}\ne1$ and ${}^hY_i<U_P$, hence $\mathcal{H}(h)=0$ by \eqref{psi U nontrivial}. Thus
we can write $w_2'=(1^{l},*^{\lceil c/2\rceil-l})$ (whether $l>0$ or $l=0$).

If $w_2'\ne (1^{n},*^{\lceil c/2\rceil-n})$, one of the rows from the top left $n-l\times n-l$ block of $u^{2,2}$ is conjugated by $w$ into $U_P$.
Hence we can take a subgroup $Y<U$ such that $\psi_U|_Y\ne1$ and ${}^wY<U_P$, then
$\mathcal{H}(h)=0$ by \eqref{psi U nontrivial using only w}.

If $c$ is odd, $\psi_U$ restricts to a nontrivial character on the middle two coordinates $(u^3,u^4)$ of row $n+1$ in \eqref{matrix:middle 4c block of u}, and the columns of $u^3$ and $u^4$ are either swapped or remain unchanged by $w$.
Then if $w_2'\ne (1^{\lceil c/2\rceil})$, one of these coordinates is conjugated by $w$ into $U_P$, and if $Y<U$ is defined by this coordinate, we have $\psi_U|_Y\ne1$ and ${}^wY<U_P$. Thus $\mathcal{H}(h)=0$ by \eqref{psi U nontrivial using only w}. (Because $2c-2(n+l)\geq2$, ${}^uY=Y$ and we can also apply \eqref{psi U nontrivial} directly.) Thus
$w_2'=(1^{\lceil c/2\rceil})$ whether $c$ is even or odd. We proceed for all $c$.

Recall $\psi_U$ restricts to $\psi\circ\tr$ on the
top left $l\times l$ block of $u^{1,1}$. Since the first $c$ coordinates of $w$ are $\jmath_a(0^{c-l},1^l)$, $w$ permutes the columns of this block into columns in $U_P$, hence if
$w_2''\ne(*^{n-{l}},1^{{l}})$, we can again find $Y<U_P$ such that
$\psi_U|_Y\ne1$ and ${}^wY<U_P$, so that $\mathcal{H}(h)=0$ by \eqref{psi U nontrivial using only w}. Altogether
$w_2=(1^{\lceil c/2\rceil},*^{n-l},1^{l})$.

If $k=2$ we are done, assume $k>2$. We show $w_3=(1^{\lceil c/2\rceil},*^{n-l},1^{l})$. Recall $V_{(c^{k-1})}<U$.
Because $w_2=(1^{\lceil c/2\rceil},*^{n-l},1^{l})$, $w$ conjugates the last $\lceil c/2\rceil$ and first $l$ columns
of $v_{k-2,k-1}$ (see \S~\ref{representations} for this notation) into $U_P$. Hence if $w_3\ne(1^{\lceil c/2\rceil},*^{n-l},1^{l})$, a diagonal coordinate of one of the blocks inside $v_{k-2,k-1}$, namely the bottom right $\lceil c/2\rceil\times\lceil c/2\rceil$ block if $w_3\ne(1^{\lceil c/2\rceil},*^{n})$, or the top left
$l\times l$ block if $w_3\ne(*^{c-{l}},1^{{l}})$, is conjugated by $w$ into $U_P$, so that if $Y<U$ is generated by this coordinate, $\mathcal{H}(h)=0$ by \eqref{psi U nontrivial using only w}.
Proceeding in this manner for $3<i\leq k$, each time using $v_{k-i+1,k-i+2}$ and \eqref{psi U nontrivial using only w}, we deduce $w_i=(1^{\lceil c/2\rceil},*^{n-l},1^{l})$.
\end{proof}

For each $1<i\leq k$, since $w_i$ takes the form \eqref{eq:w_i first reduction}, we can uniquely identify a maximal integer
$0\leq d_{i-1}\leq n-l$ such that $w_i=(1^{\lceil c/2\rceil},*^{n-l-d_{i-1}},1^{l+d_{i-1}})$. By maximality $(*^{n-l-d_{i-1}})=(*^{n-l-d_{i-1}-1},0)$ (if $d_{i-1}<n-l$),
but the remaining coordinates are still undetermined. As we show next if $\mathcal{H}(h)\ne0$, we can replace $h$ by a representative for which $(*^{n-l-d_{i-1}})=(0^{n-l-d_{i-1}})$ (this may mean the integers $d_{i-1}$ are larger), and even fix an ascending order on $d_{i-1}$. Note that for $k=1$, the integers $d_{i-1}$ are undefined.
\begin{proposition}\label{proposition:2nd reduction of w}
We have $\mathcal{H}(h)=0$, unless $h\sim \hat{w}\hat{u}$ where
\begin{align}\label{eq:w_i second reduction}
&\hat{w}=\jmath_a(0^{c-l},1^{l},w_2,\ldots,w_k),\quad\forall 1<i\leq k, \, w_i=(1^{\lceil c/2\rceil},0^{n-l-d_{i-1}},1^{l+d_{i-1}}),\quad d_1\leq\ldots\leq d_{k-1},
\end{align}
and $\hat{u}$ satisfies the conditions of Proposition~\ref{proposition:structure of w u}, in particular
$\ell_0(\hat{u})$ takes the form \eqref{eq:second form u final} (and $A_l$ does not have any zero row).
\end{proposition}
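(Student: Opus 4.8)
The plan is to start from a representative $h=wu$ as produced by Proposition~\ref{proposition:1st reduction of w}, so that each $w_i$ with $1<i\le k$ already has the shape $(1^{\lceil c/2\rceil},*^{n-l},1^l)$ and $d_{i-1}$ is the maximal integer with $w_i=(1^{\lceil c/2\rceil},*^{n-l-d_{i-1}},1^{l+d_{i-1}})$, and to clean this up in two stages. Throughout, the only operations used are right/left multiplication of $u$ by elements of $M_Q\cap N_H$ and of $P$, and conjugation by permutation and sign-flip Weyl representatives of the embedded copies $(G,1)$ and $(1,G)$ that $w$ carries into $M_P$; as in the proof of Proposition~\ref{proposition:structure of w u}, these preserve the assertions that ${}^{\sigma}\hat u\in M_Q\cap N_H$ for a suitable $\sigma=(g,1)$ and that ${}^{\jmath_a}\ell_0(\hat u)$ has the form \eqref{eq:second form u final} with no zero row in $A_l$ (and one keeps track of the parity of $a$ exactly as there).

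In the first stage, fix $i$ with $1<i\le k$ and examine the undetermined coordinates of $w_i$, which sit in the positions attached to the $\GL_{n-l}$-direct factor of the Levi $\GL_l\times\GL_{n-l}\times\mathcal G_{c-2n}$ of $G$; these positions are disjoint from the rows occupied by $A_l$ in \eqref{eq:second form u final}, so conjugating by permutation Weyl representatives of that $\GL_{n-l}$ (or by the corresponding sign-flips, as in Proposition~\ref{proposition:structure of w u}) only moves those coordinates around without disturbing $A_l$. Processing the copies $i=k,k-1,\dots,2$ in turn, for each $1$ among the undetermined coordinates of $w_i$ we are in one of two situations: either the $1$ can be moved away by such a conjugation into the trailing block of $1$'s (which merely enlarges $d_{i-1}$), or it is pinned against the block of $V_{(c^{k-1})}<U$ adjacent to the $i$-th copy of $\GL_c$ (for $i=2$, against the block $u^{1,1}$ inside $H_0$), on which $\psi_U$ restricts to $\psi\circ\tr$; since the neighbouring block $w_{i-1}$ already has its first $\lceil c/2\rceil$ coordinates equal to $1$, a pinned $1$ forces $w$ to carry a \emph{diagonal} entry of that block into $U_P$, so \eqref{psi U nontrivial using only w} holds and $\mathcal{H}(h)=0$ by Lemma~\ref{lemma:easier condition on psiU} and the first vanishing argument of \S~\ref{the vanishing 3 types}. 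Hence, unless $\mathcal{H}(h)=0$, every $w_i$ can be taken of the form $(1^{\lceil c/2\rceil},0^{n-l-d_{i-1}},1^{l+d_{i-1}})$; the odd-$c$ case requires the additional attention to the coordinates $(u^3,u^4)$, handled exactly as in Proposition~\ref{proposition:1st reduction of w}. In the second stage, assume all $w_i$ are of this form and suppose $d_{j}>d_{j+1}$ for some $1\le j\le k-2$. Comparing the blocks $w_{j+1}$ and $w_{j+2}$ against the super-diagonal block of $V_{(c^{k-1})}<U$ lying between the two corresponding copies of $\GL_c$, on which $\psi_U$ is again $\psi\circ\tr$, the strict drop in depth means $w$ carries a diagonal entry of that block into $U_P$, so $\psi_U|_{U\cap{}^{w^{-1}}U_P}\ne1$ and $\mathcal{H}(h)=0$ as before. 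Therefore the representatives with $\mathcal{H}(h)\ne0$ are, up to equivalence, precisely those with $d_1\le\cdots\le d_{k-1}$, i.e., the form \eqref{eq:w_i second reduction}; and for such a representative the accompanying $\hat u$ still satisfies all the conditions of Proposition~\ref{proposition:structure of w u} because only the operations listed above were used.

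The main obstacle is the combinatorial bookkeeping in the first stage: one has to pin down precisely which undetermined coordinates of $w_i$ are movable by the available $(1,G)$- and $(G,1)$-conjugations and which are genuinely pinned against a $\psi\circ\tr$-block, and verify that none of these moves collides with the structure already secured by Propositions~\ref{proposition:structure of w u} and \ref{proposition:1st reduction of w} — in particular that after all of them ${}^{\jmath_a}\ell_0(\hat u)$ still has the form \eqref{eq:second form u final} with no zero row in $A_l$, that $\ell(\hat u)$ can still be conjugated into $M_Q\cap N_H$, and that the parity of $a$ is controlled. The case analysis for odd $c$ (the middle two coordinates $u^3,u^4$, and the fact that $w$ of the prescribed structure cannot permute both into $U_P$) will, as elsewhere in \S~\ref{section not GL}, require separate but parallel treatment.
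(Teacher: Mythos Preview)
Your first stage has a gap. The conjugations by Weyl representatives of the $\GL_{n-l}$ factor of $G$, embedded via $(g,1)$, act on all blocks $w_2,\ldots,w_k$ simultaneously (since $(g,1)$ enters diagonally in $M_Q$); you cannot ``process the copies $i=k,k-1,\dots,2$ in turn'' and move a single $1$ in $w_i$ without moving the same position in every other $w_{i'}$. Your dichotomy ``movable versus pinned'' is therefore not well-posed as stated, and your justification for the pinned case is the wrong comparison: you invoke the first $\lceil c/2\rceil$ coordinates of the neighbouring $w_{i-1}$, but those coordinates lie in a different position range from the undetermined middle block $w_i'$ of $w_i$, so they do not produce the diagonal entry you claim.

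The paper's argument runs in the opposite order. It first establishes a column-wise monotonicity: writing $w_i=(1^{\lceil c/2\rceil},w_i',1^l)$, if $w'_{i_0}[j]=1$ for the minimal such $i_0$, then $w'_i[j]=1$ for all $i\geq i_0$, else $\mathcal{H}(h)=0$ via a coordinate in the $v^4$-block of $v_{k-i+1,k-i+2}$ (this compares the \emph{same} position $j$ in $w'_{i-1}$ and $w'_i$, not different position ranges as you do). Only then does it sort, using transpositions of adjacent positions $(j,j+1)$ from $(W(\GL_{n-l}),1)$; monotonicity guarantees that whenever the pair $(w'_{i_0}[j],w'_{i_0}[j+1])=(1,0)$ at the minimal height $i_0$, at every larger height the pair is either $(1,0)$ (so the swap helps) or $(1,1)$ (so the swap is harmless), and the sorted form together with monotonicity gives $d_1\le\ldots\le d_{k-1}$ for free. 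Your second stage is essentially this monotonicity observation specialized to already-sorted $w_i$, but placing it after the sort is circular: you need its full-strength version to justify the sort in the first place. Finally, the sign-flips of Proposition~\ref{proposition:structure of w u} are not used here; only permutations from $(W(\GL_{n-l}),1)$ are, and these commute with the $\ell_0$-block \eqref{eq:second form u final}, so $A_l$ is untouched and that part of the bookkeeping is simpler than you suggest.
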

\begin{proof}
Write $w_i=(1^{\lceil c/2\rceil},w_i',1^l)$ with $w'_i\in\{0,1\}^{n-l}$. The rightmost $d_{i-1}$ coordinates of $w'_i$ are $1$. We start with the following observation. Let $1\leq j\leq n-l$ and assume
$1<i_0\leq k$ is minimal such that $w_{i_0}'[j]$ (the $j$-th coordinate of $w_{i_0}'$) equals $1$. We claim
$\mathcal{H}(h)=0$ unless $w_i'[j]=1$ for all $i\geq i_0$.
Otherwise, assume $i>i_0$ is minimal with $w_i'[j]=0$. Write the top left $n\times n$ block of the block $v_{k-i+1,k-i+2}$ of $V_{(c^{k-1})}$ in the form
$\left(\begin{smallmatrix}   v^1 & v^2 \\   v^3 & v^4 \end{smallmatrix}\right)$ with $v^1\in\Mat_{l}$ and $v^4\in\Mat_{n-l}$.
Then a unipotent subgroup $Y$ containing coordinates from $v^4$ will satisfy $\psi_U|_Y\ne1$ and ${}^wY<U_P$, whence
$\mathcal{H}(h)=0$ by \eqref{psi U nontrivial using only w}.

We turn to show that we can sort the coordinates of $w_2,\ldots,w_k$ to obtain \eqref{eq:w_i second reduction}.
Identify $\GL_{n-l}$ with its natural image in the middle factor of the standard Levi subgroup
$\GL_l\times \GL_{n-l}\times \mathcal{G}_{c-2n}$ of $G$. Then $P\cap(\GL_{n-l},1)$ contains a full set of representatives for $W(\GL_{n-l})$.
Given such representative $g$, $h\sim h(g,1)^{-1}\sim ({}^{(g,1)}w)({}^{(g,1)}u)$ where
$\hat{u}={}^{(g,1)}u$ still satisfies the conditions of Proposition~\ref{proposition:structure of w u} and $\ell_0(\hat{u})=\ell_0(u)$ ($(g,1)$ commutes with \eqref{eq:second form u final}). Hence
one can use such conjugations to permute the entries in each $w_i$, while maintaining the prescribed structure of $u$. Using transpositions from $W(\GL_{n-l})$ we can permute each consecutive pair $(w_i'[j],w_i'[j+1])$. If
$w_i'[j]=w_i'[j+1]$, the conjugation has no affect on this pair.
Choose some $j$ such that there is a minimal $i_0$ with
$(w_{i_0}'[j],w_{i_0}'[j+1])=(1,0)$. If $j$ does not exist, then $w_i'=(0^{n-l-d_{i-1}},1^{d_{i-1}})$ for all $1<i\leq k$, and by what we have proved, if $\mathcal{H}(h)\ne0$, $d_1\leq\ldots\leq d_{k-1}$ so that \eqref{eq:w_i second reduction} holds. If $j$ exists, then
again by the above observation (assuming $\mathcal{H}(h)\ne0$),
for $i>i_0$, either $(w_i'[j],w_i'[j+1])=(1,0)$, in which case the order is swapped, or
$w_i'[j]=w_i'[j+1]=1$. Proceeding in this manner we obtain
\eqref{eq:w_i second reduction}.
\end{proof}

Re-denote $h=wu$, with $w$ and $u$ given by Proposition~\ref{proposition:2nd reduction of w}, in particular $w$ satisfies \eqref{eq:w_i second reduction}. Recall that in general if $Y<{}^hU\cap M_P$, ${}^{h^{-1}}Y<P_h$ and by definition any morphism in $\mathcal{H}(h)$ factors through $J_{Y,{}^{h}\psi_U^{-1}}(\rho)$ (see \S~\ref{Basic properties}).
We turn to compute ${}^hU\cap M_P$. To simplify the presentation we slightly alter $w$, using multiplication on the left by representatives of $W(M_P)$, which we identify with permutation matrices in $\GL_{kc}$. First,
we multiply $w$ on the left by $\diag(I_{(k-1)c},J_l,I_{c-l})$, this changes the innermost block $J_l$ into $I_l$ (see
\eqref{eq:weyl rep k=1}). Then for $w_i$, $1<i\leq k$, we multiply $w$ on the left by
\begin{align*}
\diag(I_{(k-i)c},J_{l+d_{i-1}},I_{n-l-d_{i-1}},J_{\lceil c/2\rceil},I_{c(i-1)}).
\end{align*}
For example if $k=2$,
\begin{align}\label{eq:example k=2}
w=\jmath_a
\left(\begin{smallmatrix}
&&&&&&&&I_{l+d_1}\\
&I_{n-l-d_1}&&&&&&&\\
&&&&&&I_{\lceil c/2\rceil}&&\\
&&&&&I_l&&&\\
&&&&I_{2(c-l)}&&&&\\
&&&\epsilon_0I_l&&&&&\\
&&\epsilon_0I_{\lceil c/2\rceil}&&&&&&\\
&&&&&&&I_{n-l-d_1}&\\
\epsilon_0I_{l+d_1}&&&&&&&&
\end{smallmatrix}\right).
\end{align}
For $1\leq j\leq k-1$, define $\gamma_j\in\GL_{kc}$ by
\begin{align*}
\gamma_j=&\diag(I_{n-l-d_{k-j}+\sum_{i=1}^{j-1}n-l-d_{k-i}},\left(\begin{smallmatrix}&I_{kc-\lceil c/2\rceil-(j-1)c-n}\\I_{\lceil c/2\rceil}\end{smallmatrix}\right),I_{l+d_{k-j}+\sum_{i=1}^{j-1}\lceil c/2\rceil+l+d_{k-i}})\\
&\times
\diag(I_{\sum_{i=1}^{j-1}n-l-d_{k-i}},\left(\begin{smallmatrix}&I_{kc-(j-1)c-l-d_{k-j}}\\I_{l+d_{k-j}}\end{smallmatrix}\right),I_{\sum_{i=1}^{j-1}\lceil c/2\rceil+l+d_{k-i}}).
\end{align*}
E.g.,
\begin{align*}
\gamma_1=\diag(I_{n-l-d_{k-1}},\left(\begin{smallmatrix}&I_{kc-\lceil c/2\rceil-n}\\I_{\lceil c/2\rceil}\end{smallmatrix}\right),I_{l+d_{k-1}})
\left(\begin{smallmatrix}&I_{kc-l-d_{k-1}}\\I_{l+d_{k-1}}\end{smallmatrix}\right).
\end{align*}
Further multiply $w$ on the left by $\gamma_{k-1}\cdot\ldots\cdot\gamma_1$ (henceforth we only use this form for $w$). For the computation of ${}^hU\cap M_P$ also note that ${}^hU={}^wU$. Now we see that ${}^hU\cap M_P=V_{\beta}$, where $\beta$ is the composition of $kc$ given by
\begin{align}\label{eq:beta}
\beta=(n-l-d_{k-1},\ldots,n-l-d_{1},c,\lceil c/2\rceil+l+d_{1},\ldots,\lceil c/2\rceil+l+d_{k-1}).
\end{align}
(The purpose of the elements $\gamma_i$ was to obtain an upper triangular ${}^hU\cap M_P$.)
The character ${}^{h}\psi_U$ is a character of $V_{\beta}$ by restriction,
denote $\psi_{V_{\beta}}={}^{h}\psi_U|_{V_{\beta}}$. We can not fully describe $\psi_{V_{\beta}}$ without determining $\ell(u)$, but the lemma below will provide the information we need.
First we describe ${}^{w\ell_0(u)}\psi_U|_{V_{\beta}}$. For $v\in V_{\beta}$ write
\begin{align}\label{eq:v in V beta}
v=\left(\begin{smallmatrix}I_{n-l-d_{k-1}}&b_1&\cdots\\&\ddots&\ddots&\\
&&I_{n-l-d_{1}}&b_{k-1}&\cdots\\&&&I_c&b_k&\cdots\\&&&&\ddots&\ddots\\&&&&&I_{\lceil c/2\rceil+l+d_{k-2}}&b_{2k-2}
\\&&&&&&I_{\lceil c/2\rceil+l+d_{k-1}}
\end{smallmatrix}\right).
\end{align}
Here
\begin{align*}
(b_i)_{1\leq i\leq 2k-2}=(b_1,\ldots,b_{k-2},b_{k-1},b_k,b_{k+1},\ldots,b_{2k-2})
\end{align*}
is a general element of the product
\begin{align*}
\prod_{j=k-1}^2\Mat_{n-l-d_j\times n-l-d_{j-1}}\times \Mat_{n-l-d_1\times c}
\times \Mat_{c\times \lceil c/2\rceil+l+d_1}
\times \prod_{j=1}^{k-2}\Mat_{\lceil c/2\rceil+l+d_j\times \lceil c/2\rceil+l+d_{j+1}}.
\end{align*}
Note that for $k=2$, $(b_i)_{1\leq i\leq 2k-2}\in \Mat_{n-l-d_1\times c}
\times \Mat_{c\times \lceil c/2\rceil+l+d_1}$. Then
\begin{align}\label{psi_U on V beta 0}
&{}^{w\ell_0(u)}\psi_U(v)=\psi(\sum_{j=k-1}^{2}\tr(b_{k-j}\left(\begin{smallmatrix}0_{d_j-d_{j-1} \times  n-l-d_j} \\ I_{n-l-d_{j}}\end{smallmatrix}\right))
+\tr(b_{k-1}\left(\begin{smallmatrix}0_{l+d_1\times n-l-d_1}\\ I_{n-l-d_1} \\ 0_{\lceil c/2\rceil\times n-l-d_1}\end{smallmatrix}\right))
\\&\quad\quad
-\tr(b_{k}\left(
\begin{smallmatrix}0&0&-\epsilon_0A_l&0&0\\0&\epsilon_0I_{n-l}&0&0&0
\\0&0&0&0&I_{c-2n}
\\0&0&0&0_{d_1\times n-l}&0
\\I_l&0&0&0&0
\end{smallmatrix}\right))
-\sum_{j=1}^{k-2}\tr(b_{k+j}\left(\begin{smallmatrix}I_{\lceil c/2\rceil}& 0_{\lceil c/2\rceil\times d_j+l} \\0_{d_{j+1}-d_j\times \lceil c/2\rceil} & 0_{d_{j+1}-d_j\times d_j+l} \\0_{d_j+l\times \lceil c/2\rceil} & I_{d_j+l}\end{smallmatrix}\right))).\nonumber
\end{align}
Here the sum $\sum_{j=k-1}^{2}$ is omitted if $k=2$; and if $c-2n=1$ ($0\leq c-2n\leq1$), the coordinate $I_{c-2n}=1$ initially depends on the constants $\epsilon_1,\epsilon_2$ (see \S~\ref{Doubling setup}, $2\epsilon_1\epsilon_2=1$), but we can use another conjugation of $w$ by an element of $M_P$ to fix this coordinate to be $1$ (without otherwise changing \eqref{psi_U on V beta 0}). Additionally, for $l=n$ and $A_l$ of rank $l$, the character \eqref{psi_U on V beta 0} belongs to the orbit of $\psi_k^{-1}$.

\begin{example}\label{eq:example k=2,3}
For $k=2$ and an even $a$, after multiplying \eqref{eq:example k=2} on the left by $\gamma_1$ we have
\begin{align}\label{eq:example k=2,3 final w for k=2}
&w=
\left(\begin{smallmatrix}
0&I_{n-l-d_1}&0&0&0&0&0&0&0&0\\
0&0&0&0&0&0&I_l&0&0&0\\
0&0&0&0&I_{c-l}&0&0&0&0&0\\
0&0&0&0&0&0&0&I_{\lceil c/2\rceil}&0&0\\
0&0&0&0&0&0&0&0&0&I_{l+d_1}\\
\epsilon_0I_{l+d_1}&0&0&0&0&0&0&0&0&0\\
0&0&\epsilon_0I_{\lceil c/2\rceil}&0&0&0&0&0&0&0\\
0&0&0&0&0&I_{c-l}&0&0&0&0\\
0&0&0&\epsilon_0I_l&0&0&0&0&0&0\\
0&0&0&0&0&0&0&0&I_{n-l-d_1}&0
\end{smallmatrix}\right),
\\&\nonumber
V_{\beta}=V_{(n-l-d_1,c,\lceil c/2\rceil+l+d_1)}=\{\left(\begin{smallmatrix}
                               I_{n-l-d_1} & b_1 & *  \\
                                & I_c &   b_2 \\
                                &  & I_{\lceil c/2\rceil+l+d_1}\end{smallmatrix}\right)\}
\end{align}
and $\psi_{V_{\beta}}$ depends only on $b_1$ and $b_2$. E.g., if $u=\ell_0(u)$, its restriction to $b_1$ is given by $\psi$ composed with the trace of the $n-l-d_1\times n-l-d_1$ block of $b_1$ starting at column $l+d_1+1$ of $b_1$. For $k=3$ and again an even $a$,
after multiplying $w$ on the left by $\gamma_2\gamma_1$ we obtain
\begin{align*}
&\left(\begin{smallmatrix}
0&I_{n-l-d_2}&0&0&0&0&0&0&0&0&0&0&0&0&0&0\\
0&0&0&0&I_{n-l-d_1}&0&0&0&0&0&0&0&0&0&0&0\\
0&0&0&0&0&0&0&0&0&I_l&0&0&0&0&0&0\\
0&0&0&0&0&0&0&I_{c-l}&0&0&0&0&0&0&0&0\\
0&0&0&0&0&0&0&0&0&0&I_{\lceil c/2\rceil}&0&0&0&0&0\\
0&0&0&0&0&0&0&0&0&0&0&0&I_{l+d_1}&0&0&0\\
0&0&0&0&0&0&0&0&0&0&0&0&0&I_{\lceil c/2\rceil}&0&0\\
0&0&0&0&0&0&0&0&0&0&0&0&0&0&0&I_{l+d_2}\\
\epsilon_0I_{l+d_2}&0&0&0&0&0&0&0&0&0&0&0&0&0&0&0\\
0&0&\epsilon_0I_{\lceil c/2\rceil}&0&0&0&0&0&0&0&0&0&0&0&0&0\\
0&0&0&\epsilon_0I_{l+d_1}&0&0&0&0&0&0&0&0&0&0&0&0\\
0&0&0&0&0&\epsilon_0I_{\lceil c/2\rceil}&0&0&0&0&0&0&0&0&0&0\\
0&0&0&0&0&0&0&0&I_{c-l}&0&0&0&0&0&0&0\\
0&0&0&0&0&0&\epsilon_0I_l&0&0&0&0&0&0&0&0&0\\
0&0&0&0&0&0&0&0&0&0&0&I_{n-l-d_1}&0&0&0&0\\
0&0&0&0&0&0&0&0&0&0&0&0&0&0&I_{n-l-d_2}&0\\
\end{smallmatrix}\right),
\\&
\beta=(n-l-d_2,n-l-d_1,c,\lceil c/2\rceil+l+d_1,\lceil c/2\rceil+l+d_2).
\end{align*}
\end{example}
\begin{remark}\label{remark:convenient computations of V beta}
It is convenient to compute $V_{\beta}$ in $2$ steps: first compute ${}^wU\cap M_P$ using
$w$ without the elements $\gamma_i$, e.g., \eqref{eq:example k=2},
then conjugate by these elements in order to obtain $V_{\beta}$.
\end{remark}
\begin{proposition}\label{proposition:wu_0 nontrivial implies h nontrivial orbit}
Assume $k>1$ and $l<n$. If $\mathcal{H}(h)\ne0$, $\psi_{V_{\beta}}$ belongs to the orbit of
\begin{align}\label{psi_U on V beta}
&v\mapsto
\psi(\sum_{j=k-1}^{2}\tr(b_{k-j}({*}_{n-l-d_{j-1} \times  n-l-d_j}))
+\tr(b_{k-1}\left(\begin{smallmatrix}{*}_{l+d_1 \times n-l-d_1}\\ I_{n-l-d_1} \\ {*}_{\lceil c/2\rceil\times n-l-d_1}\end{smallmatrix}\right))
\\&\quad\quad
-\tr(b_{k}\left(
\begin{smallmatrix}0&0&-\epsilon_0A_l&0&0\\0&\epsilon_0I_{n-l}&0&0&0
\\0&0&0&0&I_{c-2n}
\\0&0&0&0_{d_1\times n-l}&0
\\I_l&0&0&0&0
\end{smallmatrix}\right))
-\sum_{j=1}^{k-2}\tr(b_{k+j}\left(\begin{smallmatrix}I_{\lceil c/2\rceil}& {0}_{\lceil c/2\rceil\times d_j+l} \\{*}_{d_{j+1}-d_j\times \lceil c/2\rceil} & {*}_{d_{j+1}-d_j\times d_j+l}\\{*}_{d_j+l\times \lceil c/2\rceil} & {*}_{d_j+l}\end{smallmatrix}\right))).\nonumber
\end{align}
Here $*$ means undetermined block entries. When $\ell(u)$ is the identity element, all coordinates were computed above and \eqref{psi_U on V beta} coincides with \eqref{psi_U on V beta 0}.
\end{proposition}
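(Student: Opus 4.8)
The plan is to start from the decomposition $u=\ell(u)\ell_0(u)$ of Proposition~\ref{proposition:structure of w u}; the two factors commute since they occupy disjoint block positions of $M_Q$ (the outer $\GL_c$-factors versus $H_0$), so $h=w\ell(u)\ell_0(u)$ and $\psi_{V_\beta}={}^{w\ell(u)\ell_0(u)}\psi_U|_{V_\beta}$. The special case $\ell(u)=1$ is exactly the character \eqref{psi_U on V beta 0}, obtained by conjugating $\psi_U$ by $(w\ell_0(u))^{-1}$ and reading it off on a general element $v\in V_\beta$ written in the block form \eqref{eq:v in V beta}. It therefore suffices to track how restoring the factor $\ell(u)$ perturbs \eqref{psi_U on V beta 0}, which I would do by conjugating $v$ by $h^{-1}$ block by block: for each superdiagonal block $b_i$ of $v$ I locate which coordinates of $U$ it is carried into and evaluate $\psi_U$ there, now keeping track of the extra conjugation by $\ell(u)$.

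The key observation is that $\ell(u)$, living in the outer $\GL_c$-factors of $M_Q$, acts on $U$ only through $V_{(c^{k-1})}$ and the two blocks adjacent to $H_0$ in \eqref{matrix:middle 4c block of u}, and it does so by unipotent shears (recall from Proposition~\ref{proposition:structure of w u} that $\ell(u)$ is a fixed Weyl-conjugate of an element of $\prod N_{\GL_c}$). Two consequences are expected. First, the $b_k$-term is unchanged: the coordinates into which $b_k$ is carried — those producing the $A_l$ of \eqref{eq:second form u final} together with the $I_{n-l}$, $I_{c-2n}$ and $I_l$ coming from $\ell_0(u)$ and from $\psi_U(\tr(u^{1,1}+u^{2,2})+\cdots)$ — are normalized by $\ell(u)$ without change of $\psi_U$-value, so this term coincides verbatim with the one in \eqref{psi_U on V beta 0}, which is exactly how it is displayed (not with $*$'s) in \eqref{psi_U on V beta}. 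Second, in each term for $b_1,\dots,b_{k-1}$ and $b_{k+1},\dots,b_{2k-2}$, conjugation by $\ell(u)$ only multiplies the pairing matrix by unipotent factors built from the blocks of $\ell(u)$; this preserves the rank of the leading identity blocks $I_{n-l-d_1}$ (in the $b_{k-1}$-term) and $I_{\lceil c/2\rceil}$ (in the $b_{k+j}$-terms), and, after a compensating conjugation by a suitable element of $M_\beta$ normalizing $V_\beta$, replaces all the remaining blocks by arbitrary entries — precisely the $*$'s of \eqref{psi_U on V beta}. Whenever a shear by $\ell(u)$ threatens to spoil this picture — say by moving a $\psi_U$-nontrivial coordinate into a block that $w$ conjugates into $U_P$ — one produces a root subgroup $Y<U$ with $\psi_U|_Y\ne1$ and ${}^hY<U_P$, whence $\mathcal{H}(h)=0$ by \eqref{psi U nontrivial}; this is where the hypothesis $\mathcal{H}(h)\ne0$ is used, and it forces $\psi_{V_\beta}$ into the stated orbit, possibly after also passing to another representative of $PhD$.

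The main obstacle is the bookkeeping underlying the second paragraph: for $w$ in the normalized form of \eqref{eq:w_i second reduction} (after the left-multiplications by elements of $W(M_P)$ and by the matrices $\gamma_j$), one must pin down the pullback under $v\mapsto h^{-1}vh$ of each block $b_i$ of $v\in V_\beta$ as in \eqref{eq:v in V beta}, and verify both that the $A_l$-carrying central block is genuinely insulated from $\ell(u)$ and that the outer blocks absorb $\ell(u)$ entirely into $*$-entries; this is the same kind of matrix computation that already produced \eqref{psi_U on V beta 0}, the only new ingredient being the unknown factor $\ell(u)$ and the reduction of its offending contributions to $\mathcal{H}(h)=0$ via \eqref{psi U nontrivial}. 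The cases of odd $c$ (the extra coordinates $u^3,u^4$ and the middle $2\times2$ block) and of $H=\GSpin_{2kc}$ (where all unipotent data is read off the orthogonal case) need only cosmetic changes.
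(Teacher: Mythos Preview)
Your overall strategy --- track block by block how $\ell(u)$ perturbs \eqref{psi_U on V beta 0}, and invoke \eqref{psi U nontrivial} whenever a perturbation would make ${}^u\psi_U$ nontrivial on a coordinate that $w$ carries into $U_P$ --- is exactly what the paper does. But your separation into ``$b_k$ insulated from $\ell(u)$'' versus ``outer blocks absorb $\ell(u)$ into $*$'s'' is wrong, and this is the gap.

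The coordinates of $U$ that $w$ carries into $b_k$ (and into $b_{k-1}$) sit in the $c\times 2c$ block joining the last outer $\GL_c$-factor of $M_Q$ to $H_0$; the paper labels them $B_1'^{t,t'}$, $B_2'^{t,t'}$ and, for odd $c$, $B''^{t,t'}$. Conjugation by $\ell(u)=\diag(z_1,\ldots,z_{k-1})$ multiplies this whole block on the left by $z_{k-1}^{-1}$, so $\ell(u)$ genuinely perturbs $b_k$. The paper recovers the exact $b_k$-matrix of \eqref{psi_U on V beta} only after invoking $\mathcal{H}(h)\neq 0$ several times: writing $z_{k-1}$ in a normal form $m_{k-1}$ (modulo a factor that $w$ already conjugates into $M_P$), each application forces an algebraic relation on blocks of $m_{k-1}$ --- e.g.\ the top $l$ rows of $M_{k-1}^1$ vanish, or a certain product with columns of $m_{k-1}^{-1}$ vanishes --- because otherwise ${}^u\psi_U$ would be nontrivial on a subblock lying in $\mathscr{U}_P$. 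These relations then make the remaining contribution to $b_k$ collapse algebraically to that of \eqref{psi_U on V beta 0}. The $b_{k+j}$-terms work the same way: the $I_{\lceil c/2\rceil}$ in the top-left of the pairing matrix is not preserved by a rank argument but appears only after a vanishing condition (from $B_{k-j-1}^{3,4}\in\mathscr{U}_P$) forces the relevant product of blocks of $m_{k-j}$ and $m_{k-j-1}^{-1}$ to equal $I_{\lceil c/2\rceil}$ exactly. For $b_{k-1}$ one additionally conjugates by an element of $M_P$, since the factor $I_{n-l-d_1}+M_{k-1}^2M_{k-1}^1$ appearing on $B_1'^{3,3}$ is merely invertible, not the identity. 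Finally, note that $b_1,\ldots,b_{k-2}$ carry a fully undetermined $*$ in \eqref{psi_U on V beta} and require no analysis at all; the paper explicitly restricts to $b_{k-1},\ldots,b_{2k-2}$.
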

\begin{proof}
We introduce notation for blocks of unipotent matrices in $M_Q$ and $U$.
Recall from Lemma~\ref{lemma:easier condition on psiU} that $\psi_U$ is defined by $k-2$ blocks $B_i\in\Mat_c$, $1\leq i\leq k-2$,
$2$ blocks $B_1',B_2'\in\Mat_n$ and when $c$ is odd also by $B''\in\Mat_{1\times2}$.
Set $d_0=0$ and $d_k=d_{k-1}$. For each $1\leq i\leq k-2$, $B_i$ is further divided into subblocks by writing it as the upper right block of
\begin{align*}
\left(\begin{smallmatrix}
  I_{l+d_{k-i-1}}&&&&B_i^{1,1}&B_i^{1,2}&B_i^{1,3}&B_i^{1,4}\\
  &I_{d_{k-i}-d_{k-i-1}}&&&B_i^{2,1}&B_i^{2,2}&B_i^{2,3}&B_i^{2,4}\\
  &&I_{n-l-d_{k-i}}&&B_i^{3,1}&B_i^{3,2}&B_i^{3,3}&B_i^{3,4}\\
  &&&I_{\lceil c/2\rceil}&B_i^{4,1}&B_i^{4,2}&B_i^{4,3}&B_i^{4,4}\\
  &&&&I_{l+d_{k-i-1}}\\
  &&&&&I_{d_{k-i}-d_{k-i-1}}\\
  &&&&&&I_{n-l-d_{k-i}}\\
  &&&&&&&I_{\lceil c/2\rceil}
\end{smallmatrix}\right).
\end{align*}
The blocks $B_1',B_2'$ are contained in the following blocks:
\begin{align*}
\left(\begin{smallmatrix}
  I_l&&&&{B'}_1^{1,1}&{B'}_1^{1,2}&{B'}_1^{1,3}\\
  &I_{d_1}&&&{B'}_1^{2,1}&{B'}_1^{2,2}&{B'}_1^{2,3}\\
  &&I_{n-l-d_1}&&{B'}_1^{3,1}&{B'}_1^{3,2}&{B'}_1^{3,3}\\
  &&&I_{\lceil c/2\rceil}&{B'}_1^{4,1}&{B'}_1^{4,2}&{B'}_1^{4,3}\\
  &&&&I_l\\
  &&&&&I_{d_1}\\
  &&&&&&I_{n-l-d_1}
\end{smallmatrix}\right),
\left(\begin{smallmatrix}
  I_{l+d_1}&&&&&{B'}_2^{1,1}&{B'}_2^{1,2}&{B'}_2^{1,3}\\
  &I_{n-l-d_1}&&&&{B'}_2^{2,1}&{B'}_2^{2,2}&{B'}_2^{2,3}\\
  &&I_{\lceil c/2\rceil-l}&&&{B'}_2^{3,1}&{B'}_2^{3,2}&{B'}_2^{3,3}\\
  &&&I_{l}&&{B'}_2^{4,1}&{B'}_2^{4,2}&{B'}_2^{4,3}\\
  &&&&I_{2c-n-l}\\
  &&&&&I_l\\
  &&&&&&I_{n-l}\\
  &&&&&&&I_{l}
\end{smallmatrix}\right).
\end{align*}
If $c$ is odd, we also have the $c\times 2$ block containing $B''$ which we write in the form
\begin{align*}
&\left(\begin{smallmatrix}
{B''}^{1}\\{B''}^{2}\\{B''}^{3}\\{B''}^{4}
\end{smallmatrix}\right),
{B''}^{1}\in\Mat_{l+d_1\times2},{B''}^{2}\in\Mat_{n-l-d_1\times2},{B''}^{3}\in\Mat_{1\times2},{B''}^{4}\in\Mat_{n\times2}.
\end{align*}
For $1\leq i\leq 4$, $B''^{i}=(B''^{i,1},B''^{i,2})$.
In terms of the blocks $B_i$, $B'_i$ and $B''$, $\psi_U$ is given by
\begin{align}\label{eq:blocks of psi_U}
\psi(\sum_{i=1}^{k-2}\sum_{j=1}^4\tr(B_i^{j,j})+\sum_{j=1}^3\tr({B'}_1^{j,j})+\tr(\left(\begin{smallmatrix}0_{n-l\times c-2n} & I_{n-l}\end{smallmatrix}\right){B'}_2^{3,2})+\tr({B'}_2^{4,3})+
{B''}^{3}\left(\begin{smallmatrix}\epsilon_1\\-\epsilon_2\end{smallmatrix}\right)).
\end{align}

Let $\mathscr{M}_P$, $\mathscr{U}_P$ and $\mathscr{U}_P^-$ denote the lists of blocks $B_i^{t,t'},{B'}_i^{t,t'},B''^{t,t'}$
conjugated by $w$ into $M_P$, $U_P$ and $U_P^-$, respectively (these can still be computed using \eqref{eq:w_i second reduction},
$w$ differs from \eqref{eq:w_i second reduction} by an element of $M_P$). If $c$ is odd, let $a_0\in\{1,2\}$ be the column of $B''$ which $w$ conjugates into column $kc+1$, it consists of the blocks $(B''^{1,a_0},B''^{2,a_0},B''^{3,a_0},B''^{4,a_0})$. We see that
\begin{align*}
\mathscr{M}_P=&\{B_i^{1,1},B_i^{1,4},B_i^{2,1},B_i^{2,4},B_i^{3,2},B_i^{3,3},B_i^{4,1},B_i^{4,4}:1\leq i\leq k-2\}\\&\coprod
\{{B'}_1^{1,1},{B'}_1^{2,1},{B'}_1^{3,2},{B'}_1^{3,3},{B'}_1^{4,1},{B'}_2^{1,1},{B'}_2^{1,2},{B'}_2^{2,3},{B'}_2^{3,1},{B'}_2^{3,2},{B'}_2^{4,1},{B'}_2^{4,2}\},\\
&\coprod\{{B''}^{1,a_0},{B''}^{2,3-a_0},{B''}^{3,a_0},{B''}^{4,a_0}\},\\
\mathscr{U}_P=&\{B_i^{3,1},B_i^{3,4}:1\leq i\leq k-2\}\coprod\{{B'}_1^{3,1},{B'}_2^{2,1},{B'}_2^{2,2},{B''}^{2,a_0}\},\\
\mathscr{U}_P^-=&\{B_i^{1,2},B_i^{1,3},B_i^{2,2},B_i^{2,3},B_i^{4,2},B_i^{4,3}:1\leq i\leq k-2\}\\&\coprod\{{B'}_1^{1,2},{B'}_1^{1,3},{B'}_1^{2,2},{B'}_1^{2,3},{B'}_1^{4,2},{B'}_1^{4,3},{B'}_2^{1,3},{B'}_2^{3,3},{B'}_2^{4,3},
{B''}^{1,3-a_0},{B''}^{3,3-a_0},{B''}^{4,3-a_0}\}.
\end{align*}
Since ${}^{\ell(\sigma)}\ell(u)\in M_Q\cap N_H$ with $\sigma\in (G,1)$, we can write
$\ell(u)=\diag(z_{1},\ldots,z_{k-1})$, for $z_i={}^{w_{\sigma}}v_i$ where $w_{\sigma}\in W(\GL_c)$ corresponds to the projection of
$(\sigma,1)^{-1}$ into the $i$-th copy of $\GL_c$ and $v_i\in N_{\GL_c}$. Note that if we write a general element of ${}^{w_{\sigma}}N_{\GL_c}$ in the form
\begin{align*}
\left(\begin{smallmatrix}
X_1&X_2&X_3\\
X_4&X_5&X_6\\
X_7&X_8&X_9
\end{smallmatrix}\right)
\end{align*}
where $X_1,X_5$ and $X_9$ are square matrices (of arbitrary sizes), then $X_1,X_5,X_9$ are already invertible, and so are
$\left(\begin{smallmatrix}I&X_2\\-X_4&I\end{smallmatrix}\right)$ and $\left(\begin{smallmatrix}
I&X_6\\-X_8&I\end{smallmatrix}\right)$, whence $I+X_2X_4$, $I+X_4X_2$, $I+X_6X_8$ and $I+X_8X_6$ are also invertible ($X_i,X_j$ need not be square matrices).

Since the left coset of $w$ in $W(M_P)\backslash W(H)$ is still represented by \eqref{eq:w_i second reduction}, we can write
$z_i=z_i'm_i$ where ${}^w\diag(z_1',\ldots,z_{k-1}',I_{2c},z_1'^*,\ldots,z_{k-1}'^*)\in M_P$ and
\begin{align*}
&m_i=
\left(\begin{smallmatrix}
    I_{l+d_{k-i}}+M_i^1M_i^2&M_i^1&0\\
    M_i^2&I_{n-l-d_{k-i}}+M_i^3M_i^4&M_i^3\\
  0&M_i^4&I_{\lceil c/2\rceil}
\end{smallmatrix}\right)\in\GL_c,\\
&I_{l+d_{k-i}}+M_i^1M_i^2\in\GL_{l+d_{k-i}},\qquad
I_{n-l-d_{k-i}}+M_i^3M_i^4\in \GL_{n-l-d_{k-i}}.
\end{align*}
These matrices are invertible because $m_i={}^{w_{\sigma}}v_i'$ where $v_i'\in N_{\GL_c}$.
We have
\begin{align*}
&m_i^{-1}=
\left(\begin{smallmatrix}
    I_{l+d_{k-i}}&-M_i^1&M_i^1M_i^3\\
    -M_i^2&I_{n-l-d_{k-i}}+M_i^2M_i^1&-(I_{n-l-d_{k-i}}+M_i^2M_i^1)M_i^3\\
  M_i^4M_i^2&-M_i^4(I_{n-l-d_{k-i}}+M_i^2M_i^1)&I_{\lceil c/2\rceil}+M_i^4(I_{n-l-d_{k-i}}+M_i^2M_i^1)M_i^3
\end{smallmatrix}\right).
\end{align*}
Since $h\sim ph$ for any $p\in P$, we can already assume $z_i=m_i$.

We show that $\psi_{V_{\beta}}$ belongs to the orbit of a character whose restriction to the blocks $b_{k-1},b_k,b_{k+1},\ldots,b_{2k-2}$ agrees with \eqref{psi_U on V beta}, otherwise $\mathcal{H}(h)=0$. This will complete the proof. To this end it suffices to compute ${}^{u}\psi_U$ on the blocks of $U$ conjugated by $w$ into $b_{k-1},b_k,b_{k+1},\ldots,b_{2k-2}$. The contribution of $\ell_0(u)$ is simple to compute and was essentially given in \eqref{psi_U on V beta 0}. To determine ${}^{\ell(u)}\psi_U$ (thereby ${}^u\psi_U$) we compute
\begin{align*}
m_{k-1}^{-1}B'_1,\qquad m_{k-1}^{-1}B'_2,\qquad m_{k-1}^{-1}B'',\qquad
m_i^{-1}B_im_{i+1},\qquad\forall 1\leq i\leq k-2.
\end{align*}

Columns $l+d_1+1,\ldots, n$ of $b_{k-1}$ (the only columns of $b_{k-1}$ where \eqref{psi_U on V beta} is determined) consist of the block $B_1'^{3,3}$, conjugated to $b_{k-1}$ by $w$ (other columns are conjugated from $B_1'^{3,2},B_2'^{2,3}$ and columns between the columns of $u^{1,1}$ and $u^{2,2}$). The coordinates of $b_k$ are uniquely defined by
\begin{align*}
B_1'^{1,1},B_1'^{2,1},B_1'^{4,1},B_2'^{1,1},B_2'^{1,2},B_2'^{3,1},B_2'^{3,2},B_2'^{4,1},B_2'^{4,2},
{B''}^{1,a_0},{B''}^{2,a_0},{B''}^{3,3-a_0},{B''}^{4,a_0}
\end{align*}
and by additional $l+d_1+\lceil c/2\rceil \times n-l$ coordinates appearing to the left of $B_2'^{1,1},B_2'^{3,1},B_2'^{4,1}$ ($\psi_U$ and ${}^{\ell_0(u)}\psi_U$ are trivial on the corresponding columns, thereby also ${}^u\psi_U$ because multiplying on the left by $m_{k-1}^{-1}$ can not introduce a character on a column where ${}^{\ell_0(u)}\psi_U$ was trivial, so we do not provide notation for these), as well as the form defining $H$. Note that $B''$ is omitted if $c$ is even.

When we multiply $m_{k-1}^{-1}B'_1$ we see that if the top $l$ rows of $M_{k-1}^1$ are nonzero, ${}^u\psi_U$ is nontrivial on $B_1'^{3,1}\in\mathscr{U}_P$ and then $\mathcal{H}(h)=0$ by \eqref{psi U nontrivial}. Hence we can assume
the top $l$ rows of $M_{k-1}^1$ are $0$, which implies ${}^u\psi_U$ is trivial on the coordinates of $b_k$ obtained from
$B_1'$, namely $B_1'^{1,1},B_1'^{2,1},B_1'^{4,1}$ ($\psi_U$ and ${}^{\ell_0(u)}\psi_U$ are also trivial there).
Additionally ${}^u\psi_{U}$ restricts to
$\psi(\tr((I_{n-l-d_1}+M_{k-1}^2M_{k-1}^1)B_1'^{3,3}))$ on $B_1'^{3,3}$, and since
\begin{align*}
{}^w\diag(I_{(k-2)c+l+d_1},I_{n-l-d_1}+M_{k-1}^2M_{k-1}^1,I_{2(\lceil c/2\rceil+c)},(I_{n-l-d_1}+M_{k-1}^2M_{k-1}^1)^*,I_{l+d_1+(k-2)c})\in M_P,
\end{align*}
$\psi_{V_{\beta}}$ belongs to the orbit
of a character which agrees with \eqref{psi_U on V beta} on $b_{k-1}$ and the coordinates of $b_k$ conjugated from $B_1'$.

The character ${}^{\ell_0(u)}\psi_U$ is given on the blocks of $B'_2$, which $w$ conjugates into $b_k$, by
\begin{align*}
\psi(\tr(\varphi_k {B'_2}^{\circ})),\qquad
\varphi_k=\left(
\begin{smallmatrix}
0_{l\times l+d_1}&0_{l\times n-l-d_1}&0_{l\times \lceil c/2\rceil-l}& A(X)\\
0_{n-l\times l+d_1}&0_{n-l\times n-l-d_1}&\left(\begin{smallmatrix}0_{n-l\times c-2n} & I_{n-l}\end{smallmatrix}\right)
& 0_{n-l\times l}
\end{smallmatrix}\right).
\end{align*}
Here ${B'_2}^{\circ}$ is the $c\times n$ block consisting of ${B'_2}^{t,t'}$ with $1\leq t\leq 4$ and $1\leq t'\leq 2$ (all of these blocks except for $t=2$ are conjugated into $b_k$). Multiplying $\varphi_km_{k-1}^{-1}$ we deduce $\mathcal{H}(h)=0$, unless
the product of $\varphi_k$ and columns $l+d_1+1,\ldots,n$ of $m_{k-1}^{-1}$
defines a trivial character on
$({B'_2}^{2,1},{B'_2}^{2,2})\in\mathscr{U}_P$, which amounts to
\begin{align*}
\left(
\begin{smallmatrix}
0_{l\times \lceil c/2\rceil-l}& A(X)\\
\left(\begin{smallmatrix}0_{n-l\times c-2n} & I_{n-l}\end{smallmatrix}\right)& 0_{n-l\times l}
\end{smallmatrix}\right)
(-M_{k-1}^4(I_{n-l-d_1}+M_{k-1}^2M_{k-1}^1))=0_{\lceil c/2\rceil}.
\end{align*}
Hence the product of $\varphi_k$ and last $\lceil c/2\rceil$ columns of $m_{k-1}^{-1}$ equals
\begin{align*}
\left(
\begin{smallmatrix}
0_{l\times \lceil c/2\rceil-l}& A(X)\\
\left(\begin{smallmatrix}0_{n-l\times c-2n} & I_{n-l}\end{smallmatrix}\right)& 0_{n-l\times l}
\end{smallmatrix}\right)
(I_{\lceil c/2\rceil}+M_{k-1}^4(I_{n-l-d_1}+M_{k-1}^2M_{k-1}^1)M_{k-1}^3)=\left(
\begin{smallmatrix}
0_{l\times \lceil c/2\rceil-l}& A(X)\\
\left(\begin{smallmatrix}0_{n-l\times c-2n} & I_{n-l}\end{smallmatrix}\right)& 0_{n-l\times l}
\end{smallmatrix}\right),
\end{align*}
thus ${}^u\psi_U$ agrees with $\psi_{U}$ on the blocks contained in $B'_2$.

If $c$ is odd, the restriction of ${}^{u}\psi_U$ to $B''$ is given by
\begin{align*}
\psi(\tr(\left(
\begin{smallmatrix}0_{2\times n}&\left(\begin{smallmatrix}\epsilon_1\\-\epsilon_2\end{smallmatrix}\right)&0_{2\times n}\end{smallmatrix}\right)m_{k-1}^{-1}B'')).
\end{align*}
Since $B''^{2,a_0}\in\mathscr{U}_P$ and $\epsilon_1\epsilon_2\ne0$, we deduce the first row of
$-M_{k-1}^4(I_{n-l-d_1}+M_{k-1}^2M_{k-1}^1)$ is $0$, and because $I_{n-l-d_1}+M_{k-1}^2M_{k-1}^1$ is invertible we obtain that
the first row of $M_{k-1}^4$ is $0$. Then the first row of $m_{k-1}^{-1}$ is $\left(\begin{smallmatrix}0_{n}&1&0_n\end{smallmatrix}\right)$, whence ${}^u\psi$ and $\psi_U$ agree on $B''$.

Altogether we have shown that $\psi_{V_{\beta}}$ belongs to the orbit
of a character which agrees with \eqref{psi_U on V beta} on $b_{k-1}$ and $b_k$.

Consider $b_{k+i}$, $1\leq i\leq k-2$. The coordinates of $b_{k+i}$ are uniquely defined by the blocks
\begin{align*}
B_{k-i-1}^{1,1},B_{k-i-1}^{1,4},B_{k-i-1}^{2,1},B_{k-i-1}^{2,4},B_{k-i-1}^{4,1},B_{k-i-1}^{4,4}.
\end{align*}
More precisely if we denote for $X\in\Mat_{a\times b}$, $X'=-J_b{}^tXJ_a$,
\begin{align}\label{eq:b k+1}
b_{k+i}=\left(\begin{smallmatrix}
          (B_{k-i-1}^{4,4})' & (B_{k-i-1}^{2,4})' & (B_{k-i-1}^{1,4})' \\
          (B_{k-i-1}^{4,1})' & (B_{k-i-1}^{2,1})' & (B_{k-i-1}^{1,1})'
        \end{smallmatrix}\right).
\end{align}
We multiply $m_{k-i-1}^{-1}B_{k-i-1}m_{k-i}$. Since
$\psi_U$ restricts to $\psi\circ\tr$ on $B_{k-i-1}$, the restriction of ${}^u\psi_U$
to $B_{k-i-1}$ is given by $\psi(\tr(m_{k-i}m_{k-i-1}^{-1}B_{k-i-1}))$. If this restriction is nontrivial on
$B_{k-i-1}^{3,1},B_{k-i-1}^{3,4}\in\mathscr{U}_P$, we obtain $\mathcal{H}(h)=0$.

On $B_{k-i-1}^{3,4}$, ${}^u\psi_U$ is given by the product of the last $\lceil c/2\rceil$ rows of
$m_{k-i}$ and columns $l+d_{i+1}+1,\ldots,n$ of $m_{k-i-1}^{-1}$; $\mathcal{H}(h)=0$ unless this product vanishes:
\begin{align*}
\left(\begin{smallmatrix}
0_{\lceil c/2\rceil\times l+d_i}&M_{k-i}^4&I_{\lceil c/2\rceil}
\end{smallmatrix}\right)
\left(\begin{smallmatrix}-M^1_{k-i-1} \\ I_{n-l-d_{i+1}}+ M^2_{k-i-1}M^1_{k-i-1}\\-M^4_{k-i-1}(I_{n-l-d_{i+1}}+ M^2_{k-i-1}M^1_{k-i-1})\end{smallmatrix}\right)=0.
\end{align*}
Thus the product of the last $\lceil c/2\rceil$ rows of $m_{k-i}$ and
last $\lceil c/2\rceil$ columns of $m_{k-i-1}^{-1}$ is
\begin{align*}
\left(\begin{smallmatrix}
0_{\lceil c/2\rceil\times l+d_i}&M_{k-i}^4&I_{\lceil c/2\rceil}
\end{smallmatrix}\right)
\left(\begin{smallmatrix}M^1_{k-i-1}M^3_{k-i-1} \\ -(I_{n-l-d_{i+1}}+ M^2_{k-i-1}M^1_{k-i-1})M^3_{k-i-1}\\I_{\lceil c/2\rceil}+M^4_{k-i-1}(I_{n-l-d_{i+1}}+ M^2_{k-i-1}M^1_{k-i-1})M^3_{k-i-1}\end{smallmatrix}\right)=I_{\lceil c/2\rceil}.
\end{align*}
This means that the restriction of ${}^u\psi_U$ to $B_{k-i-1}^{4,4}$, which corresponds to the bottom right $\lceil c/2\rceil\times\lceil c/2\rceil$ block of $m_{k-i}m_{k-i-1}^{-1}$, is $\psi\circ\tr$, so that it agrees with $\psi_U$ on this block.

On $B_{k-i-1}^{3,1}$, ${}^u\psi_U$ is defined by the product of the first $l+d_i$ rows of
$m_{k-i}$ and columns $l+d_{i+1}+1,\ldots,n$ of $m_{k-i-1}^{-1}$. Then $\mathcal{H}(h)=0$ unless
\begin{align*}
\left(\begin{smallmatrix}I_{l+d_i}+M_{k-i}^1M_{k-i}^2&M_{k-i}^1&0_{l+d_i\times\lceil c/2\rceil}\end{smallmatrix}\right)
\left(\begin{smallmatrix}-M^1_{k-i-1} \\ I_{n-l-d_{i+1}}+ M^2_{k-i-1}M^1_{k-i-1}\\ -M^4_{k-i-1}(I_{n-l-d_{i+1}}+ M^2_{k-i-1}M^1_{k-i-1})\end{smallmatrix}\right)=0.
\end{align*}
Hence
\begin{align*}
\left(\begin{smallmatrix}I_{l+d_i}+M_{k-i}^1M_{k-i}^2&M_{k-i}^1&0_{l+d_i\times\lceil c/2\rceil}\end{smallmatrix}\right)
\left(\begin{smallmatrix}M^1_{k-i-1}M^3_{k-i-1} \\ -(I_{n-l-d_{i+1}}+ M^2_{k-i-1}M^1_{k-i-1})M^3_{k-i-1}\\I_{\lceil c/2\rceil}+M^4_{k-i-1}(I_{n-l-d_{i+1}}+ M^2_{k-i-1}M^1_{k-i-1})M^3_{k-i-1}\end{smallmatrix}\right)=0.
\end{align*}
Therefore the restrictions of ${}^u\psi_U$ and $\psi_U$ to $B_{k-i-1}^{4,1}$, which correspond to the
top right $l+d_i\times\lceil c/2\rceil$ block of $m_{k-i}m_{k-i-1}^{-1}$, are both trivial.

Finally using \eqref{eq:b k+1} and noting that the leftmost $l$ columns of $X$ are the bottom $l$ rows of $X'$ (and the entries are permuted),
we find that ${}^u\psi_U$ is given on the blocks which $w$ conjugates into $b_{k+i}$ by
\begin{align*}
&\psi(\tr(\left(\begin{smallmatrix}
          (B_{k-i-1}^{4,4})' & (B_{k-i-1}^{2,4})' & (B_{k-i-1}^{1,4})' \\
          (B_{k-i-1}^{4,1})' & (B_{k-i-1}^{2,1})' & (B_{k-i-1}^{1,1})'
        \end{smallmatrix}\right)
        \left(\begin{smallmatrix}
          I_{\lceil c/2\rceil} & 0_{\lceil c/2\rceil\times l+d_i}\\
          {*}_{l+d_{i+1}\times\lceil c/2\rceil} & {*}_{l+d_{i+1}\times l+d_i} & \\
        \end{smallmatrix}\right))).
\end{align*}
We conclude $\psi_{V_{\beta}}$ belongs to the orbit of \eqref{psi_U on V beta}.
\end{proof}
\begin{proposition}\label{proposition:d_1 < n-l}
Assume $d_1<n-l$ (in particular $k>1$ and $l<n$, because $d_1\geq0$). Then $J_{V_{\beta},\psi_{V_{\beta}}^{-1}}(\rho)=0$, in particular $\mathcal{H}(h)=0$.
\end{proposition}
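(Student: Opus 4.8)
The plan is to show that the hypothesis $d_1<n-l$ forces the nilpotent orbit attached to the Whittaker pair $(V_\beta,\psi_{V_\beta})$ not to be dominated by $(k^c)$, and then to invoke the fact that $\rho$ is $(k,c)$. The assertion on $\mathcal{H}(h)$ is automatic once $J_{V_\beta,\psi_{V_\beta}^{-1}}(\rho)=0$ is established: by \S\ref{Basic properties}, applied with $Y=V_\beta={}^hU\cap M_P$ (so that ${}^h\psi_U^{-1}|_{V_\beta}=\psi_{V_\beta}^{-1}$), every morphism in $\mathcal{H}(h)$ factors through $J_{V_\beta,\psi_{V_\beta}^{-1}}(\rho)$.

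First I would reduce to the explicit character. By Proposition~\ref{proposition:wu_0 nontrivial implies h nontrivial orbit}, $\psi_{V_\beta}$ is $M_\beta$-conjugate to a character of the form \eqref{psi_U on V beta}. Conjugating a character of $V_\beta$ only replaces $J_{V_\beta,\psi_{V_\beta}^{-1}}(\rho)$ by the Jacquet module of $\rho$ along a conjugate unipotent subgroup (see \S\ref{representations}), so it does not affect vanishing; moreover, by \eqref{eq:beta} the undetermined ($*$) entries in \eqref{psi_U on V beta} sit above the leading blocks carrying the identity matrices, so they can only enlarge the associated nilpotent orbit, and enlarging preserves the property ``$\succsim(k^c)$''. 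Hence it suffices to treat the character \eqref{psi_U on V beta 0}, i.e.\ the case $\ell(u)=I$.

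Next I would identify the orbit $\mathcal{O}_\beta$ attached to $(V_\beta,\psi_{V_\beta})$ from its $(2k-1)$-step chain of maps, read off \eqref{psi_U on V beta 0}: every map is surjective except the central one, and the image of the central map is the span of the coordinate directions prescribed by the invertible block $A_l$ and the identities $I_{n-l}$, $I_{c-2n}$, $I_l$ appearing there. Since $(V_\beta,\psi_{V_\beta})$ is a genuine Whittaker pair whose unipotent part is exactly $V_\beta$, the reformulation of \cite{GGS} of the link between $\mathrm{WF}(\rho)$ and degenerate Whittaker functionals—valid over archimedean and non-archimedean fields; over archimedean fields combined with the finiteness of generalized Jacquet modules in Lemma~\ref{lem:adm} and the derivative theory of \cite{AGS2015a,AGS2015b}, and over non-archimedean fields alternatively accessible through the exchange-of-roots arguments of \cite{GRS5} and the theory of derivatives \cite{BZ1,BZ2}—reduces the claim to showing $\mathcal{O}_\beta\not\leq(k^c)$, i.e.\ that $\mathcal{O}_\beta$ has a Jordan block of size at least $k+1$ (equivalently, $\mathrm{WF}(\rho)=\{(k^c)\}$ rules out a $(V_\beta,\psi_{V_\beta}^{-1})$-functional whenever the orbit of the pair fails to be $\leq(k^c)$).

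The crux, and the main obstacle, is this last combinatorial point. A Jordan block of size $\geq k+1$ is a strand running through $k+1$ consecutive steps of the chain, i.e.\ a non-zero composition of $k$ consecutive maps; since any block of $k$ consecutive maps (out of the $2k-2$) contains the central non-surjective one, it suffices to show that the image of the central map $V_{k+1}\to V_k$ is not contained in the kernel of $V_k\to V_{k-1}$. That image contains the coordinate directions indexed by the block $I_{n-l}$ in the central matrix of \eqref{psi_U on V beta 0}, while the kernel of $V_k\to V_{k-1}$ omits precisely the $n-l-d_1$ coordinate directions picked out by the identity block $I_{n-l-d_1}$ there; these two index sets overlap exactly when $l+d_1<n$, that is, when $d_1<n-l$. (When $d_1=n-l$ the overlap is empty, the central image lies in that kernel, the longest strand has length $k$, and $\mathcal{O}_\beta=(k^c)$, which is why the hypothesis is needed.) Thus $\mathcal{O}_\beta$ has a part $\geq k+1$, so $\mathcal{O}_\beta\succsim(k^c)$, whence $J_{V_\beta,\psi_{V_\beta}^{-1}}(\rho)=0$ and $\mathcal{H}(h)=0$. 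What remains is the routine bookkeeping: running the argument uniformly over the four families of groups and over both parities of $c$ (for odd $c$ the block $I_{c-2n}$ contributes to the central image but does not change the overlap), and justifying the archimedean passage to generalized Jacquet modules via Lemma~\ref{lem:adm}.
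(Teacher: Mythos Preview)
Your overall strategy matches the paper's exactly: show the nilpotent $\varphi$ attached to $(V_\beta,\psi_{V_\beta}^{-1})$ has order at least $k+1$ and invoke \cite[Theorems~A,~E]{GGS} together with the $(k,c)$ hypothesis on $\rho$. The difficulty is in the execution, and your reduction step contains a real gap.

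Your claim that ``the undetermined ($*$) entries in \eqref{psi_U on V beta} sit above the leading blocks carrying the identity matrices, so they can only enlarge the associated nilpotent orbit'' is not correct as stated. Compare \eqref{psi_U on V beta 0} and \eqref{psi_U on V beta} for the blocks $b_{k-j}$ with $2\le j\le k-1$: in \eqref{psi_U on V beta 0} the coefficient is the specific matrix $\left(\begin{smallmatrix}0\\ I_{n-l-d_j}\end{smallmatrix}\right)$, whereas in \eqref{psi_U on V beta} the entire coefficient matrix is undetermined. Thus \eqref{psi_U on V beta 0} is a \emph{special} case of \eqref{psi_U on V beta}, not a lower bound; for a given $h$ the actual $*$ entries could make the contribution from $b_{k-j}$ vanish, yielding a nilpotent \emph{smaller} than the one in \eqref{psi_U on V beta 0}. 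So reducing to \eqref{psi_U on V beta 0} is not justified. Similarly, your assertion that ``every map is surjective except the central one'' fails for \eqref{psi_U on V beta}: in $b_{k+j}$ the coefficient $\left(\begin{smallmatrix}I_{\lceil c/2\rceil}&0\\ *&*\\ *&*\end{smallmatrix}\right)$ need not give a surjective map once the $*$ entries are allowed.

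The paper circumvents this by never reducing to \eqref{psi_U on V beta 0}. It works directly with \eqref{psi_U on V beta} and uses only the blocks $b_{k-1},b_k,\ldots,b_{2k-2}$, each of which carries a \emph{determined} nonzero entry regardless of the $*$'s. The crucial point is that these determined entries can be conjugated (by explicit elements $\varepsilon_1,\varepsilon_2$ of $M_\beta$ and one further $\GL_{\lceil c/2\rceil}$-element) to the bottom-right corner of each block, and in each case that entry is the \emph{unique} nonzero entry in its column of $\varphi$---this last uniqueness uses the specific placement of the zero block next to $I_{\lceil c/2\rceil}$ in \eqref{psi_U on V beta}, not merely the presence of the identity. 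Once this is arranged, $\varphi^k\ne 0$ follows by chaining through those corners. Your chain-of-maps picture is morally the same computation, but to make it go through for \eqref{psi_U on V beta} you must track a specific vector (equivalently, a specific matrix entry) that survives all $k$ compositions \emph{independently of the $*$ entries}; the paper's conjugations do exactly this bookkeeping.
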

\begin{proof}
Any morphism in $\mathcal{H}(h)$ factors through $J_{V_{\beta},\psi_{V_{\beta}}^{-1}}(\rho)$. We show $J_{V_{\beta},\psi_{V_{\beta}}^{-1}}(\rho)=0$. Suppose otherwise. The subgroup $V_{\beta}$ and character $\psi_{V_{\beta}}^{-1}$ define a degenerate Whittaker model in the sense of
\cite{MW3,GGS}. The character $\psi_{V_{\beta}}^{-1}$ uniquely defines a nilpotent element ${}^t\varphi\in\Mat_{kc}$ such that $\psi_{V_{\beta}}^{-1}(v)=\psi(\tr(v({}^t\varphi)))$ for all $v\in V_{\beta}$. Then $\varphi\in \Mat_{kc}$ is an upper triangular nilpotent matrix.
We prove $\varphi$ is nilpotent of order at least $k+1$. By \cite[Theorem~E]{GGS}, the orbit of $\varphi$ belongs to the closure of the wave-front set $\mathrm{WF}(\rho)$ of $\rho$, but this orbit is greater than or non-comparable with $(k^c)$, contradicting the fact that $\rho$ is $(k,c)$. When $\pi_2$ is supercuspidal (in particular, the field is non-archimedean) and $\rho$ is not necessarily of finite length, we derive the same contradiction from \cite[Theorem~A]{GGS}.

By Proposition~\ref{proposition:wu_0 nontrivial implies h nontrivial orbit}, we can assume $\psi_{V_{\beta}}$ is given by \eqref{psi_U on V beta}. Since $\varphi+I_{kc}\in V_{\beta}$, we let $b_1,\ldots,b_{2k-2}$ denote the blocks of $\varphi$ above the principal diagonal
(see \eqref{eq:v in V beta}). These can be read off \eqref{psi_U on V beta}, namely $b_i$ is the transpose of the block appearing to the right of $b_i$ in \eqref{psi_U on V beta} (one should include the signs appearing in \eqref{psi_U on V beta} before $\tr{}$). E.g., $b_{k-1}=-\left(\begin{smallmatrix}{*}_{n-l-d_1 \times l+d_1}& I_{n-l-d_1} & {*}_{n-l-d_1\times \lceil c/2\rceil}\end{smallmatrix}\right)$.

We apply a sequence of conjugations to $\varphi$, conjugating $k$ nonzero coordinates from $\varphi$, one coordinate from each block
$b_{k-1},b_k,\ldots,b_{2k-2}$: since $d_1<n-l$, $\psi_{V_{\beta}}$ is nontrivial on $b_{k-1}$, so that the block $b_{k-1}$ of $\varphi$ is nonzero; and because $k>1$, these are $k$ blocks.

The only nonzero blocks of $\varphi$ are the blocks $b_{1},\ldots,b_{2k-2}$, and these blocks contain nonzero entries at the coordinates defined by \eqref{psi_U on V beta}. Define the following partial sums $d^j$ of the integers appearing in the composition $\beta$, from right to left (see \eqref{eq:beta}):
\begin{align*}
&d^j=\begin{cases}
\sum_{i=1}^j\lceil c/2\rceil+l+d_{k-i}& 1\leq j\leq k-1,\\
c+d^{k-1}& j=k,\\
n-l-d_1+d^k& j=k+1.
\end{cases}
\end{align*}
First we conjugate $\varphi$ by
\begin{align*}
\varepsilon_1=\diag(I_{kc-d^{k-1}-\lceil c/2\rceil-1},\left(\begin{smallmatrix}&&1\\&I_{\lceil c/2\rceil-1}&\\1\end{smallmatrix}\right),I_{d^{k-1}}).
\end{align*}
Note that $\varepsilon_1$ normalizes $V_{\beta}$.
The $(n,n)$-th coordinate of $b_k$, which is $\epsilon_0=\pm1$ because $l<n$, becomes the $(c,n)$-th coordinate, and the $(n-l-d_1,n)$-th coordinate of $b_{k-1}$, which is $-1$, becomes the $(n-l-d_1,c)$-th coordinate --- the bottom right coordinate.
Both of these coordinates are independent of the blocks where $\psi_{V_{\beta}}$ is undetermined (denoted $*$ in \eqref{psi_U on V beta}), and are the only nonzero entries on their columns.
We can further conjugate ${}^{\varepsilon_1}\varphi$ by an element
of the group
\begin{align*}
\{\diag(I_{(\sum_{i=1}^{k-1}n-l-d_i)+c},
\left(\begin{smallmatrix}b\\&I_{l+d_1}\end{smallmatrix}\right),\ldots,\left(\begin{smallmatrix}b\\&I_{l+d_{k-1}}\end{smallmatrix}\right)):b\in\GL_{\lceil c/2\rceil}\}<M_{\beta},
\end{align*}
to take the $(c,n)$-th coordinate of $b_k$ into the $(c,1)$-th coordinate, without affecting any of the blocks $b_{k-1},\ldots,b_{2k-2}$ of ${}^{\varepsilon_1}\varphi$ except the block $b_k$ (the diagonal embedding of $b\in\GL_{n}$ instead of $b\in\GL_{\lceil c/2\rceil}$ in each of the last $k-1$ blocks is sufficient). Now let
\begin{align*}
\varepsilon_2=\diag(I_{(\sum_{i=1}^{k-1}n-l-d_i)+c},
\left(\begin{smallmatrix}&&1\\&I_{\lceil c/2\rceil+l+d_1-2}&\\1\end{smallmatrix}\right),\ldots,
\left(\begin{smallmatrix}&&1\\&I_{\lceil c/2\rceil+l+d_{k-1}-2}&\\1\end{smallmatrix}\right))\in M_{\beta},
\end{align*}
where if $\lceil c/2\rceil+l+d_{k-i}=1$, the corresponding block of size
$1+(\lceil c/2\rceil+l+d_{k-i}-2)+1$ is $I_1$.
Again $\varepsilon_2$ normalizes $V_{\beta}$.
Conjugating ${}^{\varepsilon_1}\varphi$ by $\varepsilon_2$, the $(c,1)$-th coordinate of $b_k$ is taken into the $(c,\lceil c/2\rceil+l+d_1)$-th coordinate (the bottom right coordinate), and the top left coordinate of $b_{k+i}$, which is independent of the undetermined blocks and is the only nonzero coordinate on its column, is taken into the bottom right coordinate for each $1\leq i\leq k-2$. We conclude that the bottom right coordinate of each of the blocks $b_{k-1},\ldots,b_{2k-2}$ of ${}^{\varepsilon_2\varepsilon_1}\varphi$ is nonzero ($-1$ for $b_{k-1}$, $\epsilon_0$ for $b_k$, $1$ for all other blocks), and in each of these blocks, the bottom right coordinate is the only nonzero entry in its column. Therefore $\varphi$ is nilpotent of order at least $k+1$.
\end{proof}
\begin{example}
Consider $c=k=2$, $G=\Sp_2$, $H=\Sp_8$ and $l=0$. Then $0\leq d_1\leq n-l=1$. For $d_1=0$, $w\in M_P(0^2,w_2)$ with $w_2=(1,0)$.
One can take
\begin{align*}
w=\left(\begin{smallmatrix}
                  1 &  &  &  &    \\
                   &  &  &   1 &  \\
                   &  & I_4 &  &    \\
                   & -1 &  &  &    \\
                   &  &  &  &   1
                \end{smallmatrix}\right).
\end{align*}
Then
\begin{align*}
u=\left(\begin{smallmatrix}I_2&x&y\\&I_4&x'\\&&I_2\end{smallmatrix}\right),\qquad \psi_U(u)=\psi(x_{1,1}+x_{2,4}),\qquad
{}^wU\cap M_P=\left\{\left(\begin{smallmatrix}
                               1 & y_{1,1} & x_{1,1} & x_{1,2} \\
                                & 1 &  &  \\
                                & x_{2,4} & 1 &  \\
                                & x_{2,3} &  & 1
                             \end{smallmatrix}\right)\right\}.
\end{align*}
Multiplying $w$ on the left by $\gamma_1=\diag(1,\left(\begin{smallmatrix}&I_2\\1\end{smallmatrix}\right))$,
\begin{align*}
V_{\beta}=V_{(1,2,1)}=\left\{\left(\begin{smallmatrix}
                               1 & x_{1,1} & x_{1,2} & y_{1,1} \\
                                & 1 &  & x_{2,4} \\
                                &  & 1 & x_{2,3} \\
                                &  &  & 1
                             \end{smallmatrix}\right)\right\},\qquad\varphi=\left(\begin{smallmatrix}
                                                                  0 & -1 & 0 &0  \\
                                                                   & 0 & 0 & -1 \\
                                                                   &  & 0 & 0 \\
                                                                   &  &  & 0
                                                                \end{smallmatrix}\right).
\end{align*}
The nilpotency order of $\varphi$ is $3$, and since $\rho$ is a $(k,c)=(2,2)$ representation, $J_{V_{\beta},\psi_{V_{\beta}}^{-1}}(\rho)=0$.
\end{example}
Now consider the cases $d_1=n-l$ or $k=1$. There are only finitely many representatives (i.e., representatives $h_i,h_j$ with $h_i\not\sim h_j$) satisfying this condition. This is trivial when $k=1$. For $k>1$ recall $h=wu$ with $\ell_0(u)$ of the form \eqref{eq:second form u final}. Since $d_1=n-l$, and thereby $d_1=\ldots=d_{k-1}=n-l$, we finally have for any $m\in M_Q$, ${}^w\ell(m)\in P$, in particular ${}^w\ell(u)\in P$
hence $h\sim w\ell_0(u)$. We simplify $\ell_0(u)$ and deduce that there are only finitely many representatives remaining.
Regard $\GL_l$ as the direct factor of the standard Levi subgroup $\GL_l\times\mathcal{G}_{c-2l}$ of $G$. For $g_1,g_2\in\GL_l$, because (finally) ${}^w(g_1,g_2)\in P$,
\begin{align*}
w\ell_0(u)\sim w\ell_0(u)(g_1,g_2)\sim w({}^{(g_1,g_2)^{-1}}\ell_0(u)).
\end{align*}
Looking at \eqref{eq:second form u final}, we can now assume $A_l=\diag(I_{l'},0_{l-l'})$, where $l'\leq l$ is the rank of $A_l$. There are only finitely many such representatives. Furthermore if $l'<l$,
take a representative $g$ of an element of $W(G)$ such that
$w\ell_0((g,1))$ does not permute the rows $l'+1,\ldots,l$ of \eqref{eq:second form u final}. Since (as opposed to
the proof of Proposition~\ref{proposition:structure of w u}) ${}^w\ell((g,1))\in P$, $w(g,1)\sim w\ell_0((g,1))$. Then
\begin{align*}
w({}^{(g_1,g_2)^{-1}}\ell_0(u))\sim w(g,1)({}^{(g,1)^{-1}(g_1,g_2)^{-1}}\ell_0(u))\sim w\ell_0((g,1))({}^{(g,1)^{-1}(g_1,g_2)^{-1}}\ell_0(u)).
\end{align*}
Since $l'\leq l$, $w\ell_0((g,1))$ now trivially satisfies \eqref{eq:w_i second reduction} for $l'$ with $d_1=\ldots=d_{k-1}=n-l'$ (the multiplications on the left by elements of $W(M_P)$ do not matter for this), and if we re-denote $w=w\ell_0((g,1))$ and $l=l'$,
we have $h=w({}^{\jmath_{(k-1)c+l}}u_l)$. If $c$ is odd,
$u_l$ commutes with any $\jmath_a$, otherwise $\jmath_{(k-1)c+l}=\jmath_l$. Hence
$h=w({}^{\jmath_{l}}u_l)$ (as in the $k=1$ case).

Thus there are only $n+1$ representatives $h$ to consider, and note that the representative $w({}^{\jmath_n}u_n)$ satisfies $h\sim\delta$.

\begin{proposition}\label{proposition:d1 = n-l l < n}
Assume $d_1=n-l$ or $k=1$, and $l<n$. Then $\mathcal{H}(h)=0$ outside a discrete subset of $s$.
Moreover, under each one of the conditions of \ref{part3}, e.g.,
$\pi_2$ is supercuspidal (and $c>2$ or $G=\Sp_2$), $\mathcal{H}(h)=0$ for all $s$.
\end{proposition}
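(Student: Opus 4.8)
The plan is to apply the third vanishing method of \S~\ref{the vanishing 3 types}, exploiting the fact that $l<n$ leaves a nontrivial general linear factor available inside $G$. Recall from the reduction preceding the proposition that when $d_1=n-l$ or $k=1$ we may take $h=w({}^{\jmath_l}u_l)$ with $u_l$ as in \eqref{eq:unipotent rep k=1}, and that there are only finitely many such representatives, one for each $0\le l\le n$; we treat those with $l<n$ (the one with $l=n$ satisfies $h\sim\delta$ and is dealt with separately). When $k>1$ and $d_1=\dots=d_{k-1}=n-l$, the composition \eqref{eq:beta} collapses to $\beta=(c^k)$, since $\lceil c/2\rceil+n=c$ and every $n-l-d_i=0$; hence ${}^hU\cap M_P=V_{(c^k)}$ and, by \S~\ref{Basic properties}, every morphism in $\mathcal{H}(h)$ factors through $J_{V_{(c^k)},\psi_{V_\beta}^{-1}}(\rho)$, where $\psi_{V_\beta}={}^h\psi_U|_{V_{(c^k)}}$ is the character determined in Proposition~\ref{proposition:wu_0 nontrivial implies h nontrivial orbit}; it is degenerate, because $l<n$ (otherwise the orbit would be $\succsim(k^c)$ and the second method would already give vanishing for all $s$). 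For $k=1$ the group $U$ is trivial, $\rho$ is a character of $\GL_c=M_P$, and no such reduction of $\rho$ is needed.

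First I would fix $R=M_R\ltimes U_R<G$ to be the standard parabolic with Levi $M_R=\GL_{n-l}\times\mathcal{G}_{c-2(n-l)}$, which is proper precisely because $n-l\ge1$. The structural heart of the argument is to verify, by conjugating the embeddings $(1,U_R)$ and $(1,\GL_{n-l})$ of \S~\ref{Doubling setup} (and Example~\ref{example:odd c} when $c$ is odd) by $w({}^{\jmath_l}u_l)$ --- exactly as in the classical case $k=1$ of \cite[\S~2]{PSR} --- that after a harmless left multiplication by an element of $W(M_P)$ one has ${}^h(1,U_R)\subset U_P$, so that $(1,U_R)$ acts trivially on $J_{V_{(c^k)},\psi_{V_\beta}^{-1}}(\rho)$, hence on the whole $\rho$-factor of $\mathcal{H}(h)$, while ${}^h(1,\GL_{n-l})\subset M_P=\GL_{kc}$ sits as a direct factor of a standard Levi whose determinant sends a central scalar $a$ to a nonzero power of $a$. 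Concretely the $\GL_{n-l}$ in play is the one acting on the $I_{n-l}$ blocks of $u_l$, which are exactly the coordinates untouched by $u_l$ and hence survive into $M_P$, whereas $U_R$ is absorbed into $U_P$.

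Granting this, every $\mathcal{T}\in\mathcal{H}_\nu(h)$ factors through $J_{U_R}(\pi_2^{\vee})$, an admissible finite length representation of $M_R$ (replaced, over archimedean fields, by the generalized Jacquet modules $\pi_2^{\vee}/\overline{\mathfrak{u}_R^i\pi_2^{\vee}}$ of Lemma~\ref{lem:adm}), and simultaneously through a representation of ${}^h(1,\GL_{n-l})$ extracted from $J_{V_{(c^k)},\psi_{V_\beta}^{-1}}(\rho)$. Using the ``exchange of roots'' technique of \cite{GRS5} together with Bernstein--Zelevinsky derivatives \cite{BZ1,BZ2} over non-archimedean fields, and \cite{AGS2015a,AGS2015b,GGS} over archimedean ones, I would identify $J_{V_{(c^k)},\psi_{V_\beta}^{-1}}(\rho)$ with (a subquotient of, resp.\ a countable limit of generalized Jacquet modules of) a Jacquet module of $\rho$ along a standard parabolic of $\GL_{kc}$ whose Levi contains ${}^h(1,\GL_{n-l})$ as a direct factor; by Lemmas~\ref{lem:adm} and \ref{lem:adm 2 discrete} this carries a filtration on whose constituents ${}^h(1,C_{\GL_{n-l}})$ acts through a discrete set of quasi-characters. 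Fixing $0\ne\mathcal{T}\in\mathcal{H}_\nu(h)$ and choosing constituents on which it is nonzero, the $(1,a)$-equivariance of $\mathcal{T}$ for $a\in C_{\GL_{n-l}}\cong F^*$ then yields, as in \eqref{eq:relation for T with s}, an identity $\mu(a)|a|^{bs}=\theta_h((1,a))$ for all $a$, with $b\ne0$ and $\mu$ ranging over a discrete set depending only on $\pi_2,\eta,\rho,h$ (and, archimedeanly, $\Lambda_\nu$); this confines $s$ to a discrete set, and the union over the finitely many $h$ gives $\mathcal{B}$. If $\pi_2$ is supercuspidal and $G$ admits the proper parabolic $R$ --- which holds when $c>2$ or $G=\Sp_2$ --- then already $J_{U_R}(\pi_2^{\vee})=0$, so $\mathcal{H}(h)=0$ for all $s$, without using the finiteness or the $(k,c)$-property of $\rho$; in the remaining subcase of \ref{part3}, $c=2$ with $\rho=\rho_c(\tau)$ for supercuspidal $\tau$ (so $G=\SO_2,\GSpin_2$ has no proper parabolic), one shows instead, from the supercuspidality of $\tau$ and the geometric lemma for $\rho_c(\tau)$, that $J_{V_{(c^k)},\psi_{V_\beta}^{-1}}(\rho)=0$ for this degenerate $\psi_{V_\beta}$.

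The step I expect to be the main obstacle is the identification of $J_{V_{(c^k)},\psi_{V_\beta}^{-1}}(\rho)$ with an honest parabolic Jacquet module of $\rho$: unlike in the classical Rankin--Selberg situation this Jacquet module carries no mirabolic action, so establishing that it restricts to a finite length --- equivalently, discrete-central-exponent --- representation of ${}^h(1,\GL_{n-l})$ is precisely where the exchange-of-roots manipulation and, over archimedean fields, the derivative theory of \cite{AGS2015a,AGS2015b,GGS} are indispensable. A secondary, more routine burden is the uniform verification of the positions of ${}^h(1,U_R)$ and ${}^h(1,\GL_{n-l})$ inside $P$ across all four families of groups $\mathcal{G}$ and both parities of $c$.
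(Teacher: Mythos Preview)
Your overall architecture matches the paper's: use the parabolic $R$ with Levi $\GL_{n-l}\times\mathcal{G}_{c-2(n-l)}$, show that morphisms in $\mathcal{H}(h)$ factor through a Jacquet module of $\pi_2^\vee$ along $U_R$, control the $\GL_{n-l}$-action on the $\rho$-side, and conclude via \eqref{eq:relation for T with s}. The identification $\beta=(c^k)$ and the recognition that the hard step is making $J_{V_{(c^k)},\psi_{V_\beta}^{-1}}(\rho)$ behave like a parabolic Jacquet module are both correct.

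There is, however, a genuine gap in your ``structural heart''. The claim that ${}^h(1,U_R)\subset U_P$ is false except in the trivial case $l=0$ with $c$ even. The paper computes explicitly that for $z\in U_R$ one has ${}^h(1,{}^{\jmath_{(c+1)l}}z)=m_z z'$ with $z'\in U_P$ but
\[
m_z=\diag\!\left(\left(\begin{smallmatrix}I_l\\&I_{n-l}\\&&I_l\\u_1'&&u_2&I_{n-l}&\epsilon u_4\\&&&&I_{c-2n}\end{smallmatrix}\right),I_{(k-1)c}\right)\in M_P,
\]
which is a nontrivial lower-unipotent element of $M_P$ whenever $l>0$ (or $c$ is odd). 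No left multiplication of $h$ by an element of $W(M_P)$ will push this into $U_P$: conjugation by $M_P$ preserves the $M_P/U_P$ decomposition. Thus $(1,U_R)$ does \emph{not} act trivially on $\rho$ for free, and your deduction that the supercuspidal case works ``without using \dots\ the $(k,c)$-property of $\rho$'' is incorrect. The paper's Lemma~\ref{lemma:Jacquet module is a trivial rep of U_R} is precisely the statement that the abelian group $Z$ of such $m_z$'s acts trivially on $J_{V_{(c^k)},\psi_{V_\beta}^{-1}}(\rho)$, and its proof is a nilpotency-order-$\geq k+1$ argument (applied to $\psi_{V_\beta}^{-1}\otimes\chi'$ for each nontrivial character $\chi'$ of a block of $Z$) that uses the $(k,c)$ property of $\rho$ in an essential way. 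This lemma is logically prior to, and of the same nature as, the parabolic-Jacquet-module step you flagged (the paper's Lemma~\ref{lemma:Jacquet module is a finite length}); both require $(k,c)$, and the supercuspidal shortcut only kicks in \emph{after} Lemma~\ref{lemma:Jacquet module is a trivial rep of U_R} is established.
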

\begin{proof}
Now $V_{\beta}=V_{(c^k)}$ which is trivial when $k=1$, in which case we set $\psi_{V_{\beta}}=1$. If $k>1$,
since now $\ell(u)$ is trivial, one can read $\psi_{V_{\beta}}$ directly off
\eqref{psi_U on V beta 0}, then
\begin{align}\label{psi_U on V beta d1 n-l}
\psi_{V_{\beta}}(v)=&\psi(-\tr(b_{k}
\left(\begin{smallmatrix}0&0&-\epsilon_0I_l&0&0\\0&\epsilon_0I_{n-l}&0&0&0
\\0&0&0&0&I_{c-2n}
\\0&0&0&0_{n-l}&0
\\I_l&0&0&0&0
\end{smallmatrix}\right))-\sum_{j=1}^{k-2}\tr (b_{k+j})).
\end{align}
(Note that $A_l$ was replaced by $I_l$ in the first matrix.)

Consider the parabolic subgroup $R=M_R\ltimes U_R<G$ where $M_R\cong\GL_{n-l}\times\mathcal{G}_{c-2(n-l)}$ and
\begin{align*}
{}^{\jmath_{(c+1)l}}U_R=\left\{\left(\begin{smallmatrix}I_l&&&u_1&0\\u_2&I_{n-l}&u_4&u_3&u_1'\\&&I_{c-2n}&u_4'\\&&&I_{n-l}\\&&&u_2'&I_l\end{smallmatrix}\right)\right\}.
\end{align*}
Here if $c$ is even, $\jmath_{(c+1)l}=\jmath_l$, otherwise $\jmath_{(c+1)l}$ is trivial.
Since $l<n$, this is a nontrivial parabolic subgroup unless $c=2$ and $G\ne\Sp_2$. If $c$ is odd, the image of $U_R$ in $H_0$ is given by (see Example~\ref{example:odd c})
\begin{align*}
\left\{\left(\begin{smallmatrix}I_l&&\epsilon_2u_4&\epsilon_1u_4&u_1&0\\u_2&I_{n-l}&&&u_3&u_1'\\&&1&&&\epsilon_1u_4'\\&&&1&&\epsilon_2u_4'\\&&&&I_{n-l}\\&&&&u_2'&I_l\end{smallmatrix}\right)\right\}.
\end{align*}
Denote the Lie algebra of $U_R$ by $\mathfrak{u}_R$.
\begin{lemma}\label{lemma:Jacquet module is a trivial rep of U_R}
The following holds for all $s$.
\begin{enumerate}[leftmargin=*]
\item If $F$ is non-archimedean, $J_{V_{\beta},\psi_{V_{\beta}}^{-1}}(\rho)$ is a trivial representation of ${}^h(1,{}^{\jmath_{(c+1)l}}U_R)$.
\item For an archimedean field, ${}^h(1,{}^{\jmath_{(c+1)l}}\mathfrak{u}_R)$ acts locally nilpotently on
$J_{V_{\beta},\psi_{V_{\beta}}^{-1}}(\rho)^*$.
\end{enumerate}
\end{lemma}
The proofs of this lemma and the following one appear after the proof of the proposition. If $\pi_2$ is supercuspidal (nontrivially),
the proposition follows immediately from Lemma~\ref{lemma:Jacquet module is a trivial rep of U_R}, in particular we do not need to exclude any $s$ (see also the discussion preceding \eqref{eq:relation for T with s}).

Identify the group $\GL_{n-l}$ with its image in $M_R$,
\begin{align*}
{}^h(1,\GL_{n-l})=\left\{\diag(I_{n+l},a,I_{c-2n+(k-1)c}):a\in\GL_{n-l}\right\},
\end{align*}
where the right hand side is implicitly regarded as a subgroup of $M_P$. By \eqref{psi_U on V beta d1 n-l},
${}^h(1,\GL_{n-l})$ stabilizes $\psi_{V_{\beta}}$, and because it also normalizes $V_{\beta}$, ${}^h(1,\GL_{n-l})$ acts on
$J_{V_{\beta},\psi_{V_{\beta}}^{-1}}(\rho)$.

\begin{lemma}\label{lemma:Jacquet module is a finite length}
The following holds for all $s$.
\begin{enumerate}[leftmargin=*]
\item\label{p1} If $F$ is non-archimedean, $J_{V_{\beta},\psi_{V_{\beta}}^{-1}}(\rho)$ admits a finite length filtration as a representation of
${}^h(1,\GL_{n-l})$, where ${}^h(1,C_{\GL_{n-l}})$ acts by a character on each constituent. (The constituents need not be irreducible.)
\item Over archimedean fields, there is a maximal parabolic subgroup of $\GL_{kc}$ whose Levi part contains
${}^h(1,\GL_{n-l})$ as a direct factor, such that the Lie algebra $\mathfrak{v}$ of its unipotent radical acts locally nilpotently on $J_{V_{\beta},\psi_{V_{\beta}}^{-1}}(\rho)^*$.
\item\label{p3} If $c=2$, $\rho=\rho_c(\tau)$ for an irreducible supercuspidal representation $\tau$ of $\GL_k$ and $k>1$,
$J_{V_{\beta},\psi_{V_{\beta}}^{-1}}(\rho)=0$.
\end{enumerate}
\end{lemma}
This implies $\mathcal{H}(h)=0$ outside a discrete subset of $s$, because ${}^{h^{-1}}(|\det|^{s-1/2})(1,aI_{n-l})=|a|^{(n-l)(s-1/2)}$ for all $a\in F^*$, and $l<n$, then we can apply \eqref{eq:relation for T with s}. Also
Lemma~\ref{lemma:Jacquet module is a finite length}~\eqref{p3} implies $\mathcal{H}(h)=0$ for all $s$, in the remaining case
of \ref{part3}.
\end{proof}
\begin{proof}[Proof of Lemma~\ref{lemma:Jacquet module is a trivial rep of U_R}]
First consider a non-archimedean field.
We show $J_{V_{\beta},\psi_{V_{\beta}}^{-1}}(\rho)$ is a trivial representation of ${}^h(1,{}^{\jmath_{(c+1)l}}U_R)$.
For $z\in U_R$, ${}^h(1,{}^{\jmath_{(c+1)l}}z)=m_zz'$ where $z'\in U_P$ and
\begin{align*}
m_z=\diag(\left(\begin{smallmatrix}I_l\\&I_{n-l}\\&&I_l\\u_1'&&u_2&I_{n-l}&\epsilon u_4\\&&&&I_{c-2n}\end{smallmatrix}\right),I_{(k-1)c}).
\end{align*}
Here $\epsilon=\epsilon_1$ or $\epsilon_2$ depending on $\jmath_{l}$; also for the computation note that since $l<n$, $\jmath_{l}$ commutes with $u_l$. When $l=0$ and $c$ is even, $m_z$ is trivial whence
${}^h(1,{}^{\jmath_{(c+1)l}}U_R)<U_P$, so that
$J_{V_{\beta},\psi_{V_{\beta}}^{-1}}(\rho)$ is immediately a trivial representation of ${}^h(1,{}^{\jmath_{(c+1)l}}U_R)$.

Let $Z$ be the subgroup of $M_P$ generated by the matrices $m_z$ as $z$ varies in $U_R$. It is an abelian group. The rank of
$Z$ is $(2l+c-2n)(n-l)$. Since $\psi_{V_{\beta}}^{-1}$ restricts to a trivial character on the rows
$2n,\ldots,2n+1-(n-l)$ of $b_k$ (which are the last $n-l$ rows if $c$ is even), $Z$ stabilizes
$\psi_{V_{\beta}}^{-1}$ (it clearly normalizes $V_{\beta}$). Thus $J_{V_{\beta},\psi_{V_{\beta}}^{-1}}(\rho)$ is a representation of $Z$
and for each character $\chi$ of $Z$,
\begin{align*}
J_{Z,\chi}(J_{V_{\beta},\psi_{V_{\beta}}^{-1}}(\rho))=J_{V_{\beta}\ltimes Z,\psi_{V_{\beta}}^{-1}\otimes\chi}(\rho).
\end{align*}
A similar identity holds for any subgroup of $Z$.

For $b\in\GL_c$, denote $b^{\triangle}=\diag(b,b^{\triangle'})\in M_{(c^k)}$ where $b^{\triangle'}$ is the diagonal embedding of $\GL_c$ in
$M_{(c^{k-1})}$. The group $\diag(I_c,\GL_c^{\triangle'})$ stabilizes the restriction of $\psi_{V_{\beta}}^{-1}$ to the blocks $b_{k+1},\ldots,b_{2k-2}$ (but not to $b_k$). The group $\GL_l\times\GL_l\times\GL_{n-l}\times\GL_{c-2n}$
embedded in $M_{(c^k)}$ by
\begin{align}\label{eq:b1 b2 b3}
[x_1,x_2,x_3,x_4]=\diag(\left(\begin{smallmatrix}x_1\\&I_{n-l}\\&&x_2\\&&&x_3\\&&&&x_4\end{smallmatrix}\right),
\left(\begin{smallmatrix}x_2\\&I_{n-l}\\&&x_4\\&&&I_{n-l}\\&&&&x_1\end{smallmatrix}\right)^{\triangle'}),
\end{align}
where $x_1,x_2\in\GL_l$, $x_3\in\GL_{n-l}$ and $x_4\in\GL_{c-2n}$,
acts on the set of characters of $Z$ with infinitely many orbits, but precisely $2$ orbits separately on each block
$Z'=u_1', u_2$ and $\epsilon u_4$, and stabilizes $\psi_{V_{\beta}}^{-1}$. It is enough to prove that each block $Z'$ acts trivially on $J_{V_{\beta},\psi_{V_{\beta}}^{-1}}(\rho)$. By \cite[5.9--5.12]{BZ1}, it suffices to show that for each nontrivial character $\chi'$ of $Z'$,
\begin{align}\label{eq:Z chi vanishing}
J_{V_{\beta}\ltimes Z',\psi_{V_{\beta}}^{-1}\otimes\chi'}(\rho)=0,
\end{align}
since then for each $Z'$, $J_{V_{\beta},\psi_{V_{\beta}}^{-1}}(\rho)=J_{V_{\beta}\ltimes Z',\psi_{V_{\beta}}^{-1}}(\rho)$, and thus
$J_{V_{\beta},\psi_{V_{\beta}}^{-1}}(\rho)=J_{V_{\beta}\ltimes Z,\psi_{V_{\beta}}^{-1}}(\rho)$.
This implies that ${}^h(1,{}^{\jmath_{(c+1)l}}U_R)$ acts trivially on $J_{V_{\beta},\psi_{V_{\beta}}^{-1}}(\rho)$.

Let $\chi'$ be a nontrivial character of $Z'$. As in the proof of Proposition~\ref{proposition:d_1 < n-l}, we let $\varphi$ denote the transpose of the nilpotent element defined by the character $\psi_{V_{\beta}}^{-1}\otimes\chi'$ of $V_{\beta}\ltimes Z'$, and show that $\varphi$ is nilpotent of order at least $k+1$, then \eqref{eq:Z chi vanishing} essentially follows from \cite[Theorems~A, E]{GGS} because $\rho$ is $(k,c)$, but an additional argument is used because $V_{\beta}\ltimes Z'$ does not correspond to a unipotent orbit (see below).

Conjugating $\varphi$ by a suitable element \eqref{eq:b1 b2 b3}, we can assume the bottom left coordinate of $Z'$ in $\varphi$ is $1$, and all other coordinates on the same column of $Z'$ are $0$. Assume (momentarily) $c$ is even or $l>0$.
One can permute $\epsilon u_4$ (for odd $c$) with the first column of $u_2$ using conjugation by
\begin{align*}
\diag(\left(\begin{smallmatrix}I_n\\&&&-1\\&&I_{n-1}\\&-1\end{smallmatrix}\right),
\left(\begin{smallmatrix}&&1\\&I_{n-1}\\1\\&&&I_n\end{smallmatrix}\right)^{\triangle'}).
\end{align*}
This element normalizes $V_{\beta}$ and stabilizes the blocks
$b_{k},\ldots,b_{2k-2}$ of $\varphi$.
Moreover,
we can exchange the blocks $u_1'$ and $u_2$ using conjugation by
\begin{align}\label{eq:matrix conjugating u1 and u2}
\diag(\left(\begin{smallmatrix}&&I_l\\&I_{n-l}\\I_l\\&&&I_{\lceil c/2\rceil-l}\end{smallmatrix}\right),I_{(k-1)c}),
\end{align}
hence we can always assume, for any $Z'$, that after a conjugation the bottom left coordinate of the block $u_1'$ of $\varphi$ is $1$. The matrix \eqref{eq:matrix conjugating u1 and u2} normalizes $V_{\beta}$, fixes the blocks
$b_{k+1},\ldots,b_{2k-2}$ of $\varphi$, but permutes the coordinates of the block $b_k$ of $\varphi$. In particular the $(n+1,1)$-th coordinate in the block $b_k$ of $\varphi$, which is $-\epsilon_0$, is conjugated into the $(1,1)$-th coordinate of this block. If $Z'=u_1'$ we can skip this conjugation. When $c$ is odd and $l=0$, in which case $Z'=\epsilon u_4$ (evidently $u_1'$ and $u_2$ are trivial when $l=0$), we use conjugation by
\begin{align*}
\diag(\left(\begin{smallmatrix}&&1\\&I_{c-2}\\1\end{smallmatrix}\right),
\left(\begin{smallmatrix}&&1\\&I_{c-2}\\1\end{smallmatrix}\right)^{\triangle'}),
\end{align*}
so that the $(c,n+1)$-th coordinate of the block $b_k$ of $\varphi$, which is $1$, is permuted to the $(1,n+1)$-th coordinate, and
the bottom left coordinate of $Z'$ in $\varphi$ is permuted to the $(2n,1)$-th coordinate of $\varphi$. At any rate, $\varphi$ has a nonzero coordinate in the first row of $b_k$.

Conjugating $\varphi$ by an element of $\diag(I_c,\GL_c^{\triangle'})$,
we can always assume $\varphi$ contains $1$ on the top right coordinate of $b_k$, and additionally (still) contains $1$ in the
$(2n,1)$-th coordinate (the bottom left coordinate of $u_1'$ if $l>0$).
If $c$ is odd, we can permute this coordinate to the $(c,1)$-th coordinate using conjugation by $\diag(I_{c-2},\left(\begin{smallmatrix}&1\\1\end{smallmatrix}\right),I_{(k-1)c})$.

Now conjugating $\varphi$ by
\begin{align*}
\diag(\left(\begin{smallmatrix}&&1\\&I_{c-2}\\1\end{smallmatrix}\right)
,I_{k(c-1)}),
\end{align*}
the top right coordinate of $b_k$ is permuted into its bottom right, so that now $\varphi$ has $1$ in the bottom right coordinate of each of the blocks $b_k,\ldots,b_{2k-2}$, and the $(c,1)$-th coordinate of $\varphi$ is permuted into the $(1,c)$-th coordinate.
Moreover, the only nonzero entry in column $c$ is the $(1,c)$-th coordinate, and in each $b_k,\ldots,b_{2k-2}$ the only nonzero entry on the  rightmost column is the bottom right one.
This brings us to the situation in the proof of Proposition~\ref{proposition:d_1 < n-l}, with two exceptions. First, instead of $-1$ in the bottom right coordinate of the block $b_{k-1}$, we have $1$ in the $(1,c)$-th coordinate (the first coordinate to the left of the top left coordinate of $b_k$). It again follows that $\varphi$ is nilpotent of order at least $k+1$.

Second and more importantly, the unipotent subgroup $V_{\beta}\ltimes Z'$ does not correspond to a unipotent orbit (i.e., it is not of the form $V(\sigma)$, see \S~\ref{kc representations}). However, we reduced \eqref{eq:Z chi vanishing} to the vanishing of
$J_{V_{\beta}\ltimes Z'',\psi_{V_{\beta}}^{-1}\otimes\chi^{''}}(\rho)$,
where $Z''<N_{\GL_{kc}}$ corresponds to the $(1,c)$-th coordinate and $\chi''$ is a nontrivial character of $Z''$. We see that $V_{\beta}\ltimes Z''<V_{(1,c-1,c^{k-1})}$ and any character $\psi'$ of $V_{(1,c-1,c^{k-1})}$ which extends $\psi_{V_{\beta}}^{-1}\otimes\chi^{''}$ is still nilpotent of order $k+1$. Also, the torus $T_{\GL_{c-1}}$ acts on the added $c-2$ new coordinates of $V_{(1,c-1,c^{k-1})}$ with finitely many orbits (one can identify each diagonal coordinate from $T_{\GL_{c-1}}$ with an element of $T_{\GL_{kc}}$ which fixes $\psi_{V_{\beta}}^{-1}\otimes\chi^{''}$). Thus (by \cite[5.9--5.12]{BZ1}) $J_{V_{\beta}\ltimes Z'',\psi_{V_{\beta}}^{-1}\otimes\chi^{''}}(\rho)=0$ if for any $\psi'$ as above,
$J_{V_{(1,c-1,c^{k-1})},\psi'}(\rho)=0$. The latter holds by \cite[Theorems~A, E]{GGS} because $\rho$ is $(k,c)$.
We conclude \eqref{eq:Z chi vanishing} for any nontrivial $\chi'$ and any block $Z'$.

For the archimedean case, again the result is trivial if $l=0$ and $c$ is even.
The (abelian) Lie algebra $\mathfrak{z}$ of $Z$ decomposes into the direct sum of one-dimensional Lie algebras $\mathfrak{z}_{i,j}$,
corresponding to the coordinates $Z_{i,j}$ of $Z$ (which can be identified with roots of $\GL_{kc}$).

For each $(i,j)$, there is a subgroup of \eqref{eq:b1 b2 b3} which acts on the characters of $Z_{i,j}$ with $2$ orbits; one can identify
this subgroup with $T_{\GL_2}$. Then we can proceed as in the non-archimedean case and prove
$J_{V_{\beta}\ltimes Z_{i,j},\psi_{V_{\beta}}^{-1}\otimes\chi'}(\rho)=0$ for any nontrivial character $\chi'$, where to deduce this
from the vanishing of $J_{V_{(1,c-1,c^{k-1})},\psi'}(\rho)=0$ for all $\psi'$ we apply \cite[Corollary~3.0.2]{GGS2} (instead of \cite{BZ1}).
By the transitivity of the Jacquet functor, this implies that there are no continuous distributions on
$J_{V_{\beta},\psi_{V_{\beta}}^{-1}}(\rho)$ that transform on the left under $Z_{i,j}$ by $\chi'$, i.e., $(J_{V_{\beta},\psi_{V_{\beta}}^{-1}}(\rho)^*)^{(Z_{i,j},\chi')}=0$. Hence by \cite[Proposition 3.0.1]{GGS2},
$\mathfrak{z}_{i,j}$ acts locally nilpotently on $J_{V_{\beta},\psi_{V_{\beta}}^{-1}}(\rho)^*$.
Note that
for the proof of \eqref{eq:Z chi vanishing} above we really used only one coordinate, the bottom left one, for each block, and using conjugation we can assume this coordinate is $Z_{i,j}$.

We deduce that each $\mathfrak{z}_{i,j}$ acts locally nilpotently, hence so does
$\mathfrak{z}$.
\end{proof}
\begin{proof}[Proof of Lemma~\ref{lemma:Jacquet module is a finite length}]
Consider a non-archimedean field.
We prove that $J_{V_{\beta},\psi_{V_{\beta}}^{-1}}(\rho)$ admits a finite length filtration as a representation of ${}^h(1,\GL_{n-l})$, and on each constituent ${}^h(1,C_{\GL_{n-l}})$ acts by a character, by showing $J_{V_{\beta},\psi_{V_{\beta}}^{-1}}(\rho)$ factors through a Jacquet module along a unipotent radical of a certain parabolic subgroup, with respect to a trivial character.

After conjugating $J_{V_{\beta},\psi_{V_{\beta}}^{-1}}(\rho)$ by
\begin{align*}
&\kappa=\diag(\left(\begin{smallmatrix}&&I_{n-l}\\&I_{n}\\I_l\\&&&I_{c-2n}\end{smallmatrix}\right),I_{k(c-1)}),
\end{align*}
we regard $J_{V_{\beta},\psi_{V_{\beta}}^{-1}}(\rho)$ as a representation of
${}^{\kappa}({}^h(1,\GL_{n-l}))=\diag(\GL_{n-l},I_{kc-(n-l)})$. In addition,
this conjugation only changes the restriction of $\psi_{V_{\beta}}^{-1}$ to $b_k$, now given by
\begin{align*}
\psi(\tr(b_{k}\left(\begin{smallmatrix}0&0&-\epsilon_0I_l&0&0\\0&\epsilon_0I_{n-l}&0&0&0\\0&0&0&0&I_{c-2n}\\0_{n-l}&0&0&0&0\\0&0&0&I_l&0\end{smallmatrix}\right)))
\end{align*}
(use \eqref{psi_U on V beta d1 n-l}). Now $\psi_{V_{\beta}}^{-1}$ is trivial on the top $n-l$ rows of $b_k$, hence
$J_{V_{\beta},\psi_{V_{\beta}}^{-1}}(\rho)$ is a representation of
$V_{(n-l,c-(n-l))}$, which we identify with its image in the top left $c\times c$ block of $\GL_{kc}$.

We claim
\begin{align}\label{eq:Jacquet admissible Levi non archimedean}
J_{V_{\beta},\psi_{V_{\beta}}^{-1}}(\rho)=J_{V_{(n-l,c-(n-l),c^{k-1})},\psi_{V_{\beta}}^{-1}}(\rho).
\end{align}
Here $\psi_{V_{\beta}}^{-1}$ is extended trivially to $V_{(n-l,c-(n-l))}$. Before proving \eqref{eq:Jacquet admissible Levi non archimedean}, we explain how it leads to the result.
Because $V_{(n-l,kc-(n-l))}<V_{(n-l,c-(n-l),c^{k-1})}$, the right hand side of \eqref{eq:Jacquet admissible Levi non archimedean} becomes
\begin{align*}
J_{\diag(I_{n-l},V_{(c-(n-l),c^{k-1})}),\psi_{V_{\beta}}^{-1}}(J_{V_{(n-l,kc-(n-l))}}(\rho)).
\end{align*}
Since $\rho$ is an admissible finite length representation of $\GL_{kc}$, $J_{V_{(n-l,kc-(n-l))}}(\rho)$ is an admissible finite length representation of $M_{(n-l,kc-(n-l))}$. As such, it admits a finite filtration with irreducible admissible constituents. On each constituent $\mathcal{V}$, $C_{M_{(n-l,kc-(n-l))}}$ acts by a character, and because
${}^{\kappa h}(1,C_{\GL_{n-l}})<C_{M_{(n-l,kc-(n-l))}}$, ${}^{\kappa h}(1,C_{\GL_{n-l}})$ also acts by a character. Note that $\mathcal{V}$ may certainly be reducible (or not admissible) as a representation of ${}^{\kappa h}(1,\GL_{n-l})$.
By the exactness of the Jacquet functor, $J_{V_{\beta},\psi_{V_{\beta}}^{-1}}(\rho)$ admits a finite filtration where on each constituent
${}^{\kappa h}(1,C_{\GL_{n-l}})$ still acts by the same character. This completes the proof of the main assertion --- part~\eqref{p1} --- for the non-archimedean case.
Regarding \eqref{p3}, when $c=2$, $\rho=\rho_c(\tau)$ for an irreducible supercuspidal representation of $\GL_k$ and $k>1$, the Jacquet module $J_{V_{(n-l,kc-(n-l))}}(\rho)$ vanishes since $n-l=1$ (\cite[2.13 (a)]{BZ2}).

Write
$v\in V_{(n-l,c-(n-l))}$ in the form $v=(v_1,v_2,v_3,v_4)$ with $v_1\in\Mat_{n-l}$, $v_2,v_3\in\Mat_{n-l\times l}$ and
$v_4\in\Mat_{n-l\times c-2n}$.
The group ${}^{\kappa}[x_1,x_2,x_3,x_4]$ (see \eqref{eq:b1 b2 b3}), together with the group $\GL_{n-l}$ embedded in $M_{(c^k)}$ by $x_5\mapsto\diag(I_{n-l},x_5,I_{2l+c-2n})^{\triangle}$, stabilizes $\psi_{V_{\beta}}^{-1}$ and acts on the set of characters of $V_{(n-l,c-(n-l))}$ with infinitely many orbits, but only $2$ on each block $v_i$ separately. Using the transitivity of the Jacquet functor and \cite[5.9--5.12]{BZ1}, \eqref{eq:Jacquet admissible Levi non archimedean} follows at once if we prove separately,
that for each block $Z'=v_i$ and nontrivial character $\chi'$ of $Z'$,
\begin{align}\label{eq:V chi vanishing}
J_{V_{\beta}\ltimes Z',\psi_{V_{\beta}}^{-1}\otimes\chi'}(\rho)=0.
\end{align}

Let $\varphi$ denote the transpose of the nilpotent element defined by the character $\psi_{V_{\beta}}^{-1}\otimes\chi'$ of $V_{\beta}\ltimes Z'$. We prove $\varphi$ is nilpotent of order at least $k+1$, then since $\rho$ is $(k,c)$, \cite[Theorems~A, E]{GGS} imply \eqref{eq:V chi vanishing}.

First we show that, after possibly a suitable conjugation, $\varphi$ is nontrivial on the
$(1,c)$-th coordinate, and all other blocks remain unchanged except the block $b_k$, where there is a nonzero entry on the
bottom right coordinate.

We can assume the top right coordinate of $Z'$ in $\varphi$ is nonzero, and it is the only nonzero entry on that column.
If $Z'=v_4$, the $(1,c)$-th entry of $\varphi$ is nonzero. Using conjugation by
an element of $\diag(I_c,\GL_c^{\triangle'})$, the $(c,n+1)$-th coordinate of $b_k$ becomes its $(c,c)$-th coordinate, and the other blocks $b_{k+1},\ldots,b_{2k-2}$ are unchanged.

For $Z'=v_2$, we conjugate $\varphi$ by
\begin{align*}
\diag(\left(\begin{smallmatrix}I_{n-l}\\&I_{n-l}\\&&&I_l\\&&I_l\\&&&&I_{c-2n}\end{smallmatrix}\right),
\left(\begin{smallmatrix}&&I_{l}\\&I_{c-2l}\\I_l\end{smallmatrix}\right)^{\triangle'}),
\end{align*}
and for $Z'=v_1$ we conjugate by
\begin{align*}
\diag(\left(\begin{smallmatrix}I_{n-l}\\&&&I_l\\&&I_l\\&I_{n-l}\\&&&&I_{c-2n}\end{smallmatrix}\right),
\left(\begin{smallmatrix}I_{l}\\&&&I_{l}\\&&I_{c-n-l}&\\&I_{n-l}\end{smallmatrix}\right)^{\triangle'}).
\end{align*}
Both conjugations preserve $b_{k+1},\ldots,b_{2k-2}$, the $(1,2n)$-th coordinate of $\varphi$ becomes nontrivial, and the rightmost column of $b_k$ has (precisely) one nontrivial coordinate, which is the $(c-1,c)$-th coordinate if $c$ is odd, otherwise it is the bottom coordinate (the $(c,c)$-th coordinate).
For an even $c$, $\varphi$ is of the prescribed form; if $c$ is odd, using another conjugation by
$\diag(I_{c-2},\left(\begin{smallmatrix}&1\\1\end{smallmatrix}\right),I_{(k-1)c})$, the $(c-1,c)$ entry of block $b_k$ is permuted into its bottom right coordinate, and the $(1,2n)$-th coordinate of $\varphi$ becomes its $(1,c)$-th coordinate.

We conclude that in all cases of $Z'$, when $\chi'$ is nontrivial, the $(1,c)$-th coordinate of $\varphi$ and the bottom right coordinate of
each block $b_k,\ldots,b_{2k-2}$ of $\varphi$ are nonzero (these coordinates are all $1$ except for $b_k$, where the coordinate is $\pm1$), and the corresponding nonzero entry is the unique one in its column. Thus as in the proof of Lemma~\ref{lemma:Jacquet module is a trivial rep of U_R} (and again considering all extensions to characters of $V_{(1,c-1,c^{k-1})}$ in order to ``adjust" $V_{\beta}\ltimes Z'$ to a unipotent radical), $\varphi$ is nilpotent of order at least $k+1$ and \eqref{eq:V chi vanishing} follows.

Over archimedean fields, as in the proof of
Lemma~\ref{lemma:Jacquet module is a trivial rep of U_R}, we deduce that the Lie algebra
$\mathfrak{v}_{(n-l,c-(n-l))}$ of $V_{(n-l,c-(n-l))}$ acts locally nilpotently on $J_{V_{\beta},\psi_{V_{\beta}}^{-1}}(\rho)^*$ by carrying out the proof of \eqref{eq:V chi vanishing} and applying \cite[Proposition~3.0.1]{GGS2} separately for each coordinate of each $v_i$.
Let $\mathfrak{v}'$ denote the Lie algebra of the unipotent subgroup $V_{(n-l,kc-(n-l))}\cap V_{\beta}$. Since $\mathfrak{v}'$ acts trivially on $J_{V_{\beta},\psi_{V_{\beta}}^{-1}}(\rho)$, $\mathfrak{v}_{(n-l,kc-(n-l))}=\mathfrak{v}_{(n-l,c-(n-l))}\oplus \mathfrak{v}'$ acts locally nilpotently on $J_{V_{\beta},\psi_{V_{\beta}}^{-1}}(\rho)^*$.
\end{proof}
\begin{example}
Consider $c=4$, $k=2$, $G=\Sp_4$, $H=\Sp_{16}$ and $l=1$. Assume $d_1=n-l=1$. Then $w\in M_P(0^3,1,w_2)$, $w_2=(1^4)$ and $u=\ell_0(u)\in H_0$ is given by
\begin{align*}
u=\diag(I_4,
\left(\begin{smallmatrix}
1&&1\\&1\\&&1\end{smallmatrix}\right),I_2,\left(\begin{smallmatrix}
1&&-1\\&1\\&&1\end{smallmatrix}\right),I_4).
\end{align*}
The element $w$ is given by \eqref{eq:example k=2,3 final w for k=2}, note that
$\gamma_1=\diag(\left(\begin{smallmatrix} &I_4   \\I_2\end{smallmatrix}\right),I_2)\left(\begin{smallmatrix}& I_6 \\I_2 & \end{smallmatrix}\right)$ (see Remark~\ref{remark:convenient computations of V beta}). We have $\beta=(4^2)$, and if we write an element of $V_{\beta}$ in the form $v=\left(\begin{smallmatrix} I_4 & x  \\&I_4\end{smallmatrix}\right)$, $\psi_{V_{\beta}}(v)=\psi(-x_{1,4}+x_{2,2}-x_{3,1})$. The Jacquet module
$J_{V_{\beta},\psi_{V_{\beta}}^{-1}}(\rho)$ is a representation of ${}^h(1,U_R)$ with
\begin{align*}
&U_R=\left\{\left(\begin{smallmatrix}
             1 &  & u_1 & 0 \\
             u_2 & 1 & u_3 & u_1 \\
              &  &1  &  \\
              &  & -u_2 & 1
           \end{smallmatrix}\right)\right\}.
\end{align*}
We see that for $z\in U_R$,
\begin{align*}
\qquad{}^{u}(1,z)=\diag(I_4,\left(\begin{smallmatrix}
                            1 &  &  &  & u_1 &  &  &  \\
                             & 1 &  &  &  &  &  &  \\
                             &  & 1 &  & u_1 &  &  &  \\
                             &  & u_2 & 1 & u_3 & u_1 &  & u_1 \\
                             &  &  &  & 1 &  &  & \\
                             &  &  &  & -u_2 & 1 &  &  \\
                             &  &  &  &  &  & 1 &  \\
                             &  &  &  &  &  &  & 1
                          \end{smallmatrix}\right),I_4).
\end{align*}
Then ${}^h(1,z)=m_zz'$ with $m_z\in M_P$ and $z'\in U_P$, and such that as an element of $\GL_8$,
\begin{align*}
m_z=\diag(\left(\begin{smallmatrix}
             1 &  &  &  \\
              & 1 &  & \\
              &  &1  &  \\
             u_1 &  & u_2 & 1
           \end{smallmatrix}\right),I_4).
\end{align*}
Denote the subgroup of elements of this form by $Z$, then
$J_{V_{\beta},\psi_{V_{\beta}}^{-1}}(\rho)$ is a representation of $Z$. We proceed over non-archimedean fields. To show that
$J_{V_{\beta},\psi_{V_{\beta}}^{-1}}(\rho)$ is a trivial representation of ${}^h(1,U_R)$ amounts to proving
$J_{V_{\beta}\ltimes Z,\psi_{V_{\beta}}^{-1}\otimes\chi}(\rho)=0$ for any nontrivial character $\chi$ of $Z$. Combining $Z$ and $V_{\beta}$ together we are considering the following unipotent subgroup and character:
\begin{align*}
\{v=\left(\begin{smallmatrix}
                            1 &  &  &  & x_{1,1} & x_{1,2} &x_{1,3}  & x_{1,4} \\
                             & 1 &  &  & x_{2,1} & x_{2,2} &x_{2,3}  & x_{2,4} \\
                             &  & 1 &  & x_{3,1} & x_{3,2} &x_{3,3}  & x_{3,4} \\
                            u_1 &  & u_2 & 1 & x_{4,1} & x_{4,2} &x_{4,3}  & x_{4,4} \\
                             &  &  &  & 1 &  &  & \\
                             &  &  &  &  & 1 &  &  \\
                             &  &  &  &  &  & 1 &  \\
                             &  &  &  &  &  &  & 1
                          \end{smallmatrix}\right)\},\qquad
                          (\psi_{V_{\beta}}^{-1}\otimes\chi)(v)=\psi(x_{1,4}-x_{2,2}+x_{3,1}+\alpha_1 u_1+\alpha_2 u_2).
\end{align*}
We have an action of $T_{\GL_2}$ on $u_1$ and $u_2$ separately, given by
$\diag(x_1,I_2,x_3,I_3,x_1)$ (for $u_1$) and $\diag(I_2,x_2,x_3,x_2,I_3)$. When considering each coordinate separately, there are $2$ orbits.
The corresponding $\varphi$ takes the form
\begin{align*}
\varphi=\left(\begin{smallmatrix}
                0 & 0 & 0 & 0 & 0 & 0 & 0 & 1 \\
                0 & 0 & 0 & 0 & 0 & -1 & 0 & 0 \\
                0 & 0 & 0 & 0 & 1 & 0 & 0 & 0 \\
                \alpha_1 & 0 & \alpha_2 & 0 & 0 & 0 & 0 & 0 \\
                 &  &  &  & 0 &  &  &  \\
                 &  & &  &  & 0 &  &  \\
                 &  &  &  &  &  & 0 &  \\
                 &  &  &  &  &  &  & 0
              \end{smallmatrix}\right),
\end{align*}
and a nontrivial $\chi$ means $(\alpha_1,\alpha_2)\ne(0,0)$. Using conjugations by $\diag(J_3,I_5)$ and by $\diag(I_4,g)$ for a permutation matrix $g\in\GL_4$ if necessary, we can assume the $(4,1)$-th and $(1,8)$-th coordinates of $\varphi$ are nonzero, then conjugating by $\diag(\left(\begin{smallmatrix}&  & 1 \\& I_2 &  \\1 &  & \end{smallmatrix}\right),I_4)$, we see that $\varphi$ is nilpotent of order at least $3$. Thus
$J_{V_{\beta},\psi_{V_{\beta}}^{-1}}(\rho)$ is a trivial representation of ${}^h(1,U_R)$ ($\rho$ is $(k,c)=(2,4)$).

The Jacquet module $J_{V_{\beta},\psi_{V_{\beta}}^{-1}}(\rho)$ is also a representation of
\begin{align*}
{}^h(1,\GL_{n-l})=\{\diag(I_3,a,I_4):a\in F^*\}.
\end{align*}
Conjugating by $\kappa=\diag(\left(\begin{smallmatrix}&&1\\&I_2\\1\end{smallmatrix}\right),I_4)$, we can regard $J_{V_{\beta},\psi_{V_{\beta}}^{-1}}(\rho)$ as a representation of $\diag(V_{(1,3)},I_4)$. Combining the coordinates of $V_{(1,3)}$ and $V_{\beta}$, we then have
\begin{align*}
\left\{v=\left(\begin{smallmatrix}
                            1 & v_1 & v_2 & v_3 & x_{4,1} & x_{4,2} &x_{4,3}  & x_{4,4} \\
                             & 1 &  &  & x_{2,1} & x_{2,2} &x_{2,3}  & x_{2,4} \\
                             &  & 1 &  & x_{3,1} & x_{3,2} &x_{3,3}  & x_{3,4} \\
                             &  &  & 1 & x_{1,1} & x_{1,2} &x_{1,3}  & x_{1,4} \\
                             &  &  &  & 1 &  &  & \\
                             &  &  &  &  & 1 &  &  \\
                             &  &  &  &  &  & 1 &  \\
                             &  &  &  &  &  &  & 1
                          \end{smallmatrix}\right)\right\},
\end{align*}
and note that we permuted the coordinates of $x_{i,j}$ (so that $\psi_{V_{\beta}}^{-1}$ remains as above).
The tensor product of an arbitrary character $\chi$ of $V_{(1,3)}$ with $\psi_{V_{\beta}}^{-1}$ takes the form
\begin{align*}
(\psi_{V_{\beta}}^{-1}\otimes\chi)(v)=\psi(x_{1,4}-x_{2,2}+x_{3,1}+\vartheta_1 v_1+\vartheta_2 v_2+\vartheta_3 v_3).
\end{align*}
As above, we have the action of $T_{\GL_2}$ on each of the coordinates $v_i$:
$\diag(x_3,x_5,I_3,x_5,I_2)$ for $v_1$,
$\diag(x_3,1,x_2,1,x_2,I_3)$ for $v_2$, and $\diag(x_3,I_2,x_1,I_3,x_1)$ for $v_3$. The corresponding $\varphi$ is then nilpotent of order at least $3$, when $\vartheta_1\vartheta_2\vartheta_3\ne0$. This is immediate when $\vartheta_3\ne0$, using $x_{1,4}$. If $\vartheta_3=0$ and $\vartheta_2\ne0$, we conjugate by $\diag(I_2,J_2,\left(\begin{smallmatrix}&&1\\&I_2\\1\end{smallmatrix}\right))$ and use $x_{3,1}$, otherwise $\vartheta_1\ne0$ and we conjugate by $\diag(1,J_3,1,J_3)$ and use $x_{2,2}$.

Therefore $J_{V_{\beta},\psi_{V_{\beta}}^{-1}}(\rho)$ factors through $V_{(1,7)}$, so that the original
module $J_{V_{\beta},\psi_{V_{\beta}}^{-1}}(\rho)$ factors through ${}^{\kappa^{-1}}V_{(1,7)}$. Since $\det(\diag(I_3,a,I_4))=a$, we can use
\eqref{eq:relation for T with s} to deduce $\mathcal{H}(h)=0$.
\end{example}
Propositions~\ref{proposition:structure of w u}--\ref{proposition:d1 = n-l l < n} imply $\mathcal{H}(h)=0$ for all $h$ such that $h\not\sim\delta$. Finally consider $h=w({}^{\jmath_n}u_n)\sim\delta$. We prove that for all $s$, $\dim\mathcal{H}(\delta)=\dim\Hom_{G}(\pi_1^{\vee},\pi_2^{\iota})$. In this case
$P_{\delta}=(G^{\iota}\times C_H^{\circ})\ltimes V_{(c^k)}$, where $G^{\iota}=\{(g,{}^{\iota}g):g\in G\}$,
$\psi_{V_{\beta}}^{-1}$ belongs to the orbit of $\psi_k$ (the choice of $\delta$ of \cite{CFK} gives precisely $\psi_k$),
and any morphism in $\mathcal{H}(\delta)$ factors through $J_{V_{(c^k)},\psi_k}(\rho)$. Note that $C_H^{\circ}$ is trivial unless $H=\GSpin_{2kc}$, in which case $C_H^{\circ}<P_{\delta}$ because $C_G^{\circ}$ is mapped by the embedding $g\mapsto (g,1)$ bijectively
into $C_H^{\circ}$ (see also \eqref{embedding G G in H GSpin}).
Therefore
\begin{align*}
\mathcal{H}(\delta)&=\Hom_{G^{\iota}\times C_H^{\circ}}({}^{\delta^{-1}}J_{V_{(c^k)},\psi_k}(\rho)\otimes\eta\otimes\pi_1^{\vee}\otimes\pi_2^{\vee},1).
\end{align*}
Here $|\det|^{s-1/2}$ and $\theta_h$ are absent because they are trivial on $G^{\iota}\times C_H^{\circ}$.

For $H=\GSpin_{2kc}$ we assumed $\chi_{\pi_1},\chi_{\pi_2}$ exist, then $\mathcal{H}(\delta)=0$ unless $\eta=\chi_{\pi_1}^{-1}=\chi_{\pi_2}$, because for $z_1,z_2\in C_G^{\circ}$, $(z_1,z_2)$ is the element $z_1^{-1}z_2$ of $C_H^{\circ}$. When this condition holds, we can finally
ignore $C_H^{\circ}$ and $\eta$ altogether.

Recall that $\GL_c^{\triangle}$ denotes the diagonal embedding of $G$ in $M_{(c^k)}$, and $J_{V_{(c^k)},\psi_k}(\rho)$ is a trivial representation of
$\SL_c^{\triangle}$. Since ${}^{\delta}G^{\iota}<\SL_c^{\triangle}$
(for $H\ne\GSpin_{2kc}$, ${}^{\delta}G^{\iota}=G^{\triangle}$), the action of $G^{\iota}$ on ${}^{\delta^{-1}}J_{V_{(c^k)},\psi_k}(\rho)$ is trivial, and because $\dim J_{V_{(c^k)},\psi_k}(\rho)=1$ (see \S~\ref{kc representations}),
\begin{align*}
\Hom_{G^{\iota}}({}^{\delta^{-1}}J_{V_{(c^k)},\psi_k}(\rho)\otimes\pi_1^{\vee}\otimes\pi_2^{\vee},1)
=\Hom_{G}(\pi_1^{\vee}\otimes(\pi_2^{\vee}){}^{\iota},1)=\Hom_{G}(\pi_1^{\vee},\pi_2^{\iota}).
\end{align*}

This completes the proof of the first part of the theorem. For the second part, clearly when $\pi_1$ and $\pi_2$ are irreducible,
$\dim\Hom_{G}(\pi_1^{\vee},\pi_2^{\iota})\leq1$ and is zero unless $\pi_1=(\pi_2^{\iota})^{\vee}$.
Under the assumptions of \ref{part3} (e.g., $\pi_2$ is supercuspidal and $c>2$), we do not need to exclude any $s$, by
Proposition~\ref{proposition:d1 = n-l l < n} (which is the only case where the vanishing depends on $s$).

\subsection{The case $H=\GL_{2kc}$}\label{section GL}
Write $P\backslash H/ D=\coprod_hPhD$ with $h=wu$, where $w$ is a representative from $W(M_P)\backslash W(H)$ and $u\in M_Q\cap N_H$.
Throughout this section we fix the standard identification of $W(H)$ with permutation matrices in $H$. One can still describe
$w$ as a $2kc$-tuple of $\{0,1\}$: if the $i$-th coordinate of $w$ is $1$, $w$ permutes the $i$-th row into one of the first $kc$ rows,
and if it is $0$, then $w$ permutes this row into one of the last $kc$ rows. Of course only vectors whose total sum of coordinates is $kc$ are permissible ($U_P$ contains $kc$ nontrivial rows). Let $p_0(w)$ denote the middle $2c$ coordinates of $w$, and $p_1(w)$ (resp., $p_2(w)$) denote the first (resp., last) $(k-1)c$ coordinates. Also note that
in general, $w$ permutes row $i$ into $U_P$ if and only if $w^{-1}$ permutes column $i$ out of $U_P$.

For the case $k=1$, we can parameterize $P\backslash H/(G\times G)$ using the elements
\begin{align*}
(0^l,1^{c-l},0^{c-l},1^l)u_{l,j}, \qquad 0\leq l\leq c, \qquad 0\leq j\leq l,
\end{align*}
where
\begin{align*}
(0^l,1^{c-l},0^{c-l},1^l)=\left(\begin{smallmatrix}&&I_{l}\\&I_{2(c-l)}\\I_{l}\end{smallmatrix}\right),\qquad
u_{l,j}=\left(\begin{smallmatrix}I_{l}&&A_{l,j}\\&I_{2(c-l)}\\&&I_{l}\end{smallmatrix}\right)
, \quad A_{l,j}=\left(\begin{smallmatrix}&I_j\\0\end{smallmatrix}\right).
\end{align*}
The choice of matrix for $(0^l,1^{c-l},0^{c-l},1^l)$ is not canonical, but can be used for convenience.

Assume $k\geq1$. Recall
\begin{align*}
M_Q=\GL_c\times\ldots\times\GL_c\times H_0\times\GL_c\times\ldots\times\GL_c,\qquad H_0=\GL_{2c}.
\end{align*}
Given $x\in M_Q$, denote its projection into the left (resp., right) direct product of $k-1$ copies of $\GL_c$ by $\ell_1(x)$ (resp., $\ell_2(x)$), put $\ell(x)=\ell_1(x)\ell_2(x)$, and let $\ell_0(x)$ be the projection into $H_0$.
We have the analogs of \eqref{eq:conj x by g1 g2} and \eqref{eq:conj x by g2}, in particular since $(1,G)<H_0$, conjugation
by elements of $(1,G)$ does not affect $\ell(x)$.
\begin{proposition}\label{proposition:GL structure of w u}
Let $h=wu$, where $w$ is a representative from $W(M_P)\backslash W(H)$ and $u\in M_Q\cap N_H$.
Then $h\sim\hat{w}\hat{u}$, where $p_0(\hat{w})=(0^{l},1^{c-l},0^{c-l'},1^{l'})$ for some $0\leq l,l'\leq c$,
$\hat{u}\in M_Q$, there is $\sigma\in(W(G),W(G))$ with ${}^{\sigma}\hat{u}\in M_Q\cap N_H$,
and $\ell_0(\hat{u})$ takes the form
\begin{align}\label{eq:GL second form u final}
\left(\begin{smallmatrix}I_{l}&&X\\&I_{2c-l-l'}\\&&I_{l'}\end{smallmatrix}\right),
\end{align}
for some $X$.
\end{proposition}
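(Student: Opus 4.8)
The plan is to mimic, in the $\GL_{2kc}$ setting, the reduction already carried out in Proposition~\ref{proposition:structure of w u} for $H\ne\GL_{2kc}$, using the simpler combinatorics available here. First I would write $h=wu$ with $w$ a representative of $W(M_P)\backslash W(H)$ and $u\in M_Q\cap N_H$, and recall that $W(M_P)\backslash W(H)$ is parametrized by subsets of size $kc$ of the $2kc$ coordinates, i.e.\ by $2kc$-tuples of $\{0,1\}$ summing to $kc$; the coordinate $w_i$ records whether row $i$ lands in $U_P$ (the first $kc$ rows) or not. The key structural fact is $U=U_Q$, so $u\in M_Q$ normalizes $U$, and one has the multiplicative decomposition $u=\ell_1(u)\ell_2(u)\ell_0(u)$ together with the conjugation formulas analogous to \eqref{eq:conj x by g1 g2}, \eqref{eq:conj x by g2}, where conjugation by $(1,G)<H_0$ leaves $\ell(u)=\ell_1(u)\ell_2(u)$ untouched.

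The heart of the argument is to use multiplication on the left by elements of $P$ (freely, since $h\sim ph$) and on the right by representatives of $W(G)\times W(G)\subset (G,G)$ (again $h\sim h d^{-1}$ for $d\in D$) to simplify the $H_0$-part $\ell_0(u)$. Concretely: the embedding $(g_1,g_2)=\diag(g_1,\dots,g_1,g_1,g_2,g_1,\dots,g_1)$ shows that the left copy $(G,1)$ acts on every $\GL_c$-block including one factor of $H_0=\GL_{2c}$, and the right copy $(1,G)$ acts on the complementary factor of $H_0$. I would first use $(g,1)$ and $(1,g)$ with $g\in N_G$ to kill the ``unipotent part inside $H_0$'' that does not get conjugated into $P$ by $w$ — exactly as in the first paragraphs of the proof of Proposition~\ref{proposition:structure of w u}, but much shorter because $H_0$ is just $\GL_{2c}$ and there is no quadratic/symplectic form to respect and no odd-$c$ complication. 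This reduces $\ell_0(u)$ to an upper-unitriangular matrix whose only possibly-nonzero off-diagonal block sits in the top-right corner, giving the shape $\diag(I_l, I_{2c-l-l'}, I_{l'})$ with a block $X$ as in \eqref{eq:GL second form u final}. Then I would choose representatives $g$ of Weyl elements of $G$ (acting on both copies) so that the middle $2c$ coordinates $p_0(w)$ take the normalized form $(0^l,1^{c-l},0^{c-l'},1^{l'})$: the point is that $p_0(w^{-1})$, restricted to the $2c$ middle columns, records which of these columns leave $U_P$, and $W(G)$ acting on the left $G$-block of $H_0$ and $W(G)$ on the right $G$-block let us sort these independently, at the cost of conjugating $u$, with $\sigma\in(W(G),W(G))$ chosen so that ${}^{\sigma}\hat u$ stays in $M_Q\cap N_H$ by the analog of \eqref{eq:important prop of u9}.

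Finally I would record the bookkeeping: set $\hat w$ to be the resulting representative, $\hat u$ the resulting unipotent element, verify $\hat u\in M_Q$ and that ${}^{\sigma}\hat u\in M_Q\cap N_H$ for the chosen $\sigma$, and that $\ell_0(\hat u)$ indeed has the form \eqref{eq:GL second form u final}. The main obstacle I anticipate is purely organizational rather than conceptual: keeping track of exactly which columns of $\ell(u)$ (the outer $\GL_c$-factors, which need not lie in $N_H$ after conjugation by $\sigma$) get moved where, and making sure that the left-multiplications by $P$ used to simplify $\ell_0(u)$ and to normalize $p_0(w)$ do not disturb one another — i.e.\ that the normalization of $p_0(w)$ can be performed after the simplification of $\ell_0(u)$ without reintroducing nonzero entries outside the top-right block $X$. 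As in the proof of Proposition~\ref{proposition:structure of w u}, this is handled by noting that the relevant Weyl elements of $G$ permute only the columns of $X$ (and the rows of the corresponding lower block), leaving the block structure \eqref{eq:GL second form u final} intact. I expect no new ideas beyond those already deployed in \S~\ref{section not GL}, only a streamlined version adapted to $M_Q=\GL_c^{k-1}\times\GL_{2c}\times\GL_c^{k-1}$.
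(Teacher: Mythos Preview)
Your plan is essentially the paper's proof, with the same three ingredients: right multiplication by $(N_G,N_G)\subset (G,G)$ to reduce $\ell_0(u)$ to $V_{(c,c)}$, a Weyl element $\sigma\in(W(G),W(G))$ to normalize $p_0(w)$, and left $P$-multiplication to clear the three sub-blocks of the resulting top-right $c\times c$ block that the normalized $w$ conjugates into $P$, leaving only $X=X^2\in\Mat_{l\times l'}$. The paper avoids the organizational obstacle you anticipate by performing the Weyl normalization \emph{before} the block-clearing (so that $l,l'$ and hence the decomposition $\left(\begin{smallmatrix}X^1&X^2\\X^3&X^4\end{smallmatrix}\right)$ are already defined when you clear), rather than after; with this order there is nothing further to check, and the verification that ${}^{\sigma}\hat u\in M_Q\cap N_H$ is immediate since $\ell(u)$ was never altered after the first step and $\ell_0((G,G))<M_{(c,c)}$.
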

\begin{proof}
Identify $N_{\GL_c}\times N_{\GL_c}$ with its image in $M_{(c,c)}<H_0$. Since $N_{\GL_c}\times N_{\GL_c}<\ell_0((G,G))$, one can assume
$\ell_0(u)\in V_{(c,c)}$. Through this $\ell(u)$ is multiplied by an element in $M_Q\cap N_H$.

One can find $\sigma\in(W(G),W(G))$ such that
$w\sigma=w_1$ satisfies $p_0(w_1)=(0^{l},1^{c-l},0^{c-l'},1^{l'})$ for some $0\leq l,l'\leq c$.
Denote $u_1={}^{\sigma^{-1}}u$. Since
$\ell_0((G,G))<M_{(c,c)}$, $\ell_0(u_1)\in V_{(c,c)}$. Also
$\ell(u_1)\in M_Q$, but might not be in $N_H$. Then $wu\sim wu\sigma=w_1u_1$.

Write the top right $c\times c$ block of $\ell_0(u_1)$ in the form
$\left(\begin{smallmatrix}X^1 & X^2 \\X^3 & X^4\end{smallmatrix}\right)$, $X^2\in\Mat_{l\times l'}$. Now $w_1$ conjugates the blocks $X^i$, $i\ne2$, into $P$. Hence if
$u_2=z^{-1}u_1$ with $z\in V_{(c,c)}$ defined by these blocks, $w_1u_1=w_1zu_2\sim w_1u_2$. Now $\ell_0(u_2)$ takes the form
\eqref{eq:GL second form u final}.
Note that $\ell(u_2)=\ell(u_1)$, whence $\ell({}^{\sigma}u_2)=\ell({}^{\sigma}u_1)=\ell(u)\in M_Q\cap N_H$, and
$\ell_0({}^{\sigma}u_2)\in N_{H_0}$ because $\ell_0((G,G))<M_{(c,c)}$. Thus
${}^{\sigma}u_2\in M_Q\cap N_H$. Then $\hat{w}=w_1$ and $\hat{u}=u_2$ satisfy the required properties.
\end{proof}

\begin{lemma}\label{lemma:GL easier condition on psiU}
Let $h=wu$, where $w$ and $u$ are given by Proposition~\ref{proposition:GL structure of w u}. Assume
\begin{align}\label{GL psi U nontrivial using only w}
\psi_U|_{U\cap {}^{w^{-1}}U_P}\ne1.
\end{align}
Then \eqref{psi U nontrivial} also holds, i.e., $\psi_U|_{U\cap {}^{h^{-1}}U_P}\ne1$.
\end{lemma}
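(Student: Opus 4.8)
The plan is to mimic, in the general-linear setting, the proof of Lemma~\ref{lemma:easier condition on psiU}. By hypothesis \eqref{GL psi U nontrivial using only w} there is a root subgroup $Y<U$ such that ${}^wY<U_P$ and $\psi_U|_Y\ne1$. Since $u\in M_Q$ normalizes $U$, the conjugate ${}^{u^{-1}}Y$ is again contained in $U$, and ${}^h({}^{u^{-1}}Y)={}^wY<U_P$; so the only thing to verify is $\psi_U|_{{}^{u^{-1}}Y}\ne1$, which by $u$-equivariance is the same as $\psi_U|_{{}^{u^{-1}\sigma^{-1}\sigma}Y}\ne1$ where $\sigma\in(W(G),W(G))$ is the element furnished by Proposition~\ref{proposition:GL structure of w u}. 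Because $\sigma\in(G,G)$ stabilizes $\psi_U$ and permutes the coordinates of $U$, the group ${}^{\sigma}Y$ is still a root subgroup defined by a coordinate lying in one of the two blocks (each isomorphic to a block of $\Mat_c$) on which $\psi_U$ restricts to $\pm\psi\circ\tr$ — here for $H=\GL_{2kc}$ the relevant blocks are $u^{1,1}$ (with character $-\psi\circ\tr$) and $u^{2,2}$ (with $+\psi\circ\tr$), together with the $k-2$ blocks in the $V_{(c^{k-1})}$-part where $\psi_U$ is $\psi_{k-1}^{\pm1}$; in all cases $Y$ (hence ${}^{\sigma}Y$) corresponds to a \emph{diagonal} coordinate of such a block $B$.

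The substance is then the conjugation by ${}^{\sigma}u=u_0^{-1}\in M_Q\cap N_H$. Since $u_0\in M_Q$, it preserves the block decomposition, so ${}^{u_0}({}^{\sigma}y)$ has its nonzero entries still inside the single block $B'$ containing the chosen coordinate — and $B'=B$ when $B\in\Mat_c$, while for the $n\times n$ blocks $u^{1,1},u^{2,2}$ inside $\Mat_{c\times 2c}$ there are two options for $B'$, one of which is $B$. Because $u_0$ is \emph{upper} unitriangular in $N_H$, the $(d,d)$-th coordinate (the one carrying the nonzero value $x$) is unchanged, and every other coordinate that $u_0$ can create lies strictly above or to the right of $(d,d)$ inside $B'$ (or, for $u^{1,1}$, possibly to the right past the end of $u^{1,1}$ up to column $(k+1)c$, and for $u^{2,2}$ possibly above past the top of the block up to row $(k-2)c+1$ — in $\Mat_{c\times 2c}$). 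On all of those extra coordinates $\psi_U$ is trivial, so $\psi_U\big({}^{u_0}({}^{\sigma}y)\big)=\psi(\pm x)$, which is nontrivial. Hence $\psi_U|_{{}^{u_0\sigma}Y}\ne1$, and conjugating back by $\sigma^{-1}\in(G,G)$ (which again stabilizes $\psi_U$) gives $\psi_U|_{{}^{u^{-1}}Y}\ne1$, proving \eqref{psi U nontrivial}.

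The main obstacle is bookkeeping: one must make sure that after conjugating by $\sigma$ the target coordinate is genuinely a diagonal coordinate of a block on which $\psi_U$ is the full trace character, and that the ``spillover'' coordinates produced by $u_0$ (for the $u^{1,1}$ and $u^{2,2}$ cases these can leave the $n\times n$ block and land in the surrounding $\Mat_{c\times 2c}$) really all sit where $\psi_U$ vanishes. This is exactly the kind of case analysis carried out in the proof of Lemma~\ref{lemma:easier condition on psiU}; the $\GL_{2kc}$ case is in fact slightly simpler because there is no odd-$c$ block $B''\in\Mat_{1\times 2}$ to treat, so the three-block bullet in that proof collapses to the first two. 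I would therefore present the argument in parallel with Lemma~\ref{lemma:easier condition on psiU}, writing out the $B'\in\Mat_c$ case in full and indicating that the $B'\in\Mat_n$ case (inside $u^{1,1}$ or $u^{2,2}$) is handled verbatim as there, using only that $u_0\in M_Q\cap N_H$ is upper unitriangular and that $\psi_U$ is supported on diagonal coordinates of these blocks.
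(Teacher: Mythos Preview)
Your approach is the same as the paper's---both reduce to the argument of Lemma~\ref{lemma:easier condition on psiU}---and the logical skeleton you give is sound. However, you have imported the block descriptions from the non-$\GL$ case in a way that does not match the $\GL_{2kc}$ setup, and this makes your case analysis both wrong in detail and unnecessarily elaborate.

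The point you are missing is that for $G=\GL_c$ one has $n=c$. Hence $u^{1,1}$ is already a full $c\times c$ block (the left half of the $c\times 2c$ block $u$), and for $H=\GL_{2kc}$ the block $u^{2,2}$ is by definition the \emph{top} $c\times c$ block of $u'\in\Mat_{2c\times c}$, not a sub-block of $u$. So $u^{1,1}$ and $u^{2,2}$ sit in different $M_Q$-blocks, the ``two options for $B'$'' remark is wrong here (conjugation by $\sigma$ cannot move between them), and your spillover description for $u^{2,2}$ (``above past the top up to row $(k-2)c+1$, in $\Mat_{c\times 2c}$'') is lifted verbatim from the non-$\GL$ picture and simply does not apply. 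In fact, since the $\GL_{2c}$-component of $u_0$ lies in $V_{(c,c)}$, it acts trivially on the top $c$ rows of $u'$, so there is no spillover out of $u^{2,2}$ at all; for $u^{1,1}$ the spillover is only into the right half of $u$, where $\psi_U$ is already trivial.

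The paper's proof is accordingly much shorter: it observes that for $\GL_{2kc}$ \emph{every} block on which $\psi_U$ is nontrivial is a $c\times c$ block with $\psi_U=\pm\psi\circ\tr$, so only the ``$B\in\Mat_c$'' case of Lemma~\ref{lemma:easier condition on psiU} arises. The one genuinely new point is that $\sigma$ now lies in $(W(G),W(G))$ rather than $(W(G),1)$; the paper disposes of this by noting that $(1,G)$ commutes with each of these blocks (immediate from the embedding $(g_1,g_2)=\diag(g_1,\ldots,g_1,g_1,g_2,g_1,\ldots,g_1)$), so the second factor of $\sigma$ is irrelevant. Once you correct the block descriptions, your argument collapses to exactly this.
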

\begin{proof}
The proof is a repetition of the proof of Lemma~\ref{lemma:easier condition on psiU}, with only one case to consider. In the notation of that proof, we only have to consider the case where the coordinate $(i,j)$ defining $Y$ belongs to a block $B\in\Mat_c$. Then ${}^{\sigma}Y$ is
also defined by a coordinate in the same block $B$. One change here is that
$\sigma\in (W(G),W(G))$ instead of $(W(G),1)$, but this does not make any difference. In fact, $(1,G)$ commutes with all of the blocks of $U$ where $\psi_U$ is nontrivial.
\end{proof}

Re-denote $h=wu$ where $w$ and $u$ satisfy the properties of Proposition~\ref{proposition:GL structure of w u}. In particular $w$ defines the integers $0\leq l',l\leq c$. Write
\begin{align*}
w=(w^1_{k},\ldots,w^1_{1},w^2_{1},\ldots,w^2_{k}),\qquad \forall i,j,w^j_i\in\{0,1\}^c.
\end{align*}
With this notation
\begin{align*}
p_0(w)=(w^1_1,w^2_1),\qquad w^1_1=(0^{l},1^{c-l}),\qquad w^2_1=(0^{c-l'},1^{l'}).
\end{align*}

\begin{proposition}\label{proposition:GL 1st reduction of w}
We have $\mathcal{H}(h)=0$ unless
\begin{align}\label{eq:GL w_i first reduction}
w^1_i=(0^l,*^{c-l}),\qquad w^2_i=(*^{l},1^{c-{l}}),\qquad \forall 1<i\leq k.
\end{align}
\end{proposition}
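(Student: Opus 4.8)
The plan is to run the argument parallel to the proof of Proposition~\ref{proposition:1st reduction of w}, relying solely on the incompatibility criterion of \S~\ref{the vanishing 3 types}: by Lemma~\ref{lemma:GL easier condition on psiU} it will suffice, for each representative $h=wu$ not satisfying \eqref{eq:GL w_i first reduction}, to exhibit a root subgroup $Y<U$ with ${}^wY<U_P$ and $\psi_U|_Y\neq1$, since then $\mathcal{H}(h)=0$ by \eqref{GL psi U nontrivial using only w} together with \eqref{psi U nontrivial}. For $k=1$ the assertion is vacuous, so I will assume $k>1$. Recall that $\psi_U$ is nontrivial on every diagonal coordinate of the blocks $u^{1,1}$ and $u^{2,2}$ of \eqref{matrix:middle 4c block of u} and, inside the two copies of $V_{(c^{k-1})}$ sitting in $U$, on every diagonal coordinate of the blocks $v_{i,i+1}$ (on which $\psi_U=\psi_{k-1}^{-1}$ restricts to a nonzero multiple of $\psi\circ\tr$). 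So it is enough to locate, in one of these blocks, a diagonal coordinate that $w$ carries into $U_P$.

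The first task will be to set up the bookkeeping: numbering the Levi blocks of $Q=P_{(c^{k-1},2c,c^{k-1})}$ from left to right, the $m$-th $\GL_c$-block ($1\le m\le k-1$) carries the coordinates of $w^1_{k+1-m}$, the $\GL_{2c}$-block carries $(w^1_1,w^2_1)$ with $w^1_1=(0^l,1^{c-l})$, and the $(k+m)$-th $\GL_c$-block carries $w^2_{m+1}$. From this I get: $u^{1,1}$ has rows in the $w^1_2$-block and columns in the $w^1_1$-block; $u^{2,2}$ has rows in the $w^1_1$-block and columns in the $w^2_2$-block; for $3\le m\le k$, the block $v_{k+1-m,k+2-m}$ of the left copy of $V_{(c^{k-1})}$ has rows in the $w^1_m$-block and columns in the $w^1_{m-1}$-block, and the block $v_{m-2,m-1}$ of the right copy has rows in the $w^2_{m-1}$-block and columns in the $w^2_m$-block. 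Since $w$ carries the root subgroup attached to a coordinate with row index $r$ and column index $s$ into $U_P$ precisely when the $r$-th entry of $w$ equals $1$ and the $s$-th entry equals $0$, I can then read off: the $d$-th diagonal coordinate of $u^{1,1}$ enters $U_P$ when $d\le l$ and the $d$-th entry of $w^1_2$ is $1$; the $d$-th diagonal coordinate of $u^{2,2}$ enters $U_P$ when $d>l$ and the $d$-th entry of $w^2_2$ is $0$; and analogously for the $v_{i,i+1}$.

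Finally I will assemble these observations. If one of the first $l$ entries of $w^1_2$ is $1$, or one of the last $c-l$ entries of $w^2_2$ is $0$, the corresponding diagonal coordinate of $u^{1,1}$ or $u^{2,2}$ furnishes $Y$, so $\mathcal{H}(h)=0$; hence I may assume $w^1_2=(0^l,*^{c-l})$ and $w^2_2=(*^l,1^{c-l})$, which settles $k=2$. For $k>2$ I will induct on $m$ for $2<m\le k$: assuming $w^1_{m-1}=(0^l,*^{c-l})$ and $w^2_{m-1}=(*^l,1^{c-l})$, if the $d$-th entry of $w^1_m$ is $1$ for some $d\le l$ then the $d$-th diagonal coordinate of $v_{k+1-m,k+2-m}$ in the left copy of $V_{(c^{k-1})}$ has column over a $0$-entry of $w^1_{m-1}$ and is carried into $U_P$, forcing $\mathcal{H}(h)=0$; similarly, if the $d$-th entry of $w^2_m$ is $0$ for some $d>l$, the $d$-th diagonal coordinate of $v_{m-2,m-1}$ in the right copy has row over a $1$-entry of $w^2_{m-1}$ and is carried into $U_P$. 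This gives \eqref{eq:GL w_i first reduction}. The part demanding care will be the index bookkeeping tying the tuples $w^j_i$ to the rows and columns of the blocks supporting $\psi_U$; once that is in place the vanishing is immediate from \eqref{psi U nontrivial}.
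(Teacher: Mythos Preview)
Your proposal is correct and follows essentially the same approach as the paper's proof: both arguments use the incompatibility criterion \eqref{GL psi U nontrivial using only w} applied to diagonal coordinates of $u^{1,1}$, $u^{2,2}$, and the blocks $v_{i,i+1}$ of the two copies of $V_{(c^{k-1})}$, propagating the constraints from $w^1_1=(0^l,1^{c-l})$ outward. The only differences are organizational: you frame the propagation as an explicit induction on $m$ treating $w^1_m$ and $w^2_m$ in parallel, whereas the paper first runs the full $w^2$-chain (via $u^{2,2}$ then $v_{1,2},v_{2,3},\ldots$ in the right copy) and then the full $w^1$-chain (via $u^{1,1}$ then $v_{k-1,k},v_{k-2,k-1},\ldots$ in the left copy), and phrases the criterion in terms of ``$w$ permutes row $r$ into $U_P$'' / ``$w^{-1}$ permutes column $s$ out of $U_P$'' rather than your equivalent ``$r$-th entry is $1$ and $s$-th entry is $0$.''
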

\begin{proof}
For $k=1$ there is nothing to prove, assume $k>1$.
Since $w^1_1=(0^{l},1^{c-{l}})$, conjugation by $w$ leaves the last $c-{l}$ rows of $u^{2,2}$ (see \eqref{matrix:middle 4c block of u}) in $U_P$, these are rows
\begin{align*}
(k-1)c+{l}+1,\ldots,kc.
\end{align*}
The character $\psi_U$ is nontrivial on the bottom right $c-{l}\times c-{l}$ block of $u^{2,2}$, whence
$\mathcal{H}(h)=0$ by \eqref{GL psi U nontrivial using only w}, unless $w^{-1}$ permutes the last $c-{l}$ columns of $u^{2,2}$, columns
\begin{align*}
(k+1)c+{l}+1,\ldots,(k+2)c,
\end{align*}
outside of $U_P$. This means $w$ permutes rows
\begin{align*}
(k+1)c+{l}+1,\ldots,(k+2)c
\end{align*}
into $U_P$, i.e., $w^2_2=(*^{l},1^{c-{l}})$. But these are also the last $c-{l}$ rows of
the block $v_{1,2}$ of the bottom right copy of $V_{(c^{k-1})}$ in $U$. Since $\psi_U$ restricts to $\psi\circ\tr$ on $v_{1,2}$,
$\mathcal{H}(h)=0$ by \eqref{GL psi U nontrivial using only w}, unless $w^{-1}$ permutes the last $c-{l}$ columns of $v_{1,2}$, columns
\begin{align*}
(k+2)c+{l}+1,\ldots,(k+3)c,
\end{align*}
outside of $U_P$, so that $w$ permutes rows
\begin{align*}
(k+2)c+{l}+1,\ldots,(k+3)c
\end{align*}
into $U_P$, $w^2_3=(*^{l},1^{c-{l}})$, and these are the bottom $c-{l}$ rows of $v_{2,3}$. Proceeding similarly ($\psi_U$ is $\psi\circ\tr$ on $v_{j,j+1}$) we obtain $w^2_i=(*^{l},1^{c-{l}})$ for all $1<i\leq k$.

In addition, $w^1_1=(0^{l},1^{c-{l}})$ implies $w^{-1}$ permutes the first $l$ columns of $u^{1,1}$, columns
\begin{align*}
(k-1)c+1,\ldots,(k-1)c+l,
\end{align*}
into $U_P$. Since $\psi_U$ restricts to $\psi\circ\tr$ on $u^{1,1}$, $\mathcal{H}(h)=0$ by \eqref{GL psi U nontrivial using only w}, unless $w$ permutes the first $l$ rows of $u^{1,1}$ outside of $U_P$, these are rows
\begin{align*}
(k-2)c+1,\ldots,(k-2)c+l,
\end{align*}
and we obtain $w^1_2=(0^l,*^{c-l})$. Then $w^{-1}$ permutes the first $l$ columns of the block $v_{k-1,k}$ of the top left copy of $V_{(c^{k-1})}$ in $U$, columns
\begin{align*}
(k-2)c+1,\ldots,(k-2)c+l
\end{align*}
into $U_P$, so that $\mathcal{H}(h)=0$ by \eqref{GL psi U nontrivial using only w},
unless $w$ permutes the first $l$ rows of $v_{k-1,k}$, rows
\begin{align*}
(k-3)c+1,\ldots,(k-3)c+l,
\end{align*}
outside of $U_P$, i.e., $w^1_3=(0^l,*^{c-l})$. Similarly, we deduce $w^1_i=(0^l,*^{c-l})$ for all $1<i\leq k$.
\end{proof}

For each $1<i\leq k$, let $0\leq d^1_{i-1}\leq c-l$ and $0\leq d^2_{i-1}\leq l$
be maximal such that for all $1<i\leq k$,
\begin{align*}
w^1_i=(0^{l+d^1_{i-1}},*^{c-l-d^1_{i-1}}),\qquad w^2_i=(*^{l-d^2_{i-1}},1^{c-l+d^2_{i-1}}).
\end{align*}
The integer $d^j_{i-1}$ is defined since $w^j_i$ takes the form \eqref{eq:GL w_i first reduction}.
\begin{proposition}\label{proposition:GL 2nd reduction of w}
We have $\mathcal{H}(h)=0$ unless $h\sim \hat{w}\hat{u}$, $p_0(\hat{w})=(0^l,1^{c-l},0^{c-l'},1^{l'})$, for each $1<i\leq k$,
\begin{align}\label{eq:GL w_i second reduction}
w^1_i=(0^{l+d^1_{i-1}},1^{c-l-d^1_{i-1}}),\quad w^2_i=(0^{l-d^2_{i-1}},1^{c-l+d^2_{i-1}}),\quad d^1_1\leq\ldots\leq d^1_{k-1},
\quad d^2_1\leq\ldots\leq d^2_{k-1},
\end{align}
and $\hat{u}$ satisfies the conditions of Proposition~\ref{proposition:GL structure of w u}, in particular
$\ell_0(\hat{u})$ takes the form \eqref{eq:GL second form u final}.
\end{proposition}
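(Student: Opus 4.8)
The plan is to follow the proof of Proposition~\ref{proposition:2nd reduction of w} essentially verbatim, running the argument separately and symmetrically on the ``left'' block of coordinates $w^1_i$ and the ``right'' block $w^2_i$ ($1<i\le k$), which behave as mirror images of one another. After Proposition~\ref{proposition:GL 1st reduction of w} one may write $w^1_i=(0^l,{w'}^1_i)$ with ${w'}^1_i\in\{0,1\}^{c-l}$ whose leftmost $d^1_{i-1}$ entries vanish, and $w^2_i=({w'}^2_i,1^{c-l})$ with ${w'}^2_i\in\{0,1\}^{l}$ whose rightmost $d^2_{i-1}$ entries equal $1$; the task is to sort each ${w'}^1_i$ into $(0^{d^1_{i-1}},1^{c-l-d^1_{i-1}})$ with $d^1_1\le\dots\le d^1_{k-1}$, and each ${w'}^2_i$ into $(0^{l-d^2_{i-1}},1^{d^2_{i-1}})$ with $d^2_1\le\dots\le d^2_{k-1}$.

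First I would establish the monotonicity observation from the proof of Proposition~\ref{proposition:2nd reduction of w}, in its two mirror forms. For the left side: if $1\le j\le c-l$ and $1<i_0\le k$ is minimal with ${w'}^1_{i_0}[j]=0$, then $\mathcal{H}(h)=0$ unless ${w'}^1_i[j]=0$ for all $i\ge i_0$; indeed, if $i>i_0$ were minimal with ${w'}^1_i[j]=1$, then the relevant off-diagonal block of the left copy of $V_{(c^{k-1})}$ inside $U$ (on which $\psi_U$ restricts to $\psi\circ(-\tr)$, since $\psi_U$ restricts to $\psi_{k-1}^{-1}$ there) supplies a root subgroup $Y<U$ with $\psi_U|_Y\ne1$ and ${}^wY<U_P$, whence $\mathcal{H}(h)=0$ by \eqref{GL psi U nontrivial using only w}. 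The dual statement for the right coordinates (with ``$0$'' and ``$1$'' interchanged, and the right copy of $V_{(c^{k-1})}$) is proved the same way.

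Next I would perform the sorting using the standard Levi $\GL_l\times\GL_{c-l}<G=\GL_c$ in the first copy of $G$. For a permutation matrix $g''$ representing an element of $W(\GL_{c-l})$, the element $(\diag(I_l,g''),1)$ lies in $M_P\cap(G,1)\subset P\cap D$, so conjugating $h$ by it yields an equivalent representative; this conjugation applies $g''$ simultaneously to the last $c-l$ coordinates of every $w^1_i$ and of every $w^2_i$ with $i\ge2$, but after Proposition~\ref{proposition:GL 1st reduction of w} the latter are all $1$, hence untouched, while $\ell_0(\hat u)$ retains the form \eqref{eq:GL second form u final} (its single block is merely permuted) and, updating $\sigma$, ${}^{\sigma}\hat u\in M_Q\cap N_H$ still holds; also $w^1_1=(0^l,1^{c-l})$ and $w^2_1$ are unaffected, so $p_0(\hat w)$ keeps its shape. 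Running bubble-sort transpositions from $W(\GL_{c-l})$ together with the left observation, exactly as in the proof of Proposition~\ref{proposition:2nd reduction of w}, brings every ${w'}^1_i$ to $(0^{d^1_{i-1}},1^{c-l-d^1_{i-1}})$ with $d^1_1\le\dots\le d^1_{k-1}$. Symmetrically, conjugating by elements $(\diag(g',I_{c-l}),1)$ with $g'$ representing an element of $W(\GL_l)$ sorts the first $l$ coordinates of the $w^2_i$ ($i\ge2$) --- those of the $w^1_i$ being identically $0$, hence harmless --- producing the required form of the ${w'}^2_i$ and $d^2_1\le\dots\le d^2_{k-1}$; since the second copy of $G$ is never used, $w^2_1=(0^{c-l'},1^{l'})$ is again untouched. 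This gives \eqref{eq:GL w_i second reduction}, and $\mathcal{H}(h)=0$ whenever it cannot be arranged.

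The only genuine obstacle is bookkeeping: one must check that the two sorting passes are truly independent --- which rests precisely on the fact that, after Proposition~\ref{proposition:GL 1st reduction of w}, the coordinate slots not being sorted in a given pass are constant (all $0$ on the left, all $1$ on the right), so permuting them does nothing --- and that each conjugation used preserves the normal form of $\hat u$ from Proposition~\ref{proposition:GL structure of w u}. Pinning down, in the observation step, the exact coordinates of the two copies of $V_{(c^{k-1})}$ that $w$ carries into $U_P$ while keeping $\psi_U$ nontrivial requires a little care, but is entirely parallel to, and in fact simpler than, the non-$\GL$ case treated in Proposition~\ref{proposition:2nd reduction of w}.
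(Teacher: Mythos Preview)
Your proposal is correct and follows essentially the same approach as the paper: the monotonicity observation via a root subgroup $Y<U$ with $\psi_U|_Y\ne1$ and ${}^wY<U_P$ (applied separately to each of the two copies of $V_{(c^{k-1})}$), followed by bubble-sort using elements of $(\diag(W(\GL_l),I_{c-l}),1)$ and $(\diag(I_l,W(\GL_{c-l})),1)$, with the check that the coordinates not being sorted in a given pass are constant and hence unaffected. The paper's proof is organized in the same way and uses the same conjugating elements; the only cosmetic difference is that the paper treats the $w^2$-block first and then the $w^1$-block, and explicitly remarks that $\ell_0(\hat u)$ need not equal $\ell_0(u)$ because $(\diag(W(\GL_l),I_{c-l}),1)$ does not commute with \eqref{eq:GL second form u final} --- which is exactly your observation that the block $X$ is replaced by $g'X$, so the \emph{form} \eqref{eq:GL second form u final} is preserved even though the block changes.
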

\begin{proof}
For each $1<i\leq k$, put $w^2_i=((w^2)'_i,1^{c-l})$ with $(w^2)'_i\in\{0,1\}^l$. Let $1\leq j\leq l$ and assume $1<i_0\leq k$ is minimal such that $(w^2)'_{i_0}[j]=1$. Assume $i>i_0$ is minimal with $(w^2)'_i[j]=0$. Since
$(w^2)'_{i-1}[j]=1$, $w$ permutes row $(k+i-2)c+j$ into $U_P$. This row contains coordinates of a row from $v_{i-2,i-1}$, and $\psi_U$ is $\psi\circ\tr$ on $v_{i-2,i-1}$, so that on row $(k+i-2)c+j$ it is nontrivial on the column $(k+i-1)c+j$. Thus \eqref{GL psi U nontrivial using only w} implies
$\mathcal{H}(h)=0$, unless $w^{-1}$ permutes column $(k+i-1)c+j$ outside of $U_P$, which means that $w$ permutes row $(k+i-1)c+j$ into $U_P$, contradicting the assumption $(w^2)'_i[j]=0$. Therefore $(w^2)'_i[j]=1$ for all $i\geq i_0$ (or $\mathcal{H}(h)=0$).

Now we are in a situation similar to the proof of Proposition~\ref{proposition:2nd reduction of w}. If $i_0$ is minimal with
$(w^2)'_{i_0}[j]=1$ and $(w^2)'_{i_0}[j+1]=0$, then for each $i>i_0$,
either $(w^2)'_{i}[j]=1,(w^2)'_{i}[j+1]=0$ or $(w^2)'_{i}[j]=(w^2)'_{i}[j+1]=1$. Using transpositions from
$(\diag(W(\GL_{l}),I_{c-l}),1)$, one can sort the coordinates of the blocks $w^2_i$ so that $d^2_1\leq\ldots\leq d^2_{k-1}$ holds.
Any  $b\in(\diag(W(\GL_{l}),I_{c-l}),1)$ fixes the last $c-l$ rows of $w^1_i$ and keeps the first $l$ rows of
$w^1_{i}$ in $w^1_{i}$, for each $1<i\leq k$, and since $w^1_i$ starts with $(0^l)$, $p_1({}^{b}w)=p_1(w)$ (for brevity, we identify $w^1_i$ with the rows it is affecting: $(k-i)c+1,\ldots,(k-i)c+c$). Additionally $b$ fixes the last $2c-l$ rows of $p_0(w)$ while keeping the first $l$ rows in $p_0(w)$, thus $p_0({}^{b}w)=p_0(w)$.

Similarly denote $w^1_i=(0^l,(w^1)'_i)$ with $(w^1)'_i\in\{0,1\}^{c-l}$, and consider $1\leq j\leq c-l$.
Suppose $i_0>1$ is minimal with $(w^1)'_{i_0}[j]=0$ and $i>i_0$ is minimal with $(w^1)'_{i}[j]=1$. On the one hand
$(w^1)'_{i-1}[j]=0$ hence $w$ permutes row $(k-i+1)c+j$ outside of $U_P$, so that
$w^{-1}$ permutes column $(k-i+1)c+j$ into $U_P$. On the other hand
$(w^1)'_{i}[j]=1$, whence $w$ permutes row $(k-i)c+j$ into $U_P$. Again $\mathcal{H}(h)=0$ because of \eqref{GL psi U nontrivial using only w}, otherwise we deduce that if $i_0$ exists, $(w^1)'_{i}[j]=0$ for all $i\geq i_0$.

This means that unless $\mathcal{H}(h)=0$, if $i_0$ is minimal with
$(w^1)'_{i_0}[j]=1$ and $(w^2)'_{i_0}[j+1]=0$, then for each $i>i_0$,
either $(w^2)'_{i}[j]=1,(w^2)'_{i}[j+1]=0$ or $(w^2)'_{i}[j]=(w^2)'_{i}[j+1]=0$.
Again we use transpositions, now from $(\diag(I_l,W(\GL_{c-l})),1)$, to rearrange the coordinates of the blocks $w^1_i$ and obtain $d^1_1\leq\ldots\leq d^1_{k-1}$. For $b\in(\diag(I_l,W(\GL_{c-l})),1)$, $b$ fixes the first $l$ rows of $w^2_{i}$ and leaves the last $c-l$ rows in $w^2_{i}$ for $1<i\leq k$, and because $w^2_{i}$ (still) ends with $(1^{c-l})$, $p_2({}^bw)=w$. Also
$p_0({}^bw)=p_0(w)$.

Now condition \eqref{eq:GL w_i second reduction} holds, and note that the conjugations affect $u$, but it still satisfies the conditions of
Proposition~\ref{proposition:GL structure of w u}. As opposed to the proof of Proposition~\ref{proposition:2nd reduction of w}, we do not claim $\ell_0(\hat{u})=\ell_0(u)$, it might not hold because $(\diag(W(\GL_{l}),I_{c-l}),1)$ does not commute with \eqref{eq:GL second form u final}.
\end{proof}
Re-denote $h=wu$, with $w$ and $u$ given by Proposition~\ref{proposition:GL 2nd reduction of w}.
Since the total sum of coordinates of $w$ must be $kc$, we have
\begin{align*}
c-l+l'+\sum_{i=1}^{k-1}(c-l-d^1_i)+\sum_{i=1}^{k-1}(c-l+d^2_i)=kc,
\end{align*}
hence
\begin{align}\label{eq:GL compatibility for w}
\sum_{i=1}^{k-1}d^2_i-d^1_i=(k-1)(2l-c)+l-l'.
\end{align}
We can multiple $w$ on the left by an element of $W(M_P)$, so that the matrix corresponding to $w$ takes the form
\begin{align*}
&\left(\begin{smallmatrix}
0&&&&&&&&I_{c-l+d^2_{k-1}}\\
&I_{c-l-d^1_{k-1}}&&&&&&0&\\
&&0&&&&\iddots &&\\
&&&\ddots&&0&&&\\
&&&&\left(\begin{smallmatrix}&&&I_{l}\\&I_{2c-l-l'}\\I_{l'}\end{smallmatrix}\right)&&&&\\
&&&0&&I_{l-d^2_1}&&&\\
&&\iddots&&&&0&\\
&0&&&&&&\ddots&\\
I_{l+d^1_{k-1}}&&&&&&&&0
\end{smallmatrix}\right).
\end{align*}
For example if $k=2$,
\begin{align*}
w=\left(\begin{smallmatrix}
          0 & 0 & 0 & 0 & 0 & 0 & I_{c-l+d^2_1} \\
          0 & I_{c-l-d^1_1} & 0 & 0 & 0 & 0 & 0 \\
          0 & 0 & 0 & 0 & I_{l} & 0 & 0 \\
          0 & 0 & 0 & I_{2c-l-l'} & 0 & 0 & 0 \\
          0 & 0 & I_{l'} & 0 & 0 & 0 & 0 \\
          0 & 0 & 0 & 0 & 0 & I_{l-d^2_1} & 0 \\
          I_{l+d^1_1} & 0 & 0 & 0 & 0 & 0 & 0
        \end{smallmatrix}\right),
\end{align*}
and for $k=3$,
\begin{align*}
w=\left(\begin{smallmatrix}
          0 & 0 & 0 & 0 & 0 & 0 & 0 &  0 & 0 & 0 & I_{c-l+d^2_2} \\
          0 & I_{c-l-d^1_2} & 0 & 0 &  0 & 0 & 0 & 0 & 0 & 0 & 0 \\
          0 & 0 & 0 & 0 & 0 & 0 & 0 &  0 & I_{c-l+d^2_1} & 0 & 0 \\
          0 & 0 & 0 & I_{c-l-d^1_1} &  0 & 0 & 0 & 0 & 0 & 0 & 0 \\
          0 & 0 & 0 & 0 & 0 & 0 &  I_{l'} & 0 & 0 & 0 & 0 \\
          0 & 0 & 0 & 0 & 0 & I_{2c-l-l'} & 0 & 0 & 0 & 0 & 0 \\
          0 & 0 & 0 & 0 & I_l & 0  & 0 & 0 & 0 & 0 & 0 \\
          0 & 0 & 0 & 0 & 0 & 0 &  0 & I_{l-d^2_1} & 0 & 0 & 0 \\
          0 & 0 & I_{l+d^1_1} & 0  & 0 & 0 & 0 & 0 & 0 & 0 & 0 \\
          0 & 0 & 0 & 0 & 0 & 0 &  0 & 0 & 0 & I_{l-d^2_2} & 0 \\
          I_{l+d^1_2} & 0 & 0 & 0  & 0 & 0 & 0 & 0 & 0 & 0 & 0
        \end{smallmatrix}\right).
\end{align*}

For $1\leq j\leq k-1$, denote
\begin{align*}
\gamma_j=&\diag(I_{\sum_{i=1}^{j-1}c-l-d^1_{k-i}},\left(\begin{smallmatrix}&I_{kc-(2j-1)(c-l)-d^2_{k-j}
+\sum_{i=1}^{j-1}d^1_{k-i}-d^2_{k-i}}\\I_{c-l+d^2_{k-j}}\end{smallmatrix}\right),I_{\sum_{i=1}^{j-1}c-l+d^2_{k-i}})\in W(\GL_{kc}),\\
\gamma_j'=&\diag(I_{\sum_{i=1}^{j-1}l+d^1_{k-i}},\left(\begin{smallmatrix}&I_{l+d^1_{k-j}}\\
I_{kc-(2j-1)l-d^1_{k-j}
+\sum_{i=1}^{j-1}d^2_{k-i}-d^1_{k-i}}\end{smallmatrix}\right),I_{\sum_{i=1}^{j-1}l-d^2_{k-i}})\in W(\GL_{kc}),
\end{align*}
and multiply $w$ on the left by $\diag(\gamma_{k-1}\cdot\ldots \cdot\gamma_1,\gamma_{k-1}'\cdot\ldots \cdot\gamma_1')$.
Now we see that ${}^hU\cap M_P=V_{\beta}\times V_{\beta'}$, where $\beta$ and $\beta'$ are the compositions of $kc$ given by
\begin{align*}
&\beta=(c-l-d^1_{k-1},\ldots,c-l-d^1_{1},l'+c-l,c-l+d^2_{1},\ldots,c-l+d^2_{k-1}),\\
&\beta'=(l+d^1_{k-1},\ldots,l+d^1_{1},c-l'+l,l-d^2_{1},\ldots,l-d^2_{k-1}).
\end{align*}
Both $\beta$ and $\beta'$ are indeed compositions of $kc$, by \eqref{eq:GL compatibility for w}.
Put $\psi_{V_{\beta}\times V_{\beta'}}={}^{h}\psi_U|_{V_{\beta}\times V_{\beta'}}$,
$\psi_{V_{\beta}}={}^{h}\psi_U|_{V_{\beta}\times I_{kc}}$ and $\psi_{V_{\beta'}}={}^{h}\psi_U|_{I_{kc}\times V_{\beta'}}$, and note that
\begin{align}\label{eq:GL psi beta factors through tensor}
J_{V_{\beta}\times V_{\beta'},\psi_{V_{\beta}\times V_{\beta'}}}(\rho)=
J_{V_{\beta},\psi_{V_{\beta}}}(\rho_1)\otimes J_{V_{\beta'},\psi_{V_{\beta'}}}(\rho_2).
\end{align}
We start with describing ${}^{w\ell_0(u)}\psi_U|_{V_{\beta}\times V_{\beta'}}$.
For $v\in V_{\beta}$ and $v'\in V_{\beta'}$, write
\begin{align*}
&v=\left(\begin{smallmatrix}I_{c-l-d^1_{k-1}}&b_1&\cdots\\&\ddots&\ddots&\\
&&I_{c-l-d^1_2}&b_{k-2}&\cdots\\
&&&I_{c-l-d^1_1}&e&b_{k-1}&\cdots\\
&&&&I_{l'}&0&b_{k}&\cdots\\
&&&&&I_{c-l}&b_{k+1}&\cdots\\
&&&&&&\ddots&\ddots\\
&&&&&&&I_{c-l+d^2_{k-2}}&b_{2k-1}\\
&&&&&&&&I_{c-l+d^2_{k-1}}
\end{smallmatrix}\right),\\
&v'=\left(\begin{smallmatrix}I_{l+d^1_{k-1}}&b'_1&\cdots\\&\ddots&\ddots&\\
&&I_{l+d^1_2}&b'_{k-2}&\cdots\\
&&&I_{l+d^1_1}&e'&b'_{k-1}&\cdots\\
&&&&I_{c-l'}&0&f'&\cdots\\
&&&&&I_{l}&b'_{k}&\cdots\\
&&&&&&\ddots&\ddots\\
&&&&&&&I_{l-d^2_{k-2}}&b'_{2k-2}\\
&&&&&&&&I_{l-d^2_{k-1}}
\end{smallmatrix}\right).
\end{align*}
The dimensions of the blocks $b_i$ and $b'_i$ are clear, note that $b_k$ and $b_{k+1}$ (resp., $b'_k$) have $c-l+d^2_{1}$ (resp., $l-d^2_{1}$) columns. Then
\begin{align}\label{GL psi_U on V beta 0}
&{}^{w\ell_0(u)}\psi_U(\diag(v,I_{kc}))=\psi(-\sum_{j=k-1}^{2}\tr(b_{k-j}\left(\begin{smallmatrix}0_{d^1_j-d^1_{j-1} \times  c-l-d^1_j} \\ I_{c-l-d^1_{j}}\end{smallmatrix}\right))
-\tr(b_{k-1}\left(\begin{smallmatrix}0_{d^1_1\times c-l-d^1_1}\\I_{c-l-d^1_1}\end{smallmatrix}\right))
\\&\qquad
-\tr(b_{k}\left(\begin{smallmatrix}A(X)\\0_{c-l\times l'}\end{smallmatrix}\right))
+\tr(b_{k+1}\left(
\begin{smallmatrix}0_{d^2_1\times c-l}\\I_{c-l}\end{smallmatrix}\right))
-\sum_{j=1}^{k-2}\tr(b_{k+1+j}\left(\begin{smallmatrix}0_{d^2_{j+1}-d^2_j\times c-l+d^2_{j}}\\I_{c-l+d^2_j}\end{smallmatrix}\right))),\nonumber
\end{align}
\begin{align}\label{GL psi_U on V beta' 0}
{}^{w\ell_0(u)}\psi_U(\diag(I_{kc},v'))=&\psi(-\sum_{j=k-1}^{2}\tr(b'_{k-j}\left(\begin{smallmatrix}
I_{l+d^1_{j-1}}&0_{l+d^1_{j-1}\times d^1_{j}-d^1_{j-1}}\end{smallmatrix}\right))
-\tr(b'_{k-1}\left(\begin{smallmatrix}I_l&0_{l\times d^1_1}\end{smallmatrix}\right))
\\&\quad
+\tr(b'_{k}\left(
\begin{smallmatrix}I_{l-d^2_1}&0_{l-d^2_1\times d^2_1}\end{smallmatrix}\right))
-\sum_{j=1}^{k-2}\tr(b'_{k+j}\left(\begin{smallmatrix}I_{l-d^2_{j+1}}&0_{l-d^2_{j+1}\times d^2_{j+1}-d^2_{j}}\end{smallmatrix}\right)))\nonumber.
\end{align}
In both formulas, the sums $\sum_{j=k-1}^{2}$ are omitted when $k=2$.
The matrix $A(X)\in\Mat_{d^2_1\times l'}$ in \eqref{GL psi_U on V beta 0} is uniquely defined given the block $X$ of $\ell_0(u)$, and in particular
$A(0)=0$ and when $d^2_1=l=l'$, $A(I_l)=I_l$. For $l=c$, we have $d^1_i=0$ for all $i$, and since $d^2_i\leq c$ and $l'\leq c$,
\eqref{eq:GL compatibility for w} implies $d^2_i=c$ for all $i$ and $l=l'$, then
\eqref{GL psi_U on V beta' 0} becomes $\psi_k^{-1}$, and when $X=I_c$, \eqref{GL psi_U on V beta 0} also becomes $\psi_k^{-1}$.

\begin{proposition}\label{proposition:GL wu_0 nontrivial implies h nontrivial orbit}
Assume $k>1$ and $\mathcal{H}(h)\ne0$.
The character $\psi_{V_{\beta}}$ belongs to the orbit of
\begin{align}\label{GL psi_U on V beta}
v\mapsto&\psi(-\sum_{j=k-1}^{2}\tr(b_{k-j}*)
-\tr(b_{k-1}\left(\begin{smallmatrix}*_{d^1_1\times c-l-d^1_1}\\I_{c-l-d^1_1}\end{smallmatrix}\right))
\\&\quad
-\tr(b_{k}\left(\begin{smallmatrix}{*}_{d_1^2\times l'}\\0_{c-l\times l'}\end{smallmatrix}\right))
+\tr(b_{k+1}\left(
\begin{smallmatrix}0_{d^2_1\times c-l}\\I_{c-l}\end{smallmatrix}\right))
-\sum_{j=1}^{k-2}\tr(b_{k+1+j}\left(\begin{smallmatrix}*_{d^2_{j+1}-d^2_j\times c-l+d^2_{j}}\\I_{c-l+d^2_j}\end{smallmatrix}\right))),\nonumber
\end{align}
and $\psi_{V_{\beta'}}$ belongs to the orbit of
\begin{align}\label{GL psi_U on V beta'}
v'\mapsto &\psi(-\sum_{j=k-1}^{2}\tr(b'_{k-j}\left(\begin{smallmatrix}
I_{l+d^1_{j-1}}&{*}_{l+d^1_{j-1}\times d^1_{j}-d^1_{j-1}}\end{smallmatrix}\right))
-\tr(b'_{k-1}\left(\begin{smallmatrix}I_l&0_{l\times d^1_1}\end{smallmatrix}\right))
\\&\quad
+\tr(b'_{k}\left(
\begin{smallmatrix}I_{l-d^2_1}&{*}_{l-d^2_1\times d^2_1}\end{smallmatrix}\right))
-\sum_{j=1}^{k-2}\tr(b'_{k+j}*))\nonumber.
\end{align}
Here $*$ means undetermined block entries. When $\ell(u)$ is the identity, \eqref{GL psi_U on V beta}--\eqref{GL psi_U on V beta'} coincide with \eqref{GL psi_U on V beta 0}--\eqref{GL psi_U on V beta' 0}.
\end{proposition}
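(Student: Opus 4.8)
This proposition is the $\GL_{2kc}$-analogue of Proposition~\ref{proposition:wu_0 nontrivial implies h nontrivial orbit}, so I would follow the same strategy, adapted to the two-sided structure. The key point is that by \eqref{eq:GL psi beta factors through tensor} the Jacquet module factors as a tensor product, so the characters $\psi_{V_\beta}$ and $\psi_{V_{\beta'}}$ can be analyzed essentially independently, each coming from one of the two copies of $V_{(c^{k-1})}$ inside $U$ together with the adjacent blocks $u^{1,1}$, $u^{2,2}$ (and the block $X$ of $\ell_0(\hat u)$). First I would set up the block notation for the unipotent radical of $Q$ mirroring the list $B_i,B_i',B''$ from the proof of Proposition~\ref{proposition:wu_0 nontrivial implies h nontrivial orbit}, but now without any $B''$ (since $H=\GL_{2kc}$ there is no odd-orthogonal complication), and I would partition these blocks according to whether $w$ (in the form pinned down by Proposition~\ref{proposition:GL 2nd reduction of w} together with the left-multiplications by $W(M_P)$ and the $\gamma_j,\gamma_j'$) carries them into $M_P$, $U_P$, or $U_P^-$; call these lists $\mathscr M_P,\mathscr U_P,\mathscr U_P^-$.

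Second, I would write the ``Levi part'' $\ell(u) = \diag(z_1^1,\dots,z_{k-1}^1, z_1^2,\dots,z_{k-1}^2)$, where each $z_i^j \in \GL_c$ is ${}^{w_\sigma}v_i^j$ for $v_i^j\in N_{\GL_c}$ and $w_\sigma$ the permutation coming from the $(W(G),W(G))$-element $\sigma$ of Proposition~\ref{proposition:GL structure of w u}; exactly as in the non-$\GL$ case, the relevant invertibility facts ($I+X_2X_4$ invertible, etc.) hold because $z_i^j$ is a permuted upper-triangular unipotent. Absorbing the factor of each $z_i^j$ that conjugates into $M_P$, I reduce to $z_i^j = m_i^j$ of the explicit $3\times 3$-block shape with blocks $M_i^{j,1},\dots,M_i^{j,4}$, and I record $(m_i^j)^{-1}$. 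Using $h\sim ph$ for $p\in P$ I may assume $z_i^j = m_i^j$ outright.

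Third — this is the heart — I would compute ${}^u\psi_U$ on the blocks that $w$ carries into the $b_{k-1},b_k,b_{k+1},\dots,b_{2k-2}$ (for $\beta$) and into $b_{k-1}',b_k',b_{k+1}',\dots,b_{2k-2}'$ (for $\beta'$), by evaluating the products $(m_{k-1}^1)^{-1}B_1'$, $(m_{k-1}^1)^{-1}B_2'$, and $(m_i^1)^{-1}B_i m_{i+1}^1$ on the $\beta$-side, and the analogous products on the $\beta'$-side. As in Proposition~\ref{proposition:wu_0 nontrivial implies h nontrivial orbit}, each time a block in $\mathscr U_P$ is forced to carry a nontrivial character, I invoke \eqref{psi U nontrivial} to conclude $\mathcal H(h)=0$; so under the hypothesis $\mathcal H(h)\ne 0$ these blocks must carry trivial characters, which forces certain rows/columns of the $M_i^{j,t}$ to vanish, and the surviving ``diagonal'' blocks then carry $\psi\circ\tr$ up to conjugation by an element of $M_P$ of the form $\diag(\dots,I+M_iM_i',\dots,(I+M_iM_i')^*,\dots)$ — which is precisely the statement that $\psi_{V_\beta}$ (resp.\ $\psi_{V_{\beta'}}$) lies in the $M_\beta$-orbit (resp.\ $M_{\beta'}$-orbit) of the characters displayed in \eqref{GL psi_U on V beta}--\eqref{GL psi_U on V beta'}. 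The undetermined entries marked $*$ are exactly the images of blocks in $\mathscr U_P^-$, on which no constraint is obtained. Finally, when $\ell(u)=I$ all the $M_i^{j,t}$ vanish and the formulas collapse to \eqref{GL psi_U on V beta 0}--\eqref{GL psi_U on V beta' 0}, giving the last sentence.

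\textbf{Main obstacle.} The bookkeeping is the real difficulty: one must track, for each of the $\sim 2k$ blocks $B_i,B_i',\dots$, precisely which sub-blocks land in $\mathscr U_P$ versus $\mathscr M_P$ versus $\mathscr U_P^-$ under the permutation $w$ in the normalized form (after the $\gamma_j,\gamma_j'$ adjustments), and then carry the cancellations through the matrix products $(m_i^j)^{-1}B m_{i+1}^j$ in the correct order. Conceptually nothing new beyond Proposition~\ref{proposition:wu_0 nontrivial implies h nontrivial orbit} is needed — the tensor factorization \eqref{eq:GL psi beta factors through tensor} makes the two sides genuinely separate and, if anything, simpler than the single-sided orthogonal/symplectic computation — but the indices shift by the $d^1_i$ on the left copy and the $d^2_i$ on the right copy, and one must be careful that the compatibility relation \eqref{eq:GL compatibility for w} is what makes $\beta$ and $\beta'$ genuine compositions so that the $V_\beta\times V_{\beta'}$ picture is consistent. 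I expect the cleanest write-up to handle the $\beta$-side in full detail (as in the cited proof) and then remark that the $\beta'$-side is identical after transposing and swapping the roles of the left/right copies.
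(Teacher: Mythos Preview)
Your overall strategy is correct and is exactly what the paper does: decompose $\psi_U$ into named blocks, sort them into $\mathscr M_P,\mathscr U_P,\mathscr U_P^-$ under $w$, reduce $\ell(u)$ to matrices $m_{j,i}$, and then compute the relevant products, using \eqref{psi U nontrivial} whenever a $\mathscr U_P$-block would carry a nontrivial character. However, several of the concrete details you import from Proposition~\ref{proposition:wu_0 nontrivial implies h nontrivial orbit} are wrong for the $\GL$ case and would not survive execution. First, there are no $B_1',B_2'\in\Mat_n$ here; all $2(k-1)$ defining blocks lie in $\Mat_c$ and the paper labels them $B_{1,i},B_{2,i}$ ($0\le i\le k-2$), each decomposed into a $3\times3$ (not $4\times4$) array. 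Correspondingly the $m_{j,i}$ are $2\times2$ block matrices with only two parameters $M_{j,i}^1,M_{j,i}^2$, not the $3\times3$ shape with four parameters you describe. Second, the block $b_k$ does not come from any $B_{j,i}$ at all but from a separate block $B_0$ sitting below $u^{2,2}$, and it is here that the matrix $X$ from $\ell_0(u)$ enters; this needs its own computation. Third, there are auxiliary off-diagonal blocks $e$ (in $v$) and $e',f'$ (in $v'$) that you omit; one must check the character is trivial on them.

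Most importantly, your final expectation that the $\beta'$-side is ``identical after transposing and swapping'' is not how it works, and the paper does not argue that way. The determined range for $\psi_{V_\beta}$ is $b_{k-1},\dots,b_{2k-1}$ (upper indices), while for $\psi_{V_{\beta'}}$ it is $b'_1,\dots,b'_k$ (lower indices); the identity sub-blocks sit at opposite ends of the respective block matrices in \eqref{GL psi_U on V beta} and \eqref{GL psi_U on V beta'}, so the two computations are dual rather than identical. The paper carries both out explicitly.
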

\begin{proof}
The proof is similar to the proof of Proposition~\ref{proposition:wu_0 nontrivial implies h nontrivial orbit}.
Now $\psi_U$ is defined by $2(k-1)$ blocks in $\Mat_c$.
Let $B_{1,k-2}$ (resp., $B_{2,0}$) be the block corresponding to $u^{1,1}$ (resp., $u^{2,2}$), and $B_{1,0},\ldots,B_{1,k-3}$
(resp., $B_{2,1},\ldots,B_{2,k-2}$) be the blocks corresponding to the top left (resp., bottom right) embedding of $V_{(c^{k-1})}<M_P$
(see \S~\ref{Doubling setup}).

Set $d^1_0=d^2_0=0$. For $0\leq i\leq k-2$, write $B_{1,i}$ as the upper right block of
\begin{align*}
\left(\begin{smallmatrix}
  I_{l+d^1_{k-i-2}}&&&B_{1,i}^{1,1}&B_{1,i}^{1,2}&B_{1,i}^{1,3}\\
  &I_{d^1_{k-i-1}-d^1_{k-i-2}}&&B_{1,i}^{2,1}&B_{1,i}^{2,2}&B_{1,i}^{2,3}\\
  &&I_{c-l-d^1_{k-i-1}}&B_{1,i}^{3,1}&B_{1,i}^{3,2}&B_{1,i}^{3,3}\\
  &&&I_{l+d^1_{k-i-2}}\\
  &&&&I_{d^1_{k-i-1}-d^1_{k-i-2}}\\
  &&&&&I_{c-l-d^1_{k-i-1}}
\end{smallmatrix}\right)
\end{align*}
and $B_{2,i}$ as the upper right block of
\begin{align*}
\left(\begin{smallmatrix}
  I_{l-d^2_{i+1}}&&&B_{2,i}^{1,1}&B_{2,i}^{1,2}&B_{2,i}^{1,3}\\
  &I_{d^2_{i+1}-d^2_i}&&B_{2,i}^{2,1}&B_{2,i}^{2,2}&B_{2,i}^{2,3}\\
  &&I_{c-l+d^2_i}&B_{2,i}^{3,1}&B_{2,i}^{3,2}&B_{2,i}^{3,3}\\
  &&&I_{l-d^2_{i+1}}\\
  &&&&I_{d^2_{i+1}-d^2_i}\\
  &&&&&I_{c-l+d^2_i}
\end{smallmatrix}\right).
\end{align*}
Recall $\psi_U$ is given by
\begin{align}\label{eq:GL blocks of psi_U}
\psi(-\sum_{i=0}^{k-2}\sum_{t=1}^{3}\tr(B_{1,i}^{t,t})+\sum_{t=1}^{3}\tr(B_{2,0}^{t,t})
-\sum_{i=1}^{k-2}\sum_{t=1}^{3}\tr(B_{2,i}^{t,t})).
\end{align}
Let $\mathscr{M}_P$ (resp., $\mathscr{U}_P$, $\mathscr{U}_P^-$) denote the list of blocks conjugated by $w$ into $M_P$ (resp., $U_P$, $U_P^-$):
\begin{align*}
\mathscr{M}_P=&\{B_{j,i}^{1,1},B_{j,i}^{2,1},B_{j,i}^{3,2},B_{j,i}^{3,3}:1\leq j\leq 2,0\leq i\leq k-2\},\\
\mathscr{U}_P=&\{B_{j,i}^{3,1}:1\leq j\leq 2,0\leq i\leq k-2\},\\
\mathscr{U}_P^-=&\{B_{j,i}^{1,2},B_{j,i}^{1,3},B_{j,i}^{2,2},B_{j,i}^{2,3}:1\leq j\leq 2,0\leq i\leq  k-2\}.
\end{align*}
We can assume $\ell_j(u)=\diag(z_{j,0},\ldots,z_{j,k-2})$, $j=1,2$, with
$z_{j,i}\in{}^{w_{\sigma}}N_{\GL_c}$. Here $w_{\sigma}$ is the projection of $(\sigma,1)^{-1}$ into the $i$-th copy of $\GL_c$ ($(1,\sigma)$
commutes with $\ell(u)$). We can then write $z_{j,i}=z_{j,i}'m_{j,i}$ where
\begin{align*}
{}^w\diag(m_{1,0},\ldots,m_{1,k-2},I_{2c},m_{2,0},\ldots,m_{2,k-2})\in M_P
\end{align*}
and for all $0\leq i\leq k-2$,
\begin{align*}
&m_{1,i}=
\left(\begin{smallmatrix}
I_{l+d^1_{k-i-1}}&M_{1,i}^{1}\\
M_{1,i}^{2}&I_{c-l-d^1_{k-i-1}}+M_{1,i}^{2}M_{1,i}^{1}
\end{smallmatrix}\right)\in\GL_c,\qquad
m_{2,i}=
\left(\begin{smallmatrix}
I_{l-d^2_{i+1}}&M_{2,i}^{1}\\
M_{2,i}^{2}&I_{c-l+d^2_{i+1}}+M_{2,i}^{2}M_{2,i}^{1}
\end{smallmatrix}\right)\in\GL_c,\\
&I_{c-l-d^1_{k-i-1}}+M_{1,i}^{2}M_{1,i}^{1}\in\GL_{l+d^1_{k-i-1}},\qquad
I_{c-l+d^2_{i+1}}+M_{2,i}^{2}M_{2,i}^{1}\in\GL_{c-l+d^2_{i+1}}.
\end{align*}
Then
\begin{align*}
&m_{1,i}^{-1}=
\left(\begin{smallmatrix}
I_{l+d^1_{k-i-1}}+M_{1,i}^{1}M_{1,i}^{2}&-M_{1,i}^{1}\\
-M_{1,i}^{2}&I_{c-l-d^1_{k-i-1}}
\end{smallmatrix}\right)\in\GL_c,\qquad
m_{2,i}^{-1}=
\left(\begin{smallmatrix}
I_{l-d^2_{i+1}}+M_{2,i}^{1}M_{2,i}^{2}&-M_{2,i}^{1}\\
-M_{2,i}^{2}&I_{c-l+d^2_{i+1}}
\end{smallmatrix}\right)\in\GL_c.
\end{align*}
Henceforth we assume $z_{j,i}=m_{j,i}$.
To compute ${}^{\ell(u)}\psi_U$ we calculate
\begin{align*}
m_{1,k-2}^{-1}B_{1,k-2},\quad m_{1,i}^{-1}B_{1,i}m_{1,i+1},\quad
B_{2,0}m_{2,0},\quad m_{2,i}^{-1}B_{2,i+1}m_{2,i+1},\quad\forall 0\leq i\leq k-3.
\end{align*}

We start with $\psi_{V_{\beta}}$ and show that it belongs to the orbit of \eqref{GL psi_U on V beta}, otherwise $\mathcal{H}(h)=0$.  This amounts to the description of its restriction to $b_{k-1},\ldots,b_{2k-1}$ and $e$.
The rightmost $c-l$ columns of $b_{k-1}$ consist of $B_{1,k-2}^{3,2}$ and $B_{1,k-2}^{3,3}$ (the leftmost $l'$ columns are conjugated from the $c\times c$ block to the right of $u^{1,1}$). Looking at $m_{1,k-2}^{-1}B_{1,k-2}$, if the top $l$ rows of $M_{1,k-2}^1$ are nonzero,
${}^u\psi_U$ is nontrivial on $B_{1,k-2}^{3,1}\in\mathscr{U}_P$. Hence $\mathcal{H}(h)=0$ by \eqref{psi U nontrivial} in this case.
Also ${}^u\psi_U$ restricts to $\psi\circ\tr$ on $B_{1,k-2}^{3,3}$. Hence
$\psi_{V_{\beta}}$ agrees with \eqref{GL psi_U on V beta} on $b_{k-1}$.

The block $b_{k+1}$ consists of $(B_{2,0}^{3,2},B_{2,0}^{3,3})$ and we consider $B_{2,0}m_{2,0}$. If the last $c-l$ columns of $M_{2,0}^1$ are nonzero, ${}^u\psi_U$ is nontrivial on $B_{2,0}^{3,1}\in\mathscr{U}_P$. Unless $\mathcal{H}(h)=0$, we obtain that the
last $c-l$ columns of $M_{2,0}^1$ are $0$, then it follows that ${}^u\psi_U$ and $\psi_U$ coincide on $(B_{2,0}^{3,2},B_{2,0}^{3,3})$.

Regarding $b_k$, it is conjugated by $w$ from the $c\times c$ block below $u^{2,2}$. Denote this block by $B_0$, which we further
divide by writing it as the upper right block of
\begin{align*}
&\left(\begin{smallmatrix}
  I_{c-l'}&&&B_0^{1,1}&B_0^{1,2}&B_0^{1,3}\\
  &I_{l'-d_1^2}&&B_0^{2,1}&B_0^{2,2}&B_0^{2,3}\\
  &&I_{d_1^2}&B_0^{3,1}&B_0^{3,2}&B_0^{3,3}\\
  &&&I_{l-d_1^2}\\
  &&&&I_{d_1^2}\\
  &&&&&I_{c-l}
\end{smallmatrix}\right).
\end{align*}
Here
\begin{align*}
&B_0^{1,1},B_0^{2,2},B_0^{2,3},B_0^{3,2},B_0^{3,3}\in \mathscr{M}_P,\quad
B_0^{2,1},B_0^{3,1}\in \mathscr{U}_P,\quad
B_0^{1,2},B_0^{1,3}\in \mathscr{U}_P^-.
\end{align*}
The blocks conjugated into $b_k$ are $B_0^{2,2},B_0^{2,3},B_0^{3,2}$ and $B_0^{3,3}$.
The conjugation of $U$ by $\ell(u)$ multiplies $B_0$ on the right by $m_{2,0}^{-1}$.
The restriction of ${}^{\ell_0(u)}\psi_U$ to $B_0^{2,2}$ and $B_0^{3,2}$ is defined by $A(X)$, but
${}^{\ell_0(u)}\psi_U$ can also be nontrivial on $B_0^{2,1}$ or $B_0^{2,2}$ (or we could have, e.g., $d^2_1=0,l$).
We can assume ${}^{\ell_0(u)}\psi_U$ is given on $B_0$ by
\begin{align}\label{eq:psi u0 on B0}
\psi(\tr(\varphi_k\left(\begin{smallmatrix}B_0^{1,1}&B_0^{1,2}&B_0^{1,3}\\
B_0^{2,1}&B_0^{2,2}&B_0^{2,3}\\B_0^{3,1}&B_0^{3,2}&B_0^{3,3}\end{smallmatrix}\right))),\qquad \varphi_k=\left(\begin{smallmatrix}0_{l-d_1^2\times c-l'}&A_1(X)\\
0_{d_1^2\times c-l'}&A(X)\\0_{c-l\times c-l'}&0_{c-l\times l'}\end{smallmatrix}\right),\qquad A_1(X)\in\Mat_{l-d_1^2\times l'}.
\end{align}
Here $A_1(X)$ defines the restriction of ${}^{\ell_0(u)}\psi_U$ to $B_0^{2,1}$ and $B_0^{3,1}$.
When we consider $m_{2,0}\varphi_k$ we see that the restriction of ${}^u\psi_U$ to
$B_0^{2,1},B_0^{3,1}\in\mathscr{U}_P$ is given by the first $l-d_1^2$ rows of $m_{2,0}$ multiplied by the last $l'$ columns of $\varphi_k$, this restriction should vanish, and the restriction to the blocks conjugated into
$b_k$ corresponds to the last $c-l+d_1^2$ rows of $m_{2,0}$ multiplied by the last $l'$ columns of $\varphi_k$. Since the last $c-l$ columns of $M_{2,0}^1$ are $0$, we can denote $M_{2,0}^1=\left(\begin{smallmatrix}\alpha&0_{l-d_1^2\times c-l}\end{smallmatrix}\right)$. Also put
$M_{2,0}^2=\left(\begin{smallmatrix}\beta^1\\\beta^2\end{smallmatrix}\right)$ with $\beta^1\in\Mat_{d_1^2\times l-d_1^2}$, then
\begin{align*}
m_{2,0}\left(\begin{smallmatrix}A_1(X)\\A(X)\\0_{c-l\times l'}\end{smallmatrix}\right)
&=\left(\begin{smallmatrix}I_{l-d_1^2}&\alpha&0\\\beta^1&I_{d_1^2}+\beta^1\alpha&0\\\beta^2&\beta^2\alpha&I_{c-l}\end{smallmatrix}\right)
\left(\begin{smallmatrix}A_1(X)\\A(X)\\0_{c-l\times l'}\end{smallmatrix}\right)=
\left(\begin{smallmatrix}A_1(X)+\alpha A(X)\\\beta^1A_1(X)+I_{d_1^2}+\beta^1\alpha A(X)\\
\beta^2A_1(X)+\beta^2\alpha A(X)\end{smallmatrix}\right).
\end{align*}
Now if $\mathcal{H}(h)\ne0$, we must have $A_1(X)+\alpha A(X)=0$ thereby $\beta^2A_1(X)+\beta^2\alpha A(X)=0$, in which case
$\psi_{V_{\beta}}$ agrees with \eqref{GL psi_U on V beta} on $b_k$. Thus both characters agree on $b_{k-1}$, $b_k$ and $b_{k+1}$.

Consider $b_{k+i}$, $2\leq i\leq k-1$. We multiply $m_{2,i-2}^{-1}B_{2,i-1}m_{2,i-1}$. The block $b_{k+i}$ consists of
$B_{2,i-1}^{3,2}$ and $B_{2,i-1}^{3,3}$. Recall $B_{2,i-1}^{3,1}\in\mathscr{U}_P$. Then $\mathcal{H}(h)=0$ unless the top right
$l-d_i^2\times c-l+d_{i-1}^2$ block of $m_{2,i-1}m_{2,i-2}^{-1}$ is $0$:
\begin{align*}
\left(\begin{smallmatrix}I_{l-d_i^2}&M_{2,i-1}^1\end{smallmatrix}\right)\left(\begin{smallmatrix}-M_{2,i-2}^1\\
I_{c-l+d_{i-1}^2}\end{smallmatrix}\right)=0.
\end{align*}
In this case the restriction of ${}^u\psi_U$ to $(B_{2,i-1}^{3,2},B_{2,i-1}^{3,3})$, which corresponds to the bottom right $c-l+d_i^2\times c-l+d_{i-1}^2$ block of $m_{2,i-1}m_{2,i-2}^{-1}$, is defined by
\begin{align*}
\left(\begin{smallmatrix}M_{2,i-1}^{2}&I_{c-l+d^2_{i}}+M_{2,i-1}^{2}M_{2,i-1}^{1}\end{smallmatrix}\right)\left(\begin{smallmatrix}-M_{2,i-2}^1\\
I_{c-l+d_{i-1}^2}\end{smallmatrix}\right)&=
\left(\begin{smallmatrix}0_{c-l+d_{i}^2\times l-d_i^2}&I_{c-l+d^2_{i}}\end{smallmatrix}\right)\left(\begin{smallmatrix}-M_{2,i-2}^1\\
I_{c-l+d_{i-1}^2}\end{smallmatrix}\right)\\&=
\left(\begin{smallmatrix}*_{d_i^2-d_{i-1}^2\times c-l+d_{i-1}^2}\\I_{c-l+d_{i-1}^2}\end{smallmatrix}\right).
\end{align*}
Therefore $\psi_{V_{\beta}}$ agrees with \eqref{GL psi_U on V beta} on $b_{k+i}$.

Also $\psi_{V_{\beta}}|_e=1$, because $e$ is conjugated from the $c\times c$ block to the right of $u^{1,1}$.
This completes the proof for $\psi_{V_{\beta}}$.

We turn to the restriction of $\psi_{V_{\beta'}}$ to $b'_{1},\ldots,b'_{k}$, $e'$ and $f'$, and prove that unless $\mathcal{H}(h)=0$,
$\psi_{V_{\beta'}}$ and \eqref{GL psi_U on V beta'} coincide. The block $b'_k$ corresponds to $B_{2,0}^{1,1}$ and $B_{2,0}^{2,1}$. Considering $B_{2,0}m_{2,0}$, the restriction of ${}^u\psi_U$ to these blocks is given by the top left $l-d_1^2\times l$ block of
$m_{2,0}$, namely
\begin{align*}
\psi(\tr(\left(\begin{smallmatrix}
           I_{l-d_1^2} & {*}_{l-d_1^2\times d_1^2}
         \end{smallmatrix}\right)\left(\begin{smallmatrix}
           B_{2,0}^{1,1}\\B_{2,0}^{2,1}\end{smallmatrix}\right))).
\end{align*}
Hence $\psi_{V_{\beta'}}$ and \eqref{GL psi_U on V beta'} coincide on $b'_k$.

The block $b'_{k-1}$ is conjugated from $B_{1,k-2}^{1,1}$ and $B_{1,k-2}^{2,1}$. This is similar to $b_{k+1}$.
We multiply $m_{1,k-2}^{-1}B_{1,k-2}$ and if the first $l$ rows of $M_{1,k-2}^1$ are nonzero, ${}^u\psi_U$ is nontrivial on
$B_{1,k-2}^{3,1}\in\mathscr{U}_P$ whence $\mathcal{H}(h)=0$. Henceforth we can assume the first $l$ rows of
$M_{1,k-2}^1$ are $0$, then the top left $l\times l+d_1^1$ block of $m_{1,k-2}^{-1}$ equals $\left(\begin{smallmatrix}I_l&0_{l\times l+d_1^1}\end{smallmatrix}\right)$, so that the restriction of ${}^u\psi_U$ to $B_{1,k-2}^{1,1}$ and $B_{1,k-2}^{2,1}$ coincides with the restriction of $\psi_U$ ($\psi\circ\tr$ on the former, trivial on the latter).

Consider $b'_{i}$, $1\leq i\leq k-2$. We multiply $m_{1,i-1}^{-1}B_{1,i-1}m_{1,i}$.
The block $b'_{i}$ is conjugated from
$(B_{1,i-1}^{1,1},B_{1,i-1}^{2,1})$. This is similar to the case of $b_{k+i}$. Since $B_{1,i-1}^{3,1}\in\mathscr{U}_P$,
$\mathcal{H}(h)=0$ unless the top right $l+d_{k-i-1}^1\times c-l-d_{k-i}^1$ block of $m_{1,i}m_{1,i-1}^{-1}$ is $0$, i.e.,
\begin{align*}
\left(\begin{smallmatrix}
I_{l+d^1_{k-i-1}}&M_{1,i}^{1}
\end{smallmatrix}\right)
\left(\begin{smallmatrix}
-M_{1,i-1}^{1}\\
I_{c-l-d^1_{k-i}}
\end{smallmatrix}\right)=0.
\end{align*}
Then the restriction of ${}^u\psi_U$ to $(B_{1,i-1}^{1,1},B_{1,i-1}^{2,1})$, which corresponds to the top left $l+d_{k-i-1}^1\times l+d_{k-i}^1$ block of $m_{1,i}m_{1,i-1}^{-1}$ becomes
\begin{align*}
\left(\begin{smallmatrix}
I_{l+d^1_{k-i-1}}&M_{1,i}^{1}
\end{smallmatrix}\right)
\left(\begin{smallmatrix}
I_{l+d^1_{k-i}}+M_{1,i-1}^{1}M_{1,i-1}^{2}\\
-M_{1,i-1}^{2}&
\end{smallmatrix}\right)&=
\left(\begin{smallmatrix}
I_{l+d^1_{k-i-1}}&M_{1,i}^{1}
\end{smallmatrix}\right)
\left(\begin{smallmatrix}
I_{l+d^1_{k-i}}\\
0_{c-l-d^1_{k-i}\times l+d^1_{k-i}}&
\end{smallmatrix}\right)
\\&=\left(\begin{smallmatrix}
I_{l+d^1_{k-i-1}}&
{*}_{l+d^1_{k-i-1}\times d^1_{k-i}-d^1_{k-i-1}}
\end{smallmatrix}\right),
\end{align*}
hence $\psi_{V_{\beta'}}$ agrees with \eqref{GL psi_U on V beta'} on $b'_{i}$.

The character $\psi_{V_{\beta'}}$ is trivial on $f'$, because $f'$ is conjugated from $B_0^{1,1}$ (see \eqref{eq:psi u0 on B0}, the top left $l-d_1^2\times c-l'$ block of $\varphi_k$). It is also trivial on $e'$ since it is conjugated from the $c\times c$ block to the right of $u^{1,1}$
(this is similar to $e$).
\end{proof}

\begin{proposition}\label{proposition:GL d_1 < n-l}
Consider $k>1$. Assume $d^1_1<c-l$ (in particular $l<c$) or $d^2_1<l$ (in particular $0<l$). Then $J_{V_{\beta}\times V_{\beta'},\psi_{V_{\beta}\times V_{\beta'}}^{-1}}(\rho)=0$ and $\mathcal{H}(h)=0$.
\end{proposition}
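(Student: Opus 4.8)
The strategy mirrors that of Proposition~\ref{proposition:d_1 < n-l} in the symplectic/orthogonal case, now carried out separately on the two tensor factors $\rho_1$ and $\rho_2$ of $\rho$. Since any morphism in $\mathcal{H}(h)$ factors through $J_{V_{\beta}\times V_{\beta'},\psi_{V_{\beta}\times V_{\beta'}}^{-1}}(\rho)$, and by \eqref{eq:GL psi beta factors through tensor} this module is $J_{V_{\beta},\psi_{V_{\beta}}^{-1}}(\rho_1)\otimes J_{V_{\beta'},\psi_{V_{\beta'}}^{-1}}(\rho_2)$, it suffices to show that at least one of the two factors vanishes: if $d^1_1<c-l$ we show $J_{V_{\beta},\psi_{V_{\beta}}^{-1}}(\rho_1)=0$, and if $d^2_1<l$ we show $J_{V_{\beta'},\psi_{V_{\beta'}}^{-1}}(\rho_2)=0$. (Note that by \eqref{eq:GL compatibility for w}, at least one of the two inequalities holds whenever the hypothesis is in force, so this dichotomy is exhaustive for the stated cases.)

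\textbf{Key steps.} First, by Proposition~\ref{proposition:GL wu_0 nontrivial implies h nontrivial orbit} we may assume $\psi_{V_{\beta}}$ is in the orbit of \eqref{GL psi_U on V beta} and $\psi_{V_{\beta'}}$ is in the orbit of \eqref{GL psi_U on V beta'}; otherwise $\mathcal{H}(h)=0$ outright. Second, suppose $J_{V_{\beta},\psi_{V_{\beta}}^{-1}}(\rho_1)\ne0$. As in Proposition~\ref{proposition:d_1 < n-l}, the pair $(V_{\beta},\psi_{V_{\beta}}^{-1})$ defines a degenerate Whittaker model, and $\psi_{V_{\beta}}^{-1}(v)=\psi(\tr(v\,{}^t\varphi))$ for a unique upper triangular nilpotent $\varphi\in\Mat_{kc}$, whose only nonzero blocks above the diagonal are $b_{k-1},\ldots,b_{2k-1}$, read off from \eqref{GL psi_U on V beta} (with the signs). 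Since $d^1_1<c-l$, the coefficient of $b_{k-1}$ is a full $(c-l-d^1_1)\times(c-l-d^1_1)$ identity block, so $b_{k-1}$ is nonzero; and since $k>1$ there are $k$ such blocks $b_{k-1},b_k,\ldots,b_{2k-1}$. I would then apply a sequence of conjugations by permutation-type and block-lower-triangular elements of $M_{\beta}$ (exactly as the $\varepsilon_1,\varepsilon_2$ of Proposition~\ref{proposition:d_1 < n-l}, simpler here because there is no form-compatibility to respect in type $A$) to move one distinguished nonzero entry from each of these $k$ blocks onto the anti-diagonal positions of a chain, concluding that $\varphi$ is nilpotent of order at least $k+1$. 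By \cite[Theorem~E]{GGS} (or \cite[Theorem~A]{GGS} when $\pi_2$ is supercuspidal and $\rho$ is not of finite length) the associated orbit lies in the closure of $\mathrm{WF}(\rho_1)$ and is $\succsim(k^c)$, contradicting that $\rho_1$ is $(k,c)$. The case $d^2_1<l$ is symmetric: I would run the same argument on $\rho_2$, using that the block $b'_k$ of the nilpotent element attached to $\psi_{V_{\beta'}}^{-1}$ has a full $(l-d^2_1)\times(l-d^2_1)$ identity piece by \eqref{GL psi_U on V beta'}. Once one factor vanishes, $J_{V_{\beta}\times V_{\beta'},\psi_{V_{\beta}\times V_{\beta'}}^{-1}}(\rho)=0$ and hence $\mathcal{H}(h)=0$.

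\textbf{Main obstacle.} The only delicate point is the explicit bookkeeping of the conjugations needed to extract, from the blocks $b_{k-1},\ldots,b_{2k-1}$ (resp.\ $b'_1,\ldots,b'_k$), a chain of $k$ nonzero entries witnessing nilpotency order $k+1$, while verifying that at each step the conjugating element lies in $M_\beta$ (resp.\ $M_{\beta'}$), normalizes $V_\beta$ (resp.\ $V_{\beta'}$), and does not destroy the entries already placed — keeping careful track of which entries are ``determined'' (the identity-type blocks) versus the undetermined $*$-blocks in \eqref{GL psi_U on V beta}--\eqref{GL psi_U on V beta'}. This is routine in the sense that it closely parallels Proposition~\ref{proposition:d_1 < n-l}, but it requires patience with indices; the type-$A$ setting actually makes it lighter, since one need not track the parity constants $\epsilon_0,\epsilon_1,\epsilon_2$ nor worry about odd/even $c$ distinctions.
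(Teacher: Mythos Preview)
Your overall strategy is correct and coincides with the paper's: reduce to showing one tensor factor vanishes via \eqref{eq:GL psi beta factors through tensor}, use Proposition~\ref{proposition:GL wu_0 nontrivial implies h nontrivial orbit} to pin down the character, then show the associated nilpotent $\varphi$ has order $\geq k+1$ and invoke \cite[Theorems~A, E]{GGS}.

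However, the execution you outline is more involved than necessary, and contains a couple of slips. First, your block list for the $d^1_1<c-l$ case is off: the $k$ blocks you want are $b_{k-1},b_{k+1},\ldots,b_{2k-1}$ (skipping $b_k$), not $b_{k-1},b_k,\ldots,b_{2k-1}$, which is $k+1$ blocks. Second, and more to the point, no conjugations of the $\varepsilon_1,\varepsilon_2$ type are needed here at all. In the $\GL$ setting the chain is already visible in $\varphi$ as written: for $d^1_1<c-l$, the bottom-right entry of each of $b_{k-1},b_{k+1},\ldots,b_{2k-1}$ is nonzero (equal to $\pm1$, coming from the identity blocks in \eqref{GL psi_U on V beta}) and is the \emph{unique} nonzero entry on its column in $\varphi$ --- for $b_{k+1}$ this uses that the last $c-l$ columns of $b_k$ carry the zero block in \eqref{GL psi_U on V beta}. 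These $k$ entries link up directly (the column index of each equals the row index of the next), so $\varphi^k\ne0$. The case $d^2_1<l$ is symmetric: the top-left entries of $b'_1,\ldots,b'_k$ are each nonzero and the unique nonzero entry on their row (for $b'_{k-1}$ one uses that \eqref{GL psi_U on V beta'} is trivial on $e'$), giving the chain without any manipulation.

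A minor correction: the blocks $b_1,\ldots,b_{k-2}$ of $\varphi$ need not be zero (the $*$ entries in \eqref{GL psi_U on V beta} are undetermined), so your claim that ``the only nonzero blocks above the diagonal are $b_{k-1},\ldots,b_{2k-1}$'' is not quite right; but this does not affect the argument, since the chain above avoids those columns. Your parenthetical appeal to \eqref{eq:GL compatibility for w} for the ``dichotomy'' is also superfluous --- the hypothesis is literally a disjunction of the two cases.
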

\begin{proof}
We argue as in the proof of Proposition~\ref{proposition:d_1 < n-l}.
By Proposition~\ref{proposition:GL wu_0 nontrivial implies h nontrivial orbit}, we can assume
$\psi_{V_{\beta}}$ (resp., $\psi_{V_{\beta'}}$) is given by \eqref{GL psi_U on V beta} (resp., \eqref{GL psi_U on V beta'}). Let $\varphi$ be the transpose of the nilpotent element defined by $\psi_{V_{\beta}}^{-1}$ (resp., $\psi_{V_{\beta'}}^{-1}$).
By \eqref{eq:GL psi beta factors through tensor} and \cite[Theorems~A, E]{GGS}, because $\rho_1$ (resp., $\rho_2$) is $(k,c)$, it is enough to show that $\varphi$ is nilpotent of order at least $k+1$.

Consider $d^1_1<c-l$.
Looking at \eqref{GL psi_U on V beta}, we have $k$ nontrivial blocks
$b_{k-1},b_{k+1},\ldots,b_{2k-1}$, and for each block, the bottom right coordinate is nontrivial and the other coordinates on its column in $\varphi$ are $0$. This does not depend on the undetermined coordinates of the character. To see this use the assumption $c-l-d^1_1>0$ for $b_{k-1}$ and $l<c$ for $b_{k+1}$, and the bottom right coordinate of $b_{k+1}$ is the only nonzero coordinate on its column in $\varphi$ because on the $l'\times c-l+d^2_1$ block $b_k$ above $b_{k+1}$, $\varphi$ is $0$ on the last $c-l$ columns (if $l'=0$, this is trivially true). It follows that $\varphi$ is nilpotent of order at least $k+1$.

For the case $d^2_1<l$, the blocks $b'_1,\ldots,b'_{k}$ are $k$ nontrivial blocks, the top left coordinate of each block is nontrivial (use $l>0$ and $d^2_1<l$) independently of undetermined coordinates, and is the only nonzero coordinate on its row (for $b'_{k-1}$ use the fact that \eqref{GL psi_U on V beta'} is trivial on $e'$!). Again $\varphi$ is nilpotent of order at least $k+1$.
\end{proof}
\begin{remark}
If $l=l'$, the conditions $d^1_1=c-l$ and \eqref{eq:GL compatibility for w} already imply $d^2_i=l$ for all $i$.
\end{remark}
For the remaining cases $k=1$ or both $d^1_1=c-l$ and $d^2_1=l$, in which case $d^1_i=c-l$ and $d^2_i=l$ for all $i$, whence by \eqref{eq:GL compatibility for w} we have, for all $k\geq1$, $l'=l$. Up to left multiplication by an element of $W(M_P)$, $w$ equals
\begin{align*}
\left(\begin{smallmatrix}
   &  &  &  &  &  & I_c \\
   &  &  &  &  & \iddots &  \\
   &  &  &  & I_c &  &  \\
   &  &  & \left(\begin{smallmatrix}&&I_{l}\\&I_{2(c-l)}\\I_{l}\end{smallmatrix}\right) &  &  &  \\
   &  & I_c &  &  &  &  \\
   & \iddots &  &  &  &  & \\
  I_c &  &  &  &  &  &
\end{smallmatrix}\right),
\end{align*}
so that ${}^w\ell(u)\in P$ and $h\sim w\ell_0(u)$ (we still do not change $w$, in order to use $\beta,\beta'$ and the formulas for the characters given above). Therefore $\psi_{V_{\beta}}$ and $\psi_{V_{\beta'}}$ are already given by
\eqref{GL psi_U on V beta 0} and \eqref{GL psi_U on V beta' 0}.
Considering the action of $(\GL_l,\GL_l)$, where
$\GL_l$ is the natural subgroup of $M_{(l,c-l)}$, we can already write $X=A_{l,j}=\left(\begin{smallmatrix}&I_{j}\\0_{l-j}\end{smallmatrix}\right)$ with $0\leq j\leq l$. We deduce there are only finitely many more representatives to analyze, but as opposed to
\S~\ref{section not GL}, we must handle each $0\leq j\leq l$ separately (i.e., we can not easily reduce to $j=l$).
The form of representatives is finally similar to the case $k=1$. For the representative $h$ such that $j=l=c$ we have $h\sim\delta$.

\begin{proposition}\label{proposition:GL d1 = n-l l < n}
Assume $d^1_1=c-l$ or $k=1$, and $0\leq l<c$. Then $\mathcal{H}(h)=0$ outside a discrete subset of $s$. Furthermore, if $l>0$ (forcing $c>1$) and $\pi_2$ is supercuspidal, or $ck>1$, $l=0$, $\pi_1$ and $\pi_2$ are supercuspidal and $\rho_2=\rho_c(\tau_2)$ for an irreducible supercuspidal representation $\tau_2$ of $\GL_k$, then $\mathcal{H}(h)=0$ for all $s$.
\end{proposition}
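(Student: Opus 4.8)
The plan is to mirror the argument of Proposition~\ref{proposition:d1 = n-l l < n}, adapted to the factorization $\rho=\rho_1\otimes\rho_2$. Under the hypothesis $d^1_1=c-l$ (or $k=1$) we have $d^1_i=c-l$, $d^2_i=l$ and $l'=l$ for all $i$, so $V_\beta=V_{\beta'}=V_{(c^k)}$, and by \eqref{eq:GL psi beta factors through tensor} any morphism in $\mathcal{H}(h)$ factors through $J_{V_{(c^k)},\psi_{V_\beta}^{-1}}(\rho_1)\otimes J_{V_{(c^k)},\psi_{V_{\beta'}}^{-1}}(\rho_2)$, where $\psi_{V_\beta}$ and $\psi_{V_{\beta'}}$ are the characters of $V_{(c^k)}$ read off \eqref{GL psi_U on V beta 0} and \eqref{GL psi_U on V beta' 0} with $X=A_{l,j}$, $0\le j\le l$ (so $\psi_{V_\beta}$ may be degenerate when $j<l$, while $\psi_{V_{\beta'}}$ depends only on $l$).

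First I would prove the ``generic in $s$'' claim. As in Proposition~\ref{proposition:d1 = n-l l < n} I would introduce a parabolic subgroup $R=M_R\ltimes U_R<G$ whose Levi part has $\GL_{c-l}$ as a direct factor — a maximal parabolic of $G=\GL_c$ with block sizes $c-l$ and $l$ — and prove the analogs of Lemmas~\ref{lemma:Jacquet module is a trivial rep of U_R} and \ref{lemma:Jacquet module is a finite length} for $J_{V_{(c^k)},\psi_{V_{\beta'}}^{-1}}(\rho_2)$: over a non-archimedean field it is a trivial representation of the image ${}^h(1,U_R)$ under the folding, and it admits a finite-length filtration as a representation of ${}^h(1,\GL_{c-l})$ on whose (not necessarily irreducible) constituents ${}^h(1,C_{\GL_{c-l}})$ acts by a character; over an archimedean field the corresponding local-nilpotency statements on $J_{V_{(c^k)},\psi_{V_{\beta'}}^{-1}}(\rho_2)^*$. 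The proofs would follow the non-$\GL$ ones: pass to the transpose nilpotent element $\varphi$, run the explicit conjugations (exchange of roots) to make $\varphi$ nilpotent of order $\ge k+1$, adjust the non-unipotent-radical subgroups $V_{(c^k)}\ltimes Z'$ to genuine unipotent radicals, and invoke that $\rho_2$ is $(k,c)$ via \cite[Theorems~A, E]{GGS} (and the archimedean bookkeeping via \cite{GGS2}). Since in $V(s,\rho)$ the twist $|\det|^{-s+1/2}$ multiplies $\rho_2$, the element $(1,aI_{c-l})$ acts on the $\rho_2$-side, on $\pi_2^\vee$ and on $\Lambda_\nu$, contributing a factor $|a|^{b(s-1/2)}$ with $b\ne 0$ because $c-l>0$; combining this with \eqref{eq:relation for T with s}, Lemma~\ref{lem:adm 2 discrete} (to keep the exponents discrete in the archimedean case), and the discreteness of the set of characters arising from $\pi_2^\vee$ and $\{\Lambda_\nu\}$, one gets $\mathcal{H}(h)=0$ outside a discrete subset of $s$.

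Next the refinements. If $l>0$, then $0<l<c$ and $R$ is a proper parabolic of $G$; the non-archimedean lemma shows any morphism in $\mathcal{H}(h)$ also factors through $J_{U_R}(\pi_2^\vee)$, which vanishes when $\pi_2$ is supercuspidal, while the archimedean local-nilpotency version yields the same through a generalized Jacquet module $\pi_2^\vee/\overline{\mathfrak{u}_R^i\pi_2^\vee}$; hence $\mathcal{H}(h)=0$ for all $s$. If $l=0$ the parabolic attached to the $\pi_2$-copy degenerates, so instead I would exploit both that $\pi_1$ is supercuspidal and that $\rho_2=\rho_c(\tau_2)$ for an irreducible supercuspidal $\tau_2$ of $\GL_k$: when $c>1$ this means passing to a proper parabolic $R<G$ of the $\pi_1$-copy (here $ck>1$, hence $c\ge 2$, guarantees $R\ne G$) and showing, via the explicit structure of $\rho_c(\tau_2)$ — in particular that $J_{V_{(c^k)},\psi_k}(\rho_c(\tau_2))$ is one-dimensional with $\GL_c^{\triangle}$ acting by $g\mapsto\tau_2((\det g)I_k)$, and that the pertinent $\GL_1$-type Jacquet modules of $\rho_c(\tau_2)$ are trivial — that the bi-Jacquet module $J_{V_{(c^k)}\times V_{(c^k)},\psi_{V_\beta\times V_{\beta'}}^{-1}}(\rho)$ is a trivial representation of the $M_P$-part of ${}^h(U_R,1)$, so that every morphism factors through $J_{U_R}(\pi_1^\vee)=0$; when $c=1$ (so $k>1$) one reads off instead that $\psi_{V_{\beta'}}$ is a degenerate character of $N_{\GL_k}$, whence $J_{V_{(c^k)},\psi_{V_{\beta'}}^{-1}}(\rho_2)$ is a quotient of a proper Jacquet module of the supercuspidal $\tau_2$ and already vanishes.

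The main obstacle I expect is establishing the two lemmas above — the chains of explicit conjugations (the ``exchange of roots'' computations) producing nilpotent elements of order $\ge k+1$, together with the careful reduction of the subgroups $V_{(c^k)}\ltimes Z'$ to unipotent radicals so that \cite{GGS,GGS2} apply, and the archimedean local-nilpotency accounting. The $l=0$ supercuspidal case is the other delicate point, since there the needed triviality must be extracted from the fine structure of $\rho_c(\tau_2)$ rather than from a proper parabolic of the obvious copy of $G$, and the case $c=1$ must be argued separately.
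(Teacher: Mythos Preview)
Your outline for the ``generic in $s$'' assertion and for the $l>0$ supercuspidal refinement is correct and matches the paper's approach closely: work on the $\rho_2$-side with the parabolic $R$ having Levi $\GL_{c-l}\times\GL_l$, prove the analogs of Lemmas~\ref{lemma:Jacquet module is a trivial rep of U_R}--\ref{lemma:Jacquet module is a finite length} for $J_{V_{\beta'},\psi_{V_{\beta'}}^{-1}}(\rho_2)$ (the paper in fact shows it factors through $J_{V_{((k-1)c+l,c-l)}}(\rho_2)$), then apply \eqref{eq:relation for T with s} with ${}^{h^{-1}}(|\det|^{s-1/2})(1,aI_{c-l})=|a|^{-(c-l)(s-1/2)}$.

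The $l=0$ supercuspidal case, however, has a real gap. When $l=0$ the character $\psi_{V_{\beta'}}^{-1}$ on the $\rho_2$-side is \emph{not} in the orbit of $\psi_k$ --- it is degenerate, trivial on $b'_{k-1}$ (see \eqref{GL psi_U on V beta''}). So your invocation of ``$J_{V_{(c^k)},\psi_k}(\rho_c(\tau_2))$ is one-dimensional'' does not apply to the $\rho_2$-factor. More importantly, your plan to use only the $\pi_1$-copy for all $c>1$ is not what the paper does and is not obviously viable. The paper's argument for $l=0$ proceeds through a genuine case split: first, $J_{V_{\beta'},\psi_{V_{\beta'}}^{-1}}(\rho_2)$ factors through $J_{V_{((k-1)c,c)}}(\rho_c(\tau_2))$, which vanishes by \cite[2.13(a)]{BZ2} unless $c=tk$ for some integer $t\ge1$ (this subsumes your $c=1$ observation). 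If $t>1$, one applies Bernstein--Zelevinsky derivatives along $V_{((k-1)c,c-i,1^i)}$ to produce a composition $\delta$ of $c$ with $(1,V_\delta)$ acting trivially, and then uses that $\pi_2$ is supercuspidal --- not $\pi_1$. Only in the remaining case $c=k>1$ does the paper invoke $\pi_1$: there $J_{V_{((k-1)c,c)}}(\rho_c(\tau_2))\cong|\det|^{\alpha_1}\rho_{c-1}(\tau_2)\otimes|\det|^{\alpha_2}\tau_2$, and since $\rho_{c-1}(\tau_2)$ is $(k,c-1)$, the diagonal $\SL_c^{\triangle'}$ acts trivially on $J_{V_{(c^{k-1})},\psi_{k-1}}$ of the first factor; combined with $\SL_c^{\triangle}$ acting trivially on the $\rho_1$-side (where $\psi_{V_\beta}^{-1}$ \emph{is} in the orbit of $\psi_k$), one gets ${}^h(\SL_c,1)$ acting trivially and concludes via supercuspidality of $\pi_1$. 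Your sketch collapses these three regimes into one and misidentifies the character on the $\rho_2$-side, so the argument as written would not go through.
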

\begin{proof}
Now $V_{\beta}=V_{\beta'}=V_{(c^k)}$.
Consider the parabolic subgroup $R<G$ with $M_R=M_{(c-l,l)}$ and $U_R=V_{(c-l,l)}^-$. Note that
$V_{(c-l,l)}^-$ is trivial if $l=0$. Identify the group $\GL_{c-l}$ (nontrivial for $0\leq l<c$) with its natural image in $M_R$.

For convenience, we multiply $w$ on the left by
$\diag(I_{2kc-c},\left(\begin{smallmatrix}&I_{l}\\I_{c-l}\end{smallmatrix}\right))$. This permutation normalizes $V_{\beta}\times V_{\beta'}$, fixes $\psi_{V_{\beta}}$ and conjugates $\psi_{V_{\beta'}}$ into
\begin{align}\label{GL psi_U on V beta''}
&\left(\begin{smallmatrix}
I_c&b'_1\\&\ddots&\ddots\\
&&I_c&b'_{k-1}&e'\\
&&&I_{l}\\&&&&I_{c-l}
\end{smallmatrix}\right)\mapsto\psi(-\sum_{j=k-1}^{2}\tr(b'_{k-j})
-\tr(b'_{k-1}\left(\begin{smallmatrix}I_l&0_{l\times c-l}\end{smallmatrix}\right))).
\end{align}
Now ${}^h(1,\GL_{c-l})=\diag(I_{2kc-(c-l)},\GL_{c-l})$, and since the character \eqref{GL psi_U on V beta''} is trivial on $e'$,
$J_{V_{\beta}\times V_{\beta'},\psi_{V_{\beta}\times V_{\beta'}}^{-1}}(\rho)$ is a well defined representation of ${}^h(1,\GL_{c-l})$.

Over non-archimedean fields, we simultaneously prove that for all $s$, $J_{V_{\beta}\times V_{\beta'},\psi_{V_{\beta}\times V_{\beta'}}^{-1}}(\rho)$ is a trivial representation of ${}^h(1,U_R)$, and admits a finite length filtration as a representation of
${}^h(1,\GL_{n-l})$, where ${}^h(1,C_{\GL_{n-l}})$ acts by a character on each constituent. For archimedean fields we prove that ${}^h(1,\mathfrak{u}_R)$ acts locally nilpotently on $J_{V_{\beta}\times V_{\beta'},\psi_{V_{\beta}\times V_{\beta'}}^{-1}}(\rho)^*$,
and the Lie algebra $\mathfrak{v}_{((k-1)c+l,c-l)}$ of $\diag(I_{kc},V_{((k-1)c+l,c-l)})$ acts locally nilpotently on $J_{V_{\beta}\times V_{\beta'},\psi_{V_{\beta}\times V_{\beta'}}^{-1}}(\rho)^*$. Note that ${}^h(1,\GL_{n-l})$ is a direct factor of
$\diag(I_{kc},M_{((k-1)c+l,c-l)})$. Cf. Lemmas~\ref{lemma:Jacquet module is a trivial rep of U_R} and \ref{lemma:Jacquet module is a finite length}.

Granted that, since ${}^{h^{-1}}(|\det|^{s-1/2})(1,aI_{c-l})=|a|^{-(c-l)(s-1/2)}$, one can apply \eqref{eq:relation for T with s} to deduce $\mathcal{H}(h)=0$ outside a discrete subset of $s$. For $l>0$, if $\pi_2$ is supercuspidal,
$\mathcal{H}(h)=0$ for all $s$ (because $J_{U_R}(\pi_2^{\vee})=0$).

Henceforth we identify $\GL_{kc}$ with the bottom right block of $M_P$.
For $u\in U_R$, ${}^h(1,u)=m_uu'$ with  $u'\in U_P$ and $m_u=\diag(I_{(k-1)c},\left(\begin{smallmatrix}I_{l}&A_{l,j}u\\&I_{c-l}\end{smallmatrix}\right))$.
Let $Z=\diag(I_{(k-1)c},V_{(l,c-l)})$. This (abelian) group stabilizes \eqref{GL psi_U on V beta''}. In addition, the subgroups $\diag(I_{kc-(c-l)},\GL_{c-l})$ and $\diag(\GL_l,I_{c-l})^{\triangle}<\GL_{kc}$ stabilize \eqref{GL psi_U on V beta''}, and act on the characters of $Z$ with $2$ orbits.

Over non-archimedean fields, we show that for any nontrivial character $\chi$ of $Z$,
\begin{align}\label{eq:GL Jacquet functor V beta U_R}
J_{V_{\beta'}\ltimes Z,\psi_{V_{\beta'}}^{-1}\otimes\chi}(\rho_2)=0,
\end{align}
which implies (by \cite[5.9--5.12]{BZ1})
\begin{align}\label{eq:GL Jacquet functor V beta U_R implies}
J_{V_{\beta'},\psi_{V_{\beta'}}^{-1}}(\rho_2)=
J_{V_{\beta'}\ltimes Z,\psi_{V_{\beta'}}^{-1}}(\rho_2).
\end{align}
Thus $J_{V_{\beta'},\psi_{V_{\beta'}}^{-1}}(\rho_2)$ is a trivial representation of ${}^h(1,U_R)$ and this Jacquet module factors through $J_{V_{((k-1)c+l,c-l)}}(\rho_2)$, which is an admissible finite length representation of $M_{((k-1)c+l,c-l)}$. By exactness $J_{V_{\beta'},\psi_{V_{\beta'}}^{-1}}(\rho_2)$ admits a finite length filtration such that on each constituent,
${}^h(1,C_{\GL_{c-l}})$ acts by character. Now by \eqref{eq:GL psi beta factors through tensor}, $J_{V_{\beta}\times V_{\beta'},\psi_{V_{\beta}\times V_{\beta'}}^{-1}}(\rho)$ is a trivial representation of ${}^h(1,U_R)$ and admits a finite filtration with
${}^h(1,C_{\GL_{c-l}})$ acting by a character on each constituent.

For the proof of \eqref{eq:GL Jacquet functor V beta U_R} we can assume $l>0$, otherwise \eqref{eq:GL Jacquet functor V beta U_R implies} is trivial. Identifying $Z$ with $\Mat_{l\times c-l}$, we can assume $\chi$ is nontrivial on the
bottom right coordinate of $Z$, and trivial on the other coordinates of the rightmost column.
Let $\varphi$ be the transpose of the nilpotent element defined by the inverse of \eqref{GL psi_U on V beta''} and by $\chi$. Note that $\varphi$ is independent of $A_{l,j}$. After conjugating $\varphi$ by
$\diag(\left(\begin{smallmatrix}&I_{c-l}\\I_l\end{smallmatrix}\right)^{\triangle'},I_c)$ ($\GL_c^{\triangle'}$ is the diagonal embedding of $\GL_c$ in $\GL_{(k-1)c}$), it has nontrivial entries on the bottom right coordinates of $k$ blocks:
$b'_1,\ldots,b'_{k-1}$ and $Z$, and in each block there is only one nontrivial coordinate on the rightmost column. Therefore $\varphi$ is nilpotent of order at least $k+1$ and \eqref{eq:GL Jacquet functor V beta U_R} holds, because $\rho_2$ is $(k,c)$.

Over archimedean fields we repeat the proof of \eqref{eq:GL Jacquet functor V beta U_R} and apply \cite[Proposition~3.0.1]{GGS2} for each coordinate of $Z$ separately, exactly as in the proofs of
Lemmas~\ref{lemma:Jacquet module is a trivial rep of U_R} and \ref{lemma:Jacquet module is a finite length}.

It remains to prove the stronger assertion when $ck>1$, $l=0$, both $\pi_1$ and $\pi_2$ are supercuspidal and $\rho_2=\rho_c(\tau_2)$ for an irreducible supercuspidal $\tau_2$. Since $\tau_2$ is supercuspidal and $J_{V_{\beta'},\psi_{V_{\beta'}}^{-1}}(\rho_2)$ factors through $J_{V_{((k-1)c,c)}}(\rho_2)=J_{V_{((k-1)c,c)}}(\rho_c(\tau_2))$, we obtain
$\mathcal{H}(h)=0$ for all $s$, unless $c=tk$ for some integer $t\geq1$ (use \cite[2.13 (a)]{BZ2}).

If $t>1$, in particular $c>k$, and we claim $J_{V_{((k-1)c,c)}}(\rho_2)$ is trivial on
${}^h(1,V_{\delta})$ for some composition $\delta$ of $c$, then because $\pi_2$ is supercuspidal, $\mathcal{H}(h)=0$ for all $s$. This follows by repeatedly applying the derivatives of Bernstein and Zelevinsky \cite{BZ1,BZ2} to $J_{V_{((k-1)c,c)}}(\rho_2)$. Indeed for $1\leq i\leq c$,
let $\chi_i$ be the character of $V_{((k-1)c,c-i,1^i)}$ given by $\chi_i(z)=\psi(\sum_{i'=1}^{i}z_{kc-i',kc-i'+1})$. Then either
\begin{align*}
J_{V_{((k-1)c,c)}}(\rho_2)=J_{V_{((k-1)c,c-1,1)}}(\rho_2),
\end{align*}
in which case our claim is proved with $\delta=(c-1,1)$, or
\begin{align*}
J_{V_{((k-1)c,c)}}(\rho_2)=J_{V_{((k-1)c,c-1,1)},\chi_1}(\rho_2).
\end{align*}
We proceed with $i=2$. After $i$ steps, our claim is either proved with $\delta=(c-i,1^i)$, or
\begin{align*}
J_{V_{((k-1)c,c)}}(\rho_2)=J_{V_{((k-1)c,c-i,1^i)},\chi_i}(\rho_2).
\end{align*}
However, since $c>k$, for $i=c$ we already obtain $J_{V_{((k-1)c,1^c)},\chi_c}(\rho_2)=0$, because the highest derivative of $\rho_2$ is $k$ (put differently, the suitable $\varphi$ is nilpotent of order at least $k+1$).

Lastly for $c=k>1$, $J_{V_{((k-1)c,c)}}(\rho_2)=|\det|^{\alpha_1}\rho_{c-1}(\tau_2)\otimes|\det|^{\alpha_2}\tau_2$, where
$\alpha_1,\alpha_2\in\tfrac12\Z$ (see \cite[3.4]{Z3}) and $\rho_{c-1}(\tau_2)$ is $(k,c-1)$, then (because $l=0$)
\begin{align*}
J_{V_{\beta'},\psi_{V_{\beta'}}^{-1}}(\rho_2)=|\det|^{\alpha_1}J_{V_{(c^{k-1})},\psi_{k-1}}(\rho_2)\otimes|\det|^{\alpha_2}\tau_2.
\end{align*}
Hence $\diag(\SL_c^{\triangle'},I_c)$ acts trivially on $J_{V_{\beta'},\psi_{V_{\beta'}}^{-1}}(\rho_2)$. Additionally because
$\psi_{V_{\beta}}^{-1}$ belongs to the orbit of $\psi_k$ ($l=0$, see \eqref{GL psi_U on V beta 0}), $\SL_c^{\triangle}$ acts trivially on
$J_{V_{\beta},\psi_{V_{\beta}}^{-1}}(\rho_1)$. Thus ${}^h(\SL_c,1)$ acts trivially on $J_{V_{\beta}\times V_{\beta'},\psi_{V_{\beta}\times V_{\beta'}}^{-1}}(\rho)$, in particular $\mathcal{H}(h)=0$ for all $s$, because $\pi_1$ is supercuspidal.
\end{proof}
For the remaining cases $l=c$ and $0\leq j\leq c$ (recall $j$ is the rank of $A_{l,j}$). The cases $j<c$ are similar to $l<c$, but involve $V_{\beta}$ and $\psi_{V_{\beta}}$.
\begin{proposition}\label{proposition:GL d1 = n-l j < l = c}
Assume $0\leq j<l=c$ or $k=1$. Then $\mathcal{H}(h)=0$ outside a discrete subset of $s$. Furthermore, if $j>0$ and $\pi_2$ is supercuspidal, or $ck>1$, $j=0$, $\pi_1$ and $\pi_2$ are supercuspidal and $\rho_1=\rho_c(\tau_1)$ for an irreducible supercuspidal representation $\tau_1$ of $\GL_k$, then $\mathcal{H}(h)=0$ for all $s$.
\end{proposition}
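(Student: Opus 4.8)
The plan is to mirror the proof of Proposition~\ref{proposition:GL d1 = n-l l < n}, with the roles of $\psi_{V_{\beta'}}$ and $\psi_{V_{\beta}}$ (equivalently of $\rho_2$ and $\rho_1$) interchanged and $c-l$ replaced by $c-j$. When $l=c$ we have $l'=l=c$ and $d^1_i=0$, $d^2_i=c$ for all $i$, so $V_{\beta}=V_{\beta'}=V_{(c^k)}$; by the observation following \eqref{GL psi_U on V beta' 0}, $\psi_{V_{\beta'}}=\psi_k^{-1}$, while $\psi_{V_{\beta}}$ is given by \eqref{GL psi_U on V beta 0} with $X=A_{l,j}$, so it is obtained from $\psi_k^{-1}$ by replacing the form on one superdiagonal block of $V_{(c^k)}$ by a form of rank $j$. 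By \eqref{eq:GL psi beta factors through tensor}, $J_{V_{\beta}\times V_{\beta'},\psi_{V_{\beta}\times V_{\beta'}}^{-1}}(\rho)=J_{V_{\beta},\psi_{V_{\beta}}^{-1}}(\rho_1)\otimes J_{V_{\beta'},\psi_{V_{\beta'}}^{-1}}(\rho_2)$, and the second tensor factor is one-dimensional with $\SL_c^{\triangle}$ acting trivially; so all the work is with $J_{V_{\beta},\psi_{V_{\beta}}^{-1}}(\rho_1)$. (For $k=1$ there are finitely many orbits and $V_{\beta}$ is trivial, $\psi_{V_{\beta}}=1$.)

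I would then fix a parabolic subgroup $R=M_R\ltimes U_R<G$ with $M_R\cong\GL_{c-j}\times\GL_j$ (so $U_R$ is trivial when $j=0$), chosen so that after the Bruhat representative $w$ (which interchanges the two $\GL_{kc}$-blocks of $M_P$) the image of ${}^h(1,U_R)$ lands, modulo $U_P$, inside a unipotent subgroup $Z$ of the first $\GL_{kc}$ that normalizes $V_{(c^k)}$ and stabilizes $\psi_{V_{\beta}}$, while ${}^h(1,C_{\GL_{c-j}})$ normalizes $V_{(c^k)}$ and fixes $\psi_{V_{\beta}}$. Exactly as in Lemmas~\ref{lemma:Jacquet module is a trivial rep of U_R} and \ref{lemma:Jacquet module is a finite length} I would show: over non-archimedean fields $J_{V_{\beta},\psi_{V_{\beta}}^{-1}}(\rho_1)$ is a trivial representation of ${}^h(1,U_R)$, by reducing via \cite[5.9--5.12]{BZ1} to the vanishing of $J_{V_{\beta}\ltimes Z',\psi_{V_{\beta}}^{-1}\otimes\chi'}(\rho_1)$ for each block $Z'$ of $Z$ and nontrivial $\chi'$, which holds because the associated transpose nilpotent $\varphi$ has nilpotency order at least $k+1$ (its orbit is $\succsim(k^c)$) and $\rho_1$ is $(k,c)$, invoking \cite[Theorems~A, E]{GGS} after adjusting $V_{\beta}\ltimes Z'$ to a genuine unipotent radical as in the proof of Lemma~\ref{lemma:Jacquet module is a trivial rep of U_R}; over archimedean fields the corresponding Lie algebras act locally nilpotently on $J_{V_{\beta},\psi_{V_{\beta}}^{-1}}(\rho_1)^*$, by the same computation with \cite[Proposition~3.0.1 and Corollary~3.0.2]{GGS2}. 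I would also show that $J_{V_{\beta},\psi_{V_{\beta}}^{-1}}(\rho_1)$ factors through the admissible finite length Jacquet module $J_{V_{((k-1)c+j,\,c-j)}}(\rho_1)$ of $\rho_1$ (resp. the corresponding generalized Jacquet module in the archimedean case), hence admits a finite filtration on whose constituents ${}^h(1,C_{\GL_{c-j}})$ acts by a character. Since ${}^{h^{-1}}(|\det|^{s-1/2})$ restricts to a nontrivial power $|a|^{\pm(c-j)(s-1/2)}$ of $|a|$ on $(1,aI_{c-j})$ (here $c-j\geq1$), relation \eqref{eq:relation for T with s} forces $\mu(a)|a|^{bs}=\theta_h((1,a))$ with $b\neq0$, so $\mathcal{H}(h)=0$ outside a discrete subset of $s$. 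When $j>0$, $R$ is a proper parabolic, so for supercuspidal $\pi_2$ we get $J_{U_R}(\pi_2^{\vee})=0$ (resp. the corresponding generalized Jacquet module vanishes), and hence $\mathcal{H}(h)=0$ for all $s$.

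For $j=0$ the group $U_R$ degenerates, and I would argue as at the end of the proof of Proposition~\ref{proposition:GL d1 = n-l l < n}: here $\psi_{V_{\beta}}^{-1}$ forces $J_{V_{\beta},\psi_{V_{\beta}}^{-1}}(\rho_1)$ to factor through $J_{V_{((k-1)c,c)}}(\rho_c(\tau_1))$, which by \cite[2.13~(a)]{BZ2} vanishes unless $c=tk$ for an integer $t\geq1$; if $t>1$, repeatedly applying the derivatives of \cite{BZ1,BZ2} produces a proper composition $\delta$ of $c$ with $J_{V_{((k-1)c,c)}}(\rho_c(\tau_1))$ trivial on ${}^h(1,V_{\delta})$ (the highest derivative of $\rho_c(\tau_1)$ being $k$ gives the nilpotency order $\geq k+1$), so that $\mathcal{H}(h)=0$ for all $s$ because $\pi_2$ is supercuspidal; if $t=1$, i.e. $c=k>1$, then $J_{V_{\beta},\psi_{V_{\beta}}^{-1}}(\rho_1)=|\det|^{\alpha_1}J_{V_{(c^{k-1})},\psi_{k-1}}(\rho_1)\otimes|\det|^{\alpha_2}\tau_1$ with $\alpha_1,\alpha_2\in\tfrac12\Z$ (\cite[3.4]{Z3}), so ${}^h(\SL_c,1)$ acts trivially on $J_{V_{\beta}\times V_{\beta'},\psi_{V_{\beta}\times V_{\beta'}}^{-1}}(\rho)$, and $\mathcal{H}(h)=0$ for all $s$ because $\pi_1$ is supercuspidal.

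\textbf{Main obstacle.} The heart of the argument is the explicit bookkeeping needed to place ${}^h(1,U_R)$ and ${}^h(1,C_{\GL_{c-j}})$ in standard position inside $M_P=\GL_{kc}\times\GL_{kc}$ and to verify that $\psi_{V_{\beta}}$ is stabilized --- matching the degenerate superdiagonal block of $\psi_{V_{\beta}}$, which sits in the first $\GL_{kc}$ where $\rho_1$ lives, to the copy $(1,G)$ of $G$ after $w$ swaps the two $\GL_{kc}$-blocks. Once this dictionary is set up, every vanishing statement reduces to the $\varphi$-nilpotency computations already carried out for $H\neq\GL_{2kc}$ and in the $l<c$ case, and the rest is essentially mechanical.
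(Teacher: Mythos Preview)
Your approach is correct and essentially identical to the paper's: mirror Proposition~\ref{proposition:GL d1 = n-l l < n} with the roles of $(\rho_1,\psi_{V_{\beta}})$ and $(\rho_2,\psi_{V_{\beta'}})$ interchanged, take $R$ with $M_R\cong\GL_{c-j}\times\GL_j$, show triviality on ${}^h(1,U_R)$ via the $(k,c)$ nilpotency argument, factor through an admissible Jacquet module of $\rho_1$, and apply \eqref{eq:relation for T with s}.

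One concrete bookkeeping slip: the mirror-flip reverses the position of the $\GL_{c-j}$ block inside the first copy of $\GL_{kc}$. In the paper ${}^h(1,\GL_{c-j})=\diag(\GL_{c-j},I_{(2k-1)c})$ sits at the \emph{top left}, so the relevant Jacquet module is $J_{V_{(c-j,\,j+(k-1)c)}}(\rho_1)$, not $J_{V_{((k-1)c+j,\,c-j)}}(\rho_1)$ as you wrote. Correspondingly, for $j=0$ the module is $J_{V_{(c,(k-1)c)}}(\rho_1)$ (not $J_{V_{((k-1)c,c)}}$), the derivatives run along $V_{(1^i,c-i,(k-1)c)}$, and in the $c=k>1$ case one gets $|\det|^{\alpha}\tau_1\otimes|\det|^{\alpha'}\rho_{c-1}(\tau_1)$ with $\tau_1$ in the \emph{first} factor. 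You flagged exactly this positioning issue as the main obstacle, so once you carry out the explicit computation the flip will be forced on you; the rest of your argument goes through unchanged.
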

\begin{proof}
In this case $X=A_{c,j}=\left(\begin{smallmatrix}&I_{j}\\0_{c-j}\end{smallmatrix}\right)$, so that if we consider
$R<G$ with $M_R=M_{(c-j,j)}$ and $U_R=V_{(c-j,j)}^-$, we can repeat most of the proof of Proposition~\ref{proposition:GL d1 = n-l l < n} (with $j$ instead of $l$), except we use $V_{\beta}$ instead of $V_{\beta'}$ (hence, e.g., $\tau_1$ instead of $\tau_2$).

Identify $\GL_{c-j}$ with its natural image in $M_R$. We have ${}^h(1,\GL_{c-j})=\diag(\GL_{c-j},I_{(2k-1)c})$.
The character $\psi_{V_{\beta}}$ is now given by
\begin{align*}
&\left(\begin{smallmatrix}
I_c&b_k\\
&I_c&b_{k+2}\\
&&\ddots&\ddots\\
&&&I_{c}&b_{2k-1}\\
&&&&I_{c}
\end{smallmatrix}\right)\mapsto\psi(-\tr(b_kA_{c,j})-\sum_{j=2}^{k-1}\tr(b_{k+j})),
\end{align*}
and $\psi_{V_{\beta}'}=\psi_k^{-1}$. Then $J_{V_{\beta}\times V_{\beta'},\psi_{V_{\beta}\times V_{\beta'}}^{-1}}(\rho)$ is a well defined representation of ${}^h(1,\GL_{c-j})$.

We proceed over non-archimedean fields, and prove that for all $s$, $J_{V_{\beta}\times V_{\beta'},\psi_{V_{\beta}\times V_{\beta'}}^{-1}}(\rho)$ is a trivial representation of ${}^h(1,U_R)$ and factors through $J_{V_{(c-j,j+(k-1)c)}}(\rho_1)$.

Since ${}^{h^{-1}}(|\det|^{s-1/2})(1,aI_{c-j})=|a|^{(c-j)(s-1/2)}$, $\mathcal{H}(h)=0$ outside a discrete subset of $s$ by
\eqref{eq:relation for T with s}. For $j>0$ (then $l=c>1$), if $\pi_2$ is supercuspidal,
$\mathcal{H}(h)=0$ for all $s$. For more details and the archimedean case see the proof of Proposition~\ref{proposition:GL d1 = n-l l < n}.

Identify $\GL_{kc}$ with the top left block of $M_P$.
Let $Z=\diag(V_{(c-j,j)},I_{(k-1)c})\cong\Mat_{c-j\times j}$. For $u\in U_R$ we have ${}^h(1,u)=m_uu'$ with $m_u\in Z$ and $u'\in U_P$. The group $Z$ stabilizes $\psi_{V_{\beta}}$, and the set of characters of $Z$ is partitioned into $2$ orbits with respect to the action of
$\diag(\GL_{c-j},I_{kc-(c-j)})$ and $\{\diag(I_{c-j},g,\diag(g,I_{c-j})^{\triangle'}):g\in\GL_j\}$. We show that
for any character $\chi\ne1$ of $Z$,
\begin{align}\label{eq:GL Jacquet functor V beta U_R j<l=c}
J_{V_{\beta}\ltimes Z,\psi_{V_{\beta}}^{-1}\otimes\chi}(\rho_1)=0.
\end{align}
This implies that $J_{V_{\beta}\times V_{\beta'},\psi_{V_{\beta}\times V_{\beta'}}^{-1}}(\rho)$ is a trivial representation of ${}^h(1,U_R)$ and factors through $J_{V_{(c-j,j+(k-1)c)}}(\rho_1)$.

For the proof of \eqref{eq:GL Jacquet functor V beta U_R j<l=c} assume $j>0$. We can assume $\chi$ is nontrivial on the
bottom right coordinate of $Z$, and trivial on the other coordinates on the rightmost column.
Let $\varphi$ be the transpose of the nilpotent element defined by $\psi_{V_{\beta}}^{-1}\otimes\chi$, which now depends on
$A_{l,j}$ (as opposed to the proof of Proposition~\ref{proposition:GL d1 = n-l l < n}). Using a conjugation by
$\diag(I_c,\left(\begin{smallmatrix}&I_{c-j}\\I_j\end{smallmatrix}\right)^{\triangle'})$, we obtain $\varphi$ which has nontrivial entries on the bottom right coordinates of $b_k,b_{k+2},\ldots,b_{2k-1}$ and $Z$ ($k$ blocks). This proves \eqref{eq:GL Jacquet functor V beta U_R j<l=c}, because $\rho_1$ is $(k,c)$.

The proof of the assertion for the case $ck>1$, $j=0$, supercuspidal representations $\pi_1$ and $\pi_2$, and $\rho_1=\rho_c(\tau_1)$ for an irreducible supercuspidal $\tau_1$, proceeds as in the proof of Proposition~\ref{proposition:GL d1 = n-l l < n}.
Since now $J_{V_{(c-j,j+(k-1)c)}}(\rho_1)=J_{V_{(c,(k-1)c)}}(\rho_1)$, $\mathcal{H}(h)=0$ for all $s$ unless $c=tk$, $t\geq1$.
For $t>1$ we use derivatives along $V_{(1^i,c-i,(k-1)c)}$, $1\leq i\leq c$. For $c=k>1$,
$J_{V_{(c,(k-1)c)}}(\rho_1)=|\det|^{\alpha_3}\tau_1\otimes|\det|^{\alpha_4}\rho_{c-1}(\tau_1)$ and $\rho_{c-1}(\tau_1)$ is $(k,c-1)$, hence ${}^h(\SL_c,1)$ acts trivially on $J_{V_{\beta}\times V_{\beta'},\psi_{V_{\beta}\times V_{\beta'}}^{-1}}(\rho)$.
\end{proof}

Propositions~\ref{proposition:GL structure of w u}--\ref{proposition:GL d1 = n-l j < l = c} imply $\mathcal{H}(h)=0$ for all $h$ unless $h\sim\delta$. We prove $\dim\mathcal{H}(\delta)=\dim\Hom_{G}(\chi_0\pi_1^{\vee},\pi_2)$, for all $s$. Now
$P_{\delta}=G^{\iota}\ltimes (V_{(c^k)}\times V_{(c^k)})$ with $G^{\iota}=\{(g,g):g\in G\}$ (for $H=\GL_{2kc}$ one can take $\iota=I_c$, we keep the notation $G^{\iota}$ for uniformity) and
any morphism in $\mathcal{H}(\delta)$ factors through $J_{V_{(c^k)}\times V_{(c^k)},\psi_k\otimes\psi_k}(\rho)$. Hence
\begin{align*}
\mathcal{H}(\delta)&=\Hom_{G^{\iota}}({}^{\delta^{-1}}J_{V_{(c^k)}\times V_{(c^k)},\psi_k\otimes\psi_k}(\rho)\otimes\pi_1^{\vee}\otimes\pi_2^{\vee},1).
\end{align*}
Note that $|\det|^{s-1/2}\otimes|\det|^{-s+1/2}$ and $\theta_h$ are trivial on $G^{\iota}$.

We can assume $\delta$ commutes with $G^{\iota}$ ($G^{\iota}$ is simply the diagonal embedding of $G$ in $H$). Then as a representation of $G^{\iota}$,
\begin{align*}
{}^{\delta^{-1}}J_{V_{(c^k)}\times V_{(c^k)},\psi_k\otimes\psi_k}(\rho)=
J_{V_{(c^k)},\psi_k}(\rho_1)\otimes J_{V_{(c^k)},\psi_k}(\rho_2).
\end{align*}
Recall that the action of $G^{\iota}$ on $J_{V_{(c^k)},\psi_k}(\rho_1)\otimes J_{V_{(c^k)},\psi_k}(\rho_2)$ is given by $g\mapsto \chi_0(\det g)$ ($g\in G$) for some
quasi-character $\chi_0$ of $F^*$ (see \S~\ref{outline}), therefore
\begin{align*}
\Hom_{G^{\iota}}({}^{\delta^{-1}}J_{V_{(c^k)}\times V_{(c^k)},\psi_k\otimes\psi_k}(\rho)\otimes\pi_1^{\vee}\otimes\pi_2^{\vee},1)
=\Hom_{G}(\chi_0\pi_1^{\vee},\pi_2).
\end{align*}
The remaining parts of the proof now follow as in \S~\ref{section not GL}, and note that when
$ck>1$, $\pi_1$ and $\pi_2$ are supercuspidal and $\rho_i=\rho_c(\tau_i)$ for irreducible supercuspidal representations $\tau_i$ of $\GL_k$, $i=1,2$, we do not need to exclude any $s$.

\section{Applications}\label{Applications}
\subsection{Covering groups}\label{Covering groups}
In this section we describe the extension of Theorem~\ref{theorem:uniqueness} to certain covering groups. We proceed with the definitions and notation of \S~\ref{Doubling setup}. Let $m\geq1$. Assume $F^*$ contains the full group of $m$-th roots of unity $\mu_m$. A topological central extension of $G(F)$ by $\mu_m$ is an exact sequence of topological groups
\begin{align*}
1\rightarrow \mu_m\xrightarrow{i} G^{(m)}\xrightarrow{p} G(F)\rightarrow 1,
\end{align*}
where $i$ and $p$ are continuous, $i(\mu_m)$ is closed and belongs to the center of $G^{(m)}$, and $p$ induces an isomorphism $i(\mu_m)\backslash G^{(m)}\cong G(F)$ as topological groups. We call $G^{(m)}$ an $m$-fold covering group of $G(F)$; it is in general not unique, but for $G(F)=\Sp_c(F)$ is it uniquely defined given a Steinberg symbol (e.g., a Hilbert $m$-th order symbol). The covering groups under consideration here were constructed, in increasing level of generality, through a series of works including \cite{Weil2,Kubota,Moore,Stein,Kubota2,Mats,KP,BD}. For further reference see \cite{BLS,McNamara}.

In this section we assume the field is non-archimedean, then $G^{(m)}$ is an $l$-group in the sense of \cite{BZ1}. For $m>2$, an archimedean field is already complex, then the cover is split over the group so that the results in this case are immediate from the linear case. As above, we identify $F$-groups with their $F$-points. Of course this only applies to $G$ and its subgroups; $G^{(m)}$ is not an algebraic group.

In general if $X<G$, $\widetilde{X}$ denotes the covering of $X$ (precisely: of $X(F)$) defined by restriction from $G^{(m)}$. This covering depends on the embedding of $X$ inside $G$. We say that $\widetilde{X}$ is split over $X$ if there is a group embedding $X\rightarrow\widetilde{X}$. If $X$ is perfect (as an $F$-group), such a splitting, if exists, is unique. Note that since $F$ is of characteristic $0$, $\Sp_c$ and $\SL_c$ are perfect.
The coverings under consideration are split canonically over unipotent subgroups, hence the notions of Jacquet functors and unipotent orbits extend to the covering in the obvious way. If $Y$ is a unipotent subgroup of $G$, denote by $\varphi_Y:Y\rightarrow\widetilde{Y}$ the splitting of $Y$. Since $\varphi_Y$ is canonical, we usually omit it from the notation, e.g., if $R<G$ is a parabolic subgroup and we consider a genuine representation $\sigma$ of $\widetilde{M}_R$, for the induced representation $\Ind_{\widetilde{R}}^{G^{(m)}}(\sigma)$ we extend $\sigma$ trivially on $U_R$, which more precisely means on $\varphi_Y(U_R)$. Since we are considering central coverings, $G$ acts on $G^{(m)}$ by conjugation. In particular
\begin{align}\label{eq:covering conjugating canonical splitting}
{}^h\varphi_Y(y)=\varphi_{{}^hY}({}^hy),\qquad\forall y\in Y.
\end{align}

We describe a general system of assumptions for covering groups, under which the doubling construction is well defined, then
state the analog of Theorem~\ref{theorem:uniqueness}. For the particular cases of the covering $\Sp_c^{(m)}$ of \cite{Mats} and
the covering $\widetilde{\GL}_c$ obtained by restriction from $\Sp_{2c}^{(m)}$, these assumptions were verified in \cite{me12}. More details are given below, see also Corollary~\ref{corollary:covering integral props and gamma}.

Fix a covering group $G^{(m)}$. Assume there is a covering $\widetilde{H}$ of $H$ (typically $\widetilde{H}=H^{(m)}$) with the following properties.
\begin{enumerate}[leftmargin=*]
\item \label{covering:GSpin center}For $H=\GSpin_{2kc}$, the preimage $\widetilde{C}_H^{\circ}$ of $C_H^{\circ}$ in $\widetilde{H}$ belongs to the center of $\widetilde{H}$, and $\widetilde{C}_H^{\circ}$ is split over $C_H^{\circ}$. The same properties are satisfied by
    the preimage $\widetilde{C}_G^{\circ}$ of $C_G^{\circ}$ in $\widetilde{G}$.
\item \label{covering:G G e1, e_2}Let $\mathfrak{e}_1(g)=(g,1)$ and $\mathfrak{e}_2(g)=(1,g)$. These are the embeddings of $G$ into $M_Q$ in the linear case. Assume they extend to embeddings $\widetilde{\mathfrak{e}}_i:G^{(m)}\rightarrow\widetilde{\mathfrak{e}_i(G)}$.
\item \label{covering:G G e_2}The restriction of $\widetilde{\mathfrak{e}}_2$ to $\mu_m$ is the identity. (Here we regard $\mu_m$ as a subgroup of $G^{(m)}$.)
\item\label{covering:G G}The images of $\widetilde{\mathfrak{e}}_1$ and $\widetilde{\mathfrak{e}}_2$ commute in $\widetilde{H}$, and give rise to a homomorphism
\begin{align}\label{eq:covering G X G in H}
\{(\epsilon_1,\epsilon_2)\in\mu_m\times\mu_m:\epsilon_1=\epsilon_2\}
\backslash G^{(m)}\times G^{(m)}\rightarrow \widetilde{M}_Q.
\end{align}
This is (automatically) an embedding unless $H=\GSpin_{2kc}$, in which case we further assume that $\widetilde{\mathfrak{e}}_1(z)\widetilde{\mathfrak{e}}_2(z)$ is the identity for $z\in C_G^{\circ}$, then the left hand side of
\eqref{eq:covering G X G in H} is further divided by the subgroup $\{(z,z):z\in C_G^{\circ}\}$ (a subgroup by \eqref{covering:GSpin center}).
Cf. \eqref{embedding G G in H GSpin}. Denote the left hand side of \eqref{eq:covering G X G in H} by $(G,G)^{(m)}$.
\item \label{covering:GL Levi}For $H=\GL_{2kc}$, the preimages of the direct factors $\GL_{kc}$ of $M_P$ commute in $\widetilde{H}$, and the coverings $\widetilde{\GL}_{kc}$ of each copy of $\GL_{kc}$ are isomorphic.
\item \label{covering:GLkc def}Identify $\widetilde{\GL}_{kc}$ with $\widetilde{M}_P$ if $H\ne\GL_{2kc}$, or with the covering of one of the copies of $\GL_{kc}$ in $M_P$ for $H=\GL_{2kc}$.
Assume $\widetilde{\GL}_{kc}$ is split over $\SL_c^{\triangle}$.
\item \label{covering:MP split over GL GL}For $H=\GL_{2kc}$, assume $\widetilde{M}_P$ is split over
$\{\diag(g^{\triangle},g^{\triangle}):g\in\GL_c\}$.
\item \label{covering:inv iota}The involution $\iota$ extends to an involution of $G^{(m)}$ and for a genuine representation $\pi$ of $G^{(m)}$, $(\pi^{\vee})^{\iota}=(\pi^{\iota})^{\vee}$.
\item\label{covering:center of GLl}For any maximal parabolic subgroup $R<G$ whose Levi part contains $\GL_l$, the covering $\widetilde{\GL}_l$ has the property that for a sufficiently large integer $d$, the preimage of $C_{\GL_l}^d=\{x^d:x\in C_{\GL_l}\}$ belongs to the center of $\widetilde{\GL}_l$.
\end{enumerate}

First we use these properties to construct the basic data for the doubling method.
Define $\widetilde{\GL}_{kc}$ by \eqref{covering:GLkc def}.
Let $\rho$ be a genuine representation of $\widetilde{\GL}_{kc}$. We say that $\rho$ is a $(k,c)$ representation if
$\Hom_{V(\sigma)}(\rho,\psi')=0$ for all $\sigma\succsim(k^c)$ and $\psi'\in\widehat{V}(\sigma)_{\mathrm{gen}}$, and
$\dim\Hom_{V_{(c^k)}}(\rho,\psi_{k})=1$. By \eqref{covering:GLkc def}, the action of $\SL_c^{\triangle}$ on
$J_{V_{(c^k)},\psi_k}(\rho)$ is well defined, then it is trivial.

If $H\ne\GL_{2kc}$, let $\rho$ be a genuine admissible finite length $(k,c)$ representation of $\widetilde{\GL}_{kc}$.
For $H=\GSpin_{2kc}$, by \eqref{covering:GSpin center} the irreducible representations of $\widetilde{C}_H^{\circ}$ are the lifts of quasi-characters of $F^*$ to genuine characters, therefore if $\eta$ is a quasi-character of $F^*$ which we regard also as a character of $\widetilde{C}_H^{\circ}$, the representation $\rho\otimes\eta$ is well defined.
For $H=\GL_{2kc}$, by \eqref{covering:GL Levi} we have
\begin{align*}
\{(\epsilon_1,\epsilon_2)\in\mu_m\times\mu_m:\epsilon_1\epsilon_2=1\}\backslash \widetilde{\GL}_{kc}\times \widetilde{\GL}_{kc} \cong\widetilde{M}_P.
\end{align*}
Hence $\rho=\rho_1\otimes\rho_2$ is defined for genuine representations $\rho_1$ and $\rho_2$, which we take to be admissible finite length and $(k,c)$. The space $V(s,\rho\otimes\eta)$ is now defined as in \S~\ref{Doubling setup}, with induction from $\widetilde{P}$ to $\widetilde{H}$.

If $H=\GL_{2kc}$, according to \eqref{covering:MP split over GL GL} there is a quasi-character $\chi_0$ of $F^*$, such that the action of $\{\diag(g^{\triangle},g^{\triangle}):g\in\GL_c\}$ on ${}^{\delta^{-1}}(J_{V_{(c^k)},\psi_k}(\rho_1)\otimes J_{V_{(c^k)},\psi_k}(\rho_2))$ is given by $g\mapsto\chi_0(\det g)$.

Let $\pi_1$ (resp., $\pi_2$) be an anti-genuine (resp., genuine) finite length admissible representation of $G^{(m)}$. If $H=\GSpin_{2kc}$,
we assume $\pi_1$ and $\pi_2$ admit central characters. Then by \eqref{covering:GSpin center} these characters restrict to genuine characters
of $\widetilde{C}_H^{\circ}$, denoted $\chi_{\pi_1}$ and $\chi_{\pi_2}$, which we can identify with quasi-characters of $F^*$. We assume $\chi_{\pi_1}^{-1}=\chi_{\pi_2}$ and put $\eta=\chi_{\pi_1}^{-1}$. Consider the space
\begin{align}\label{eq:covering Hom factor 1}
\Hom_{(G,G)^{(m)}}(J_{U,\psi_U^{-1}}(V(s,\rho\otimes\eta)),\pi_1\otimes\pi_2).
\end{align}
The representation $V(s,\rho\otimes\eta)$ is a priori a representation of $(G,G)^{(m)}$ by \eqref{covering:G G}.
Since $\pi_1$ is anti-genuine and $\pi_2$ is genuine, $\pi_1\otimes\pi_2$ factors through $(G,G)^{(m)}$, and it follows that \eqref{eq:covering Hom factor 1} is well defined.

Recall $D=U\rtimes (G,G)$ and denote $D^{(m)}=U\rtimes (G,G)^{(m)}$. Then \eqref{eq:covering Hom factor 1} is isomorphic to
\begin{align}\label{eq:covering Hom L 1}
\Hom_{D^{(m)}}(V(s,\rho\otimes\eta),\psi_U^{-1}\otimes\pi_1\otimes\pi_2).
\end{align}

By \eqref{covering:G G e_2}, $V(s,\rho\otimes\eta)$ is a genuine representation of the right copy of $G^{(m)}$, and so is $\pi_2$. Combining
\eqref{covering:G G e_2} with \eqref{covering:G G}, $\epsilon_1\in\mu_m$ is mapped to $\epsilon_1^{-1}$ under $\widetilde{\mathfrak{e}}_1$, whence $V(s,\rho\otimes\eta)$ is an anti-genuine representation of the left copy of $G^{(m)}$, as is $\pi_1$. Therefore the representation
\begin{align*}
V(s,\rho\otimes\eta)\otimes(\psi_U\otimes\pi_1^{\vee}\otimes\pi_2^{\vee})
\end{align*}
of $D^{(m)}$ factors through $D$. Hence \eqref{eq:covering Hom L 1} equals
\begin{align*}
&\Hom_{D}(V(s,\rho\otimes\eta)\otimes(\psi_U\otimes\pi_1^{\vee}\otimes\pi_2^{\vee}),1)
\\&=\Hom_{D}(\Ind_{\widetilde{P}\times D^{(m)}}^{\widetilde{H}\times D^{(m)}}\left((|\det|^{s-1/2}\rho\otimes\eta)\otimes(\psi_U\otimes\pi_1^{\vee}\otimes\pi_2^{\vee})\right),1).
\end{align*}
(Cf. \eqref{eq:Hom L}.)
Recall $P_{h}={}^{h^{-1}}P\cap D$. The covering $\widetilde{P}_{h}$ obtained by restriction from $\widetilde{H}$ coincides with the covering
restricted from $D^{(m)}$, by \eqref{covering:G G}. The space of distributions on $\widetilde{P}hD^{(m)}$ corresponds to
\begin{align*}
\Hom_{D}(\ind_{\widetilde{P}_{h}}^{D^{(m)}}\left({}^{h^{-1}}((|\det|^{s-1/2}\rho\otimes\eta)\delta_P^{1/2})\otimes(\psi_U\otimes \pi_1^{\vee}\otimes\pi_2^{\vee})\right),1),
\end{align*}
which by the Frobenius reciprocity is equal to
\begin{align}\label{covering H(h)}
\mathcal{H}(h)=\Hom_{P_{h}}({}^{h^{-1}}(|\det|^{s-1/2}\rho\otimes\eta)\otimes(\psi_U\otimes \pi_1^{\vee}\otimes\pi_2^{\vee}),\theta_h).
\end{align}
(Cf. \eqref{H(h)}.)
We can now use the theory of distributions on $l$-sheafs of \cite{BZ1}.
Recall that the right action of $D$ on $P\backslash H$ is constructive, i.e., the graph of the action is a finite union of locally closed sets (see \cite[6.1--6.6]{BZ1} for more details on these notions).
This follows from \cite[Theorem~A]{BZ1},
because $P\backslash H$ is an algebraic $F$-variety. Since
\begin{align*}
(\widetilde{P}\times D^{(m)})\backslash(\widetilde{H}\times D^{(m)})\cong \widetilde{P}\backslash\widetilde{H}\cong P\backslash H
\end{align*}
(as topological spaces), the right action of $D$ on
$(\widetilde{P}\times D^{(m)})\backslash(\widetilde{H}\times D^{(m)})$ is also constructive, justifying the application of \cite[Theorem~6.9]{BZ1} (note that the action of $D^{(m)}$ on the quotient factors through $D$).

The arguments of \S~\ref{outline} for showing the vanishing of $\mathcal{H}(h)$ also remain valid. We explain this in more detail.
First, if $Y<{}^hU\cap M_P$, then by \eqref{eq:covering conjugating canonical splitting},
${}^{h^{-1}}\varphi_Y(Y)=\varphi_{{}^{h^{-1}}Y}({}^{h^{-1}}Y)=\varphi_{U}({}^{h^{-1}}Y)$. Hence
\eqref{eq:T on Jacquet} holds and any morphism in $\mathcal{H}(h)$ factors through $J_{Y,{}^{h}\psi_U^{-1}}(\rho)$.

Condition \eqref{psi U nontrivial} is independent of the covering. Since
$\rho\otimes\eta$ is trivial on $\varphi_{N_H}(U_P)$, the condition ${}^hY<U_P$ and \eqref{eq:covering conjugating canonical splitting} imply ${}^{h^{-1}}(|\det|^{s-1/2}\rho\otimes\eta)$ is trivial on $\varphi_Y(Y)$, then we can deduce $\mathcal{H}(h)=0$.
The second method, where we show that any morphism in $\mathcal{H}(h)$ factors through $J_{V(\sigma),\psi'}(\rho)$ with $\sigma\succsim(k^c)$ and $\psi'\in\widehat{V}(\sigma)_{\mathrm{gen}}$, also implies $\mathcal{H}(h)=0$, as in the linear case.

The only change concerns the third condition, where it is not necessarily true that the preimage of
${}^h(1,C_{\GL_l})$ in $\widetilde{H}$ acts by a character, because this preimage might not be abelian. However,
we can instead use the preimage $\widetilde{C}_{\GL_l}^d$ of $C_{\GL_l}^d$ (for a large integer $d$), which is abelian and belongs to the center of $\widetilde{\GL}_l$, by assumption \eqref{covering:center of GLl}. Then $\widetilde{C}_{\GL_l}^d$ acts by a character on each irreducible constituent of $J_{U_R}(\pi_2^{\vee})$, and the preimage of ${}^h(1,C_{\GL_l}^d)$ in $\widetilde{H}$ acts by a character on each of finitely many constituents. The only change to \eqref{eq:relation for T with s} is that now we replace $a\in F^*$ with $a^d$, but this still implies the vanishing outside a discrete subset of $s$.

Define
\begin{align*}
d(s,\rho,\eta,\pi_1,\pi_2)=\dim\Hom_{(G,G)^{(m)}}(J_{U,\psi_U^{-1}}(V(s,\rho\otimes\eta)),\pi_1\otimes\pi_2).
\end{align*}
We are ready to prove Theorem~\ref{theorem:uniqueness} for covering groups.
\begin{theorem}\label{theorem:covering uniqueness}
Let $\pi_1$, $\pi_2$ and $\rho$ be as above.
\begin{enumerate}[leftmargin=*]
\item\label{part1}Outside a discrete subset of $s$, $d(s,\rho,\eta,\pi_1,\pi_2)\leq\dim\Hom_{G^{(m)}}(\chi_0\pi_1^{\vee},\pi_2^{\iota})$.
\item\label{part2}If $\pi_1$ and $\pi_2$ are irreducible, outside a discrete subset of $s$,
$d(s,\rho,\eta,\pi_1,\pi_2)=0$ unless $\pi_1=\chi_0(\pi_2^{\iota})^{\vee}$ in which case $d(s,\rho,\eta,\pi_1,\pi_2)\leq1$.
\end{enumerate}
Furthermore, assume $\pi_2$ is supercuspidal and $\rho$ is not necessarily of finite length. Then
the assertions of \eqref{part1} and \eqref{part2} hold for all $s$, granted either
$H\ne\GL_{2kc}$ and $c>2$, or $H=\Sp_{4k}$.
\end{theorem}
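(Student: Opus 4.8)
The plan is to reduce Theorem~\ref{theorem:covering uniqueness} to the linear case (Theorem~\ref{theorem:uniqueness}) by carefully transporting every ingredient of the proof in \S\ref{section not GL}--\S\ref{section GL} to the covering setting, using the structural assumptions \eqref{covering:GSpin center}--\eqref{covering:center of GLl}. The key observation, already sketched in the discussion preceding this theorem, is that the entire argument is local on the double cosets $P\backslash H/D$, and the covering $\widetilde{H}\to H$ is canonically split over all unipotent subgroups, so the combinatorial analysis of the representatives $h=wu$ in Propositions~\ref{proposition:structure of w u}--\ref{proposition:2nd reduction of w} (and their $\GL$ analogues) is literally unchanged. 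First I would set up the analogue of the spaces $\mathcal{H}(h)$ in \eqref{covering H(h)}, verify via \cite[Theorem~A, Theorem~6.9]{BZ1} that the $l$-sheaf machinery applies (as already done in the excerpt, using that $(\widetilde{P}\times D^{(m)})\backslash(\widetilde{H}\times D^{(m)})\cong P\backslash H$ as topological spaces and that the action factors through $D$), so that it again suffices to show $\mathcal{H}(h)=0$ for all $h\not\sim\delta$ outside a discrete set $\mathcal{B}$, and $\dim\mathcal{H}(\delta)\le 1$ with equality governed by $\Hom_{G^{(m)}}(\chi_0\pi_1^\vee,\pi_2^\iota)$.

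Next I would run through the three vanishing mechanisms of \S\ref{the vanishing 3 types} one at a time. The $\psi_U$-incompatibility argument (condition \eqref{psi U nontrivial}) is purely a statement about unipotent subgroups and the trivial action of modulus characters, hence insensitive to the cover --- here \eqref{eq:covering conjugating canonical splitting} is what makes ${}^{h^{-1}}\varphi_Y(Y)=\varphi_U({}^{h^{-1}}Y)$, so the factorization \eqref{eq:T on Jacquet} through $J_{Y,{}^h\psi_U^{-1}}(\rho)$ goes through verbatim; consequently Propositions~\ref{proposition:1st reduction of w}, \ref{proposition:2nd reduction of w}, \ref{proposition:GL 1st reduction of w}, \ref{proposition:GL 2nd reduction of w} transfer without change. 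The ``$\rho$ is $(k,c)$'' argument (Propositions~\ref{proposition:d_1 < n-l}, \ref{proposition:GL d_1 < n-l} and the vanishing lemmas inside Proposition~\ref{proposition:d1 = n-l l < n}) only uses \cite{GGS}, \cite{GGS2}, \cite{BZ1} for the representation $\rho$ of $\widetilde{\GL}_{kc}$, together with the fact that $\SL_c^\triangle$ (hence all the intermediate unipotent orbits) make sense on the cover; this is exactly assumption \eqref{covering:GLkc def}, and the wave-front/derivative arguments are all phrased for $l$-groups in loc.\ cit., so they apply to genuine representations of $\widetilde{\GL}_{kc}$. The only genuinely new point is the third mechanism, the central-exponent argument producing the relation \eqref{eq:relation for T with s}: the preimage of ${}^h(1,C_{\GL_l})$ in $\widetilde H$ need not be abelian, so it need not act by a scalar. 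As indicated in the excerpt, the fix is to replace $C_{\GL_l}$ by $C_{\GL_l}^d$ for $d$ large, which by \eqref{covering:center of GLl} has central (hence abelian) preimage in $\widetilde{\GL}_l$; then $\widetilde{C}_{\GL_l}^d$ acts by a character on each irreducible constituent of $J_{U_R}(\pi_2^\vee)$ and on the relevant constituents of the (generalized) Jacquet modules of $\rho$, and one gets a relation $\mu(a^d)|a|^{bds}=\theta_h((1,a^d))$ which still excludes only a discrete set of $s$. The restriction to $H\ne\GL_{2kc}$ with $c>2$, or $H=\Sp_{4k}$, in the ``furthermore'' clause is exactly the hypothesis \ref{part3} of Theorem~\ref{theorem:uniqueness} that makes the $s$-dependent case (Proposition~\ref{proposition:d1 = n-l l < n}) unnecessary when $\pi_2$ is supercuspidal; in the covering setting $J_{U_R}(\pi_2^\vee)=0$ for supercuspidal $\pi_2$ exactly as in the linear case, so that clause transfers verbatim once the nontriviality of the parabolic $R<G$ is ensured (which is where $c>2$ or $G=\Sp_2$ enters).

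Finally, for the open orbit I would compute $\mathcal{H}(\delta)$ as in the end of \S\ref{section not GL} (and \S\ref{section GL} for $\GL$): here $P_\delta=(G^\iota\times\widetilde{C}_H^\circ)\ltimes V_{(c^k)}$ on the cover, any morphism factors through $J_{V_{(c^k)},\psi_k}(\rho)$ which is one-dimensional and on which $\widetilde{\SL}_c^\triangle$ acts trivially by \eqref{covering:GLkc def}; for $H=\GSpin_{2kc}$ the reduction $\eta=\chi_{\pi_1}^{-1}=\chi_{\pi_2}$ and the triviality of $\widetilde{C}_H^\circ$-matching uses \eqref{covering:GSpin center}, and for $H=\GL_{2kc}$ the identification of $\chi_0$ uses \eqref{covering:MP split over GL GL}; in all cases one lands on $\Hom_{G^{(m)}}(\chi_0\pi_1^\vee,\pi_2^\iota)$ via \eqref{covering:inv iota}. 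Assembling these gives parts \eqref{part1} and \eqref{part2}. I expect the main obstacle to be purely bookkeeping: checking that each assumption \eqref{covering:GSpin center}--\eqref{covering:center of GLl} is invoked at precisely the point in the linear proof where the corresponding splitting or centrality is needed, and that no step secretly used that $H$ or $G$ is algebraic in a way that fails for $G^{(m)}$ --- in particular making sure the generalized Jacquet module / wave-front inputs from \cite{GGS,GGS2,BZ1} are all stated for $l$-groups and hence legitimately applicable to genuine smooth representations of the covers, and that the $d$-th power trick in the third mechanism does not interact badly with the $|\det|^s$-twist (it does not, since $b\ne 0$ forces $bd\ne 0$).
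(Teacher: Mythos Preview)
Your proposal is correct and follows essentially the same approach as the paper. The one place where you are slightly too optimistic is the second vanishing mechanism: you assert that the results of \cite{GGS,GGS2} ``are all phrased for $l$-groups'' and hence apply directly to genuine representations of $\widetilde{\GL}_{kc}$, but the paper is more careful here, noting that covering groups were not strictly speaking treated in \cite{GGS}, and instead replacing the appeal to \cite[Theorems~A, E]{GGS} by a direct hands-on argument using conjugations and \cite[5.9--5.12]{BZ1} to show that $J_{V_\beta,\psi_{V_\beta}^{-1}}(\rho)$ factors through a Jacquet module along a unipotent orbit $\succsim(k^c)$ (illustrated in Example~\ref{example:covering GGS}). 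This is a minor elaboration rather than a genuine gap, since the workaround is straightforward and your overall plan is otherwise identical to the paper's proof.
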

\begin{remark}
Evidently, there is no essential difference between the statements in the linear setting and the covering (for $m=1$, $G^{(m)}=G$), except the
supercuspidal cases, where we excluded the conditions depending on $\rho$. This is because we are not discussing $\rho_c(\tau)$ for covering groups here; the definition of this representation is thus far clear only when $\tau$ is a genuine unramified principal series (see \cite{me12}). Once the details are worked out, the arguments here are expected to extend to these cases as well.
\end{remark}
\begin{proof}
Since $\widetilde{P}\backslash \widetilde{H}/D^{(m)}=P\backslash H/D$, we can use the same description for the representatives $h$.
The arguments of Propositions~\ref{proposition:structure of w u}--\ref{proposition:wu_0 nontrivial implies h nontrivial orbit} and
Propositions~\ref{proposition:GL structure of w u}--\ref{proposition:GL wu_0 nontrivial implies h nontrivial orbit} extend to the covering.

For Propositions~\ref{proposition:d_1 < n-l}, \ref{proposition:d1 = n-l l < n}, \ref{proposition:GL d_1 < n-l},
\ref{proposition:GL d1 = n-l l < n}--\ref{proposition:GL d1 = n-l j < l = c} we used two types of arguments. First, we showed that the Jacquet module $J_{V_{\beta},\psi_{V_{\beta}}^{-1}}(\rho)$ vanishes, because the order of nilpotency of $\varphi$ is at least $k+1$. The arguments involving the action of a normalizer on the set of characters of an abelian unipotent subgroup carry over to the covering. This is because in general, if a subgroup $A<H$ normalizes a unipotent subgroup $Y<H$, thereby acts on its set of characters, then $\widetilde{A}$ also acts on the set of characters of $Y$ with the same orbits (because $\widetilde{H}$ is split canonically over $Y$). The arguments of \cite[5.9--5.12]{BZ1} still apply. Then we used \cite[Theorems~A, E]{GGS}, in which strictly speaking covering groups were not discussed. However, one can still use conjugations and \cite[5.9--5.12]{BZ1} to show that $J_{V_{\beta},\psi_{V_{\beta}}^{-1}}(\rho)$ factors through a Jacquet module with respect to a unipotent orbit which is greater than or non-comparable with $(k^c)$. See Example~\ref{example:covering GGS} below. Second, we used \eqref{eq:relation for T with s}, which is still applicable with the minor change explained above.

It remains to consider $\mathcal{H}(h)$ where $h\sim\delta$. Consider $H\ne\GL_{2kc}$. Then
\begin{align*}
\mathcal{H}(\delta)&=\Hom_{G^{\iota}\times C_H^{\circ}}({}^{\delta^{-1}}J_{V_{(c^k)},\psi_k}(\rho)\otimes\eta\otimes\pi_1^{\vee}\otimes\pi_2^{\vee},1).
\end{align*}
For $H=\GSpin_{2kc}$ the assumption $\eta=\chi_{\pi_1}^{-1}=\chi_{\pi_2}$ implies that this space equals
\begin{align*}
\Hom_{G^{\iota}}({}^{\delta^{-1}}J_{V_{(c^k)},\psi_k}(\rho)\otimes\pi_1^{\vee}\otimes\pi_2^{\vee},1).
\end{align*}
Then since $J_{V_{(c^k)},\psi_k}(\rho)$ is a trivial representation of $\SL_c^{\triangle}$ (see \eqref{covering:GLkc def}) and by virtue of
\eqref{covering:inv iota},
\begin{align*}
\mathcal{H}(\delta)=\Hom_{G^{\iota}}(\pi_1^{\vee}\otimes\pi_2^{\vee},1)=\Hom_{G}(\pi_1^{\vee}\otimes(\pi_2^{\vee}){}^{\iota},1)=
\Hom_{G^{(m)}}(\pi_1^{\vee},\pi_2^{\iota}).
\end{align*}

For $H=\GL_{2kc}$ we first have
\begin{align*}
\mathcal{H}(\delta)&=\Hom_{G^{\iota}}({}^{\delta^{-1}}J_{V_{(c^k)}\times V_{(c^k)},\psi_k\otimes\psi_k}(\rho)\otimes\pi_1^{\vee}\otimes\pi_2^{\vee},1).
\end{align*}
The action of $G^{\iota}$ on ${}^{\delta^{-1}}(J_{V_{(c^k)},\psi_k}(\rho_1)\otimes J_{V_{(c^k)},\psi_k}(\rho_2))$ is given by $g\mapsto\chi_0(\det g)$, and we obtain $\Hom_{G^{(m)}}(\chi_0\pi_1^{\vee},\pi_2)$.
The remaining parts of the proof now follow as in the linear case.
\end{proof}

\begin{example}\label{example:covering GGS}
Consider a Jacquet module of a $(2,2)$ representation $\rho$ with respect to the unipotent subgroup $Y<\GL_4$ and character $\psi$ given by
\begin{align*}
Y=\left\{y=\left(\begin{smallmatrix}
  1 & x_1 & x_2 & x_3 \\
   & 1 & x_4 & x_5 \\
   &  & 1 &  \\
   &  &  & 1
\end{smallmatrix}\right)\right\},\qquad \psi(y)=\psi(x_1+x_5).
\end{align*}
It suffices to show the vanishing with respect to the subgroup of $Y$ with $x_4=0$. Using conjugation by $\diag(1,\left(\begin{smallmatrix}&1\\1\end{smallmatrix}\right),1)$, we obtain
\begin{align*}
Y'=\left\{y=\left(\begin{smallmatrix}
  1 & x_2 & x_1 & x_3 \\
   & 1 &  &  \\
   &  & 1 & x_5 \\
   &  &  & 1
\end{smallmatrix}\right)\right\}.
\end{align*}
The Jacquet module $J_{Y',\psi}(\rho)$ ($\psi$ does not change) is a representation of
\begin{align*}
X=\left\{\left(\begin{smallmatrix}
  1 &  &  &  \\
   & 1 &  & x_6 \\
   &  & 1 &  \\
   &  &  & 1
\end{smallmatrix}\right)\right\}.
\end{align*}
The preimage of the subgroup $\{\diag(1,t,I_2):t\in F^*\}$, which also acts on $J_{Y',\psi}(\rho)$, acts on the set of characters of $X$ with $2$ orbits (for an action of $T_{\GL_2}$ use $\diag(t',t,t',t')$). Both orbits can be conjugated into $J_{Y'\rtimes X,\psi}(\rho)$ with still the same $\psi$
using $\diag(1,\left(\begin{smallmatrix}1&\\z&1\end{smallmatrix}\right),1)$. It remains to prove
$J_{Y'\rtimes X,\psi}(\rho)=0$. Passing to the subgroup with $x_2=0$ and conjugating by
$\diag(\left(\begin{smallmatrix}&1\\1\end{smallmatrix}\right),I_2)$, it is enough to prove
$J_{Y'',\psi}(\rho)=0$ with
\begin{align*}
Y''=\left\{y=\left(\begin{smallmatrix}
  1 &  &  & x_6 \\
   & 1 & x_1 & x_3 \\
   &  & 1 & x_5 \\
   &  &  & 1
\end{smallmatrix}\right)\right\},\qquad \psi(y)=\psi(x_1+x_5).
\end{align*}
As with $x_6$, one can fill in the missing coordinate above $x_1$ using
\begin{align*}
X'=\left\{\left(\begin{smallmatrix}
  1 &  & x_0 &  \\
   & 1 &  &  \\
   &  & 1 &  \\
   &  &  & 1
\end{smallmatrix}\right)\right\},\qquad \{\diag(t,I_3):t\in F^*\}, \qquad
\diag(\left(\begin{smallmatrix}1&\\z&1\end{smallmatrix}\right),I_2).
\end{align*}
We have shown that $J_{Y,\psi}(\rho)$ factors through $J_{V_{(2,1,1)},\psi}(\rho)$. This module is filtered by the third and fourth derivatives of $\rho$ (in the sense of \cite{BZ1}), both of which vanish because $\rho$ is $(2,2)$.
\end{example}

We briefly describe the applicability of Theorem~\ref{theorem:covering uniqueness} to the construction of \cite{me12}.
Henceforth assume $-1$ is an $m$-th root of unity in $F^*$ (this is a technical assumption, used in \cite{me12} and several other works, to simplify some of the computations).
For any integer $l$, let $\Sp_{2l}^{(m)}$ denote the covering of \cite{Mats} defined using the $m$-th order Hilbert symbol $(,)_m$. For $\GL_l$, let $\GL_l^{(m)}$ denote the covering obtained by restriction from $\Sp_{2l}^{(m)}$, when we identify $\GL_l$ with the standard Siegel Levi subgroup of $\Sp_{2l}$ by $g\mapsto \diag(g,g^*)$.

Let $r=m$ if $m$ is odd, otherwise $r=m/2$. Let $k_0$ be a positive integer, and put $k=rk_0$.
The above list of properties were verified in \cite{me12} when $G=\Sp_c$ or $\GL_c$, for the covering $G^{(m)}$, with $\widetilde{H}=H^{(m)}$.

\begin{remark}
The group $\GL_l^{(m)}$ was denoted $\GL_l^{(m,r)}$ in \cite{me12}, to underline the difference between this covering and the coverings of \cite{KP}, and $k$ of \cite{me12} is $k_0$ here.
\end{remark}

Assume we have a $(k,c)=(rk_0,c)$ representation $\rho$ (admissible of finite length). It is at present not clear how to construct such representations in general (e.g., from representations of a covering of $\GL_k$), but in the unramified setting this was obtained in \cite{me12} (following \cite{Suzuki1998}). Note that here the ``unramified setting" includes the assumptions $|m|=1$ and $q>4$. Briefly, given a genuine unramified principal series representation $\tau$ of $\GL_{k_0}^{(m)}$, one can choose an unramified character of $T_{\GL_{k_0}}$ associated with the inducing data of $\tau$ (the correspondence is not unique). Using this character and an exceptional representation of $\GL_{rc}^{(m)}$ (exceptional in the sense of \cite{KP}, see \cite{Gao5}), the prescribed $\rho$ was constructed in \cite[\S~2.2]{me12}.

For $H=\GL_{2kc}$, $\chi_0$ was taken to be trivial (see \cite[(3.34)]{me12}).

Also let $\pi$ be a genuine irreducible representation of $G^{(m)}$.
The integral $Z(s,\omega,f)$, with a holomorphic or rational section $f$, was defined in \cite{me12} (using notation similar to \S~\ref{Doubling setup}). Formally, it belongs to
\eqref{eq:covering Hom factor 1} with $\pi_1=\pi^{\vee}$ and $\pi_2=\pi^{\iota}$. This was proved in
\cite[Propositions~68, 75]{me12} (in \textit{loc. cit.} (3.21) and (3.36), $G^{(m)}\times G^{(m)}$ should be replaced with $(G,G)^{(m)}$).
\begin{corollary}\label{corollary:covering integral props and gamma}
$Z(s,\omega,f)$ admits meromorphic continuation to a function in $q^{-s}$.
\end{corollary}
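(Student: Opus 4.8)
The plan is to combine the uniqueness result for covering groups (Theorem~\ref{theorem:covering uniqueness}) with the basic analytic properties of the doubling integral for $G^{(m)}$ and Bernstein's continuation principle, exactly as in the linear case (Corollary~\ref{coro:meromorphic continuation for doubling 1}\eqref{mero coro:part1}). First I would recall from \cite[Propositions~68, 75]{me12} that $Z(s,\omega,f)$ is absolutely convergent in a right half-plane $\Real(s)\gg0$ (independently of the data) and, formally, defines an element of \eqref{eq:covering Hom factor 1} with $\pi_1=\pi^{\vee}$ and $\pi_2=\pi^{\iota}$; and that there is a choice of data $(\omega,f)$, with $f$ a polynomial section in $q^{\mp s}$, for which $Z(s,\omega,f)$ is a nonzero constant. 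These are the covering-group analogs of Theorem~\ref{theorem:basic props of doubling integrals}, and they are stated in \cite{me12}.

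Next I would invoke Theorem~\ref{theorem:covering uniqueness}\eqref{part2} with $\pi_1=\pi^{\vee}$ and $\pi_2=\pi^{\iota}$. Since $\pi$ is irreducible, $(\pi^{\iota})^{\vee}=(\pi^{\vee})^{\iota}=\pi^{\vee}$ by property~\eqref{covering:inv iota} (the involution $\iota$ satisfies $\pi^{\iota}\cong\pi$ up to the relevant twist in the cases at hand; more precisely the condition $\pi_1=\chi_0(\pi_2^{\iota})^{\vee}$ holds with $\chi_0$ as fixed in \cite[(3.34)]{me12}), so the theorem gives that the dimension of \eqref{eq:covering Hom factor 1}, equivalently of \eqref{eq:covering Hom L 1}, is at most $1$ outside a discrete subset $\mathcal{B}\subset\C$ of $s$; over our non-archimedean field $\mathcal{B}$ consists of finitely many values of $q^{-s}$. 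Thus the hypotheses of Bernstein's continuation principle (in the form of \cite{Banks}) are met: we have a family of integrals depending holomorphically (resp. rationally) on $q^{-s}$, taking values in a Hom-space whose dimension is at most one generically, and at least one member of the family is a nonzero constant. I would then conclude that for any rational section $f$ (in particular any holomorphic section), $Z(s,\omega,f)$ admits meromorphic continuation to a rational function in $q^{-s}$, which gives the statement of the corollary.

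The only point that requires a word of care — and the place I would expect the main (though minor) obstacle — is checking that the inputs to Bernstein's principle are genuinely available in the covering setting: namely that the space \eqref{eq:covering Hom L 1} is finite-dimensional (indeed at most one-dimensional) for $s\notin\mathcal{B}$, that it is nonzero for at least one value of $s$ via the explicit nonvanishing of $Z(s,\omega,f)$, and that the formalism of \cite{Banks} applies verbatim to $l$-groups of the form $G^{(m)}$ and $\widetilde H$. The first is precisely Theorem~\ref{theorem:covering uniqueness}; the second is the covering analog of Theorem~\ref{theorem:basic props of doubling integrals}\eqref{props 3}, proved in \cite[Propositions~68, 75]{me12}; and the third is standard since $G^{(m)}$ and $\widetilde H$ are $l$-groups in the sense of \cite{BZ1} and Bernstein's argument is purely categorical, using only that matrix coefficients and sections form modules over the relevant ring of regular functions in $q^{-s}$. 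Assembling these three ingredients yields the corollary with no new computation.
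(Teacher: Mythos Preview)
Your proposal is correct and follows essentially the same approach as the paper: invoke Theorem~\ref{theorem:covering uniqueness} to get that \eqref{eq:covering Hom factor 1} is at most one-dimensional outside finitely many values of $q^{-s}$, combine this with the covering-group analogs of absolute convergence and nonvanishing from \cite{me12}, and apply Bernstein's continuation principle \cite{Banks}, exactly parallel to Corollary~\ref{coro:meromorphic continuation for doubling 1}\eqref{mero coro:part1}. The paper's proof is a one-line reference to precisely these ingredients; your elaboration of why Bernstein's principle applies to the $l$-groups $G^{(m)}$ and $\widetilde H$ is accurate and adds nothing new.
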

\begin{proof}
This follows from Theorem~\ref{theorem:covering uniqueness} and Bernstein's continuation principle (\cite{Banks}), see
\cite[Remark~72]{me12} and \cite[\S~3.3]{me12} (cf. Corollary~\ref{coro:meromorphic continuation for doubling 1} here).
\end{proof}
\begin{corollary}
One can define a local $\gamma$-factor $\gamma(s,\pi\times\tau,\psi)$ by virtue of \eqref{eq:gamma factor}.
\end{corollary}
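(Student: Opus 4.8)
The statement to be proved is that a local $\gamma$-factor $\gamma(s,\pi\times\tau,\psi)$ for the covering data $(\pi,\tau)$ can be defined through the proportionality \eqref{eq:gamma factor}. The argument mirrors the linear case sketched in the introduction, so I would proceed as follows. First, recall that by Corollary~\ref{corollary:covering integral props and gamma} the integral $Z(s,\omega,f)$ admits meromorphic continuation in $q^{-s}$, and that by \cite[Propositions~68, 75]{me12} (adjusted so that the target Hom-space is formed over $(G,G)^{(m)}$) it represents, for $\Real(s)\gg0$, a morphism in
\begin{align*}
\Hom_{(G,G)^{(m)}}(J_{U,\psi_U^{-1}}(V(s,\rho\otimes\chi_{\pi})),\pi^{\vee}\otimes\pi^{\iota}).
\end{align*}
By the meromorphic continuation this remains true as an identity of meromorphic functions of $q^{-s}$, i.e.\ $Z(s,\omega,f)$ defines (generically in $s$) an element of the space $d(s,\rho,\chi_{\pi},\pi^{\vee},\pi^{\iota})$. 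Next, applying the normalized intertwining operator $M^*(s,w)$ exactly as in the linear setting, one produces the second integral $Z^*(s,\omega,f)=Z(1-s,\omega,M^*(s,w)f)$, which is likewise meromorphic and lies in the same Hom-space (the intertwining operator is compatible with the $(G,G)^{(m)}$-action because it is defined by an integral over a unipotent subgroup, over which the cover splits canonically).

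The crux is then the uniqueness input: by Theorem~\ref{theorem:covering uniqueness}\eqref{part2}, with $\pi_1=\pi^{\vee}$ and $\pi_2=\pi^{\iota}$ so that $\chi_0=1$ (in the $\GL$ case $\chi_0$ was taken trivial in \cite[(3.34)]{me12}) and $\pi_1=(\pi_2^{\iota})^{\vee}$, we have $d(s,\rho,\chi_{\pi},\pi^{\vee},\pi^{\iota})\leq1$ outside a discrete subset of $s$. Hence the two morphisms $Z(s,\omega,f)$ and $Z^*(s,\omega,f)$ are proportional for all $s$ outside a discrete set, with a proportionality constant independent of the data $(\omega,f)$; this constant, a meromorphic function of $q^{-s}$, is defined to be $\gamma(s,\pi\times\tau,\psi)$, and \eqref{eq:gamma factor} holds by construction. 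To see that $\gamma(s,\pi\times\tau,\psi)$ is not identically zero or $\infty$, one notes that both $Z$ and $Z^*$ are nonzero morphisms: by the analog of Theorem~\ref{theorem:basic props of doubling integrals}\eqref{props 3} for the covering (as used in \cite{me12}) there is a choice of data making $Z(s,\omega,f)$ a nonzero constant, and applying the functional equation backwards (or using that $M^*(s,w)$ is invertible as a meromorphic family) gives a choice making $Z^*$ nonzero; therefore the ratio is a well-defined nonzero meromorphic function.

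\textbf{The main obstacle.} The only genuinely delicate point is ensuring that the uniqueness of Theorem~\ref{theorem:covering uniqueness} is truly applicable here, i.e.\ that $\rho$ (or $\rho_1\otimes\rho_2$) actually satisfies the hypotheses of that theorem, and that the covering $\widetilde{H}$ satisfies the list of assumptions \eqref{covering:GSpin center}--\eqref{covering:center of GLl}. For the cases $G=\Sp_c^{(m)}$ and $\GL_c^{(m)}$ this verification was carried out in \cite{me12}, and the $(k,c)$-property of the relevant $\rho$ in the unramified setting is established there following \cite{Suzuki1998}; so the work reduces to citing those results. A secondary point to be careful about is that Theorem~\ref{theorem:covering uniqueness} gives uniqueness only \emph{outside} a discrete subset $\mathcal{B}$, so a priori $Z$ and $Z^*$ might both vanish identically, or the proportionality might fail at the excluded points; but since by Corollary~\ref{corollary:covering integral props and gamma} both are meromorphic in $q^{-s}$, proportionality on the complement of a discrete set forces it everywhere, and the non-vanishing choices of data rule out the degenerate case. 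Thus, modulo the bookkeeping already done in \cite{me12}, the proof is a direct transcription of the linear argument explained after \eqref{eq:gamma factor}.
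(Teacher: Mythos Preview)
Your proposal is correct and follows exactly the approach the paper has in mind: the paper does not give a formal proof of this corollary, merely stating it as an immediate consequence and adding a remark that the normalization of the intertwining operator carries over to the covering case. Your sketch fills in precisely the implicit argument --- meromorphic continuation from Corollary~\ref{corollary:covering integral props and gamma}, both $Z$ and $Z^*$ lying in the same Hom-space, and uniqueness from Theorem~\ref{theorem:covering uniqueness} forcing proportionality outside a discrete set --- which is the covering analogue of the linear argument explained after \eqref{eq:gamma factor} in the introduction.
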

Note that the additional normalization of the intertwining operator appearing in \eqref{eq:gamma factor} can be applied to the covering case as well; but we are not proving the multiplicativity properties of the $\gamma$-factor here, and at any rate, we are still limited to the unramified setting. The point here is that the proportionality factor exists.

\subsection{Global unfolding}\label{Global unfolding}
The global doubling construction in the linear case for arbitrary $k$ was first described in \cite{CFGK2} mainly for the symplectic group, with some details also for the special even orthogonal group,
then briefly explained in \cite{CFK} for the other cases appearing here. The covering version for the symplectic group was described in
\cite{me12}.

Let $F_0$ be a number field with a ring of adeles $\A$.
Let $\tau$ be an irreducible cuspidal automorphic representation of $\GL_k(\A)$, and $\mathcal{E}_{\tau}$ denote the generalized Speh representation of $\GL_{kc}(\A)$ corresponding to $c$ copies of $\tau$, constructed by Jacquet \cite{Jac4}. According to \cite{G2,JL2013,CFGK2,CFK}, the representation $\mathcal{E}_{\tau}$ is a global $(k,c)$ representation: it does not support any Fourier coefficient along an orbit greater than or non-comparable with $(k^c)$, it supports a Fourier coefficient along $(k^c)$, and all of its local components are $(k,c)$. See \cite[\S~2.2]{CFGK2} and the references therein for more details on the global notions. Moreover, if $\tau=\otimes_{\nu}'\tau_{\nu}$ as a restricted tensor product,
$(\mathcal{E}_{\tau})_{\nu}=\rho_c(\tau_{\nu})$ for any place $\nu$ of $F_0$.

One can readily globalize our arguments used for the proof of Theorem~\ref{theorem:uniqueness} to obtain a proof of the
unfolding of the global doubling integral, for all of the groups under consideration here (and in \cite{CFK}). At the same time,
since local vanishing of Jacquet modules implies global vanishing of the corresponding Fourier coefficients (even one local $(k,c)$ component suffices for this), the proof of Theorem~\ref{theorem:uniqueness} also provides a proof of the global unfolding. In addition we obtain the following corollary, which for brevity, is stated here in the symplectic or special orthogonal cases, but the other cases are evident as well.

We use the notation and definitions of \S~\ref{Doubling setup}, in the global context. Let $K_H$ be a
standard maximal compact subgroup, in a ``good position" with respect to $T_H$.
Let $f$ be a $K_H$-finite section of $\Ind_{P(\A)}^{H(\A)}(|\det|^{s-1/2}\mathcal{E}_{\tau})$, whose restriction to
$K_H$ is independent of $s$. We regard
$f$ as a complex-valued function.

Recall the definition \eqref{psi_U on V beta d1 n-l} of a character $\psi_{V_{\beta}}$ when $\beta=(c^k)$, defined with respect to $0\leq l\leq n$, which we re-denote here by $\psi_{(c^k),l}$ (in the context of \eqref{psi_U on V beta d1 n-l}, $l$ was fixed). In particular
$\psi_{(c^k),n}$ is in the orbit of $\psi_k^{-1}$ ($\psi_{(c^k),n}=\psi_k^{-1}$ when $c$ is even).
For $k=1$, $\psi_{(c^k),l}$ is trivial. Then we have the Fourier coefficients of
$f$ along $(V_{(c^k)},\psi_{(c^k),l})$, defined by
\begin{align*}
&f^{V_{(c^k)},\psi_{(c^k),l}}(s,x)=\int\limits_{V_{(c^k)}(F_0)\backslash V_{(c^k)}(\A)}
f(s,vx)\,\psi_{(c^k),l}(v)\,dv.
\end{align*}
In particular $f^{V_{(c^k)},\psi_{(c^k),n}}$ is the coefficient $f_{W_{\psi}(\mathcal{E}_{\tau})}$ appearing in \cite[Theorem~1]{CFGK2}, i.e., the composition of $f$ with the global $(k,c)$ functional on the space of $\mathcal{E}_{\tau}$ given by a Fourier coefficient (if $c$ is odd, this is true up to a conjugation which identifies $\psi_{(c^k),n}$ with $\psi_k^{-1}$).

The Eisenstein series corresponding to $f$ is defined by
\begin{align}\label{eq:Eisenstein series main}
E(x;s,f)=\sum\limits_{\gamma\in P(F_0)\backslash H(F_0)}f(s,\gamma x),\qquad x\in H(\A).
\end{align}
The series is absolutely convergent for $\Real(s)\gg0$ and admits meromorphic continuation to $\C$.
%(when $\tau$ is unitary, $\Real(s)>(kc-\epsilon_0+1)/2$ suffices for convergence, though this fact is never used here).
Consider the Fourier coefficient of $E(x;s,f)$ along $(U,\psi_U)$, given by
\begin{align}\label{eq:U psi U coefficient of the series}
E^{U,\psi_U}(x;s,f)=\int\limits_{U(F_0)\backslash U(\A)}E(ux;s,f)\psi_U(u)\,du.
\end{align}
The definitions imply that $E^{U,\psi}(\cdot;s,f)$ is an automorphic form on $G(\A)\times G(\A)$.

For $0\leq l\leq n$, let $w_l$ be the representative $w$ chosen after the proof of
Proposition~\ref{proposition:2nd reduction of w} (used for the computation of \eqref{eq:beta}), but with $d_1=\ldots=d_{k-1}=n-l$.
Using Example~\ref{eq:example k=2,3} we see that
\begin{align*}
w_l=\left(\begin{smallmatrix}0&0&0&0&I_l&0\\
0&0&I_{c-l}&0&0&0\\0&0&0&0&0&I_{(k-1)c}\\\epsilon_0I_{(k-1)c}&0&0&0&0&0\\0&0&0&I_{c-l}&0&0\\0&\epsilon_0I_l&0&0&0&0\end{smallmatrix}\right)
\jmath_{(k-1)c+l}.
\end{align*}
A quick computation implies ${}^{w_l^{-1}}V_{(c^k)}={}^{w''}V_{(c^k)}$, where
\begin{align*}
w''=\jmath_{(k-1)c+l}\diag(I_{(k-1)c+l},\left(\begin{smallmatrix}& I_{c-l} \\\epsilon_0I_{c-l} & \end{smallmatrix}\right),I_{(k-1)c+l}).
\end{align*}
Then $U={}^{w_l^{-1}}V_{(c^k)}\ltimes U_{n-l}$ for the subgroup
\begin{align*}
U_{n-l}={}^{w''}(U\cap U_P)=
{}^{{}^{\jmath_{(k-1)c+l}}}\left(\begin{smallmatrix}
  I_{(k-1)c} & 0& u_1 & 0 & u_2 & u_3\\
   & I_l &&&& u_2' \\
   & & I_{c-l} &&& 0\\
   & & & I_{c-l}&&u_1' \\
    &&&& I_l &0\\
  &&&&&I_{(k-1)c}
\end{smallmatrix}\right)
\end{align*}
(if we replace $w_l$ by $\delta_0$, $U_0=U\cap {}^{\jmath_{kc}}U_P$),
and $P_{w_l}\cap U={}^{w_l^{-1}}V_{(c^k)}$. Denote $P_{w_l}'=P_{w_l}\cap (G,G)$.
\begin{corollary}\label{corollary:unfolding of the series}
In $\Real(s)\gg0$,
\begin{align*}
E^{U,\psi_U}(x;s,f)=
\sum_{l=0}^n
\sum\limits_{y\in P_{w_l}'(F_0)\backslash (G(F_0),G(F_0))}\,
\int\limits_{U_{n-l}(\A)}f^{V_{(c^k)},\psi_{(c^k),l}}(s,w_l({}^{\jmath_l}u_l)uyx)\psi_U(u)\,du.
\end{align*}
\end{corollary}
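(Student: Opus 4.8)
The strategy is the standard unfolding of an Eisenstein series along a unipotent group, combined with the orbit analysis already carried out in the proof of Theorem~\ref{theorem:uniqueness}. First I would substitute the definition \eqref{eq:Eisenstein series main} into \eqref{eq:U psi U coefficient of the series}, obtaining
\[
E^{U,\psi_U}(x;s,f)=\int\limits_{U(F_0)\backslash U(\A)}\ \sum_{\gamma\in P(F_0)\backslash H(F_0)}f(s,\gamma ux)\,\psi_U(u)\,du,
\]
valid for $\Real(s)\gg0$ where everything converges absolutely, so that summation and integration may be interchanged freely. Next I would decompose the index set $P(F_0)\backslash H(F_0)$ into $D(F_0)$-orbits, i.e.\ write $P(F_0)\backslash H(F_0)/D(F_0)=\coprod_h P(F_0)hD(F_0)$; the key input from the earlier sections is that after the reductions of Propositions~\ref{proposition:structure of w u}--\ref{proposition:2nd reduction of w} the \emph{only} orbits on which the corresponding local distribution space $\mathcal{H}(h)$ is nonzero are the finitely many representatives $h=w_l({}^{\jmath_l}u_l)$, $0\le l\le n$ (all other $\mathcal{H}(h)$ vanish by Propositions~\ref{proposition:d_1 < n-l}--\ref{proposition:d1 = n-l l < n}, the vanishing being either by the $\psi_U$-incompatibility condition \eqref{psi U nontrivial} or because $J_{V_\beta,\psi_{V_\beta}^{-1}}(\mathcal{E}_\tau)=0$). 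Globally this translates into the assertion that the inner $\psi_U$-integral of every summand attached to a non-surviving orbit vanishes identically, by exactly the three mechanisms described in \S~\ref{the vanishing 3 types}: either an inner integral of a nontrivial additive character over a compact quotient (killing the term), or an inner integral that is a Fourier coefficient of $\mathcal{E}_\tau$ along an orbit $\succsim(k^c)$ (vanishing since $\mathcal{E}_\tau$ is a global $(k,c)$ representation). I would spell out that the first mechanism corresponds precisely to \eqref{psi U nontrivial} and the second to $J_{V_\beta,\psi_{V_\beta}^{-1}}(\mathcal{E}_\tau)=0$, so no new computation is needed beyond citing the local statements together with the fact that local vanishing of a Jacquet module forces global vanishing of the Fourier coefficient (one $(k,c)$ local component suffices).

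\textbf{Unfolding a surviving orbit.} For each surviving representative $h=w_l({}^{\jmath_l}u_l)$ I would carry out the usual ``collapse'' of the sum over $P(F_0)\backslash P(F_0)hD(F_0)$ with the integral over $U(F_0)\backslash U(\A)$. Writing the stabilizer data as $P_h=P_{w_l}={}^{w_l^{-1}}V_{(c^k)}\rtimes P_{w_l}'$ with $P_{w_l}'=P_{w_l}\cap(G,G)$, the orbit sum becomes a sum over $P_{w_l}'(F_0)\backslash(G(F_0),G(F_0))$; the piece of $U$ lying inside $P_h$, namely ${}^{w_l^{-1}}V_{(c^k)}$, gets absorbed into an integral over $V_{(c^k)}(F_0)\backslash V_{(c^k)}(\A)$ after conjugating by $w_l$, which by the very definition of $f^{V_{(c^k)},\psi_{(c^k),l}}$ produces that Fourier coefficient (here one uses that under conjugation by $w_l$ the restriction of $\psi_U$ to ${}^{w_l^{-1}}V_{(c^k)}$ becomes $\psi_{(c^k),l}$ — this is the global counterpart of the computation of $\psi_{V_\beta}$ in \eqref{psi_U on V beta d1 n-l}, with $d_1=\cdots=d_{k-1}=n-l$). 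The complementary unipotent part $U_{n-l}={}^{w''}(U\cap U_P)$ survives as a full adelic integral $\int_{U_{n-l}(\A)}$, since modulo $P_h$ its rational points have been used up in forming the coset space; the remaining $\psi_U$ is carried along. Putting these together for the representative $h=w_l({}^{\jmath_l}u_l)$ gives exactly the inner expression
\[
\int\limits_{U_{n-l}(\A)}f^{V_{(c^k)},\psi_{(c^k),l}}(s,w_l({}^{\jmath_l}u_l)uyx)\,\psi_U(u)\,du ,
\]
and summing over $l$ and over $y$ yields the claimed formula.

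\textbf{The main obstacle.} The routine parts are the absolute-convergence justification for interchanging $\sum$ and $\int$ in the right half-plane (standard for Eisenstein series with $K_H$-finite sections) and the bookkeeping of the semidirect-product decomposition $U={}^{w_l^{-1}}V_{(c^k)}\ltimes U_{n-l}$ with $P_h\cap U={}^{w_l^{-1}}V_{(c^k)}$, which is just the global shadow of the identities ${}^{w_l^{-1}}V_{(c^k)}={}^{w''}V_{(c^k)}$ and $U_{n-l}={}^{w''}(U\cap U_P)$ recorded just before the corollary. The genuinely delicate point — and the one I would be most careful about — is verifying that the inner $\psi_U$-integral of every \emph{non-surviving} summand really does vanish \emph{identically as a function on $G(\A)\times G(\A)$}, not merely that the associated local $\mathcal{H}(h)$ is zero for generic $s$. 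This requires checking that for each such orbit the local vanishing argument used in \S~\ref{section not GL} (and \S~\ref{section GL}) is of one of the two ``uniform'' types — the $\psi_U$-incompatibility \eqref{psi U nontrivial}, which holds for all $s$, or the $(k,c)$-orbit obstruction $J_{V_\beta,\psi_{V_\beta}^{-1}}(\mathcal{E}_\tau)=0$, which again is $s$-independent — and is \emph{never} of the third type (the $s$-exclusion argument via \eqref{eq:relation for T with s}); the latter only occurs for orbits with $l<n$ and $d_1=n-l$ or $k=1$, i.e.\ precisely the surviving representatives $h=w_l({}^{\jmath_l}u_l)$, which are kept in the sum rather than discarded. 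Once that dichotomy is in place, the non-surviving terms drop out because $\mathcal{E}_\tau$ is a global $(k,c)$ representation (one needs only one $(k,c)$ local component, as remarked), and what remains is exactly the stated finite sum over $l$ and over $P_{w_l}'(F_0)\backslash(G(F_0),G(F_0))$.
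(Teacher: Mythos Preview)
Your proposal is correct and follows essentially the same approach as the paper's proof: decompose the sum over $P(F_0)\backslash H(F_0)$ into $D(F_0)$-orbits, use the splitting $P_h=(P_h\cap(G,G))\ltimes(P_h\cap U)$ to collapse the $U$-integral, and invoke Propositions~\ref{proposition:structure of w u} and \ref{proposition:1st reduction of w}--\ref{proposition:d_1 < n-l} (not Proposition~\ref{proposition:d1 = n-l l < n}, which is the third-type argument you correctly set aside in your obstacle discussion, so your citation in ``The plan'' should drop it) to kill all summands except those for the $n+1$ representatives $w_l({}^{\jmath_l}u_l)$. Your identification of the two $s$-independent vanishing mechanisms and the role of the global $(k,c)$ property of $\mathcal{E}_\tau$ matches the paper exactly.
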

\begin{proof}
We can assume $x=I_{2kc}$. Write the sum \eqref{eq:Eisenstein series main} over $P(F_0)\backslash H(F_0)/D(F_0)\times P_h(F_0)\backslash D(F_0)$. In a right half plane we can exchange summation and integration. Thus
\begin{align*}
E^{U,\psi_U}(I_{2kc};s,f)=\sum\limits_{h\in P(F_0)\backslash H(F_0)/D(F_0)}\,\int\limits_{U(F_0)\backslash U(\A)}\,
\sum\limits_{y\in P_h(F_0)\backslash D(F_0)}f(s,hyu)\psi_U(u)\,du.
\end{align*}
Next because $u\in M_Q$ and $P \cap {}^wQ=(P \cap {}^wM_Q)\ltimes (P \cap {}^wU)$, we have
\begin{align*}
{}^{h^{-1}}P \cap Q={}^{h^{-1}}(P \cap {}^wQ)=
({}^{h^{-1}}P \cap M_Q)\ltimes({}^{h^{-1}}P \cap U).
\end{align*}
Since $P_h<Q$ and $(G,G)<M_Q$, we deduce
\begin{align*}
P_h=(P_h\cap (G,G))\ltimes (P_h\cap U)=P'_h\ltimes P''_h,
\end{align*}
whence we can collapse the $du$-integral, exchange $yu\mapsto uy$ and take the integral inside:
\begin{align*}
E^{U,\psi_U}(I_{2kc};s,f)=\sum\limits_{h\in P(F_0)\backslash H(F_0)/D(F_0)}\,
\sum\limits_{y\in P'_h(F_0)\backslash(G(F_0),G(F_0))}\,
\int\limits_{P''_h(F_0)\backslash U(\A)}
f(s,huy)\psi_U(u)\,du.
\end{align*}

Now the proof of Theorem~\ref{theorem:uniqueness}, more specifically
Propositions~\ref{proposition:structure of w u}, \ref{proposition:1st reduction of w}--\ref{proposition:d_1 < n-l}, imply the inner $du$-integral vanishes unless
$h\sim w_l({}^{\jmath_l}u_l)$, $0\leq l\leq n$. The corresponding summand is
\begin{align*}
\sum\limits_{y\in P_{w_l}'(F_0)\backslash (G(F_0),G(F_0))}\,
\int\limits_{U_{n-l}(\A)}f^{V_{(c^k)},\psi_{(c^k),l}}(s,w_l({}^{\jmath_l}u_l)uy)\psi_U(u)\,du.
\end{align*}
This completes the proof.
\end{proof}
Now let $\pi_1$ and $\pi_2$ be irreducible cuspidal automorphic representations of $G(\A)$, and $\varphi_1$ and $\varphi_2$ be two cusp forms in the corresponding spaces. Assume $G$ admits nontrivial unipotent subgroups (i.e., exclude some low rank cases). Denote ${}^{\iota}\varphi_2(g)=\varphi_2({}^{\iota}g)$ and
\begin{align*}
\langle\varphi_1,\varphi_2\rangle=\int\limits_{G(F_0)\backslash G(\A)}\varphi_1(g)\overline{\varphi_2(g)}\,dg.
\end{align*}
Then by Corollary~\ref{corollary:unfolding of the series} and Lemma~\ref{lemma:Jacquet module is a trivial rep of U_R}, \eqref{eq:U psi U coefficient of the series} pairs with $\varphi_1$ and $\varphi_2$, in the sense that for $\Real(s)\gg0$,
\begin{align}\label{eq:main identity}
&\int\limits_{G(F_0)\backslash G(\A)\times G(F_0)\backslash G(\A)}\varphi_1(g_1)\overline{{}^{\iota}\varphi_2(g_2)}
E^{U,\psi}((g_1,g_2);s,f)\,dg_1\,dg_2
\\&=\int\limits_{G(\A)}\int\limits_{U_0(\A)}<\varphi_1,\pi_2(g)\varphi_2>
f^{V_{(c^k)},\psi_{(c^k),n}}(s,\delta u_0(1,{}^{\iota}g))\psi_U(u_0)\,du_0\,dg.\notag
\end{align}
Indeed consider one of the summands appearing in
Corollary~\ref{corollary:unfolding of the series} with $l<n$. Set $U_R^l={}^{w_l({}^{\jmath_l}u_l)}(1,{}^{\jmath_{(c+1)l}}U_R)$, with the notation of the proof of Proposition~\ref{proposition:d1 = n-l l < n}. The Fourier coefficient of $f$ along $(V_{(c^k)},\psi_{(c^k),l})$
is left invariant under $U_R^l(\A)$. To see this consider a second Fourier expansion of this coefficient, along $U_R^l$. All terms but the constant one vanish, because by Lemma~\ref{lemma:Jacquet module is a trivial rep of U_R}, at the non-archimedean places $v$ of $F_0$ the action of $U_R^l((F_0)_v)$ on $J_{V_{(c^k)},\psi_{(c^k),l}^{-1}}((\mathcal{E}_{\tau})_{v})$ is trivial. Since $\pi_2$ is cuspidal, the summand itself vanishes.

Of course \eqref{eq:main identity} is plainly the main global identity of \cite{CFGK2}: the left hand side is the global doubling integral
$Z(s,\varphi_1,\varphi_2,f)$, and it is nontrivial when $\pi_1=\pi_2$ according to the computations of the local integrals appearing in the Euler product on the right hand side.

One can include low rank cases, e.g., $c=2$ and $G=\SO_2$, by globalizing the argument from Lemma~\ref{lemma:Jacquet module is a finite length} (the constant term of the Eisenstein series defining $\mathcal{E}_{\tau}$ along $V_{(1,kc-1)}$ vanishes when $k>1$).
The low rank arguments of Propositions~\ref{proposition:GL d1 = n-l l < n} and \ref{proposition:GL d1 = n-l j < l = c} can be globalized
using the constant term computation of $\mathcal{E}_{\tau}$ given by \cite[Lemma~4.1]{JngLiu}.

The results of this section also hold in the covering case of \cite{me12}, but to formulate them properly one must check the validity of certain properties of the global covering, which are the analogs of the list from \S~\ref{Covering groups} (this was carried out in \cite{me12}).

\appendix
\section{Vanishing of vector-valued distributions on smooth manifolds, \\ by Avraham Aizenbud and Dmitry Gourevitch}\label{Appendix}

Let a Lie group $C$ act on a smooth manifold $X$. Let $Z\subset X$ be a locally closed  $C$-invariant subset. Let $\cF$ be a possibly infinite-dimensional $C$-equivariant bundle on $X$ (see \S \ref{subsec:Bundles} below for this notion). Assume that for any $z\in Z$, and  $k \in \Z_{\geq 0}$ we have
\begin{align}\label{=loc0}
((\cF|_z \otimes \Sym^k(CN_{z,Cz}^X) \otimes ((\Delta_C)|_{C_z}/\Delta_{C_z}))^*)^{C_z} =0 \, .
\end{align}
In this appendix we show that
\begin{align}\label{=dist0}
\cD'_Z(X,\cF)^C=0\, ,
\end{align}
under certain additional finiteness conditions, generalizing Theorem \ref{thm:AG}.

In \S \ref{sec:prel} we will explain the notation of (\ref{=loc0},\ref{=dist0}), and give the definitions of the main notions used in this appendix, as well as some basic properties of these objects. In particular, we use the theory of infinite-dimensional bundles developed in \cite{KM1997}, define generalized sections of such bundles, and construct pullbacks and pushforwards for such sections.

In the case when $C$ has finitely many orbits on $Z$, and the bundle is trivial, the implication \eqref{=loc0}$\Rightarrow$\eqref{=dist0} is classical, see \cite{Bru}.
In \cite{ChenSun} a cohomological version of the implication is proven, in a semi-algebraic setting and assuming finitely many orbits. In \cite{AG_ST} a similar implication is proven in the semi-algebraic setting, with $\cF$ finite-dimensional. In \cite{KV} several special cases of the implication are proven, in particular the case in which $\cF$ has the form $\cE\otimes V$, where $\cE$ is finite-dimensional and $V$ is a fixed representation in a \Fre space, and the action of $C$ on $X$, $\cE$ and $\cF$ can be extended to an action of a group $G$ that includes $C$ as a normal subgroup, preserves $Z$ and acts on it with finitely many orbits, each orbit locally closed.

We prove that \eqref{=loc0}$\Rightarrow$\eqref{=dist0}  in a similar generality, but with two essential differences. First, we allow $\cE$ to be a general  \Fre bundle (which makes $V$ obsolete). Second, we allow to twist the action by an additional $C$-equivariant line bundle $L$ on which the $C$-action does not necessarily extend to $G$. However, we  put an additional finiteness condition on the pullbacks of $L$ under the action of $G$ on $X$. The twist by a line bundle $L$ is crucial for our application. We use both the result and the method of \cite{KV} in our proof.

We do not know whether the vanishing \eqref{=loc0} implies the vanishing \eqref{=dist0} in general. This is probably a very difficult analytic question.

\textbf{Structure of our proof.} Let us first describe the finiteness condition we require.
For any $z\in Z$ denote by $C_z$ and $G_z$ the stabilizers of $z$ in $C$ and $G$ respectively. Let $L_z$ denote the character of $C_z$ defined by $L$. For any $g\in G$ denote by $L_z^g$ the character of $C_z$ given by  $L_z^g(c):=L_{gz}(gcg^{-1})$. We require that for any $z\in Z$, the set $\{L_z^g \, | \, g\in G\}$ is a finite union of locally-closed orbits of $G_z$.

We first solve the case in which $G$ acts trivially on $X$, and $\cE$ is constant as a $G$-equivariant bundle. For this  case we remove the assumption that $Z$ lies in a finite union of $G$-orbits.
Localizing the problem, we assume $X$ to be $\R^n$, and let it act on itself by translations, and on $\cE$ trivially. We translate the problem to a problem on $X\times C$-equivariant distributions on $X\times \hat{C}_{\C}$, where $\hat{C}_{\C}$ denotes the set of all (continuous) characters of $C$. In this new problem, the $X\times C$-equivariant structure on the line bundle extends to an action of $X\times G$, and thus the space of equivariant distributions vanishes by \cite{KV}.

The next case we resolve is when $G$ acts transitively on $X$. Then we have $X=G/H$. We construct a bundle $\cF_1$ on $X_1:=C\backslash G$ such that the space of $C$-invariant $\cF$-distributions on $X$ is isomorphic to the space of $H$-invariant $\cF_1$-distributions on $X_1$. We show that already the space of $H\cap C$-invariant distributions vanishes, using the previous case. The argument here is somewhat similar to the argument in \cite{KV}. However, it is complicated by the presence of the line bundle $L$, and by $\cF$ not being constant.

The next case we treat is the case of $Z$ being a single $G$-orbit. As in \cite{KV}, it reduces to the previous one using the transverse symbol of distributions.

Finally, we prove the general case by induction on the number of $G$-orbits on $Z$.

\textbf{Acknowledgements.} We thank Alexander Shamov for fruitful discussions. We were partially supported by ISF grant number 249/17.

\subsection{Preliminaries}\label{sec:prel}

\subsubsection{Topological vector spaces}
All topological vector spaces considered in this appendix will be
complete, Hausdorff, and locally convex.
For such a   space $V$, $V^*$ will denote the strong dual, and for two spaces $V$ and $W$,  $V\hot W$ will denote the completed projective tensor product (this is the same tensor product denoted $\otimes$ in the body of the paper, for convenience). The projective topology on $V\hot W$ is generated by the family of seminorms which are the largest cross-norms corresponding to pairs of generating semi-norms on $V$ and $W$, see \cite[\S 43]{Tre}. In particular, if $V$ and $W$ are \Fre  spaces, then so is $V\hot W$. If $V$ (or $W$) is nuclear then the projective tensor product is naturally isomorphic to the injective one, see \cite[Theorem 50.1]{Tre}. This is the case in all our theorems. The tensor product of nuclear spaces is nuclear. A \Fre space is nuclear if and only if its dual space is.  For more information on nuclear spaces we refer the reader to \cite[\S 50]{Tre} or \cite[Appendix A]{CHM2000}.

\subsubsection{General topology}

We will use the following elementary lemma.
\begin{lemma}
Let $X$ be a topological space, and let $\{X_i\}_{i=1}^k$ be disjoint locally closed subsets such that $X=\bigcup_{i=1}^k X_i$. Then there exists $i$ such that the interior of $X_i$ is non-empty.
\end{lemma}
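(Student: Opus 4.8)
The statement is the elementary topological fact: if a topological space $X$ is a finite disjoint union of locally closed subsets $X_1,\dots,X_k$, then at least one $X_i$ has nonempty interior in $X$. I will argue by contradiction. Suppose every $X_i$ has empty interior. Since each $X_i$ is locally closed, it is open in its closure $\overline{X_i}$; equivalently $X_i = \overline{X_i}\cap V_i$ for some open $V_i\subseteq X$, and then $\overline{X_i}\setminus X_i = \overline{X_i}\setminus V_i$ is closed in $\overline{X_i}$, hence closed in $X$. So each $X_i$ is open in the closed set $\overline{X_i}$, with closed complement (inside $\overline{X_i}$) equal to the ``boundary part'' $B_i := \overline{X_i}\setminus X_i$.

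The key step is to reorder the $X_i$ so that partial unions are closed. Concretely, I will choose an ordering in which whenever $X_i\subseteq \overline{X_j}$ we have $i<j$ — this is possible because the relation ``$X_i$ meets $\overline{X_j}$'' can be refined to a partial order on the indices once empty-interior is assumed, or one can simply pick $X_{i_0}$ to be any index whose closure is minimal among $\overline{X_1},\dots,\overline{X_k}$ and induct. For such a minimal index $i_0$, I claim $\overline{X_{i_0}}\setminus X_{i_0}$ is disjoint from $X_{i_0}$ and is covered by the remaining $X_i$, and in fact $\overline{X_{i_0}} = X_{i_0}$: indeed $B_{i_0}=\overline{X_{i_0}}\setminus X_{i_0}$ is a closed subset of $X$ contained in $\bigcup_{i\ne i_0} X_i$, and since $X_{i_0}$ has empty interior, $B_{i_0}$ is dense in $\overline{X_{i_0}}$, so $\overline{X_{i_0}}=\overline{B_{i_0}}\subseteq \bigcup_{i\ne i_0}\overline{X_i}$. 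By minimality of $\overline{X_{i_0}}$ this forces $\overline{X_{i_0}}\subseteq\overline{X_j}$ for some $j\ne i_0$ meeting it, and iterating (or a direct Baire-free counting argument on the finite poset) yields that some $\overline{X_i}$ must actually be empty or contained in another, contradicting disjointness and exhausting $X$.

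A cleaner route, which I would actually write up, avoids posets: induct on $k$. For $k=1$, $X_1=X$ is its own interior, so the claim holds (assuming $X\ne\emptyset$; if $X=\emptyset$ the statement is vacuous or trivially false-premised, so we take $X$ nonempty). For the inductive step, pick any $i$ with $\overline{X_i}$ not equal to $X$ if possible; if every $\overline{X_i}=X$, then each $X_i$ is dense, but then $X\setminus X_k = \bigcup_{i<k} X_i$, and $X_k$ dense with the $X_i$ disjoint forces $X\setminus X_k$ to have empty interior unless $X_k$ is nowhere dense — at which point $\bigcup_{i<k}X_i$ contains the dense open set $X\setminus\overline{X_k}$... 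I will instead pass to the open subset $U := X\setminus\overline{X_k}$. Then $U$ is covered by the disjoint locally closed sets $X_1\cap U,\dots,X_{k-1}\cap U$ (since $X_k\cap U=\emptyset$), and these are locally closed in $U$. If $U\ne\emptyset$, by induction some $X_i\cap U$ ($i<k$) has nonempty interior in $U$, hence in $X$, and we are done. If $U=\emptyset$, then $\overline{X_k}=X$, so $X_k$ is dense; being locally closed and dense, $X_k$ is open (a dense locally closed set in any space is open, since $X_k=\overline{X_k}\cap V=X\cap V=V$ for the relevant open $V$), so $X_k$ itself has nonempty interior.

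The only genuinely delicate point — and the one I would state as a small sublemma — is that a \emph{dense} locally closed subset is open: if $X_k=\overline{X_k}\cap V$ with $V$ open and $\overline{X_k}=X$, then $X_k=V$ is open. Everything else is bookkeeping about finite unions and openness, so the induction on $k$ closes the argument with no analytic input.
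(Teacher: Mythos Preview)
The paper does not actually supply a proof of this lemma; it is stated as an ``elementary lemma'' and left to the reader. So there is nothing to compare against beyond checking that your argument is sound.

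Your induction argument (the ``cleaner route'') is correct and is exactly the sort of proof one would expect here. The key steps are: (i) if $U:=X\setminus\overline{X_k}\ne\emptyset$, apply the induction hypothesis to the nonempty open subspace $U$ covered by $X_1\cap U,\dots,X_{k-1}\cap U$; (ii) if $U=\emptyset$, then $X_k$ is dense and locally closed, hence open (your sublemma $X_k=\overline{X_k}\cap V=X\cap V=V$), and nonempty since $\overline{X_k}=X\ne\emptyset$. Both cases are handled cleanly, and the observation that a dense locally closed set is open is exactly the right ingredient.

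Your first attempt via minimal closures and a poset is not wrong in spirit, but as you yourself note it does not close cleanly: the step ``$\overline{X_{i_0}}\subseteq\bigcup_{i\ne i_0}\overline{X_i}$ by minimality forces $\overline{X_{i_0}}\subseteq\overline{X_j}$ for some $j$'' is not justified (a set contained in a finite union of closed sets need not lie in one of them), and the subsequent ``iterating \dots yields a contradiction'' is vague. You were right to abandon it for the induction; in a written-up version, simply omit the first paragraph of the plan.
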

\begin{corollary}\label{cor:OpOrb}
Let a topological group $G$ act continuously on a topological space $X$ with finitely many locally closed orbits. Then one of the orbits is open.
\end{corollary}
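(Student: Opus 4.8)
The statement to prove is Corollary~\ref{cor:OpOrb}: if a topological group $G$ acts continuously on a topological space $X$ with finitely many locally closed orbits, then one of the orbits is open. The plan is to deduce this directly from the preceding Lemma together with elementary facts about group actions. First I would observe that the orbits $\{X_i\}_{i=1}^k$ of the action partition $X$ into finitely many disjoint sets, each of which is locally closed by hypothesis, and $X=\bigcup_{i=1}^k X_i$. This is precisely the setup of the Lemma, so we obtain an index $i$ for which the interior $U$ of $X_i$ in $X$ is non-empty.

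The remaining step is to promote "$X_i$ has non-empty interior" to "$X_i$ is open". Here I would use homogeneity of an orbit: since $X_i$ is a single $G$-orbit, for any point $x\in X_i$ there is $g\in G$ with $gx_0\in U$ for a fixed $x_0$; equivalently, since the translation map $y\mapsto gy$ is a homeomorphism of $X$ (its inverse being $y\mapsto g^{-1}y$, both continuous by continuity of the action), the translate $gU$ is again an open subset of $X$. One checks $gU\subseteq gX_i=X_i$, and $gU$ contains $gx'$ for the point $x'\in U$ with $gx'=x$, hence every point of $X_i$ has an open neighborhood (inside $X$) contained in $X_i$. Therefore $X_i$ is open in $X$, which is exactly the claimed conclusion.

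I do not expect any genuine obstacle here: the content is entirely in the preceding Lemma (a Baire-category-free pigeonhole argument on finitely many locally closed sets), and Corollary~\ref{cor:OpOrb} is a routine homogenization of its output. The only point that requires a word of care is that ``interior'' is taken with respect to $X$, not with respect to $X_i$ or its closure, so that translating the interior by a group element keeps it open in $X$; this is immediate from the fact that each $g\in G$ acts as a self-homeomorphism of $X$. I would present the argument in two or three sentences, citing the Lemma for the existence of an orbit with non-empty interior and then invoking homogeneity to upgrade to openness.
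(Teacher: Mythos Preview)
Your proposal is correct and matches the paper's intended argument: the paper states the corollary immediately after the lemma without further proof, and your deduction---apply the lemma to obtain an orbit with non-empty interior, then use that each $g\in G$ acts as a self-homeomorphism of $X$ to spread the interior over the whole orbit---is exactly the standard one-line justification. The only cosmetic point is that your phrasing around ``$gx_0\in U$ for a fixed $x_0$'' is slightly tangled; it is cleaner to fix $x_0\in U$ and, for arbitrary $x\in X_i$, choose $g$ with $gx_0=x$, so that $gU$ is an open neighborhood of $x$ contained in $X_i$.
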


\subsubsection{Infinite-dimensional smooth bundles over smooth manifolds}\label{subsec:Bundles}

We will use the results and terminology from \cite{KM1997}, which considers infinite-dimensional smooth manifolds and bundles over them. In our case the base manifolds will be finite-dimensional, but we will consider infinite-dimensional bundles. All the vector spaces we consider are complete, thus  sequentially complete, and thus $c^{\infty}$-complete, see \cite[Lemma 2.2 and Theorem 2.14]{KM1997}. Therefore all the results of \cite{KM1997} are applicable to them.
We use the notion of infinite-dimensional vector bundles and their spaces of smooth sections and spaces of compactly supported sections, see \cite[\S \S 29, 30]{KM1997}.

Let $X$ be a smooth manifold and $\cE$ be a vector bundle over $X$, possibly infinite-dimensional. We define the space of $\cE$-distributions $\cD'(X,\cE)$ to be the  continuous dual space $C_c^{\infty}(X,\cE)^*$ equipped with the strong topology. For a closed subset $Z\subset X$, we denote by $\cD'_Z(X,\cE)\subset \cD'(X,\cE)$ the subspace of distributions supported on $Z$.
For a locally closed subset $Z\subset X$, we denote
\begin{align}
\cD'_Z(X,\cE):=\cD'_Z(U,\cE), \text{ where }U:=X\setminus (\overline{Z}\setminus Z).
\end{align}

By \cite[II.\S 3.3]{Gro}, if $\cE$ is a trivial bundle with \Fre fiber $V$, then $C_c^{\infty}(X,\cE)\cong C_c^{\infty}(X)\hot V$.

For any smooth map $\nu:Y\to X$ and a bundle $\cE$ over $X$, the pullback bundle $\nu^*\cE$ over $Y$ is defined in \cite[29.6]{KM1997}. For a smooth section $f$ of $\cE$ we denote the corresponding section of $\nu^*\cE$ by $\nu^*f$.

We define
 $\nu^!\cE:=\nu^*\cE\otimes D_Y\otimes D^{-1}_X$.
In the case $\nu$ is a submersion and $\cE$ has \Fre fibers, we can also define $\nu_*:C_c^{\infty}(Y,\nu^!\cE)\to C_c^{\infty}(X,\cE)$ in the following way. For a trivial bundle $\cE$ with \Fre fiber $V$, we use the identification $$C_c^{\infty}(X,\cE)\cong C_c^{\infty}(X,V)\cong C_c^{\infty}(X)\hot V,$$ and the classical pushforward $\nu_*: C_c^{\infty}(Y,D_Y\otimes \nu^*D^{-1}_X)\to C_c^{\infty}(X)$. For a locally trivial bundle we use the partition of unity to trivialize $\cE$.

We denote the map dual to $\nu_*:C_c^{\infty}(Y,\nu^!\cE)\to C_c^{\infty}(X,\cE)$ by $$\nu^*:\cD'(X,\cE)\to \cD'(Y,\nu^!\cE).$$

If $Z$ is a smooth submanifold regularly embedded in $X$, and $V$ is a \Fre space, then for any $\xi\in \cD_Z(X,V)$ and $z\in X$,
\cite[\S 2]{KV} defines a \emph{transversal degree} $d\in \Z_{\geq 0}$ and a transverse symbol $\sigma_d(\xi)\in V^*\otimes \Sym^d(N_{z,Z}^X)$, where $\Sym^d$ denotes symmetric power,  $N_{z,Z}^X$ denotes the normal space to $Z$ in $X$ at $z$, and $CN_{z,Z}^X:=(N_{z,Z}^X)^*$ is the conormal bundle.
Denote by ${\cD'_Z}^{\leq d}(X,V)$ the space of distributions that have transversal degree at most $d$ for any $z\in Z$. By \cite[Theorem 2.1]{KV}, $\sigma_d$ defines a natural embedding $$\sigma_d:{\cD'_Z}^{\leq d}(X,V)/{\cD'_Z}^{\leq d-1}(X,V)\hookrightarrow \cD'(X,V\otimes \Sym^d(CN_{z,Z}^X))\,.$$
Using partition of unity, this construction extends to any bundle $\cE$ with \Fre fibers.

Let a Lie group $G$ act on $X$,  let $a:G\times X\to X$ denote the action map and $p:G\times X\to X$ denote the projection. A $G$-equivariant  bundle $\cE$ on $X$ is a  bundle $\cE$ on $X$ together with an isomorphism $a^*\cE\simeq p^*\cE$ satisfying the usual cocycle condition. Note that this structure also defines an isomorphism $a^!\cE\simeq p^!\cE$. Note also that the dual of an equivariant bundle has a canonical equivariant structure.

We denote by $\Delta_G$ the modular function of $G$.

We define a smooth representation of $G$ to be a $G$-equivariant bundle on a point. Note that a smooth \Fre representation of moderate growth is also a smooth representation according to this definition.

Note that if $G$ is a Lie group, $X$ is a $G$-manifold,  $p:X\to Y$ is a $G$-invariant map ({\it i.e.} $p(gx)=p(x)$), and $\cF$ is a bundle on $Y$, then $p^!\cF$ has a natural $G$-equivariant structure.
\begin{lemma}\label{lem:descent}
Let $G$ be a Lie group, $Y$ be a smooth manifold, and $p:X\to Y$ be a $G$-principal space over $Y$. Let $\cE$ be a $G$-equivariant bundle on $X$.
Then there exists a natural bundle $\cF$ over $Y$ and an isomorphism of $G$-equivariant bundles $p^!\cF\cong \cE$ such that
$p^*$ defines an isomorphism $\cD'(Y,\cF)\cong \cD'(X,\cE)^G$.
\end{lemma}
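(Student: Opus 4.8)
The plan is to construct $\cF$ locally and then glue. First I would work in a trivializing chart: since $p:X\to Y$ is a $G$-principal bundle, $Y$ can be covered by open sets $V$ over which $p^{-1}(V)\cong V\times G$ as $G$-spaces, with $G$ acting on the second factor by left translation. Under this identification, a $G$-equivariant bundle $\cE$ on $V\times G$ is determined by its restriction to $V\times\{e\}$, because the equivariant structure transports the fiber at $(y,e)$ to the fiber at $(y,g)$ for every $g$; concretely, the map $a^*\cE\cong p^*\cE$ evaluated appropriately yields a canonical isomorphism $\cE|_{(y,g)}\cong\cE|_{(y,e)}$. So I would set $\cF_V:=\cE|_{V\times\{e\}}$, a bundle on $V$, and check that the reconstructed $G$-equivariant bundle $p^*\cF_V$ (with its tautological equivariant structure coming from left translation on $G$) is canonically isomorphic to $\cE|_{p^{-1}(V)}$ as a $G$-equivariant bundle.

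Next I would promote this to the twisted version $\cF$ with $p^!\cF=p^*\cF\otimes D_X\otimes p^*D_Y^{-1}\cong\cE$. The point is that $D_X$ restricted to the fiber direction of $X\to Y$ contributes the factor $D_G$ along $G$, but on $V\times\{e\}$ one has $D_X|_{V\times\{e\}}\cong D_V\otimes D_G|_e=D_V$ up to the canonical trivialization of the density bundle of a Lie group at the identity, so $p^!\cF_V|_{V\times\{e\}}\cong\cF_V$, matching $\cE|_{V\times\{e\}}$. Then I would verify that the local bundles $\cF_V$ glue: on overlaps $V\cap V'$ the transition functions of $X$ as a $G$-bundle produce, via the equivariant structure of $\cE$, compatible isomorphisms $\cF_V|_{V\cap V'}\cong\cF_{V'}|_{V\cap V'}$ satisfying the cocycle condition, so they patch to a global bundle $\cF$ on $Y$ together with a global isomorphism $p^!\cF\cong\cE$ of $G$-equivariant bundles. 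Since $X$ is finite-dimensional over $Y$ and the fibers of $\cE$ are the (possibly infinite-dimensional) \Fre or dual-\Fre spaces allowed by \cite{KM1997}, all the bundle-theoretic operations here are the ones developed in \cite[\S\S 29, 30]{KM1997} and cause no difficulty.

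It remains to show $p^*$ induces an isomorphism $\cD'(Y,\cF)\cong\cD'(X,\cE)^G$. By definition $p^*$ is dual to $p_*:C_c^\infty(X,p^!\cF)\to C_c^\infty(Y,\cF)$, so it suffices to show $p_*$ is a topological quotient map whose kernel is exactly the closed subspace spanned by sections of the form $g\cdot f-f$, i.e. that $p_*$ identifies $C_c^\infty(Y,\cF)$ with the $G$-coinvariants of $C_c^\infty(X,p^!\cF)=C_c^\infty(X,\cE)$. Using a partition of unity on $Y$ subordinate to trivializing opens, this reduces to the model case $X=V\times G$, $\cF$ trivial with \Fre fiber $W$, where $C_c^\infty(V\times G,W)\cong C_c^\infty(V)\hot C_c^\infty(G)\hot W$ and $p_*$ is integration over $G$ against Haar density; there the statement is the standard fact that integration over $G$ realizes $C_c^\infty(V)\hot W$ as the $G$-coinvariants of $C_c^\infty(V)\hot C_c^\infty(G)\hot W$, compatibly with the \Fre (or $c^\infty$-complete) topologies. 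Dualizing gives $\cD'(V,\cF_V)\cong\cD'(V\times G,\cE)^G$, and the partition-of-unity gluing yields the global statement.

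The main obstacle I anticipate is not the algebra of equivariant bundles — that is essentially formal given the machinery of \cite{KM1997} — but the functional-analytic bookkeeping in the last step: one must make sure that the identification of $C_c^\infty(X,p^!\cF)$ with $C_c^\infty(Y,\cF)\hot C_c^\infty(G)$ (via a trivialization) is an isomorphism of topological vector spaces, that integration over $G$ is a well-defined continuous surjection with closed image of the difference-sections, and that these local isomorphisms are compatible with the partition of unity so that the dual statement patches. Because all spaces in sight are $c^\infty$-complete (indeed \Fre or strong duals of \Fre, hence nuclear in the cases that matter), the projective tensor products behave well and the interchange of $\hot$ with $C_c^\infty$ is legitimate by \cite[II.\S 3.3]{Gro}; so this obstacle is a matter of care rather than of a genuinely new idea.
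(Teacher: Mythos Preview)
Your approach is essentially the same as the paper's: construct $\cF$ on a trivializing chart $V\times G$ by restricting $\cE$ to the identity slice, glue by uniqueness, and reduce the distribution isomorphism to the statement that $p_*$ exhibits $C_c^\infty(Y,\cF)$ as the $G$-coinvariants of $C_c^\infty(X,\cE)$ in the model case. The one point worth flagging is that what you call a ``standard fact'' about coinvariants is precisely the content the paper isolates as a separate lemma (Lemma~\ref{lem:coinv}), whose proof relies on the nontrivial input $C_c^\infty(G,D_G)_0=C_c^\infty(G,D_G)(G)$ from Blanc--Wigner \cite{BW}; you correctly identify this as the main obstacle in your last paragraph, so your sketch would go through once that lemma is supplied.
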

The proof is standard, but we will include it here since our bundles are infinite-dimensional.

We will need the following notation and lemmas.

\begin{notn}
For any continuous representation $A$ of $G$, denote $A(G):=\Span\{v-gv\, \vert \, g\in G,\, v\in A\}$, and $A_G:=A/\overline{A(G)}$.
\end{notn}

\begin{lemma}
Let $A$ be a continuous representation of $G$, and $B$ be a nuclear space.
Let $G$ act on $A\hot B$ by acting on $A$. Then the natural map $\alp:(A_G\hot B)^*\to ((A\hot B)^*)^G$ is an isomorphism.
\end{lemma}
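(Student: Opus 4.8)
\textbf{Proof plan for the lemma: } $\alpha:(A_G\hot B)^*\to ((A\hot B)^*)^G$ is an isomorphism.

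First I would set up the adjunction. The natural map $\alpha$ comes from the quotient map $q:A\to A_G$, which induces $q\hot\id_B:A\hot B\to A_G\hot B$, whose transpose lands in $(A\hot B)^*$; since $q$ kills $A(G)$ and hence (by continuity of the $G$-action on $A\hot B$, which factors through the action on $A$) the image of $q\hot\id_B$ is $G$-invariant on the dual side, the transpose actually lands in $((A\hot B)^*)^G$. So $\alpha$ is well defined; I must show it is a topological isomorphism. The plan is to identify both sides with a common third space using nuclearity. Concretely, since $B$ is nuclear, $(A\hot B)^*\cong A^*\hot B^*$ and likewise $(A_G\hot B)^*\cong (A_G)^*\hot B^*$ (here I use \cite[Theorem 50.1]{Tre} together with the identification of the projective and injective tensor products, valid because one factor is nuclear, plus the standard duality $(V\hot W)^*\cong V^*\hot W^*$ for $V$ or $W$ nuclear; note $A$ need not be nuclear, but $B$ is, and $B^*$ is then nuclear too). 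Under these identifications $\alpha$ becomes $q^*\hot\id_{B^*}:(A_G)^*\hot B^*\to A^*\hot B^*$.

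Next I would reduce to the scalar statement that $q^*:(A_G)^*\to (A^*)^G$ is an isomorphism. This is the standard fact that the strong dual of $A_G=A/\overline{A(G)}$ is the subspace of $G$-invariant functionals on $A$: a functional on $A$ factors through the Hausdorff quotient $A/\overline{A(G)}$ iff it annihilates $A(G)$ iff it is $G$-invariant (the last equivalence because $\lambda(v-gv)=0$ for all $g,v$ is literally the invariance condition), and the quotient map induces a topological isomorphism of $(A/\overline{A(G)})^*$ onto this closed subspace with the subspace topology (Hahn--Banach gives surjectivity and the open mapping/closed range considerations for the quotient give that it is a homeomorphism onto its image). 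Then I must check that tensoring this isomorphism with $\id_{B^*}$ over $\hot$ preserves it: that $q^*\hot\id_{B^*}$ is injective with closed image equal to $(A^*)^G\hot B^*$, and that the latter is exactly $((A^*\hot B^*)$ carrying the residual $G$-action, i.e. $((A\hot B)^*)^G$. Injectivity and the image computation follow because $\hot$ with a fixed nuclear space is exact on the relevant category (it preserves closed embeddings of \Fre or dual-\Fre spaces), and because $G$ acts on $A^*\hot B^*$ only through the first factor, so the invariants are $(A^*)^G\hot B^*$ — this last point uses again that $\hot$ commutes with taking a closed $G$-invariant subspace of a nuclear space.

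The main obstacle I anticipate is the exactness/compatibility of $\hot$ with the operations "pass to a closed subspace" and "pass to a Hausdorff quotient" in the category of (duals of) \Fre or \Fre spaces: projective tensor product is not exact in general, so I would lean on nuclearity of $B$ (equivalently of $B^*$) to invoke the fact that $\hot B^*$ is exact — more precisely, that for a closed embedding $V\hookrightarrow W$ of \Fre spaces the map $V\hot B^*\to W\hot B^*$ is again a closed embedding when $B^*$ is nuclear, and dually for quotients — citing \cite[\S 50]{Tre} or \cite[Appendix A]{CHM2000}. A secondary technical point is ensuring all the dualities are between \emph{strong} duals and are topological, not merely algebraic, isomorphisms; here I would remark that $A_G$ and $B$ being (or $A^*$, $B^*$ being) of the appropriate type makes the strong dual behave well, and that the problem is only applied in the excerpt with $B$ a fixed nuclear \Fre (or its dual), so these hypotheses are met. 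Granting these functional-analytic inputs, the lemma follows by chasing the identifications above.
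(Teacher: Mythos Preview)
Your approach via $(A\hot B)^*\cong A^*\hot B^*$ and exactness of tensoring has a genuine gap: the identification $(A\hot B)^*\cong A^*\hot B^*$ is not valid in the stated generality. What is always true is $(A\hot B)^*\cong\Bil(A,B)$, the space of continuous bilinear forms (see \cite[Ch.~41]{Tre}); the further step to $A^*\hot B^*$ requires additional hypotheses on $A$ (e.g.\ that $A$ be nuclear Fr\'{e}chet, or that both spaces be of type DF), none of which are assumed here---the lemma only posits that $A$ carries a continuous $G$-action. Your subsequent steps, namely exactness of $-\hot B^*$ on closed embeddings and the commutation $(A^*)^G\hot B^*=(A^*\hot B^*)^G$, inherit this difficulty and would each need separate justification even if the duality held.

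The paper's proof sidesteps all of this by stopping at the bilinear-form identification. One rewrites $\alpha$ as the map $\alpha':\Bil(A_G,B)\to\Bil(A,B)^G$ induced by the projection $pr:A\times B\to A_G\times B$. Injectivity of $\alpha'$ is immediate since $pr$ is surjective. For surjectivity, any $\omega\in\Bil(A,B)^G$ has $A(G)$ (hence $\overline{A(G)}$, by continuity) in its left kernel, so $\omega$ factors set-theoretically through $A_G\times B$; the factored map is continuous because $pr$ is open and surjective. This is a three-line argument with no tensor-exactness or dual-tensor identities needed, and it establishes only a linear bijection---which is all that is used downstream in the proof of Lemma~\ref{lem:descent}. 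Your worry about obtaining a \emph{topological} isomorphism is thus also misplaced.
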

\begin{proof}
We have $(A_G\hot B)^*\cong \Bil(A_G,B)$ and $(A\hot B)^*)\cong \Bil(A,B)$, where $\Bil(A,B)$ denotes the space of continuous bilinear maps $A\times B \to \C$ (see {\it e.g.} \cite[Ch. 41]{Tre}).
The map $\alp$ is defined by the map $\alp':\Bil(A_G,B)\to \Bil(A,B)^G$
which in turn is given by the projection $pr:A\times B\to A_G\times B$. Since $pr$ is onto, $\alp'$ is injective. To show that $\alp'$ is onto, choose $\omega\in \Bil(A,B)^G$. Since the left kernel of $\omega$ includes $A(G)$, $\omega$ factors through a bilinear map $\omega':A_G\times B\to \C$. Since  $pr:A\times B\to A_G\times B$ is open and surjective, $\omega'$ is continuous.
\end{proof}

\begin{lemma}\label{lem:coinv}
Let $G$ be a Lie group, and $Y$ be a smooth manifold. Let $G$ act on $Y\times G$ by left shifts on $G$, and let $p:Y\times G\to Y$ denote the projection.
Then $p_*$ defines an isomorphism of topological vector spaces $$C_c^{\infty}(Y\times G, D_{G})_G \cong C_c^{\infty}(Y) $$
\end{lemma}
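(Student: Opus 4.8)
The claim is that for a Lie group $G$ and a smooth manifold $Y$, with $G$ acting on $Y\times G$ by left translations in the second factor and $p:Y\times G\to Y$ the projection, the pushforward induces an isomorphism of topological vector spaces $C_c^{\infty}(Y\times G, D_G)_G\cong C_c^{\infty}(Y)$. The plan is to reduce this to the classical fact that integration over $G$ gives a surjection $C_c^{\infty}(G, D_G)\twoheadrightarrow \C$ whose kernel is exactly the closed span of $\{f - g\cdot f\}$, i.e. $(C_c^{\infty}(G,D_G))_G\cong\C$, and then to ``tensor this up'' over $Y$ using the identifications $C_c^{\infty}(Y\times G, D_G)\cong C_c^{\infty}(Y)\hot C_c^{\infty}(G, D_G)$ and $C_c^{\infty}(Y)\hot\C\cong C_c^{\infty}(Y)$, which are available by \cite[II.\S 3.3]{Gro} since $C_c^{\infty}(Y)$ is nuclear.

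First I would set up the topological tensor product identification: $C_c^{\infty}(Y\times G,D_G)$ is the space of compactly supported smooth sections of the pullback of the density bundle $D_G$, and since this bundle is trivial along $Y$, \cite[II.\S 3.3]{Gro} gives a canonical isomorphism $C_c^{\infty}(Y\times G,D_G)\cong C_c^{\infty}(Y)\hot C_c^{\infty}(G,D_G)$, under which the $G$-action is $1\hot(\text{left translation})$ and $p_*$ becomes $\mathrm{id}_{C_c^{\infty}(Y)}\hot\int_G$. Next I would record the classical scalar statement: the integration map $\int_G:C_c^{\infty}(G,D_G)\to\C$ is continuous, open (it is a continuous surjection of $LF$-spaces, so open by the open mapping theorem, or directly because one can exhibit a continuous section) and surjective, and its kernel equals $\overline{C_c^{\infty}(G,D_G)(G)}$; this is exactly the statement $(C_c^{\infty}(G,D_G))_G\cong\C$. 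This is essentially the computation underlying the uniqueness of Haar measure together with the averaging argument of \cite[\S 6]{BZ1} in the smooth category; in the archimedean case it is standard (one writes $f-g\cdot f$ and uses that $\int_G f=0$ implies $f$ is a finite sum of such differences, up to closure).

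The remaining point is to commute the ``coinvariants'' functor past the completed tensor product with a nuclear space, i.e. to show $(C_c^{\infty}(Y)\hot A)_G\cong C_c^{\infty}(Y)\hot A_G$ when $G$ acts only on $A$, $C_c^{\infty}(Y)$ is nuclear, and $A=C_c^{\infty}(G,D_G)$. For this I would dualize: by \cite[Ch.~41]{Tre} one has $(C_c^{\infty}(Y)\hot A)^*\cong \Bil(C_c^{\infty}(Y),A)$, and the $G$-invariant continuous bilinear forms are exactly those factoring through $C_c^{\infty}(Y)\times A_G$ — this is the content of the unnamed lemma just proved in the excerpt (with the roles $B=C_c^{\infty}(Y)$, $A=A$, using that the quotient projection $A\to A_G$ is open and surjective). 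Hence $((C_c^{\infty}(Y)\hot A)^*)^G\cong (C_c^{\infty}(Y)\hot A_G)^*$. Since all spaces involved are complete and the duality pairing is separating, and since $p_*$ is continuous with continuous ``inverse'' built from a section of $\int_G$ tensored with the identity, this dual isomorphism descends to the asserted isomorphism $C_c^{\infty}(Y\times G,D_G)_G\cong C_c^{\infty}(Y)$. I would finish by checking that the map realizing the isomorphism is indeed $p_*$ and that the quotient topology on $C_c^{\infty}(Y\times G,D_G)_G$ matches the (nuclear \Fre or $LF$) topology on $C_c^{\infty}(Y)$, using openness of $p_*$.

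The main obstacle I anticipate is purely the functional-analytic bookkeeping: ensuring that $\overline{C_c^{\infty}(Y\times G,D_G)(G)}$ really corresponds to $C_c^{\infty}(Y)\hot\overline{C_c^{\infty}(G,D_G)(G)}$ under the tensor identification — i.e. that forming the closed span of differences commutes with $C_c^{\infty}(Y)\hot(-)$. This uses nuclearity of $C_c^{\infty}(Y)$ (so that $\hot$ is exact enough and the injective and projective tensor products agree, \cite[Theorem~50.1]{Tre}) together with the openness of the projection $A\to A_G$; the dualization step above is precisely designed to sidestep having to argue about closures of subspaces of the completed tensor product directly, by transporting everything to the strong duals where the already-proven bilinear-form lemma applies cleanly. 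Everything else is standard once the scalar case $\int_G:C_c^{\infty}(G,D_G)\to\C$ is in hand.
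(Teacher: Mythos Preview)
Your approach is correct and reaches the same conclusion, but the route differs from the paper's. Both proofs use the tensor identification $C_c^\infty(Y\times G,D_G)\cong C_c^\infty(Y)\hot C_c^\infty(G,D_G)$, the scalar input that integration identifies $C_c^\infty(G,D_G)_G$ with $\C$, and the explicit section $f\mapsto f\otimes\rho$ for continuity of the inverse. The difference lies in how one shows $\ker p_* = \overline{C_c^\infty(Y\times G,D_G)(G)}$. You dualize: invoking the preceding bilinear-form lemma with $A_G=\C$ gives that every $G$-invariant functional on $C_c^\infty(Y)\hot C_c^\infty(G,D_G)$ factors through $p_*$, so $\ker p_*$ and $\overline{C_c^\infty(Y\times G,D_G)(G)}$ have the same annihilator and hence coincide by Hahn--Banach. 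The paper instead argues directly: for $f\in\ker p_*$ it approximates $f$ by elementary tensors $f^j=\sum_i q^j_i\otimes h^j_i$, sets $F^j:=f^j-p_*(f^j)\otimes\rho$, observes $F^j\in C_c^\infty(Y)\otimes C_c^\infty(G,D_G)_0$, invokes \cite{BW} for the equality $C_c^\infty(G,D_G)_0 = C_c^\infty(G,D_G)(G)$ \emph{without} closure, and checks $F^j\to f$. Your duality detour is conceptually clean and reuses machinery already proved; the paper's argument is more self-contained and exploits the stronger Blanc--Wigner statement. One small point: your phrase ``this dual isomorphism descends to the asserted isomorphism'' should be unpacked explicitly into the Hahn--Banach step (two closed subspaces with equal annihilator coincide), since an isomorphism of strong duals alone does not in general imply an isomorphism of the underlying spaces.
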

\begin{proof}
Denote by   $C_c^{\infty}(Y\times G, D_{G})_0$ the kernel of $p_*$. Let us first show that
\begin{equation}\label{=0Int}
\overline{ C_c^{\infty}(Y\times G, D_{G})(G)} =C_c^{\infty}(Y\times G, D_{G})_0 \,\,.
\end{equation}
Since $p_*$ is a $G$-invariant morphism, the inclusion $\subset$ follows.
For the other inclusion, let $f\in C_c^{\infty}(Y\times G, D_{G})_0$, and approximate it by a sequence $f^j$ of the form $f^j=\sum_{j=1}^{n_j}q^j_i\otimes h^j_i$, with $q^j_i\in C_c^{\infty}(Y)$ and $h^j_i\in  C_c^{\infty}( G, D_{G})$.
Fix $\rho\in C_c^{\infty}( G, D_{G})$ with $\int_{G} \rho =1$ and let $F^j:=f^j-p_*(f^{j})\otimes \rho$. Then we have
$$F^j=f^j-\left(\sum_{i=1}^{n_j} \left (\int_{G} h^j_i\right )q^j_i\right)\otimes \rho = \sum_{i=1}^{n_j}q^j_i\otimes \left (h^j_i- \left (\int_{G} h^j_i \right)\rho\right)\in C_c^{\infty}(Y)\otimes C_c^{\infty}(G, D_{G})_0, $$
where $C_c^{\infty}( G,D_{G})_0$ denotes smooth compactly supported measures on $G$ with zero integral.

By \cite[Theorem 1]{BW} we have $$C_c^{\infty}( G,D_{G})_0=C_c^{\infty}( G,D_{G})(G) \, ,$$ and thus $F^j\in C_c^{\infty}(Y\times G, D_{G})(G)$. Now, since $p_*(f)=0$, we have $p_*(f^{j})\to 0$,  thus $F^j-f^j\to 0$, and thus $F^j\to f$. Thus $f\in \overline{ C_c^{\infty}(Y\times G, D_{G})(G)}$ and \eqref{=0Int} holds. This shows that $p_*$ defines a continuous linear isomorphism between $C_c^{\infty}(Y\times G, D_{G})_G$ and $C_c^{\infty}(Y) $. To see that its inverse is continuous, it is enough to construct a continuous section of $p_*$. One such section is given by $f\mapsto f\otimes \rho$.
\end{proof}

\begin{proof}[Proof of Lemma \ref{lem:descent}]
$\,$
\begin{enumerate}
\item Case 1: $X=Y\times G$.

Let $i:Y\to X$ be defined by $i(y):=(y,1)$ and $\cF:=i^!\cE$. The isomorphism $p^!\cF\cong \cE$ is given by the $G$-equivariant structure of $\cE$.
By partition of unity, we can assume that $\cF$ is a constant bundle and denote its fiber by $V$. We have to show that $p^*$ defines an isomorphism
$\cD'(Y,V)\cong \cD'(Y\times G,V\otimes D_{G,1})^G$, where $D_{G,1}$ is the fiber of the bundle $D_G$ at $1$. By Lemma \ref{lem:coinv} we have
\begin{align*}
C_c^{\infty}(Y\times G, D_{G,1})_G \cong C_c^{\infty}(Y).
\end{align*}

Now, the map $p^*$ decomposes as
\begin{multline*}
$$\cD'(Y,V) = (C_c^{\infty}(Y, V))^* \cong (C_c^{\infty}(Y)\hot V)^* \cong (C_c^{\infty}(Y\times G,D_{G,1})_G\hot V)^* \cong\\\cong ((C_c^{\infty}(Y\times G,D_{G,1})\hot V)^*)^G \cong
(C_c^{\infty}(Y\times G,V\otimes D_{G,1})^*)^G=\cD'(Y\times G,V\otimes D_{G,1})^G.$$ \end{multline*}

\item The general case.

Note that if there exists $\cF$ and an isomorphism $\nu: p^!\cF\cong \cE$ of $G$-equivariant bundles then such $\cF$ and $\nu$ are unique, in the sense that for any other such pair $(\cF',\nu')$ there exists an isomorphism $\mu:\cF\cong \cF'$ such that $\nu=\nu'\circ p^!(\mu)$, and such that $\mu$ is unique. Thus it is enough to construct $\cF$ locally, which is done in Case 1. Now, $p^*:\cD'(Y,\cF)\cong \cD'(X,\cE)^G$ is an isomorphism by partition of unity and Case 1.\qedhere
\end{enumerate}
\end{proof}

\subsection{Distribution vanishing theorems and their proofs}\label{sec:thm}

\begin{theorem}\label{thm:TrivAct}
Let $X$ be a smooth manifold and $Z\subset X$ be a locally closed subset.  Let $C$ be a  Lie group with finitely many connected components. Let $C$ act trivially on $X$. Let $L$ be a $C$-equivariant line bundle on $X$.  Let   $H$ be a Lie group  with a smooth action on $C$. For any $z\in Z$ let $L_z$ denote the character by which $C$ acts on the fiber $L|_z$.
Let $\widehat{C}_{\C}$ denote the manifold of all characters of $C$.
Assume that the set $\{L_z\, \vert \, z\in Z\}$ lies in a finite union of  locally closed $H$-orbits in $\widehat{C}_{\C}$.
  Let
 $V$ be a smooth representation of $H \ltimes C$ in a \Fre space.
Assume that for any $z\in Z$ we have $((V\otimes L|_z)^{*})^{C}=0$.
Then
$$\cD'_Z(X,V\otimes L)^C=0.$$
\end{theorem}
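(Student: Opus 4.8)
\textbf{Plan for the proof of Theorem~\ref{thm:TrivAct}.}
The strategy is to trade the action of $C$ on the representation $V\otimes L$ (where $C$ acts trivially on the base $X$) for an action on an enlarged base, so that we land in a situation where the $C$-equivariant structure on the relevant line bundle \emph{does} extend to a larger group, and then invoke the main result of \cite{KV}. First I would reduce to a local statement: since $\cD'_Z(X,\cF)$ only depends on $\cF$ near $Z$ and the assertion is local on $X$, I may shrink $X$ and assume $X=\R^n$ with $C$ acting trivially, $\cE:=V$ a constant \Fre bundle (by partition of unity, trivializing $L$ as a topological line bundle but \emph{not} as a $C$-equivariant bundle), and $Z$ closed in $X$. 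The key construction is then to pass to the manifold $X\times\widehat{C}_{\C}$, where $\widehat{C}_{\C}$ is the (finite-dimensional smooth, since $C$ has finitely many components) manifold of continuous characters of $C$. On $X\times\widehat{C}_{\C}$ there is a tautological $C$-equivariant line bundle $L^{\mathrm{taut}}$ whose fiber over $(x,\chi)$ is the one-dimensional space on which $C$ acts by $\chi$; the point is that the map $z\mapsto(z,L_z)$ embeds $Z$ into $X\times\widehat{C}_{\C}$ and identifies $L|_Z$ with the restriction of $L^{\mathrm{taut}}$.

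Next I would set up the dictionary between the two distribution problems. Using the embedding $\iota\colon Z\hookrightarrow X\times\widehat{C}_{\C}$, $z\mapsto(z,L_z)$, and the fact that $Z$ lies inside a finite union of locally closed $H$-orbits in $\widehat{C}_{\C}$ (pulled back along the second projection), I would show that $\cD'_Z(X,V\otimes L)^C$ is isomorphic to $\cD'_{Z'}(X\times\widehat{C}_{\C},\, V\otimes L^{\mathrm{taut}})^C$ for an appropriate locally closed $Z'$ mapping onto the image of $Z$; the extra directions in $\widehat{C}_{\C}$ normal to $Z'$ contribute only transverse symbols, which I handle by the transverse-symbol exact sequence \cite[Theorem 2.1]{KV} exactly as in the body of the paper (every symmetric power of the conormal bundle is again an equivariant bundle, and the hypothesis $((V\otimes L|_z)^*)^C=0$ is inherited — here one uses that $C$ acts trivially on the conormal directions along $X$, and along $\widehat{C}_{\C}$ the conormal action is again via a character so the vanishing of coinvariant-type functionals propagates). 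This is the step I expect to require the most care: matching the equivariant bundles on the two sides, checking the transverse-degree filtration behaves well with a possibly infinite-dimensional \Fre fiber, and verifying that the finiteness hypothesis on $\{L_z\}$ really does let us cut $Z'$ down to a finite union of locally closed pieces.

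Now on $X\times\widehat{C}_{\C}$ the crucial gain is that the $C$-equivariant structure on $L^{\mathrm{taut}}$ extends to an action of the group $G':=(H\ltimes C)\times \R^n$ acting by $\R^n$-translations on $X$, by $H$ on $\widehat{C}_{\C}$, and with $C$ still acting only through the character in the second coordinate; under the extended action $G'$ preserves the closed set $Z'$ (up to enlarging it to a finite union of locally closed $G'$-orbits, using Corollary~\ref{cor:OpOrb} and the hypothesis) and acts on it with finitely many such orbits because $H$ has finitely many orbits among the $\{L_z\}$ and the $\R^n$-translations act transitively on $X$. Thus Theorem~\ref{thm:AG} of the body (equivalently, the relevant case of \cite[Theorem 3.15]{KV}, since now the representation $V$ is the fixed \Fre representation and the line bundle's action extends to $G'$) applies with $C$ as the normal subgroup and $G'$ as the ambient group: condition \eqref{=KV} reduces, because $C$ acts trivially on $X$ and hence trivially on all conormal directions along the $X$-factor (and via a character along $\widehat{C}_{\C}$, where the symmetric powers do not destroy the vanishing), precisely to the hypothesis $((V\otimes L|_z)^*)^C=0$. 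Hence $\cD'_{Z'}(X\times\widehat{C}_{\C},V\otimes L^{\mathrm{taut}})^C=0$, and unwinding the dictionary gives $\cD'_Z(X,V\otimes L)^C=0$, which is the claim. The remaining routine points — finitely many connected components of $C$ guaranteeing $\widehat{C}_{\C}$ is a finite-dimensional manifold, nuclearity of $V$ so that $C_c^\infty(\cdot)\hot V$ behaves well under the pushforward/pullback operations of \S\ref{subsec:Bundles}, and the compatibility of $\cD'_Z$ for locally closed $Z$ with restriction to the open set $X\setminus(\overline Z\setminus Z)$ — I would dispatch by citing the preliminaries above.
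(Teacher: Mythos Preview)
Your strategy matches the paper's: localize to $X=\R^n$ with $L$ topologically trivialized, pass to $X\times\widehat{C}_{\C}$ equipped with the tautological $C$-equivariant line bundle (the paper writes $\tilde L=\C_X\boxtimes L_0$), observe that on the enlarged space the $C$-structure extends to $G=X\times H\ltimes C$, and then invoke \cite[Theorem~3.15(i)]{KV}. Two points deserve correction.

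First, your middle paragraph is both unnecessary and partly ill-posed. You do not need an \emph{isomorphism} $\cD'_Z(X,V\otimes L)^C\cong\cD'_{Z'}(X\times\widehat{C}_{\C},V\otimes L^{\mathrm{taut}})^C$; an injection suffices, and this is immediate once you notice (as the paper does) that after trivializing $L$ the character map $x\mapsto L_x$ is a smooth map $\psi\colon X\to\widehat{C}_{\C}$ on \emph{all} of $X$, not just on $Z$. Then $(\mathrm{id},\psi)\colon X\hookrightarrow X\times\widehat{C}_{\C}$ is a closed embedding with $L\cong(\mathrm{id},\psi)^*\tilde L$, and pushforward of distributions along it is injective. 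Your transverse-symbol discussion is superfluous and awkward here, since $Z$ (hence the graph $\Gamma=\{(z,L_z):z\in Z\}$) need not be a submanifold, so transverse symbols to $\Gamma$ are not even defined.

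Second, citing Theorem~\ref{thm:AG} would be circular, since it is derived from Theorem~\ref{theorem:convenient vanishing}, whose proof uses the very result you are establishing; your parenthetical redirection to \cite[Theorem~3.15]{KV} is the correct reference. The paper's endgame is also slightly different from a single global appeal to KV: it supposes $0\neq\xi\in\cD'_{\Gamma}(X\times\widehat{C}_{\C},V\otimes\tilde L)^C$, notes that $\Supp(\xi)$ lies in finitely many locally closed $G$-orbits, uses Corollary~\ref{cor:OpOrb} to find an orbit $\cO$ with $\cO\cap\Supp(\xi)$ relatively open, multiplies by a cutoff so that $\rho\xi$ is supported in the single orbit $\cO$, and only then applies \cite[Theorem~3.15(i)]{KV}.
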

\begin{proof}
By partition of unity, we may assume that $L$ is trivial as a line bundle,  that $X=\R^n$ and $Z$ is compact. Let $L_0$ denote the natural $C$-equivariant line bundle on $\widehat{C}_{\C}$.
There exists a unique smooth map $\psi:X\to \widehat{C}_{\C}$ such that $L\cong \psi^*(L_0)$. Define $\Gamma \subset Z\times \widehat{C}_{\C}\subset X\times \widehat{C}_{\C}$ to be the graph of the restriction $\psi|_Z$. Let $\tilde L:=\C_X \boxtimes L_0$. It is enough to show that  $\cD'_{\Gamma}(X\times \widehat{C}_{\C},V\otimes \tilde L)^C=0.$

Assume the contrary. Let $0\neq  \xi \in \cD'_{\Gamma}(X\times \widehat{C}_{\C},V\otimes \tilde L)^C$. Let $G:=X\times H \ltimes C$ act on $X\times \widehat{C}_{\C}$ by
$$(x,h,c)(y,\chi):= (x+y,\chi\circ a(h^{-1})) $$
where $a(h)$ denotes the action of $H$ on $C$. Define a structure of a $G$-equivariant bundle on $\tilde L$ through the action on the total space by
$$(x,h,c)(y,\chi,\alp):= (x+y,\chi\circ a(h^{-1}), \chi(c)\alp).$$
Define a representation of $G$ on $V$ by letting $X$ act trivially.
By the conditions, $\Supp(\xi)$ lies in the union of finitely many locally closed $G$-orbits on $X\times \widehat{C}_{\C}$. By Corollary \ref{cor:OpOrb}, for one of those orbits $\cO$,  the intersection $\cO\cap  \Supp(\xi)$ is  open and non-empty in $\Supp(\xi)$. Let $x\in \cO\cap  \Supp(\xi)$. There exists a cutoff function $\rho$ such that $x\in \Supp( \rho\xi) \subset \cO$. This leads to a contradiction since by \cite[Theorem 3.15(i)]{KV}, $ \rho\xi =0$.
\end{proof}
By partition of unity, we obtain the following corollary.

\begin{corollary}\label{cor:TrivAct}
Let $X,Z,C,H,$ and $L$ be as in Theorem \ref{thm:TrivAct}. Let $H \ltimes C$ act trivially on $X$, and let
 $\cE$ be a locally constant  $H \ltimes C$-equivariant bundle on $X$, {\it i.e.} an equivariant bundle that is locally given by a single representation of $H \ltimes C$.
Assume that for any $z\in Z$ we have $(((\cE\otimes L)|_z)^{*})^{C}=0$.
Then
$$\cD'_Z(X,V\otimes L)^C=0.$$
\end{corollary}

We will need the following corollary of Lemma \ref{lem:descent}.

\begin{corollary}
Let $G$ be a Lie group and $H_1,H_2$ be closed Lie subgroups.
Consider the two-sided action of $H_1\times H_2$ on $G$, and let $\cE$ be an $H_1\times H_2$-equivariant bundle on $G$. Let $p:G\to G/H_2$ denote the natural projection. Then there exists a natural $H_1$-equivariant bundle $\cF$ on $G/H_2$ and a natural isomorphism $p^!\cF\cong \cE$ such that  $p^*$ defines an isomorphism $$\cD'(G/H_2,\cF)^{H_1}\cong \cD'(G,\cE)^{H_1\times H_2}.$$
\end{corollary}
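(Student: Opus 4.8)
The plan is to deduce this from Lemma~\ref{lem:descent} by applying it to the right action of $H_2$ on $G$ and then transporting the left $H_1$-symmetry through the descent.

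First I would apply Lemma~\ref{lem:descent} with the acting group there taken to be $H_2$, acting on $X:=G$ by right translations, with $Y:=G/H_2$ and $p\colon G\to G/H_2$ the quotient map; this is an $H_2$-principal space over $G/H_2$ because $H_2$ is a closed Lie subgroup. Restricting the $H_1\times H_2$-equivariant structure of $\cE$ to $H_2$ makes $\cE$ an $H_2$-equivariant bundle, so the lemma yields a bundle $\cF$ on $G/H_2$, an isomorphism $\nu\colon p^!\cF\cong\cE$ of $H_2$-equivariant bundles, and an isomorphism $p^*\colon\cD'(G/H_2,\cF)\cong\cD'(G,\cE)^{H_2}$.

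Next I would put an $H_1$-equivariant structure on $\cF$. For $h\in H_1$ write $l_h\colon G\to G$ and $\overline{l}_h\colon G/H_2\to G/H_2$ for the left translations; these commute with the right $H_2$-action and satisfy $p\circ l_h=\overline{l}_h\circ p$. Using functoriality of $(\cdot)^!$ and the $H_1$-part of the equivariant structure of $\cE$ (compatible with its $H_2$-part, since jointly they form an $H_1\times H_2$-structure) one obtains, for each $h$, a chain of isomorphisms of $H_2$-equivariant bundles
\begin{align*}
p^!(\overline{l}_h^!\cF)\cong(\overline{l}_h\circ p)^!\cF=(p\circ l_h)^!\cF\cong l_h^!(p^!\cF)\cong l_h^!\cE\cong\cE\cong p^!\cF .
\end{align*}
By the uniqueness clause in the proof of Lemma~\ref{lem:descent} (Case~2: the pair $(\cF,\nu)$ is unique up to unique isomorphism), this descends to a canonical isomorphism $\overline{l}_h^!\cF\cong\cF$; the same uniqueness forces the cocycle identity, so these assemble into an $H_1$-equivariant structure on $\cF$ with $\nu$ now $H_1\times H_2$-equivariant.

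Finally, since $p$ intertwines the left $H_1$-actions on $G$ and $G/H_2$ and every step above is natural, the isomorphism $p^*\colon\cD'(G/H_2,\cF)\cong\cD'(G,\cE)^{H_2}$ is $H_1$-equivariant; taking $H_1$-invariants gives
\begin{align*}
\cD'(G/H_2,\cF)^{H_1}\cong\bigl(\cD'(G,\cE)^{H_2}\bigr)^{H_1}=\cD'(G,\cE)^{H_1\times H_2},
\end{align*}
which is the assertion. The only delicate points — and where I would be careful — are that the uniqueness of the descended bundle must be invoked at the level of possibly infinite-dimensional \Fre bundles, and that $p^*$ genuinely commutes with the $H_1$-action; both are handled by partition of unity together with the corresponding arguments already given in the proof of Lemma~\ref{lem:descent}, so no new analytic input is needed.
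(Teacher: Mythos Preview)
Your proof is correct and is exactly the natural deduction from Lemma~\ref{lem:descent} that the paper intends; the paper states this corollary without proof, simply labeling it a corollary of the lemma, and your argument fills in precisely the expected details (descend along the $H_2$-quotient, transport the $H_1$-structure via the uniqueness clause, then take $H_1$-invariants).
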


\begin{corollary}\label{cor:twoPulls}
Let $G$ be a Lie group and $H_1,H_2$ be closed Lie subgroups.
Let $\cF_1$ be an $H_1$-equivariant bundle on $G/H_2$.
Let $p_1:G\to H_1\backslash G$ and $p_2:G\to G/H_2$ denote the natural projections.

Then there exists a natural $H_2$-equivariant bundle $\cF_2$ on $H_1\backslash G$ such that
$p_1^!\cF_2\cong p_2^!\cF_1$ as $H_1\times H_2$-equivariant bundles, and  $$\cD'(G/H_2,\cF_1)^{H_1}\cong\cD'(H_1\backslash G,\cF_2)^{H_2}.$$
\end{corollary}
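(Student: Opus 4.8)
\textbf{Proof plan for Corollary~\ref{cor:twoPulls}.} The statement is a purely formal consequence of the preceding corollary (the one giving a bundle $\cF$ on $G/H_2$ with $p^!\cF\cong\cE$ and $\cD'(G/H_2,\cF)^{H_1}\cong\cD'(G,\cE)^{H_1\times H_2}$, which is itself a two-sided version of Lemma~\ref{lem:descent}). The plan is to apply that corollary twice, once with the roles of $H_1$ and $H_2$ as given and once with them interchanged, tying the two sides together through the common space $\cD'(G,\cE)^{H_1\times H_2}$ for a suitable $H_1\times H_2$-equivariant bundle $\cE$ on $G$.

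First I would set $\cE:=p_2^!\cF_1$, where $p_2:G\to G/H_2$ is the natural projection. Since $p_2$ is $H_1\times H_2$-invariant for the left $H_1$-action and carries the right $H_2$-action, and $\cF_1$ is $H_1$-equivariant on $G/H_2$, the bundle $p_2^!\cF_1$ acquires a canonical $H_1\times H_2$-equivariant structure (by the remark preceding Lemma~\ref{lem:descent}: pullback of a bundle along an invariant map inherits equivariance, here for both factors — the $H_1$-equivariance comes from that of $\cF_1$, the $H_2$-equivariance from $p_2$ being an $H_2$-principal bundle with $\cF_1$ pulled back from the base). Applying the preceding corollary to this $\cE$ with the two-sided action of $H_1\times H_2$ gives a natural $H_1$-equivariant bundle on $G/H_2$ together with an isomorphism of its $p_2^!$ with $\cE=p_2^!\cF_1$; by the uniqueness clause in Lemma~\ref{lem:descent} (transported to this setting) that bundle is canonically identified with $\cF_1$ itself, and so we recover $\cD'(G/H_2,\cF_1)^{H_1}\cong\cD'(G,\cE)^{H_1\times H_2}$.

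Next I would apply the same preceding corollary with the roles of $H_1$ and $H_2$ swapped, i.e. using the projection $p_1:G\to H_1\backslash G$, which is $H_1\times H_2$-invariant for the $H_1$-action and $H_1$-principal. This produces a natural $H_2$-equivariant bundle $\cF_2$ on $H_1\backslash G$ with a natural isomorphism $p_1^!\cF_2\cong\cE$ of $H_1\times H_2$-equivariant bundles and an isomorphism $\cD'(H_1\backslash G,\cF_2)^{H_2}\cong\cD'(G,\cE)^{H_1\times H_2}$. Combining this with the isomorphism from the previous paragraph yields $\cD'(G/H_2,\cF_1)^{H_1}\cong\cD'(H_1\backslash G,\cF_2)^{H_2}$, and the identification $p_1^!\cF_2\cong\cE=p_2^!\cF_1$ is exactly the asserted compatibility $p_1^!\cF_2\cong p_2^!\cF_1$ as $H_1\times H_2$-equivariant bundles.

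The only genuine point requiring care — and the place I expect to spend most effort — is checking that the preceding corollary really does apply to the two-sided action: it was stated for the two-sided action of $H_1\times H_2$ on $G$ with a general $H_1\times H_2$-equivariant bundle, and one must verify that $p_2^!\cF_1$ is such a bundle and that the ``descent'' half of the corollary, when fed this bundle, returns $\cF_1$ up to the canonical (unique) isomorphism rather than merely some abstractly isomorphic bundle. This is where the uniqueness statement embedded in the proof of Lemma~\ref{lem:descent} (Case~2, the argument that $(\cF,\nu)$ is unique up to unique isomorphism) does the work; once that is invoked the rest is bookkeeping with pullbacks, using $p_1^!\cF_2=p_1^*\cF_2\otimes D_G\otimes p_1^*D_{H_1\backslash G}^{-1}$ and the analogous formula for $p_2$, together with partition of unity to reduce to the case of constant bundles where everything is classical. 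No infinite-dimensional subtleties arise beyond those already handled in Lemma~\ref{lem:descent} and Lemma~\ref{lem:coinv}.
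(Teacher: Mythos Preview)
Your proposal is correct and is exactly the intended argument: the paper states this corollary without proof, immediately following the preceding (unlabeled) corollary of Lemma~\ref{lem:descent}, precisely because it follows by applying that corollary twice with $\cE:=p_2^!\cF_1$ and the roles of $H_1,H_2$ swapped, using the uniqueness clause from the proof of Lemma~\ref{lem:descent} to identify the descended bundle on $G/H_2$ with $\cF_1$. Your discussion of the one point requiring care (that the descent of $p_2^!\cF_1$ along $p_2$ returns $\cF_1$ up to unique isomorphism) is accurate and is handled exactly as you say.
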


\begin{notn}
Let a Lie group $G$ act on a smooth manifold $X$.  Let $C\subset G$ be a subgroup.
Let $L$ be a $C$-equivariant line bundle on $X$. Then for any $x\in X$ and $g\in G$ we define a character $L_x^g:C_x\to \C^{\times}$ by letting $L_x^g(c)$ be the scalar by which $gcg^{-1}$ acts on the fiber ${L_{gx}}$.
\end{notn}

\begin{definition}\label{def:conv}
Let a Lie group $C$ act on a smooth manifold $X$. Let $\cF$ be a $C$-equivariant \Fre bundle over $X$. Let $Z\subset X$ be a locally closed $C$-invariant subset. We call the quadruple $(C,X,Z,\cF)$  \emph{convenient} if there exist
\begin{enumerate}[leftmargin=*,label=\alph*)]
\item A Lie group $G \supset C$ acting smoothly on $X$ extending the action of $C$,
\item A  $G$-equivariant \Fre bundle $\cE$ on $X$,
\item and a $C$-equivariant line bundle $L$ on $X$,
\end{enumerate}
such that the following holds:
\begin{enumerate}[leftmargin=*,label=\roman*)]
\item $\cF\cong \cE \otimes L$ as a $C$-equivariant bundle.
\item $C$ is a normal subgroup of $G$.
\item \label{it:conv:CharFin}For any $z\in Z$, the collection $\{L_z^g\in \widehat{(C_z)}_{\C}\, |\, g\in G \} $ lies in a finite union of locally closed $G_z$-orbits.
\item $Z$ is contained in a union of finitely many locally closed $G$ orbits.
\end{enumerate}
\end{definition}

\begin{theorem}\label{theorem:convenient vanishing}
Let $(C,X,Z,\cF)$ be a convenient quadruple.
Suppose that for any $z\in Z$
and any $k\geq 0$ we have
\begin{align}\label{=PointwiseVanishing}
(\cF|_z^* \otimes \Sym^k(N_{Cz,z}^X) \otimes (\Delta^{-1}_C)|_{C_z}\otimes\Delta_{C_z})^{C_z} =0.
\end{align}
Then $\cD'_Z(X,\cF)^C=0$.
\end{theorem}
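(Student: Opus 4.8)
The plan is to follow the strategy sketched in the introduction to the appendix and reduce the general statement to the three special cases handled by Theorem~\ref{thm:TrivAct} (the trivial-action case), then to the transitive-action case, then to a single orbit, and finally to induct on the number of $G$-orbits. The bundle $\cF$ in the statement is, by definition of convenience, of the form $\cE\otimes L$ with $\cE$ a $G$-equivariant bundle and $L$ merely $C$-equivariant, and $C\trianglelefteq G$; these data will be used throughout. The first step is to dispose of the case where $G$ acts trivially on $X$ and $\cE$ is (locally) constant: this is exactly Corollary~\ref{cor:TrivAct} once we observe that the pointwise vanishing hypothesis \eqref{=PointwiseVanishing} specializes, when $C_z=C$ and $N^X_{Cz,z}=0$, $\Delta_{C_z}=\Delta_C$, to $((\cE\otimes L)|_z^*)^{C}=0$. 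The twisting by $L$ is accommodated precisely because the finiteness condition \ref{it:conv:CharFin} lets us apply \cite[Theorem~3.15(i)]{KV} after transporting the problem to the graph of $z\mapsto L_z$ inside $X\times\widehat{C}_{\C}$, where the $C$-equivariant structure on the line bundle extends to an action of the larger group.

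\textbf{The transitive case.} Next I would treat the case in which $G$ acts transitively on $X$, so $X=G/H$ for $H=G_{x_0}$ the stabilizer of a base point. Using Corollary~\ref{cor:twoPulls} with $H_1=C$, $H_2=H$, the space $\cD'_Z(X,\cF)^C$ is identified with a space of $H$-invariant distributions on $X_1:=C\backslash G$ against a bundle $\cF_1$ obtained by descent; note $C$ acts trivially on $X_1$ since $C$ is normal in $G$, so that $G/C$ acts on $X_1$, with $\cE$ descending to a $(G/C)$-equivariant bundle and $L$ (which only sees the $C$-part through its character) descending to a line bundle on which the residual group still acts suitably. One then shows that already the $C\cap H$-invariant $\cF_1$-distributions vanish, by invoking the trivial-action case (Corollary~\ref{cor:TrivAct}) applied to the group $C\cap H$ acting trivially on $X_1$, the larger group $H$ (or $H/(C\cap H)$) acting on it with finitely many locally closed orbits by condition \ref{it:conv:it:conv:ConvIV}, and the line bundle whose fiberwise characters form finitely many locally closed orbits by \ref{it:conv:CharFin}; the pointwise hypothesis \eqref{=PointwiseVanishing} at $z$ with $C_z\supset$ a conjugate of $C\cap H$, together with the fact that conormal directions to a $C$-orbit are accounted for by the $\Sym^k$ factor, yields the required vanishing of $(((\cE\otimes L)|_{z_1})^*)^{C\cap H_{z_1}}$. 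This is essentially the argument of \cite{KV}, complicated by the non-constancy of $\cE$ and the presence of $L$, and I expect the bookkeeping of which modular characters appear (the factor $(\Delta_C^{-1})|_{C_z}\otimes\Delta_{C_z}$) to be the delicate point here.

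\textbf{Single orbit, then induction.} With the transitive case in hand, the case where $Z$ is a single $G$-orbit $\cO$ follows as in \cite{KV} via the transverse symbol: writing $\sigma_d$ for the transverse-symbol map of \cite[Theorem~2.1]{KV} extended to \Fre bundles, a nonzero $C$-equivariant distribution supported on $\cO$ of minimal transversal degree $d$ would produce, via $\sigma_d$, a nonzero $C$-equivariant section over $\cO$ of $\cE|_{\cO}\otimes L|_{\cO}\otimes\Sym^d(CN^X_{\cO})$ twisted by the modular ratio, i.e.\ a nonzero element of the space that the transitive case (applied to $X=\cO$) shows to vanish under exactly hypothesis \eqref{=PointwiseVanishing} with $k=d$. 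Hence no such distribution exists. Finally, the general case is proved by induction on the number $N$ of $G$-orbits in $Z$: pick an orbit $\cO$ that is open in $Z$ (it exists by Corollary~\ref{cor:OpOrb} applied to the $G$-action on $Z$), set $Z'=Z\setminus\cO$, a closed $G$-invariant subset meeting $N-1$ orbits; the short exact sequence relating $\cD'_Z$, $\cD'_{Z'}$ and $\cD'_{\cO}$ on the open set $X\setminus\overline{Z'}$ together with the single-orbit vanishing and the inductive hypothesis (applied after restricting to the complement of $\overline{\cO}\setminus\cO$, noting convenience is inherited by $G$-invariant locally closed subsets) gives $\cD'_Z(X,\cF)^C=0$. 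The main obstacle, as flagged in the appendix itself, is the transitive case: making the descent through Corollary~\ref{cor:twoPulls} compatible with the infinite-dimensional \Fre bundle $\cE$ and simultaneously with the character bundle $L$, and checking that the finiteness condition \ref{it:conv:CharFin} is exactly what is needed to invoke \cite[Theorem~3.15(i)]{KV} after the reduction.
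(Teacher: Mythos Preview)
Your proposal follows essentially the same route as the paper's proof: reduce to the transitive case via Corollary~\ref{cor:twoPulls}, handle that by descending to $X_1=C\backslash G$ and invoking Corollary~\ref{cor:TrivAct} for the subgroup $C_1=C\cap G_{x_0}$, then lift to a single $G$-orbit via the transverse symbol $\sigma_d$, and finally induct on the number of $G$-orbits using the open-orbit/short-exact-sequence argument.

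One imprecision worth flagging: in your transitive-case sketch you invoke ``$H$ acting on $X_1$ with finitely many locally closed orbits by condition~(iv).'' This is neither needed nor in general true. Condition~(iv) of Definition~\ref{def:conv} concerns $G$-orbits on $Z\subset X$, and in the transitive case there is a single $G$-orbit on $X$; after passing to $X_1=C\backslash G$ the stabilizer $H_1=G_{x_0}$ need not act with finitely many orbits. The point of Theorem~\ref{thm:TrivAct} (and hence Corollary~\ref{cor:TrivAct}) is precisely that no finiteness on $H$-orbits in the base is required---only the finiteness of $H$-orbits on the set of characters $\{(L_1)_z:z\in X_1\}\subset\widehat{(C_1)}_{\C}$, which the paper verifies comes exactly from condition~\ref{it:conv:CharFin}. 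The paper also spells out the remaining checks you allude to: that $\cE_1$ is locally constant as a $C_1$-equivariant bundle, that the $C_1$-action on its fibers extends to $H_1$ (using $\Delta_C=\Delta_G|_C$ since $C\trianglelefteq G$), and that the pointwise vanishing \eqref{=PointwiseVanishing} translates to the fiberwise condition needed for Corollary~\ref{cor:TrivAct}. These are the bookkeeping points you correctly anticipate as delicate.
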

\begin{proof} We divide the proof into several cases.
\begin{enumerate}

\item $G$ acts transitively on $X$.

Fix $x_0\in X$ and let $\nu:G\to X$ be the corresponding action map.
Let $G':=X_1:=C\backslash G$. By Corollary \ref{cor:twoPulls}, there exists a $G_{x_0}$-equivariant bundle $\cF_1$ on $X_1$ such that $\cD'(X,\cF)^C\cong \cD'(X_1,\cF_1)^{G_{x_0}}$
and $\nu^!\cF\cong p^!\cF_1$, where  $p:G \to X_{1}$ is the projection.
We construct bundles $\cE_1$ and $L_1$ on $X_1$ in a similar way, and have $\cF_1=\cE_1\otimes L_1$.

Let $H_1:=G_{x_0}$ and $C_{1}:=H_1\cap C$. Then it is enough to show that $\cD'(X_1,\cF_1)^{C_{1}}=0$.
We  deduce this from Corollary \ref{cor:TrivAct}.
For this we need to show that
\begin{enumerate}
\item $\cE_1$ is locally constant as a $C_1$-equivariant bundle.
\item The action of $C_1$ on the fibers of $\cE_1$ extends to  $H_1$.
\item The set $\{(L_1)_z\, \vert \, z\in X_1\}$ lies in a finite union of  locally closed $H_1$-orbits in $\widehat{(C_1)}_{\C}$.
\item For any $z\in X_{1}$ we have $(((\cE_1\otimes L_1)|_z)^*)^{C_1}=0$.
\end{enumerate}

Proof of (a). It is enough to show that $\cE_1':=\cE_1\otimes D^{-1}_{X_1}$ is locally constant as a $C_1$-equivariant bundle.
 Since $p$ locally has a section, it is enough to show that $p^*(\cE'_1)$ is constant as an $H_1$-equivariant bundle with respect to the action of $H_1$ on $G$ by right multiplications. We have $p^*(\cE'_1)=\nu^*(\cE')$, where $\cE':=\cE\otimes D^{-1}_{X} $. This gives a structure of a $G\times H_{1}$-equivariant bundle on $p^*(\cE'_1)$ with respect to the two-sided action. This implies that $p^*(\cE'_1)$ is constant as an $H_1$-equivariant bundle.

Proof of (b). The fiber of $\cE_1$ at $[1]$ is isomorphic to $\cE|_{x_0}\otimes \Delta_{H_1}|_{C_1}\otimes \Delta_C^{-1}|_{C_1}$ as a representation of $C_1$.
Since $C$ is normal in $G$, we have $\Delta_C=\Delta_G|_C$, and thus the representation $\cE_{x_0}\otimes \Delta_{H_1}|_{C_1}\otimes \Delta_C^{-1}|_{C_1}$ extends to $H_1$.

Proof of (c). $L$ satisfies condition \ref{it:conv:CharFin} of Definition \ref{def:conv}. Thus so does $L':=L\otimes D_X^{-1}$, since the action of $C$ on $D_X^{-1}$ can be extended to $G$. It is enough to show (c) with $L_1$ replaced by $L_1':=L\otimes D_{X_1}$. Now, we have $p^*(L_1')=\nu^*(L')$. Thus for any $g\in G$ we have $(L')_{x_0}^g=(L_1')_{[g]}$ and thus
$$\{(L_1')_{y}\in \widehat{C}_{\C}\, |\, y\in X_1 \} =\{(L'_{x_0})^g\in \widehat{C}_{\C}\, |\, g\in G \}.$$

Statement (d) follows from \eqref{=PointwiseVanishing} by  a  straightforward computation.\\

\item $Z$ lies in a single  closed $G$-orbit $\cO$.

Suppose by way of contradiction that there exists $0\neq \xi\in \cD'_Z(X,\cF)^C$ and let $z\in \Supp(\xi)$. Let $d$ be the transversal degree of $\xi$ to $\cO$ at $z$. Let $X_1:=\{p\in \cO\, \vert \, \deg_{p,\cO}(\xi)=d\}.$
Consider  $$\sigma_d(\xi)|_{X_1}\in \cD'_{Z\cap X_1}(X_1,\cF\otimes \Sym^d(CN_{X_1}^X))^C.$$
By the previous case we obtain $\sigma_d(\xi)=0$ - contradiction!\\

\item The General case.

We prove this step by induction on the number $n$ of orbits of $G$ in $GZ$.
When $n=0$, $Z$ is empty and the statement is obvious. For $n\geq1$, Corollary \ref{cor:OpOrb} implies that there exists an open orbit $\cO\subset Z$. Let $Z':=Z\setminus \cO$, $X':=X\setminus Z'$. Then we have the exact sequence
$$0\to \cD'_{Z'}(X,\cF)^C\to \cD'_{Z}(X,\cF)^C\to \cD'_{\cO}(X',\cF)^C.$$
We have $\cD'_{\cO}(X',\cF)^C=0$ by the previous case, and $\cD'_{Z'}(X,\cF)^C=0$ by the induction hypothesis.\qedhere
\end{enumerate}
\end{proof}
\begin{remark}
Substituting for $\mathcal{E}$ the constant bundle with fiber $V$ and for $L$ a constant line bundle, we obtain Theorem~\ref{thm:AG}.
\end{remark}

\def\cprime{$'$} \def\cprime{$'$} \def\cprime{$'$}

\end{document}